\newtheorem{thm}{Theorem}[section]
\newtheorem{cor}[thm]{Corollary}
\newtheorem{claim}[thm]{Claim}
\newtheorem{fact}[thm]{Fact}
\newtheorem{lemma}[thm]{Lemma}
\newtheorem{prop}[thm]{Proposition}
\theoremstyle{definition}
\newtheorem{definition}[thm]{Definition}
\newtheorem{app}[thm]{Application}
\newtheorem{question}[thm]{Question}
\newtheorem{problem}[thm]{Problem}
\title{Hyperbolicities in CAT(0) cube complexes}
\date{\today}
\author{Anthony Genevois}
\begin{document}

\maketitle

\begin{abstract}
This paper is a survey dedicated to the following question: given a group acting on a CAT(0) cube complex, how to exploit this action to determine whether or not the group is Gromov / relatively / acylindrically hyperbolic? As much as possible, the different criteria we mention are illustrated by applications. We also propose a model for universal acylindrical actions of cubulable groups, and give a few applications to Morse, stable and hyperbolically embedded subgroups. 
\end{abstract}

\tableofcontents

\section{Introduction}

\hspace{0.5cm} A well-known strategy to study groups from a geometric point of view is to find ``nice'' actions on spaces which are ``nonpositively-curved'', or even better, which are ``negatively-curved''. The most iconic illustration of this idea comes from Gromov's seminal paper \cite{GromovHyperbolic} introducing \emph{hyperbolic groups}. Since then, hyperbolic groups have been generalised in different directions. In this paper, we are interested in Gromov's hyperbolic groups as well as (strongly) relatively hyperbolic groups and the recent acylindrically hyperbolic groups. Proving that a group satisfies some hyperbolicity is very convenient as it provides interesting information of the group; see \cite{GhysdelaHarpe, OsinRH, OsinAcylSurvey} and references therein for more information. However, it may be a difficult task to show that a given group actually has a negatively-curved behavior, motivating the need of general criteria.

In this article, our objective is to stress out the idea that, if we want to determine whether a given group is hyperbolic in some sense, then it may be quite convenient to find an action on a CAT(0) cube complex (usually considered as a generalised tree in higher dimension). So the main question of the article is the following:

\begin{question}\label{mainquestion}
Let $G$ be a group acting on some CAT(0) cube complex $X$. How to exploit the action $G \curvearrowright X$ to determine whether or not $G$ is Gromov / relatively / acylindrically hyperbolic?
\end{question}

Our motivation is twofold. The first point is that the strategy actually works: we are indeed able to exploit the nice geometry of CAT(0) cube complexes in order to state and prove general criteria about hyperbolicity. And secondly, many groups of interest turn out to act on CAT(0) cube complexes, providing a large and interesting collection of potential applications. Along the article, as much as possible the different criteria we will mention will be illustrated by concrete applications, justifying our choice of working with cube complexes. (Indeed, the applications we mention deal with no less than twelve classes of groups!)

Although most of our article is a survey of already published works, some of our results are new, including:
\begin{itemize}
	\item The introduction of \emph{Morse subgroups} (introduced independently in \cite{StronglyQC} under the name \emph{strongly quasiconvex subgroups}) and their characterisation in cubulable groups (see Section \ref{section:Morse}, especially Corollary \ref{cor:MorseCriterion}).
	\item A proof of the freeness of Morse subgroups in freely irreducible right-angled Artin groups (see Appendix \ref{section:MorseInRAAG}).
	\item The characterisation of hyperbolically embedded subgroups in cubulable groups (see Section \ref{section:hypembed}.
	\item The introduction of hyperbolic models for CAT(0) cube complexes, with applications to stable subgroups and regular elements (see Section \ref{section:curvegraph}).  
	\item A short study of crossing graphs of CAT(0) cube complexes, stressing out their similarity with contact graphs (see Appendix \ref{section:crossing}).
\end{itemize}
Along our text, several open questions are left. Some of them being well-known, and other ones being new.

\paragraph{Acknowledgments.} I am grateful to Hung Tran for useful comments on the first version of this paper.

\section{Preliminaries}\label{section:preliminaries}

\noindent
A \textit{cube complex} is a CW complex constructed by gluing together cubes of arbitrary (finite) dimension by isometries along their faces. It is \textit{nonpositively curved} if the link of any of its vertices is a simplicial \textit{flag} complex (ie., $n+1$ vertices span a $n$-simplex if and only if they are pairwise adjacent), and \textit{CAT(0)} if it is nonpositively curved and simply-connected. See \cite[page 111]{MR1744486} for more information.

\medskip \noindent
Fundamental tools when studying CAT(0) cube complexes are \emph{hyperplanes}. Formally, a \textit{hyperplane} $J$ is an equivalence class of edges with respect to the transitive closure of the relation identifying two parallel edges of a square. Notice that a hyperplane is uniquely determined by one of its edges, so if $e \in J$ we say that $J$ is the \textit{hyperplane dual to $e$}. Geometrically, a hyperplane $J$ is rather thought of as the union of the \textit{midcubes} transverse to the edges belonging to $J$ (sometimes referred to as its \emph{geometric realisation}). See Figure \ref{figure27}. The \textit{carrier} $N(J)$ of a hyperplane $J$ is the union of the cubes intersecting (the geometric realisation of) $J$. 
\begin{figure}
\begin{center}
\includegraphics[trim={0 13cm 10cm 0},clip,scale=0.4]{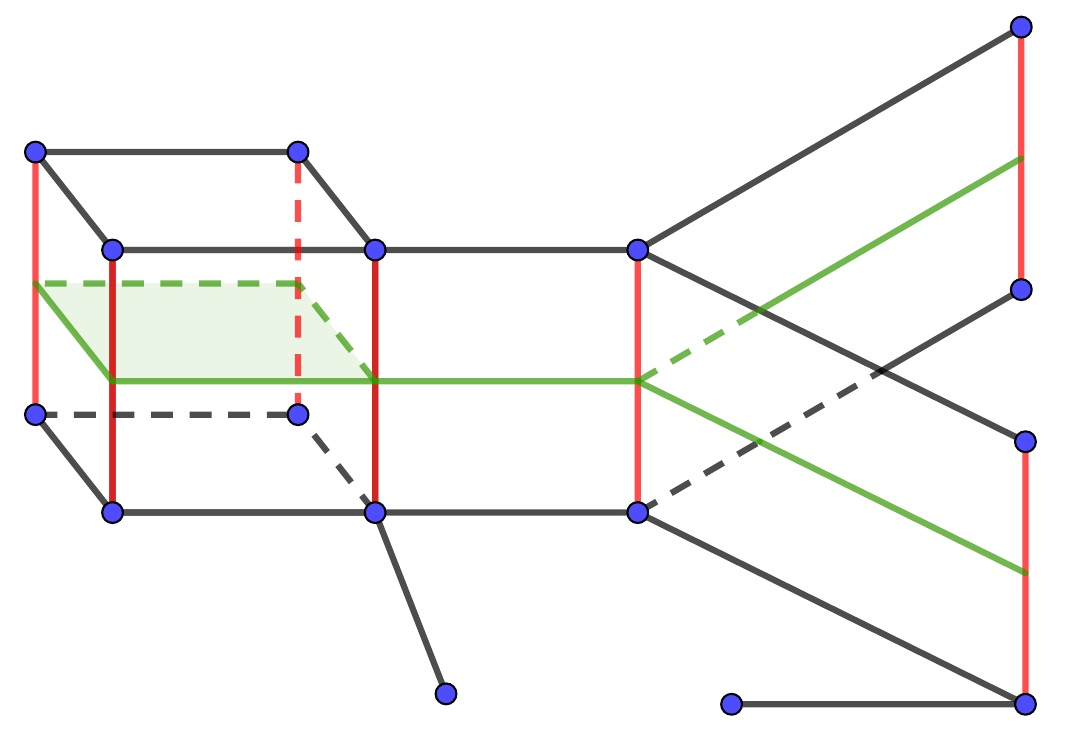}
\caption{A hyperplane (in red) and the associated union of midcubes (in green).}
\label{figure27}
\end{center}
\end{figure}

\medskip \noindent
There exist several metrics naturally defined on a CAT(0) cube complex. For instance, for any $p \in (0,+ \infty)$, the $\ell_p$-norm defined on each cube can be extended to a length metric defined on the whole complex, the \emph{$\ell_p$-metric}. Usually, the $\ell_1$-metric is referred to as the \emph{combinatorial distance} and the $\ell_2$-metric as the \emph{CAT(0) distance}. In this article, we are mainly interested in the combinatorial metric. Actually, unless specified otherwise, we will identify a CAT(0) cube complex with its one-skeleton, thought of as a collection of vertices endowed with a relation of adjacency. In particular, when writing $x \in X$, we always mean that $x$ is a vertex of $X$. 

\medskip \noindent
The following theorem is one of the most fundamental results about the geometry of CAT(0) cube complexes.

\begin{thm}\emph{\cite{MR1347406}}
Let $X$ be a CAT(0) cube complex.
\begin{itemize}
	\item If $J$ is a hyperplane of $X$, the graph $X \backslash \backslash J$ obtained from $X$ by removing the (interiors of the) edges of $J$ contains two connected components. They are convex subgraphs of $X$, referred to as the \emph{halfspaces} delimited by $J$.
	\item A path in $X$ is a geodesic if and only if it crosses each hyperplane at most once.
	\item For every $x,y \in X$, the distance between $x$ and $y$ coincides with the cardinality of the set $\mathcal{W}(x,y)$ of the hyperplanes separating them.
\end{itemize}
\end{thm}

\noindent
Now, we record several results on the geometry of CAT(0) cube complexes which will be used in the rest of the article.

\paragraph{Projections.} Given a CAT(0) cube complex $X$ and a convex subcomplex $C$, we know that, for every vertex $x \in X$, there exists a unique vertex of $C$ minimising the distance to $x$ (see for instance \cite[Lemma 13.8]{MR2377497}); we refer to this new vertex as the \emph{projection of $x$ onto $C$}, and we denote by $\mathrm{proj}_C : X \to C$ the map associating to a vertex of $X$ its projection onto $C$. Below is a list of results which will be useful later.

\begin{prop}\label{prop:proj}
\emph{\cite[Proposition 2.9]{coningoff}}
Let $X$ be a CAT(0) cube complex and $A,B \subset X$ two convex subcomplexes. The projection $\mathrm{proj}_{B}(A)$ is a geodesic subcomplex of $B$. Moreover, the hyperplanes intersecting $\mathrm{proj}_{B}(A)$ are precisely those which intersect both $A$ and $B$. 
\end{prop}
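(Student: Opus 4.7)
The plan is to exploit the standard hyperplane characterisation of combinatorial projections: for a convex subcomplex $C \subset X$ and a vertex $x$, the projection $\mathrm{proj}_C(x)$ is the unique vertex of $C$ that agrees with $x$ on the orientation of every hyperplane crossing $C$ and lies on the (unique) $C$-side of every hyperplane not crossing $C$; equivalently, the hyperplanes separating $x$ from $\mathrm{proj}_C(x)$ are exactly those separating $x$ from $C$. From this I first deduce the key adjacency lemma: if $a,a' \in X$ are adjacent and $H$ is the hyperplane separating them, then either $H$ does not cross $B$, in which case $\mathrm{proj}_B(a) = \mathrm{proj}_B(a')$, or $H$ crosses $B$, in which case $\mathrm{proj}_B(a),\mathrm{proj}_B(a')$ are adjacent in $B$ and separated precisely by $H$. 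This is immediate from the characterisation, since $a$ and $a'$ agree in orientation on every hyperplane other than $H$.

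Granting this, the hyperplane statement is almost automatic. If $H$ crosses $\mathrm{proj}_B(A)$, pick $a_1,a_2 \in A$ whose projections lie on opposite sides of $H$; then $H$ crosses $B$, so it does not separate either $a_i$ from $B$, hence $a_i$ is on the same side of $H$ as $\mathrm{proj}_B(a_i)$, and $H$ crosses $A$. Conversely, if $H$ crosses both $A$ and $B$, then any $a_1,a_2 \in A$ on opposite sides of $H$ project to vertices still on those respective sides (by the same observation), so $H$ crosses $\mathrm{proj}_B(A)$.

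For the geodesic-subcomplex part, I take $p_1 = \mathrm{proj}_B(a_1)$ and $p_2 = \mathrm{proj}_B(a_2)$ and fix a combinatorial geodesic $a_1 = b_0, b_1, \ldots, b_k = a_2$ inside the convex subcomplex $A$, with $H_i$ the hyperplane separating $b_{i-1}$ from $b_i$. Applying the adjacency lemma along this path yields a sequence $\mathrm{proj}_B(b_0), \ldots, \mathrm{proj}_B(b_k)$ in $\mathrm{proj}_B(A)$ whose consecutive entries are either equal or adjacent; after deleting repetitions this gives a combinatorial path from $p_1$ to $p_2$ inside $\mathrm{proj}_B(A)$ of length equal to the number of $H_i$ crossing $B$. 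By the hyperplane claim already proved, every hyperplane separating $p_1$ from $p_2$ crosses $B$ and separates $a_1$ from $a_2$, hence appears among the $H_i$ (which are pairwise distinct, as usual for a combinatorial geodesic); thus the length of the projected path equals $d(p_1,p_2)$, making it a genuine geodesic lying in $\mathrm{proj}_B(A)$.

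The main step to get right is the adjacency lemma; once it is in place, both halves of the statement follow from the single observation that $\mathrm{proj}_B$ preserves orientations on hyperplanes crossing $B$. I do not anticipate any genuine obstacle beyond keeping the hyperplane bookkeeping honest in the last paragraph.
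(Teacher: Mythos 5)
The paper does not prove this proposition; it is quoted verbatim from an external source (\cite[Proposition 2.9]{coningoff}), so there is no internal proof to compare against. Your argument is correct and is the standard one: the ultrafilter/gate description of $\mathrm{proj}_B$ (equivalently, Lemma \ref{lem:vertextoproj} of the paper) gives the adjacency lemma, and everything else follows by bookkeeping of which hyperplanes cross $B$. The only point worth tightening is the last inequality: your containment of the separating hyperplanes of $p_1,p_2$ among the $H_i$ crossing $B$ gives $d(p_1,p_2)\leq N$ where $N$ is the length of the projected path, and you should say explicitly that the reverse inequality $N\geq d(p_1,p_2)$ is automatic for any path, so equality holds and the path is a geodesic. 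In fact the forward inclusion you really need for geodesicity is that each $H_i$ crossing $B$ separates $p_1$ from $p_2$ (it flips the orientation exactly once along the projected path, at step $i$), which your adjacency lemma delivers; with that stated, the proof is complete.
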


\begin{lemma}\label{lem:vertextoproj}
\emph{\cite[Lemma 13.8]{MR2377497}}
Let $X$ be a CAT(0) cube complex, $Y \subset X$ a convex subcomplex and $x \in X$ a vertex. Any hyperplane separating $x$ from its projection onto $Y$ must separate $x$ from $Y$.
\end{lemma}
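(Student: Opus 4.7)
The plan is to prove the lemma via the median, which is the natural combinatorial tool for projections in CAT(0) cube complexes. Let $p = \mathrm{proj}_Y(x)$, and pick any vertex $y \in Y$; I want to show that every hyperplane separating $x$ from $p$ also separates $x$ from $y$. Writing this for all $y \in Y$ will give the result.

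First I will consider the median $m$ of the triple $(x,p,y)$ in $X$. Two basic properties are enough. On one hand, $m$ lies on a combinatorial geodesic joining $p$ to $y$; since both $p$ and $y$ are in $Y$ and $Y$ is a convex subcomplex, all such geodesics are contained in $Y$, so $m \in Y$. On the other hand, $m$ also lies on a combinatorial geodesic joining $x$ to $p$, which gives the decomposition $d(x,p) = d(x,m) + d(m,p)$, hence $d(x,m) \leq d(x,p)$.

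Combining these two facts with the defining property of the projection — that $p$ is the \emph{unique} vertex of $Y$ minimising the distance to $x$ — forces $d(x,m) = d(x,p)$ and $d(m,p) = 0$, so $m = p$. Thus $p$ itself is the median of $(x,p,y)$, which precisely means that $p$ lies on a combinatorial geodesic from $x$ to $y$, i.e.\ $d(x,y) = d(x,p) + d(p,y)$. Equivalently, the set of hyperplanes separating $x$ from $y$ is the disjoint union of those separating $x$ from $p$ and those separating $p$ from $y$. In particular, any hyperplane separating $x$ from $p$ separates $x$ from $y$, and since $y \in Y$ was arbitrary, it separates $x$ from all of $Y$.

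I don't expect any real obstacle here; the only point requiring a moment of care is the first one, namely that the median $m$ of $(x,p,y)$ automatically belongs to $Y$, which relies on the standard fact that combinatorially convex subcomplexes of a CAT(0) cube complex are median-closed (equivalently, that they contain every combinatorial geodesic between two of their vertices). Everything else is a direct manipulation of the equality $d(x,p) = d(x,m) + d(m,p)$ together with the uniqueness of the nearest-point projection recalled in the excerpt.
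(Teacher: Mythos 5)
Your proof is correct: the median argument (showing that the median of $(x,p,y)$ lies in $Y$ by convexity, hence equals $p$ by minimality, hence $p \in I(x,y)$ and every hyperplane separating $x$ from $p$ separates $x$ from $y$) is the standard proof of this fact, and each step is justified. The paper itself does not reprove the lemma but cites \cite[Lemma 2.8]{coningoff}, whose argument is essentially this one, so there is nothing further to reconcile.
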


\begin{lemma}\label{lem:sepproj}
\emph{\cite[Proposition 2.6]{article3}}
Let $X$ be a CAT(0) cube complex, $C \subset X$ a convex subcomplex and $x,y \in X$ two vertices. The hyperplanes separating the projections of $x$ and $y$ onto $C$ are precisely the hyperplanes separating $x$ and $y$ which intersect $C$. 
\end{lemma}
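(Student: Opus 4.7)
The plan is to prove the two inclusions separately, and in both cases the key input is Lemma \ref{lem:vertextoproj} (together with the elementary observation that a hyperplane separating two points of the convex subcomplex $C$ must cross $C$).

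First, suppose $H$ is a hyperplane separating $\mathrm{proj}_C(x)$ and $\mathrm{proj}_C(y)$. Since both of these vertices lie in $C$ and $C$ is connected, $H$ intersects $C$. Because $H$ crosses $C$, it does not separate $x$ from $C$, so by the contrapositive of Lemma \ref{lem:vertextoproj} it does not separate $x$ from $\mathrm{proj}_C(x)$; the same applies to $y$. Thus $x$ lies on the same side of $H$ as $\mathrm{proj}_C(x)$ and $y$ lies on the same side as $\mathrm{proj}_C(y)$, so $H$ separates $x$ and $y$ and crosses $C$, as required.

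Conversely, suppose $H$ is a hyperplane separating $x$ and $y$ that intersects $C$. The same argument as above, based on Lemma \ref{lem:vertextoproj}, shows that $H$ cannot separate $x$ from $\mathrm{proj}_C(x)$ nor $y$ from $\mathrm{proj}_C(y)$. Since $x$ and $y$ are on opposite sides of $H$, the projections $\mathrm{proj}_C(x)$ and $\mathrm{proj}_C(y)$ must also be on opposite sides of $H$, i.e.\ $H$ separates them.

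There is essentially no obstacle: both directions are symmetric and reduce to noting that a hyperplane crossing $C$ cannot separate a vertex from its projection onto $C$, a fact furnished directly by Lemma \ref{lem:vertextoproj}.
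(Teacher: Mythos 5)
Your proof is correct. The paper itself does not prove this lemma (it simply cites \cite[Proposition 2.6]{article3}), and your argument is the standard one: a hyperplane meeting $C$ cannot separate a vertex from its projection onto $C$ (the contrapositive of Lemma \ref{lem:vertextoproj}), so the halfspace parity of $x$ versus $\mathrm{proj}_C(x)$ and of $y$ versus $\mathrm{proj}_C(y)$ is preserved, and both inclusions follow.
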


\begin{lemma}\label{lem:twomin}
\emph{\cite[Corollary 13.10]{MR2377497}}
Let $X$ be a CAT(0) cube complex and $Y_1,Y_2 \subset X$ two convex subcomplexes. If $x \in Y_1$ and $y \in Y_2$ are two vertices minimising the distance between $Y_1$ and $Y_2$, then the hyperplanes separating $x$ and $y$ are precisely the hyperplanes separating $Y_1$ and $Y_2$. 
\end{lemma}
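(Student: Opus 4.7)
The plan is to prove the two inclusions separately. One direction is immediate: any hyperplane $H$ that separates $Y_1$ from $Y_2$ must separate $x$ from $y$ since $x \in Y_1$ and $y \in Y_2$. So the real work is the reverse inclusion.

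For the nontrivial direction, I would argue by contradiction. Suppose some hyperplane $H$ separates $x$ and $y$ but does not separate $Y_1$ and $Y_2$. Since $x,y$ lie on opposite sides of $H$, at least one of $Y_1,Y_2$ must be crossed by $H$ (otherwise both subcomplexes would lie on the same side as their respective vertex, which is incompatible with not separating $Y_1,Y_2$ while still separating $x,y$). Without loss of generality, $H$ intersects $Y_1$.

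Now I would bring in the projection $x' := \mathrm{proj}_{Y_1}(y)$ and use Lemma~\ref{lem:vertextoproj} to show that the hyperplanes separating $y$ from $x'$ are exactly those separating $y$ from $Y_1$. Since $x \in Y_1$, every hyperplane separating $y$ from $Y_1$ also separates $y$ from $x$. So the set of hyperplanes separating $x'$ from $y$ is contained in the set separating $x$ from $y$. On the other hand, $H$ lies in the latter set but, because $H$ crosses $Y_1$, it does not separate $y$ from $Y_1$ and hence does not separate $y$ from $x'$. This strict containment, combined with the fact that in a CAT(0) cube complex the combinatorial distance between two vertices equals the number of hyperplanes separating them, gives $d(x',y) < d(x,y)$, contradicting minimality since $x' \in Y_1$ and $y \in Y_2$.

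The only delicate point is ensuring that ``$H$ does not separate $Y_1$ from $Y_2$'' genuinely forces $H$ to cross one of the two subcomplexes in the configuration where $x,y$ are on opposite sides of $H$; this is just a case analysis on which side of $H$ contains $Y_1$ and $Y_2$ when $H$ is disjoint from both, and it rules out the disjoint case immediately. Everything else reduces to combining the projection identity from Lemma~\ref{lem:vertextoproj} with the standard fact that distance in the $1$-skeleton is counted by separating hyperplanes.
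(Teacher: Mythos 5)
Your argument is correct and complete: the easy inclusion is handled, the case analysis showing that a non-separating hyperplane must cross one of the two convex subcomplexes is sound, and the strict inclusion of separating-hyperplane sets obtained via Lemma~\ref{lem:vertextoproj} together with the identity $d(\cdot,\cdot)=\#\{\text{separating hyperplanes}\}$ does contradict minimality. The paper itself does not prove this lemma but cites it to an external reference, and your projection argument is essentially the standard one given there, so nothing further is needed.
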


\begin{lemma}\label{lem:projinter}
\emph{\cite[Lemma 2.38]{Qm}}
Let $X$ be a CAT(0) cube complex and $Y_1,Y_2 \subset X$ two intersecting convex subcomplexes. Then $\mathrm{proj}_{Y_1} \circ \mathrm{proj}_{Y_2} = \mathrm{proj}_{Y_1 \cap Y_2}$. 
\end{lemma}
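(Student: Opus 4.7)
The plan is to identify projections by their hyperplane signature: by Lemma \ref{lem:vertextoproj} (together with its trivial converse), the projection $\mathrm{proj}_C(x)$ is the unique vertex $v \in C$ such that the set of hyperplanes separating $x$ from $v$ equals the set of hyperplanes separating $x$ from $C$. Set $p = \mathrm{proj}_{Y_2}(x)$ and $q = \mathrm{proj}_{Y_1}(p)$; I want to show that $q \in Y_1 \cap Y_2$ and that the hyperplanes separating $x$ from $q$ are precisely those separating $x$ from $Y_1 \cap Y_2$.

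First I would verify that $q \in Y_2$, which is where the hypothesis $Y_1 \cap Y_2 \neq \emptyset$ is used. The key observation is that every hyperplane $H$ separating $p$ from $Y_1$ must cross $Y_2$: indeed, $H$ separates $p \in Y_2$ from $Y_1 \cap Y_2 \subset Y_1$, and $Y_1 \cap Y_2 \subset Y_2$, so $Y_2$ lies on both sides of $H$. Applying the hyperplane characterization of $q = \mathrm{proj}_{Y_1}(p)$, every hyperplane separating $p$ from $q$ crosses $Y_2$. If $q$ were not in $Y_2$, some hyperplane $K$ would separate $q$ from $Y_2$; but then $K$ separates $q$ from $p \in Y_2$, hence $K$ crosses $Y_2$, contradicting the fact that $Y_2$ is on one side of $K$. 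Thus $q \in Y_1 \cap Y_2$.

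Next I would verify the hyperplane identification. The inclusion "separating $x$ from $Y_1 \cap Y_2$ implies separating $x$ from $q$" is immediate since $q \in Y_1 \cap Y_2$. For the converse, let $H$ separate $x$ from $q$, and split into two cases according to the side of $H$ containing $p$. If $H$ separates $x$ from $p$, then $H$ separates $x$ from $Y_2$ (by the hyperplane characterization of $p = \mathrm{proj}_{Y_2}(x)$), so $Y_1 \cap Y_2 \subset Y_2$ lies opposite $x$ and $H$ separates $x$ from $Y_1 \cap Y_2$. Otherwise $p$ lies on the same side of $H$ as $x$, so $H$ separates $p$ from $q$; then $H$ separates $p$ from $Y_1$, hence $Y_1 \cap Y_2 \subset Y_1$ lies opposite $p$ and therefore opposite $x$. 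In either case $H$ separates $x$ from $Y_1 \cap Y_2$, which by the uniqueness characterization forces $q = \mathrm{proj}_{Y_1 \cap Y_2}(x)$.

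The main obstacle is the first step, showing $q \in Y_2$, since both subsequent arguments are just bookkeeping with hyperplane incidences. This is really the only place where the intersection hypothesis is used, and it has to be exploited precisely to turn "$H$ separates $p$ from $Y_1$" into "$H$ crosses $Y_2$".
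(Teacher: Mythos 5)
Your proof is correct: the hyperplane-signature characterisation of projections (a vertex $v\in C$ is $\mathrm{proj}_C(x)$ if and only if the hyperplanes separating $x$ from $v$ are exactly those separating $x$ from $C$, which follows from Lemma \ref{lem:vertextoproj}, its trivial converse, and the fact that a hyperplane separating two vertices of $C$ crosses $C$), the verification that $q\in Y_2$ using the nonempty intersection, and the two-case bookkeeping are all sound. The paper itself gives no proof of this lemma, only the citation to \cite[Lemma 2.38]{Qm}, but your argument is the standard one and is self-contained given the preliminaries stated here.
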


\paragraph{Cycles of subcomplexes.} Given a CAT(0) cube complex, a \emph{cycle of subcomplexes} is a sequence of subcomplexes $(C_1, \ldots, C_r)$ such that, for every $i \in \mathbb{Z}/r \mathbb{Z}$, the subcomplexes $C_i$ and $C_{i+1}$ intersects. 

\begin{prop}\label{prop:cycle}
Let $(A,B,C,D)$ be a cycle of four convex subcomplexes. There exists a combinatorial isometric embedding $[0,p] \times [0,q] \hookrightarrow X$ such that $[0,p] \times \{0\} \subset A$, $\{p\} \times [0,q] \subset B$, $[0,p] \times \{ q \} \subset C$ and $\{0\} \times [0,q] \subset D$. Moreover, the hyperplanes intersecting $[0,p] \times \{ 0\}$ (resp. $\{0\} \times [0,q]$) are disjoint from $B$ and $D$ (resp. $A$ and $C$). 
\end{prop}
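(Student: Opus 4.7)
The plan is to identify two families of hyperplanes in $X$ that will serve as the two coordinate directions of the rectangle. Let $\mathcal{H}$ denote the set of hyperplanes separating $A$ from $C$ and $\mathcal{V}$ the set of hyperplanes separating $B$ from $D$; set $p := |\mathcal{V}|$ and $q := |\mathcal{H}|$. Since $A$ intersects both $B$ and $D$, any hyperplane in $\mathcal{H}$ must cross $B$ and $D$ (otherwise it would separate $A$ from one of them, contradicting the non-empty intersections); symmetrically every $V \in \mathcal{V}$ crosses $A$ and $C$. It follows that every $H \in \mathcal{H}$ crosses every $V \in \mathcal{V}$: indeed $V$ admits edges in $A$ (on the $A$-side of $H$) and edges in $C$ (on the $C$-side of $H$), hence cannot lie on a single side of $H$. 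In particular $\mathcal{H} \cap \mathcal{V} = \emptyset$.

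I would then identify the corners of the rectangle. A case analysis on which of the four subcomplexes a hyperplane separating $A \cap D$ from $B \cap C$ may avoid, where the cases corresponding to disjoint ``adjacent'' subcomplexes such as $A$ and $B$ are ruled out by the cycle hypothesis, shows that this set of hyperplanes is exactly $\mathcal{H} \cup \mathcal{V}$. Pick vertices $x_A \in A \cap D$ and $x_C \in B \cap C$ realising the minimal distance between these two convex subcomplexes; by Lemma~\ref{lem:twomin}, $d(x_A, x_C) = p + q$ with $\mathcal{H} \cup \mathcal{V}$ as separating hyperplanes. Define $x_B := \mathrm{proj}_B(x_A)$ and $x_D := \mathrm{proj}_D(x_C)$; Lemma~\ref{lem:projinter} (applied respectively to $(A, B)$ with $x_A \in A$ and to $(C, D)$ with $x_C \in C$) yields $x_B \in A \cap B$ and $x_D \in C \cap D$.

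By Lemma~\ref{lem:vertextoproj}, the hyperplanes separating $x_A$ and $x_B$ lie among those separating $x_A$ from $B$, hence among those separating $x_A$ from $x_C$, i.e.\ in $\mathcal{H} \cup \mathcal{V}$. They cannot cross $B$ (so are not in $\mathcal{H}$), and every $V \in \mathcal{V}$ does separate $x_A \in D$ from $B$; so this separating set equals $\mathcal{V}$ and $d(x_A, x_B) = p$. A symmetric argument gives $d(x_C, x_D) = p$; expressing the hyperplanes separating $x_B$ and $x_C$ as the symmetric difference of those separating $x_A, x_B$ and those separating $x_A, x_C$, one obtains $d(x_B, x_C) = d(x_A, x_D) = q$ with separating set $\mathcal{H}$, and $d(x_B, x_D) = p + q$.

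To build the rectangle, I order $\mathcal{V} = \{V_1, \dots, V_p\}$ along a combinatorial geodesic from $x_A$ to $x_B$ inside $A$ (such a geodesic stays in $A$ by convexity), and similarly order $\mathcal{H} = \{H_1, \dots, H_q\}$ along a geodesic from $x_A$ to $x_D$ in $D$. For each pair $(i, j) \in \{0, \dots, p\} \times \{0, \dots, q\}$, define $\phi(i, j)$ as the median of the $i$-th vertex of the first geodesic, the $j$-th vertex of the second, and $x_C$; comparing hyperplane sides, this vertex is the unique one lying on the $B$-side of $V_1, \dots, V_i$, the $D$-side of $V_{i+1}, \dots, V_p$, the $C$-side of $H_1, \dots, H_j$, the $A$-side of $H_{j+1}, \dots, H_q$, and on the same side as $x_A$ of every other hyperplane. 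Because each $V_k$ crosses each $H_l$, consecutive vertices of the grid are joined by edges and each unit square of the image is a genuine $2$-cube of $X$, so $\phi$ defines a combinatorial isometric embedding of $[0, p] \times [0, q]$ into $X$. The characterisation of a convex subcomplex as the intersection of its containing half-spaces then yields $\phi(\cdot, 0) \subset A$, $\phi(p, \cdot) \subset B$, $\phi(\cdot, q) \subset C$ and $\phi(0, \cdot) \subset D$; and the hyperplanes intersecting the bottom (resp.\ left) side of the rectangle are precisely $\mathcal{V}$ (resp.\ $\mathcal{H}$), which by definition are disjoint from $B$ and $D$ (resp.\ $A$ and $C$). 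The main obstacle is this last paragraph: checking that the median construction genuinely produces a combinatorial isometric embedding of the rectangle with the announced boundary properties, which is where the pairwise crossing of $\mathcal{H}$ and $\mathcal{V}$ established at the outset is used in an essential way.
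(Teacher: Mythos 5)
Your proposal is correct, and it is essentially a self-contained reconstruction of what the paper delegates to a citation. The published proof simply invokes \cite[Proposition 2.111]{Qm} to produce the flat rectangle with corners $a \in A \cap D$ (a closest point to $B \cap C$), $b = \mathrm{proj}_B(a)$, $c = \mathrm{proj}_{B \cap C}(a)$, $d = \mathrm{proj}_C(a)$, then gets the side inclusions from convexity and the ``moreover'' clause from Lemma \ref{lem:vertextoproj} together with Helly's property and Lemma \ref{lem:twomin}. You instead build the rectangle from scratch: you first identify the two coordinate families $\mathcal{H}$ (separating $A$ from $C$) and $\mathcal{V}$ (separating $B$ from $D$), prove they pairwise cross, show via the case analysis that $\mathcal{H} \sqcup \mathcal{V}$ is exactly the set of hyperplanes separating $A \cap D$ from $B \cap C$, and then realise the grid by medians with the correct halfspace profile. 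This buys you two things: the construction no longer depends on the external reference, and the final disjointness assertion comes for free and in a sharper form --- the hyperplanes dual to the bottom (resp.\ left) side are not merely disjoint from $B$ and $D$ (resp.\ $A$ and $C$) but are precisely the hyperplanes separating $B$ from $D$ (resp.\ $A$ from $C$), with no need for the Helly argument. All the individual steps check out: the transversality of $\mathcal{H}$ and $\mathcal{V}$, the computation of the four corner distances via Lemmas \ref{lem:twomin}, \ref{lem:vertextoproj} and \ref{lem:projinter}, the symmetric-difference bookkeeping, and the verification that the median vertices have the stated halfspace coordinates (so that the grid map is isometric on vertices and fills in genuine squares). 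The only cosmetic imprecisions are that the finiteness of $\mathcal{H}$ and $\mathcal{V}$ should be noted before setting $p$ and $q$ (it follows since each such hyperplane separates a fixed vertex of $A \cap D$ from a fixed vertex of $B \cap C$), and that when arguing that $H \in \mathcal{H}$ crosses $B$ you need both exclusions ($H$ cannot separate $A$ from $B$, nor $B$ from $C$); neither affects the validity of the argument.
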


\begin{proof}
First of all, let us record a statement which is contained into the proof of \cite[Proposition 2.111]{Qm} (in the context of \emph{quasi-median graphs}, a class of graphs including median graphs, i.e., one-skeleta of CAT(0) cube complexes).

\begin{fact}
If $a$ is a vertex of $A \cap D$ minimising the distance to $B \cap C$ and if $b$ (resp. $c$, $d$) denotes the projection of $a$ onto $B$ (resp. $B \cap C$, $C$), then there exists a combinatorial isometric embedding  $[0,p] \times [0,q] \hookrightarrow X$ such that $(0,0)=a$, $(p,0)=b$, $(p,q)=c$ and $(0,q)=d$. 
\end{fact}

\noindent
By convexity of $A$, $B$, $C$ and $D$, this implies that $[0,p] \times \{0\} \subset A$, $\{p\} \times [0,q] \subset B$, $[0,p] \times \{ q \} \subset C$ and $\{0\} \times [0,q] \subset D$. 

\medskip \noindent
Let $J$ be a hyperplane intersecting $[0,p] \times \{0\}$. We know from Lemma \ref{lem:vertextoproj} that $J$ must be disjoint from $B$. Moreover, if $J$ intersects $D$, then it follows from Helly's property, satisfied by convex subcomplexes in CAT(0) cube complexes, that $J$ must intersect $A \cap D$, which contradicts Lemma \ref{lem:twomin}. Consequently, any hyperplane intersecting $[0,p] \times \{0\}$ must be disjoint from both $B$ and $D$. By symmetry, one shows similarly that any hyperplane intersecting $\{0\} \times [0,q]$ must be disjoint from $A$ and $C$. 
\end{proof}

\paragraph{Quadruples.} Recall that, in a CAT(0) cube complex, the \emph{interval} between two vertices $x$ and $y$, denoted by $I(x,y)$, is the union of all the geodesics joining $x$ and $y$. 

\begin{lemma}\label{lem:quadrangle}
Let $X$ be a CAT(0) cube complex and $x_1,x_2,x_3,x_4 \in X$ four vertices. There exist four vertices $m_1,m_2,m_3,m_4 \in X$ such that 
\begin{itemize}
	\item $I(x_i,m_i) \cup I(m_i,m_{i+1}) \cup I(m_{i+1},x_{i+1}) \subset I(x_i,x_{i+1})$ for every $i \in \mathbb{Z}/4 \mathbb{Z}$;
	\item there exists a combinatorial isometric embedding $[0,a] \times [0,b] \hookrightarrow X$ such that $m_1=(0,0)$, $m_2=(a,0)$, $m_3=(a,b)$, and $m_4=(0,b)$.
\end{itemize}
\end{lemma}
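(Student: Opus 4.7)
The plan is to apply Proposition~\ref{prop:cycle} to the cycle of convex subcomplexes consisting of the four intervals $I(x_i, x_{i+1})$ for $i \in \mathbb{Z}/4\mathbb{Z}$. Intervals in a CAT(0) cube complex are convex subcomplexes (being intersections of the combinatorial half-spaces containing their two endpoints), and the consecutive intervals $I(x_i, x_{i+1})$ and $I(x_{i+1}, x_{i+2})$ always share the vertex $x_{i+1}$, so the cycle hypothesis is satisfied.

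Proposition~\ref{prop:cycle} then supplies a combinatorial isometric embedding $[0,a] \times [0,b] \hookrightarrow X$ whose four sides sit respectively in $I(x_1,x_2)$, $I(x_2,x_3)$, $I(x_3,x_4)$, $I(x_4,x_1)$. Declaring $m_1, m_2, m_3, m_4$ to be its four corners, ordered as in the statement, the second bullet of the lemma holds by construction.

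For the first bullet, fix $i$ and observe that by construction both $m_i$ and $m_{i+1}$ lie in $I(x_i, x_{i+1})$. Convexity of this interval gives $I(m_i, m_{i+1}) \subset I(x_i, x_{i+1})$, and the standard principle that $m \in I(x,y)$ forces $I(x,m), I(m,y) \subset I(x,y)$ (a one-line hyperplane count, since the hyperplanes separating any two vertices of $I(x,m)$ separate $x$ from $m$, hence $x$ from $y$) provides the remaining two inclusions.

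In essence, the lemma is a direct repackaging of Proposition~\ref{prop:cycle} applied to intervals, so I do not expect any genuine obstacle; the only bookkeeping is matching each corner of the grid to the correct side and hence to the correct interval, which is entirely determined by the correspondence built into Proposition~\ref{prop:cycle}.
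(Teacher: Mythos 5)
Your argument is correct for the lemma as stated, but it takes a genuinely different route from the paper's. The paper does not go through Proposition~\ref{prop:cycle}: it constructs the median quadrangle by hand, taking $Q$ to be the (Helly-nonempty) intersection of the convex hull of $\{x_1,\dots,x_4\}$ with all halfspaces containing exactly one of the $x_j$ or exactly $\{x_2,x_4\}$, setting $m_i=\mathrm{proj}_Q(x_i)$, deriving the rectangle from the distance identities supplied by Lemma~\ref{lem:sepproj}, and then checking hyperplane by hyperplane that each concatenation $[x_i,m_i]\cup[m_i,m_{i+1}]\cup[m_{i+1},x_{i+1}]$ is a geodesic. Your route --- feeding the four intervals, which are indeed convex subcomplexes with consecutive ones meeting at the $x_j$, into Proposition~\ref{prop:cycle} and declaring the corners to be the $m_i$ --- is shorter, and both bullets of the statement do follow: the second one directly, and the first one from convexity of $I(x_i,x_{i+1})$ together with the fact that all of $x_i,m_i,m_{i+1},x_{i+1}$ lie in it.

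One caveat you should record. The paper's proof establishes something strictly stronger than the first bullet, namely that $m_{i+1}\in I(m_i,x_{i+1})$ (equivalently, the concatenation above is a geodesic), and it is this stronger fact, not the bare containment, that is invoked later in the proof of Proposition~\ref{prop:HXhyp}; the containment alone does not imply it (on a line, $x=0$, $y=10$, $m_1=7$, $m_2=3$ satisfies all three inclusions but $7+4+7\neq 10$). Your corners do satisfy the stronger property, because by the construction in Proposition~\ref{prop:cycle} (together with Lemma~\ref{lem:projinter}) one endpoint of each side of the rectangle is the projection of the other onto a convex subcomplex containing the relevant $x_j$; Lemma~\ref{lem:vertextoproj} then shows that a hyperplane separating $m_i$ from $m_{i+1}$ separates $m_i$ from $x_{i+1}$, and combined with $m_i,m_{i+1}\in I(x_i,x_{i+1})$ this rules out any hyperplane crossing the concatenation twice. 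But this verification is absent from your write-up, so if the lemma is to be used as the paper uses it, that extra paragraph is needed.
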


\begin{proof}
Let $\mathcal{U}$ denote the collection of the halfspaces containing exactly one vertex among $x_1,x_2,x_3,x_4$, and let $\mathcal{D}$ denote the collection of the halfspaces containing exactly $x_2,x_4$ among $x_1,x_2,x_3,x_4$. Notice that any two halfspaces of $\mathcal{U} \cup \mathcal{D}$ intersect in the convex hull $C$ of $\{ x_1,x_2,x_3,x_4 \}$, which is precisely the intersection of all the halfspaces containing at least two vertices among $x_1,x_2,x_3,x_4$; and that the collection $\mathcal{U} \cup \mathcal{D}$ is finite since these halfspaces are delimited by hyperplanes separating at least two vertices among $x_1,x_2,x_3,x_4$. It follows from Helly's property, satisfied by convex subcomplexes in CAT(0) cube complexes, that the intersection 
$$Q = \bigcap\limits_{A \in \mathcal{U} \cup \mathcal{D}} A \cap C$$
is non-empty. For every $1 \leq i \leq 4$, let $m_i$ denote the projection of $x_i$ onto $Q$. By construction, the hyperplanes intersecting $Q$ are precisely the hyperplanes separating $\{x_1,x_2 \}$ and $\{x_3,x_4 \}$, and those separating $\{ x_1,x_4 \}$ and $\{x_2,x_3 \}$. Let $\mathcal{H}$ denote the first collection, and $\mathcal{V}$ the second one. It follows from Lemma \ref{lem:sepproj} that
$$d(m_1,m_2)=d(m_3,m_4)= \# \mathcal{H}, \ d(m_2,m_3)=d(m_1,m_4)=\# \mathcal{V},$$  
$$d(m_1,m_3)=d(m_2,m_4)= \# \mathcal{H}+ \# \mathcal{V}.$$
A fortiori, $m_2,m_4 \in I(m_1,m_3)$ and $m_1,m_3 \in I(m_2,m_4)$. It follows from \cite[Lemma 2.110]{Qm} (proved in the context of \emph{quasi-median graphs}, a class of graphs including median graphs, i.e., one-skeleta of CAT(0) cube complexes) that there exists a combinatorial isometric embedding $[0,a] \times [0,b] \hookrightarrow X$ such that $m_1=(0,0)$, $m_2=(a,0)$, $m_3=(a,b)$, and $m_4=(0,b)$.

\medskip \noindent
Now fix some $i \in \mathbb{Z}/4 \mathbb{Z}$ and some geodesics $[x_i,m_i]$, $[m_i,m_{i+1}]$, $[x_{i+1},m_{i+1}]$ respectively between $x_i$ and $m_i$, $m_i$ and $m_{i+1}$, and $x_{i+1}$ and $m_{i+1}$. We claim that the concatenation $[x_i,m_i] \cup [m_i,m_{i+1}] \cup [m_{i+1},x_{i+1}]$ is a geodesic. Indeed, it follows from Lemma \ref{lem:vertextoproj} that no hyperplane intersects both $[x_i,m_i]$ and $[m_i,m_{i+1}]$, nor both $[m_i,m_{i+1}]$ and $[x_{i+1},m_{i+1}]$. Next, suppose that $J$ is a hyperplane intersecting both $[x_i,m_i]$ and $[x_{i+1},m_{i+1}]$. We know from Lemma \ref{lem:vertextoproj} that $J$ must be disjoint from $Q$, so $x_i$ and $x_{i+1}$ belong to the same halfspace $D$ delimited by $J$. Since $J \notin \mathcal{H} \cup \mathcal{V}$, this implies that $D$ belongs to $\mathcal{D}$. By construction of $Q$, necessarily $Q \subset D$, contradicting the fact that $m_i$ and $m_{i+1}$ do not belong to $D$. Consequently, the path $[x_i,m_i] \cup [m_i,m_{i+1}] \cup [m_{i+1},x_{i+1}]$ intersects each hyperplane at most once, proving our claim. A fortiori, $I(x_i,m_i) \cup I(m_i,m_{i+1}) \cup I(m_{i+1},x_{i+1}) \subset I(x_i,x_{i+1})$. 
\end{proof}

\section{Gromov hyperbolicity}\label{section:Gromovhyp}

\noindent
Recall that a geodesic metric space $X$ is \emph{Gromov hyperbolic} (or just \emph{hyperbolic} for short) if there exists some constant $\delta \geq 0$ such that all the geodesic triangles of $X$ are \emph{$\delta$-thin}, i.e., any side is contained into the $\delta$-neighborhood of the union of the two others. The question we are interested in is: when is a CAT(0) cube complex Gromov hyperbolic? 

\medskip \noindent
Of course, first we have to fix the metric we consider since, as mentioned in Section \ref{section:preliminaries}, several metrics are naturally defined on CAT(0) cube complexes. Recall that, for every $p \in (0,+ \infty)$, the $\ell^p$-metric defined on a cube complex is the length metric which extends the $\ell^p$-norms defined on each cube. For finite-dimensional CAT(0) cube complexes, all these metrics turn out to be quasi-isometric; but they may be quite different for infinite-dimensional complexes. Nevertheless, by noticing that an $n$-cube contains a triangle which is not $(n^{1/p}-1)$-thin with respect to the $\ell^p$-norm, only two cases need to be considered: the finite-dimensional situation with respect to the $\ell^1$-metric (the other $\ell^p$-metrics being quasi-isometric to this one); and the infinite-dimensional situation with respect to the $\ell^{\infty}$-metric (the other $\ell^p$-metrics being not able to be hyperbolic). 

\medskip \noindent
The former case has been studied in several places, in particular \cite{CDEHV, Hagenthesis, coningoff}. Our next statement sum up the criteria which can be found there. We begin by defining the needed vocabulary. 
\begin{itemize}
	\item A \emph{flat rectangle} is a combinatorial isometric embedding $[0,p] \times [0,q] \hookrightarrow X$ for some integers $p,q \geq 0$; it is \emph{$L$-thin} for some $L \geq 0$ if $\min(p,q) \leq L$. 
	\item A \emph{facing triple} is the data of three hyperplanes such that no one separates the other two. 
	\item A \emph{join of hyperplanes} $(\mathcal{H},\mathcal{V})$ is the data of two collections of hyperplanes $\mathcal{H}, \mathcal{V}$ which do not contain any facing triple such that any hyperplane of $\mathcal{H}$ is transverse to any hyperplane of $\mathcal{V}$; if moreover $\mathcal{H}, \mathcal{V}$ are collections of pairwise disjoint hyperplanes, then $(\mathcal{H}, \mathcal{V})$ is a \emph{grid of hyperplanes}. The join or grid of hyperplanes $(\mathcal{H}, \mathcal{V})$ is \emph{$L$-thin} for some $L \geq 0$ if $\min(\# \mathcal{H}, \# \mathcal{V}) \leq L$; it is \emph{$L$-thick} if $\# \mathcal{H}, \# \mathcal{V} \geq L$. 
	\item The \emph{crossing graph} $\Delta X$ of a CAT(0) cube complex $X$ is the graph whose vertices are the hyperplanes of $X$ and whose edges link two transverse hyperplanes. It has \emph{thin bicycles} if there exists some $K \geq 0$ such that any bipartite complete subgraph $K_{p,q} \subset \Delta X$ satisfies $\min(p,q) \leq K$.
\end{itemize}
Notice for instance that the crossing graph of a flat rectangle defines a grid of hyperplanes. So flat rectangles, join or grid of hyperplanes, and bipartite complete subgraphs in the crossing graph are three different ways of thinking about ``flat subspaces'' in cube complexes. Now we are ready to state our criteria, saying basically that a cube complex is hyperbolic if and only if its flat subspaces cannot be too ``thick''. 

\begin{thm}\label{thm:hyperbolic1}
Let $X$ be an arbitrary CAT(0) cube complex endowed with the $\ell^1$-metric. The following statements are equivalent:
\begin{itemize}
	\item[(i)] $X$ is hyperbolic;
	\item[(ii)] the flat rectangles of $X$ are uniformly thin;
	\item[(iii)] the joins of hyperplanes of $X$ are uniformly thin;
	\item[(iv)] $X$ is finite-dimensional and its grids of hyperplanes are uniformly thin;
\end{itemize}
moreover, if $X$ is cocompact (i.e., there exists a group acting geometrically on $X$), then the previous statements are also equivalent to:
\begin{itemize}
	\item[(v)] there does not exist a combinatorial isometric embedding $\mathbb{R}^2 \hookrightarrow X$;
	\item[(vi)] the crossing graph of $X$ has thin bicycles.
\end{itemize}
\end{thm}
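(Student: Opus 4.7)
My overall plan is to prove the cycle of implications $(i) \Rightarrow (ii) \Rightarrow (iii) \Rightarrow (iv) \Rightarrow (ii) \Rightarrow (i)$ in the general case, and then, assuming cocompactness, to establish $(ii) \Leftrightarrow (v)$ and $(iii) \Leftrightarrow (vi)$ separately. The implication $(i) \Rightarrow (ii)$ is standard: an isometrically embedded $[0,p] \times [0,q]$ with $\min(p,q)$ arbitrarily large would produce geodesic quadrilaterals, and hence geodesic triangles, that fail to be $\delta$-thin for any fixed $\delta$.

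For $(ii) \Rightarrow (iii)$, I would extract a flat rectangle from an $L$-thick join $(\mathcal{H},\mathcal{V})$ as follows. Because $\mathcal{H}$ contains no facing triple, its hyperplanes admit a total separation order; let $H^-, H^+$ be its two extreme hyperplanes, and similarly let $V^-, V^+$ be the extreme hyperplanes of $\mathcal{V}$. Choosing halfspaces $A, C$ bounded by $V^-, V^+$ and $B, D$ bounded by $H^-, H^+$ with coherent orientations yields a cycle $(A,B,C,D)$ of convex subcomplexes to which Proposition~\ref{prop:cycle} applies, producing a combinatorial flat rectangle $[0,p] \times [0,q] \hookrightarrow X$. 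By the moreover clause of that proposition, every hyperplane of $\mathcal{H}$ (resp. $\mathcal{V}$) must cross the horizontal (resp. vertical) side of the rectangle, forcing $\min(p,q) \geq \min(\#\mathcal{H},\#\mathcal{V})$. For $(iii) \Rightarrow (iv)$, grids are special joins and an unbounded dimension yields arbitrarily large cubes whose hyperplanes are pairwise transverse, producing thick joins. For $(iv) \Rightarrow (ii)$, the hyperplanes crossing a flat rectangle split into two families that are pairwise disjoint \emph{within} each family (by combinatorial embeddedness), so they form a grid of hyperplanes of comparable thickness.

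The delicate step is $(ii) \Rightarrow (i)$, which I would handle via Lemma~\ref{lem:quadrangle}. Given a geodesic triangle with vertices $x_1,x_2,x_3$, I would apply the lemma to the quadruple $(x_1,x_2,x_3,x_3)$ (or small variants) to extract a central flat rectangle and four tripartite geodesic decompositions $[x_i,m_i] \cup [m_i,m_{i+1}] \cup [m_{i+1},x_{i+1}]$. A uniform upper bound on the thinness of flat rectangles then controls how far each side of the original triangle is allowed to wander from the union of the other two, yielding a uniform $\delta$.

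For the cocompact case, $(ii) \Leftrightarrow (v)$ follows from a standard Arzelà--Ascoli / diagonal extraction argument: a sequence of flat rectangles with thickness tending to infinity can, after translating their basepoints into a compact fundamental domain under the geometric action, be passed to a subsequential limit which is a combinatorial isometric embedding $\mathbb{R}^2 \hookrightarrow X$. The equivalence $(iii) \Leftrightarrow (vi)$ is immediate in one direction since a join gives a bipartite complete subgraph; for the converse, a thick $K_{p,q} \subset \Delta X$ supplies two families of pairwise transverse hyperplanes, which must be refined to subfamilies with no facing triple. I expect \emph{this refinement} to be the main obstacle: it is precisely here that finite-dimensionality (already granted by $(iv)$) and cocompactness combine via a Ramsey-type pigeonhole argument, guaranteeing that one can pass to comparably large subfamilies that are totally ordered by separation, hence defining an honest thick join.
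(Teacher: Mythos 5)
Your step $(ii) \Rightarrow (iii)$ contains the one genuine gap. You assert that, because $\mathcal{H}$ contains no facing triple, ``its hyperplanes admit a total separation order.'' For the notion of facing triple actually in play here --- three \emph{pairwise disjoint} hyperplanes none of which separates the other two, which is the reading forced on us if Fact~\ref{fact:pairwisedisjoint} is to have any content and if items $(iii)$ and $(iv)$ are to be genuinely distinct --- this is false: a collection without facing triples may contain arbitrarily many pairwise \emph{transverse} hyperplanes (the $n$ hyperplanes of an $n$-cube contain no facing triple), and such a collection admits no separation order at all. This is precisely the difference between a join and a grid, and it is why the passage from joins to grids requires the Ramsey extraction of Fact~\ref{fact:pairwisedisjoint}, which in turn needs finite-dimensionality. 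Your extraction of a flat rectangle via Proposition~\ref{prop:cycle} applied to extreme halfspaces is perfectly fine for a \emph{grid} (it mirrors how the paper uses Proposition~\ref{prop:cycle} in the proof of Lemma~\ref{MorseContracting}), so the repair is to reroute: prove $(ii) \Rightarrow (iv)$ with your rectangle argument, noting that $(ii)$ already forces finite-dimensionality because an $n$-cube contains an isometrically embedded $\lfloor n/2\rfloor \times \lceil n/2 \rceil$ flat rectangle, and then deduce $(iv) \Rightarrow (iii)$ by first converting a thick join into a comparably thick grid via Fact~\ref{fact:pairwisedisjoint}. With that rerouting your cycle closes; your $(ii) \Rightarrow (i)$ sketch via Lemma~\ref{lem:quadrangle} is sound (the cleanest implementation is the four-point condition applied to the quadruple, exactly as in the proof of Proposition~\ref{prop:HXhyp}), and your limiting argument for $(v) \Rightarrow (ii)$ is the paper's own.

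A second, smaller issue: for $(iii) \Rightarrow (vi)$ you correctly identify that refining a thick biclique $K_{p,q} \subset \Delta X$ into a thick join is the real difficulty, but the ``Ramsey-type pigeonhole'' you propose does not suffice. Ramsey only yields a large pairwise-disjoint (or pairwise-transverse, hence dimension-bounded) subfamily, and pairwise disjoint hyperplanes can still contain facing triples (think of a star configuration), so extracting a chain-like subfamily with no facing triple requires genuinely more; this is the content of \cite[Theorem 4.1.3]{Hagenthesis}, which the paper simply cites for $(i) \Leftrightarrow (vi)$. More broadly, the paper outsources $(i) \Leftrightarrow (ii) \Leftrightarrow (iv)$ and $(i) \Leftrightarrow (vi)$ to the literature and only argues $(iii) \Leftrightarrow (iv)$ and $(v) \Rightarrow (i)$ in-house; your attempt at a self-contained proof is more ambitious, but as written it does not close at the two points above.
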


\begin{proof}
The equivalences $(i) \Leftrightarrow (ii) \Leftrightarrow (iv)$ are proved by \cite[Theorem 3.3]{coningoff}; the equivalence $(i) \Leftrightarrow (ii)$ can also be found in \cite[Corollary 5]{CDEHV}. The implication $(iii) \Rightarrow (iv)$ is clear. The converse follows from the next fact, which is an easy consequence of  \cite[Lemma 3.7]{coningoff}. We recall that $\mathrm{Ram}(\cdot)$ denotes the \emph{Ramsey number}. Explicitly, if $n \geq 0$, $\mathrm{Ram}(n)$ is the smallest integer $k \geq 0$ satisfying the following property: if one colors the edges of a complete graph containing at least $k$ vertices with two colors, it is possible to find a monochromatic complete subgraph containing at least $n$ vertices.  

\begin{fact}\label{fact:pairwisedisjoint}
Let $(\mathcal{H}, \mathcal{V})$ be a join of hyperplanes satisfying $\# \mathcal{H}, \# \mathcal{V} \geq \mathrm{Ram}(k)$ for some $k> \dim(X)$. Then there exist subcollections $\mathcal{H}' \subset \mathcal{H}$ and $\mathcal{V}' \subset \mathcal{V}$ such that $(\mathcal{H}', \mathcal{V}')$ is a grid of hyperplanes satisfying $\# \mathcal{H}' , \mathcal{V}' \geq k$. 
\end{fact}

\noindent
Now, suppose that $X$ is cocompact. The equivalence $(i) \Leftrightarrow (vi)$ is \cite[Theorem 4.1.3]{Hagenthesis}. It remains to show that $(i) \Leftrightarrow (v)$. The implication $(i) \Rightarrow (v)$ is clear, so we only have to prove the converse. Suppose that $X$ is not hyperbolic. Point $(ii)$ implies that, for every $n \geq 1$, there exists a flat rectangle $[0,2n] \times [0,2n] \hookrightarrow X$; let $D_n$ denote its image in $X$. Because $X$ is cocompact, we may suppose without loss of generality that $(n,n)$ belongs to a given ball $B$ for every $n \geq 1$. Next, since $X$ is locally finite (since cocompact), the sequence $(D_n)$ subconverges to some subcomplex $D_{\infty} \subset X$, i.e., there exists a subsequence of $(D_n)$ which is eventually constant on every ball. Necessarily, $D_{\infty}$ is isometric to the square complex $\mathbb{R}^2$, giving a combinatorial isometric embedding $\mathbb{R}^2 \hookrightarrow X$. 
\end{proof}

\noindent
As a consequence of Theorem \ref{thm:hyperbolic1}, we recover a sufficient criterion formulated by Gromov in \cite[Section 4.2.C]{GromovHyperbolic}. (In fact, under these assumptions, Gromov showed more generally that the cube complex can be endowed with a CAT(-1) metric.)

\begin{cor}\label{cor:squareless}
Let $X$ be a CAT(0) cube complex. If no vertex of $X$ has an induced cycle of length four in its link then $X$ is hyperbolic.
\end{cor}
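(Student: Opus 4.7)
The plan is to apply Theorem~\ref{thm:hyperbolic1}, specifically the equivalence $(i)\Leftrightarrow(ii)$: we must show that the flat rectangles of $X$ are uniformly thin. Arguing by contradiction, suppose $f : [0,p] \times [0,q] \hookrightarrow X$ is a combinatorial isometric embedding with $\min(p,q)$ arbitrarily large; we seek to find an induced 4-cycle in the link of some vertex of $X$.

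Fix an interior vertex $v = f(i,j)$ of the rectangle. The four edges at $v$ along the rectangle directions are dual to four pairwise distinct hyperplanes of $X$, hence yield four distinct vertices $N,S,E,W$ in the link of $v$; the four unit squares of the rectangle at $(i,j)$ supply the edges $NE$, $NW$, $SE$, $SW$ of the link, which already form a 4-cycle $N$--$E$--$S$--$W$--$N$. Since the hypothesis forbids an induced 4-cycle, this 4-cycle must have a chord, which must be one of the two diagonals $NS$ or $EW$. Such a chord records an extra square of $X$ attached at $v$ to two parallel (opposite) rectangle-edges, and forces a fourth vertex of $X$ lying outside the image of $f$.

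The heart of the argument is to upgrade these local chords into a global obstruction. Each chord at $v$ is forced, by the flag condition on links in a CAT(0) cube complex, to extend to $3$-cubes of $X$ containing $v$; the axiom that any two cubes of a cube complex meet along a common face then severely constrains how the $3$-cubes forced at adjacent interior rectangle-vertices may coexist. A finite case analysis shows that, once $\min(p,q)$ exceeds a universal constant, no choice of chord type (either $NS$ or $EW$) at every interior vertex of the rectangle can be realized consistently within a CAT(0) cube complex without forcing an induced 4-cycle in the link of one of the newly introduced vertices, contradicting the hypothesis. The main obstacle is exactly this bookkeeping: verifying systematically that no compatible pattern of $NS$- and $EW$-chords can be chosen when the rectangle is large enough. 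Once this is carried out, flat rectangles in $X$ are uniformly thin and Theorem~\ref{thm:hyperbolic1} delivers the hyperbolicity of $X$.
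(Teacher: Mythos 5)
Your reduction to Theorem~\ref{thm:hyperbolic1}(ii) and your local observation are fine: at an interior vertex of a flat rectangle the four incident rectangle-edges are dual to four distinct hyperplanes, the four incident squares give the $4$-cycle $N$--$E$--$S$--$W$ in the link, and the hypothesis forces one of the two diagonals. The problem is that the entire remaining content of the proof is the sentence ``a finite case analysis shows\dots'', and the claim that this case analysis is supposed to establish is false. Consider the CAT(0) cube complex consisting of a single $2n$-cube and all its faces: every vertex link is a full simplex on $2n$ vertices, whose $1$-skeleton is a complete graph, so no link contains an induced $4$-cycle; yet the map $(i,j) \mapsto (1^i0^{n-i},1^j0^{n-j}) \in \{0,1\}^n \times \{0,1\}^n$ is a combinatorial isometric embedding $[0,n] \times [0,n] \hookrightarrow [0,1]^{2n}$. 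At every interior vertex of this rectangle \emph{both} diagonal chords are present, all the $3$-cubes you invoke exist, and everything coexists consistently without creating a single induced $4$-cycle. Hence there is no universal constant bounding $\min(p,q)$ under the hypothesis, and no bookkeeping of chord patterns over the rectangle alone can yield a contradiction: any bound must depend on $\dim X$, and your argument never brings the dimension into play. (Passing to the union over all $n$, i.e.\ the infinite-dimensional cube with finitely supported vertices, one even gets an unbounded complex satisfying the hypothesis that is not $\ell^1$-hyperbolic, so the corollary must be read, as in Gromov's original setting, for finite-dimensional complexes.)

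The structural reason your local step cannot be globalized is that a diagonal chord $NS$ at $(i,j)$ only records that the two ``parallel'' horizontal hyperplanes of the rectangle adjacent to that vertex are transverse somewhere in $X$; nothing in the definition of a combinatorial isometric embedding forbids this, and the big cube realizes it maximally. The disjointness needed to make the $4$-cycle induced is available not for flat rectangles but for \emph{grids} of hyperplanes, so the efficient route is Theorem~\ref{thm:hyperbolic1}(iv). Given a grid with $H_1,H_2$ disjoint, $V_1,V_2$ disjoint and all cross pairs transverse, apply Proposition~\ref{prop:cycle} to the cycle of carriers $(N(H_1),N(V_1),N(H_2),N(V_2))$: at a corner of the resulting rectangle one finds four edges, dual respectively to $H_1$, to $V_1$, and to the two sides of the rectangle meeting there, which form a $4$-cycle in the link whose diagonals are absent --- the edge between the two rectangle-sides' duals and the duals of $H_1,V_1$ is missing exactly because Proposition~\ref{prop:cycle} makes the side hyperplanes disjoint from the relevant carriers, and disjoint hyperplanes never span a square (the degenerate cases where the rectangle has a side of length $0$ are handled by the same computation). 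Thus the hypothesis forces every grid of hyperplanes to be $1$-thin, and together with finite-dimensionality Theorem~\ref{thm:hyperbolic1}(iv) gives hyperbolicity. I would rewrite your proof along these lines rather than trying to repair the chord analysis.
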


\begin{app}
Fix a graph $\Gamma$ (without multiple edges nor loops) and a collection of groups $\mathcal{G}= \{ G_v \mid v \in V(\Gamma) \}$ indexed by the vertices of $\Gamma$. The \emph{graph product} $\Gamma \mathcal{G}$, as defined in \cite{GreenGP}, is the quotient
$$ \left( \underset{v \in V(\Gamma)}{\ast} G_v \right) / \langle \langle [g,h], \ (u,v) \in E(\Gamma), g \in G_u, h \in G_v \rangle \rangle.$$
Loosely speaking, $\Gamma \mathcal{G}$ is the disjoint union of the $G_v$'s in which two adjacent groups commute. Notice that, if the groups of $\mathcal{G}$ are all infinite cyclic, we recover the right-angled Artin group $A(\Gamma)$; and if the groups of $\mathcal{G}$ are all cyclic of order two, we recover the right-angled Coxeter group $C(\Gamma)$. In \cite{MeierGP}, John Meier use the criterion provided by Corollary \ref{cor:squareless} to characterise precisely when a graph product is hyperbolic, just by looking at $\Gamma$ and the cardinalities of the vertex-groups (trivial, finite, or infinite). As a particular case, a right-angled Coxeter group $C(\Gamma)$ turns out to be hyperbolic if and only if $\Gamma$ is square-free. For an alternative proof of Meier's theorem, based on Theorem~\ref{thm:hyperbolic1} (in a more general context, but which can be adapted to produce a purely cubical argument), see \cite[Theorem 8.30]{Qm}.   
\end{app}

\begin{app}\label{app:braidgrouphyp}
Let $\Gamma$ be a topological graph and $n \geq 1$ an integer. Define the \emph{ordered configuration space} $C_n(\Gamma)$ as
$$\Gamma^n \backslash \{ (x_1, \ldots, x_n) \mid \text{$x_i=x_j$ for some $i \neq j$} \},$$
and the \emph{unordered configuration space} $UC_n(\Gamma)$ as the quotient of $C_n(\Gamma)$ by the free action of the symmetric group $S_n$ which acts by permuting the coordinates. The fundamental group of $UC_n(\Gamma)$ based at some point $\ast$ is the \emph{graph braid group} $B_n(\Gamma, \ast)$. In \cite{GraphBraidGroups}, it is shown how to discretise $UC_n(\Gamma)$ in order to produce a nonpositively-curved cube complex with $B_n(\Gamma,\ast)$ as its fundamental group. Theorem \ref{thm:hyperbolic1} is applied in \cite{MoiSpecialBraid} to determine precisely when a graph braid group is hyperbolic. For instance, if $\Gamma$ is connected, then $B_2(\Gamma, \ast)$ is hyperbolic if and only if $\Gamma$ does not contain a pair of disjoint cycles. 
\end{app}

\noindent
Now, let us turn to the metric $\ell^{\infty}$. This situation was considered in \cite{coningoff}. 

\begin{thm}
Let $X$ be a CAT(0) cube complex endowed with the $\ell^{\infty}$-metric. Then $X$ is hyperbolic if and only if the grids of hyperplanes of $X$ are uniformly thin. 
\end{thm}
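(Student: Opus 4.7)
The plan is to treat the two implications separately, following the pattern of Theorem~\ref{thm:hyperbolic1} but accounting for the fact that the $\ell^{\infty}$-metric flattens individual cubes (the $\ell^{\infty}$-diameter of an $n$-cube equals $1$ independently of $n$), so the obstruction to hyperbolicity can come only from genuinely two-dimensional flat regions.

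For the direct implication, I would assume that for every $n \geq 1$ there exists a grid of pairwise disjoint hyperplanes $(\mathcal{H}_n, \mathcal{V}_n)$ with $\min(\#\mathcal{H}_n, \#\mathcal{V}_n) \geq n$. Standard cubical arguments (analogous to those used in the proof of Proposition~\ref{prop:cycle}) yield a combinatorial isometric embedding $[0,n] \times [0,n] \hookrightarrow X$, and the $\ell^{\infty}$-metric of $X$ restricted to its image coincides with the standard $\ell^{\infty}$-metric on $[0,n]^2 \subset \mathbb{R}^2$. A direct computation shows that the geodesic triangle with vertices $(0,0), (n,0), (0,n)$ fails to be $\delta$-thin for any $\delta < n/4$: the midpoint $(n/2, 0)$ of its horizontal side lies at $\ell^{\infty}$-distance $n/4$ from the diagonal side (minimum attained at $(3n/4, n/4)$) and at $\ell^{\infty}$-distance $n/2$ from the vertical side. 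Letting $n \to \infty$ forces $X$ to be non-hyperbolic.

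For the converse I would argue contrapositively. Assume $X$ is not $\ell^{\infty}$-hyperbolic; for every $N \geq 1$ there exist three vertices $x_1, x_2, x_3$ and a vertex $m$ on a geodesic side from $x_1$ to $x_2$ at $\ell^{\infty}$-distance $\geq N$ from the union of the remaining two sides. Applying Lemma~\ref{lem:quadrangle} to the quadruple $(x_1, x_2, x_3, x_3)$ produces an embedded rectangle $[0,a]\times[0,b] \hookrightarrow X$ whose corners lie on the interval sides, from which I would deduce that the $\ell^{\infty}$-thickness of $m$ propagates into this rectangle, forcing $\min(a,b)$ to grow linearly with $N$. The families of hyperplanes crossing the two factors of this rectangle form a join of hyperplanes $(\mathcal{H}, \mathcal{V})$ with $\#\mathcal{H}, \#\mathcal{V} \geq \min(a,b)$, from which Fact~\ref{fact:pairwisedisjoint} extracts an arbitrarily thick grid of pairwise disjoint hyperplanes, contradicting uniform thinness.

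The main obstacle is the propagation step in the converse: one must show that $\ell^{\infty}$-thickness of the triangle cannot be absorbed by crossing a single high-dimensional cube (whose $\ell^{\infty}$-diameter is only $1$) and must instead be reflected as growth of \emph{both} dimensions of the rectangle produced by Lemma~\ref{lem:quadrangle}. The natural way to control this is the observation that a unit of $\ell^{\infty}$-distance can traverse an arbitrarily large pairwise transverse family of hyperplanes at once but only one genuinely parallel layer at a time, so accumulating $N$ units of $\ell^{\infty}$-thickness must span $N$ hyperplanes arranged in a chain of pairwise disjoint hyperplanes—exactly the combinatorial data needed to build a thick grid.
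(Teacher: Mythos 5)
The paper does not actually prove this statement (it is quoted from \cite{coningoff}), but the method it intends is visible in Proposition \ref{prop:HXhyp}, which proves hyperbolicity of the closely related metrics $\delta_L$: one verifies the four-point condition for an arbitrary quadruple $x_1,x_2,x_3,x_4$ using the median rectangle of Lemma \ref{lem:quadrangle} together with the identity $d_\infty(x,y)=$ maximal number of \emph{pairwise disjoint} hyperplanes separating $x$ and $y$ (recorded in Section \ref{section:curvegraph}); if the two sides $\mathcal{H},\mathcal{V}$ of the median rectangle both contain more than $C$ pairwise disjoint hyperplanes, these two subfamilies form a $(C+1)$-thick grid, so uniform thinness of grids bounds $\min(d_\infty(m_1,m_2),d_\infty(m_1,m_4))$ and hence the four-point defect. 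Your sketch diverges from this in ways that create genuine gaps. In the converse direction, the quadruple $(x_1,x_2,x_3,x_3)$ is degenerate: the family $\mathcal{V}$ of Lemma \ref{lem:quadrangle} consists of hyperplanes separating $\{x_1,x_3\}$ from $\{x_2,x_3\}$, which is empty, so $b=0$ and the ``rectangle'' is a segment --- your propagation step cannot even start. Worse, your extraction of a grid from a join relies on Fact \ref{fact:pairwisedisjoint}, which requires $k>\dim(X)$ and is therefore unavailable in the infinite-dimensional case, which is precisely the case this theorem is designed to cover (see Application \ref{app:smallcancellationhyp}). The correct replacement is the mechanism you allude to in your last paragraph but do not use: because $d_\infty$ is computed by chains of pairwise disjoint hyperplanes, $\ell^\infty$-thickness of the two sides of the median rectangle hands you the two disjoint families directly, with no Ramsey argument.

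The direct implication also has an unjustified step. A thick grid does yield, via Proposition \ref{prop:cycle}, a flat rectangle $[0,p]\times[0,q]\hookrightarrow X$, but this is only an $\ell^1$-isometric embedding: the $p$ hyperplanes crossing $[0,p]\times\{0\}$ form a family with no facing triple, yet they need not be pairwise disjoint in $X$ (the grid hyperplanes $H_2,\dots,H_{n-1}$ are among them, but the intermediate hyperplanes can be pairwise transverse). Consequently the restriction of $d_\infty$ to the image is \emph{not} in general the standard $\ell^\infty$-metric on $[0,p]\times[0,q]$, the sides of your triangle need not be $\ell^\infty$-geodesics, and the midpoint computation $(n/2,0)\mapsto n/4$ does not transfer to $X$. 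To repair this one must either locate points and measure all distances using only the pairwise disjoint subfamilies $\{H_i\}$ and $\{V_j\}$ (e.g.\ by exhibiting a quadruple, built from vertices sitting between $H_{n/2}$ and $H_{n/2+1}$, respectively between $V_{n/2}$ and $V_{n/2+1}$, that violates the four-point condition by roughly $2n$), or prove separately that the rectangle can be chosen so that its two hyperplane families are pairwise disjoint. As written, both halves of the argument assert the key geometric facts rather than prove them.
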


\noindent
Loosely speaking, passing from the $\ell^1$-metric to the $\ell^{\infty}$-metric ``kills'' the dimension (since the $\ell^{\infty}$-diameter of a cube remains one whatever its dimension), which explains why one gets Point $(iv)$ of Theorem \ref{thm:hyperbolic1} with the condition on the dimension removed. Interestingly, one obtains hyperbolic infinite-dimensional CAT(0) cube complexes. 

\begin{app}\label{app:smallcancellationhyp}
In \cite{MR2053602}, Wise shows how to endow every small cancellation polygonal complex with a structure of space with walls. The small cancellation condition which we consider here is C'(1/4)-T(4), meaning that every cycle in the link of some vertex has length at least four, and that the length of the intersection between any two polygons must be less than a quarter of the total perimeter of any of the two polygons. Under this condition, the CAT(0) cube complex obtained by cubulating the previous space with walls is finite-dimensional and hyperbolic if there exists a bound on the perimeters of the polygons, and infinite-dimensional otherwise. It is shown in \cite{coningoff} that, with respect to the $\ell^{\infty}$-metric, this infinite-dimensional cube complex is also hyperbolic. This observation is the starting point to the proof of the acylindrical hyperbolicity of C'(1/4)-T(4) small cancellation products; see Application \ref{app:smallcancellationproduct}.
\end{app}

\noindent
So we have a good understanding of the Gromov hyperbolicity of CAT(0) cube complexes. Nevertheless, a major question remains open:

\begin{question}
Is a group which acts geometrically on a CAT(0) cube complex and which does not contain $\mathbb{Z}^2$ as a subgroup Gromov hyperbolic?
\end{question}

\noindent
For background on this question, see \cite{WiseNotPeriodicFlat, SpecialHyp, MoiSpecialBraid, PeriodicFlatsCC, NTY, ContractingCentraliser}. For fun, we mention the following consequence of Caprace and Sageev's work \cite{MR2827012}.

\begin{thm}
A group acting geometrically on some CAT(0) cube complex which does not contain $\mathbb{Z}^2$ as a subgroup must be virtually cyclic or acylindrically hyperbolic.
\end{thm}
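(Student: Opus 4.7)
The plan is to invoke Caprace and Sageev's rank rigidity theorem~\cite{MR2827012}.

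First I would replace $X$ by its essential core, a $G$-invariant convex subcomplex on which $G$ still acts geometrically, essentially, and without a global fixed point at infinity. If this core is bounded then $G$ is finite, hence virtually cyclic, and there is nothing to prove; so we may assume it is unbounded, and rename it $X$.

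Rank rigidity then offers a dichotomy: either $X$ admits a non-trivial $G$-invariant combinatorial product decomposition $X_1 \times X_2$ with both factors unbounded, or $G$ contains a rank-one isometry. I would argue that the first alternative is excluded by our hypothesis on $G$. A finite-index subgroup $G' \leq G$ preserves each factor; projecting $G'$ onto $\mathrm{Aut}(X_i)$ and applying rank rigidity (iteratively on the finer irreducible decomposition if necessary) one obtains an element $g_1 \in G'$ acting hyperbolically on $X_1$ and an element $g_2 \in G'$ acting hyperbolically on $X_2$. After replacing the $g_i$ by appropriate commuting powers (exploiting that the $X_j$-component of $g_i$ acts as some isometry on a CAT(0) cube complex), one extracts two commuting elements of infinite order whose translation directions lie in complementary irreducible factors. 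These generate a copy of $\mathbb{Z}^2$ inside $G$, contradicting our assumption.

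So $G$ must contain a rank-one element $g$. It is a standard fact, which the earlier sections of this paper make explicit via the contact/curve graph or a suitable coning-off, that such a rank-one element in a cocompact CAT(0) cube complex is WPD for a $G$-action on some hyperbolic graph associated with $X$. Osin's characterisation of acylindrical hyperbolicity then forces $G$ to be either virtually cyclic (precisely when $\langle g \rangle$ has finite index in $G$) or acylindrically hyperbolic, which concludes.

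The main obstacle is the reducible case: producing two commuting, non-commensurable hyperbolic elements from a product splitting requires careful bookkeeping of the projections $G' \to \mathrm{Aut}(X_i)$ and a verification that the induced factor actions remain essential and without fixed point at infinity so that rank rigidity applies to each factor. Once this is settled the rest reduces to quoting Caprace--Sageev and Osin.
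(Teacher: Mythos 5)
Your overall strategy coincides with the paper's: reduce to an essential action, invoke Caprace--Sageev rank rigidity to split into the rank-one case and the product case, conclude acylindrical hyperbolicity (or virtual cyclicity) from a contracting/rank-one isometry via \cite{BBF} and Osin, and exclude the product case using the hypothesis that $G$ contains no $\mathbb{Z}^2$. One small point first: passing to the essential core does not eliminate a global fixed point at infinity, so you should not claim it does; instead, since the action is geometric, you can use the cocompact, locally finite form of rank rigidity (the second sentence of Theorem \ref{thm:CS}), which removes the fixed-point hypothesis altogether.

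The substantive issue is in the product case. The paper simply quotes \cite[Corollary D]{MR2827012}, which yields directly that a group acting geometrically on a product of two unbounded CAT(0) cube complexes contains $\mathbb{Z}^2$. You attempt to re-derive this, and your derivation has a genuine gap: after producing $g_1,g_2\in G'$ whose projections to $X_1$ and $X_2$ respectively act hyperbolically, you assert that one can pass to ``appropriate commuting powers''. There is no reason for any powers of $g_1$ and $g_2$ to commute in $G'$: commutation of the factor actions on $X_1\times X_2$ says nothing about commutation in the group itself, and two infinite-order elements of a group need not have commuting powers (think of a free group). Producing an honest $\mathbb{Z}^2$ inside $G$ from the product splitting is exactly the nontrivial content of Corollary D of \cite{MR2827012}, and it is not obtained by the bookkeeping you describe. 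You should either cite that corollary, as the paper does, or reproduce its actual proof; the sketch as written would not close the argument.
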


\begin{proof}
Let $G$ be a group acting geometrically on some CAT(0) cube complex $X$. According to \cite[Proposition 3.5]{MR2827012}, we may suppose without loss of generality that the action $G \curvearrowright X$ is \emph{essential} (i.e., no halfspace is contained into a neighborhood of its complementary). According to \cite[Theorem 6.3]{MR2827012}, two cases may happen: either $G$ contains a contracting isometry, so that it must be virtually cyclic or acylindrically hyperbolic (see Section \ref{section:contractingisom}); or $X$ decomposes as a Cartesian product of two unbounded complexes. In the latter case, it follows from \cite[Corollary D]{MR2827012} (see also \cite{ContractingCentraliser}) that $G$ contains $\mathbb{Z}^2$ as a subgroup. 
\end{proof}

\section{Morse subgroups}\label{section:Morse}

\noindent
In this section, we are concerned with \emph{Morse subgroups} which will play a fundamental role in the next sections. Loosely speaking, they are subgroups with some hyperbolic behavior. Formally:

\begin{definition}
Let $X$ be a geodesic metric space and $Y \subset X$ a subspace. Then $Y$ is a \emph{Morse subspace} if, for every $A >0$ and every $B \geq 0$, there exists a constant $K \geq 0$ such that any $(A,B)$-quasigeodesic in $X$ between any two points of $Y$ lies in the $K$-neighborhood of $Y$. As a particular case, if $G$ is a finitely generated group, then $H \subset G$ is a \emph{Morse subgroup} if it is a Morse subspace in some (or equivalently, any) Cayley graph of $G$ (constructed from a finite generating set).  
\end{definition}

\noindent
Morse subgroups encompass quasiconvex subgroups in hyperbolic groups, fully relatively quasiconvex subgroups in relatively hyperbolic groups, and hyperbolically embedded subgroups in acylindrically hyperbolic groups \cite{Sistohypembed}. The following result shows that Morse subgroups are convex-cocompact, generalising \cite[Theorem 1.1]{SageevWiseCores}.

\begin{prop}\label{prop:core}
Let $G$ be a group acting geometrically on a CAT(0) cube complex $X$ and $H \leq G$ a Morse subgroup. For any compact subspace $Q \subset X$, there exists a $G$-cocompact convex subcomplex containing $Q$.
\end{prop}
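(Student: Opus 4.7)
The plan is to take the required subcomplex to be $Y := \mathrm{Hull}(HQ)$, the combinatorial convex hull of the $H$-orbit of $Q$ in $X$. By construction $Y$ is convex, $H$-invariant and contains $Q$, so everything reduces to showing that $H$ acts cocompactly on $Y$. (I read ``$G$-cocompact'' in the statement as a typo for ``$H$-cocompact'', since otherwise $X$ itself would trivially do the job.) Because the geometric action of $G$ forces $X$ to be locally finite, cocompactness of $H \curvearrowright Y$ is equivalent to the existence of a uniform constant $N$ with $Y \subset \mathcal{N}_N(HQ)$.

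The Morse hypothesis enters through the following interval lemma. The Milnor--Schwarz Lemma makes any orbit map $G \to X$ a quasi-isometry, so for a basepoint $q_0 \in Q$ the orbit $Hq_0$ is a Morse subspace of $X$: there is a function $M$ such that every $(A,B)$-quasigeodesic of $X$ with endpoints in $Hq_0$ lies in $\mathcal{N}_{M(A,B)}(Hq_0)$. Now if $a, b \in \mathcal{N}_C(Hq_0)$, pick $a', b' \in Hq_0$ with $d(a, a'), d(b, b') \leq C$; for any combinatorial geodesic $[a,b]$ the concatenation $[a', a] \cup [a, b] \cup [b, b']$ is a $(1, 4C)$-quasigeodesic between two points of $Hq_0$, hence contained in $\mathcal{N}_{M(1, 4C)}(Hq_0)$. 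Taking the union over all geodesics $[a, b]$ yields $I(a, b) \subset \mathcal{N}_{M(1, 4C)}(Hq_0)$. Since $\mathrm{diam}(Q)$ is finite, this already places the first interval closure $S_1 := \bigcup_{a, b \in HQ} I(a, b)$ inside a uniform neighborhood of $Hq_0$ depending only on $M$ and $Q$.

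The main obstacle is passing from intervals to the full convex hull. Setting $S_0 = HQ$ and $S_{n+1} = \bigcup_{a, b \in S_n} I(a, b)$, one has $Y = \bigcup_n S_n$, but a naive iteration of the interval lemma yields a sequence of neighborhood bounds $K_n$ that can grow without control, since $K_{n+1}$ depends nonlinearly on $K_n$ through the Morse gauge $M$. To close this gap I would exploit the finite-dimensionality of $X$, which is forced by the $G$-cocompactness: the combinatorial convex hull in a $d$-dimensional CAT(0) cube complex is generated by a controlled number of interval iterations in $d$, so the constants $K_n$ can be capped by a function of $d$, $\mathrm{diam}(Q)$ and $M$.

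As a backup, if the direct dimensional argument proves delicate, I would argue by contradiction: pick a putative sequence $y_n \in Y$ with $d(y_n, HQ) \to \infty$, translate the $y_n$ into a fixed compact fundamental domain by elements $g_n \in G$, and pass to a subsequence using local finiteness so that $g_n y_n$ stabilizes at a fixed vertex $y_\infty$. Since $g_n Y = \mathrm{Hull}((g_n H g_n^{-1}) \cdot g_n Q)$ and Morse-ness is invariant under the isometric $G$-action (conjugation does not worsen the Morse gauge), the conjugates $g_n H g_n^{-1}$ are uniformly Morse; extracting a limiting configuration then contradicts the interval lemma applied to $y_\infty$. This passage from the interval bound to a uniform bound on the entire convex hull is the technical heart of the argument.
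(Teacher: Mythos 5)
Your setup agrees with the paper's: the paper also takes the combinatorial convex hull $Y$ of a neighborhood $(H\cdot x_0)^{+R}$ of an $H$-orbit containing $Q$ (and yes, the subcomplex produced is $H$-cocompact and $H$-invariant, so your reading of the statement is the intended one), and the whole proof likewise reduces to showing that $Y$ lies in a uniform neighborhood of the orbit. The problem is that this reduction is exactly the step you do not carry out. Your interval lemma only controls $S_1=\bigcup_{a,b\in HQ}I(a,b)$, and you explicitly flag the passage to the full hull $\bigcup_n S_n$ as ``the technical heart''; neither of your two proposed ways of closing it is a proof. The claim that in a $d$-dimensional CAT(0) cube complex the iterated interval join stabilises after a number of steps controlled by $d$ is a genuine theorem requiring an argument (it is not used anywhere in the paper and you give no reference or proof); and the backup compactness argument does not identify what property of the limit point $y_\infty$ would contradict the interval lemma --- membership in a limit of convex hulls is not witnessed by lying on an interval between orbit points, so no contradiction is actually extracted. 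As written, the proposal establishes control of $S_1$ only.

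The paper closes the gap in one step with Lemma \ref{lem:convexhull}: if $S$ is a combinatorially $K$-quasiconvex vertex set in a finite-dimensional CAT(0) cube complex, its convex hull lies in the $\mathrm{Ram}(\max(\dim(X)+1,K))$-neighborhood of $S$. The proof takes $x$ in the hull, $p\in S$ nearest to $x$, uses Ramsey's theorem to extract from the hyperplanes separating $x$ and $p$ a long nested chain $J_1,\ldots,J_n$ (no $\dim(X)+1$ of them can be pairwise transverse), uses the fact that no hyperplane separates a point of the hull from $S$ to find $y\in S$ with all $J_i$ separating $p$ from $y$, and then applies quasiconvexity to the median $m$ of $\{x,y,p\}$ to force $n\leq K$. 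Note that this only uses the $(1,0)$-quasigeodesic case of the Morse property (combinatorial quasiconvexity of the orbit), not the full Morse gauge, and it avoids any iteration of intervals entirely. If you want to salvage your route, you would need to either prove the stabilisation of iterated joins in bounded rank or replace the iteration by a direct argument of this median/Ramsey type.
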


\noindent
The proof reduces essentially to the following lemma (proved in \cite[Theorem H]{MR2413337} for uniformly locally finite CAT(0) cube complexes), where $\mathrm{Ram}(\cdot)$ denotes the \emph{Ramsey number}. Recall that, if $n \geq 0$, $\mathrm{Ram}(n)$ is the smallest integer $k \geq 0$ satisfying the following property: if one colors the edges of a complete graph containing at least $k$ vertices with two colors, it is possible to find a monochromatic complete subgraph containing at least $n$ vertices. Often, it is used to find a subcollection of pairwise disjoint hyperplanes in a collection of hyperplanes of some finite-dimensional CAT(0) cube complex (see for instance \cite[Lemma 3.7]{coningoff}). 

\begin{lemma}\label{lem:convexhull}
Let $X$ be a finite-dimensional CAT(0) cube complex and $S \subset X$ a set of vertices which is combinatorially $K$-quasiconvex. Then the combinatorial convex hull of $S$ is included into the $\mathrm{Ram}( \max(\dim(X)+1,K))$-neighborhood of $S$.
\end{lemma}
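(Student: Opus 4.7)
The plan is to argue by contradiction, combining Ramsey theory with a median computation. Suppose some vertex $x$ in the combinatorial convex hull of $S$ satisfies $d := d(x, S) > \mathrm{Ram}(n)$ where $n := \max(\dim(X)+1, K)$, and let $s \in S$ realize this distance, so $d(x, s) = d$.

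First I would use Ramsey to extract a long nested chain of halfspaces. Color each pair among the $d$ hyperplanes separating $x$ from $s$ by its mutual relation (transverse or disjoint). Since $d \geq \mathrm{Ram}(n) + 1$ and a transverse clique of size $n > \dim(X)$ would force an $n$-cube in $X$, one obtains $n$ pairwise disjoint hyperplanes $J_1, \dots, J_n$ among them. Ordering them from the $x$-side toward $s$, the halfspaces $H_i \ni x$ are nested: $H_1 \subsetneq H_2 \subsetneq \cdots \subsetneq H_n$. Because $x$ lies in every halfspace containing $S$ (by definition of the convex hull), the smallest $H_1$ must meet $S$: pick $s^\ast \in S \cap H_1$, and note $s^\ast \in H_i$ for every $i$ by nesting.

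Next I would pass to the median $m := m(x, s^\ast, s)$, i.e.\ the unique vertex lying on a combinatorial geodesic between each pair of $\{x, s^\ast, s\}$. For every $J_i$, two of $\{x, s^\ast, s\}$ (namely $x$ and $s^\ast$) lie in $H_i$ and one (namely $s$) lies in $H_i^c$, so $m \in H_i$; hence each $J_i$ separates $m$ from $s$ but not $x$ from $m$. Together with $m \in I(x, s)$, this yields $d(x, m) \leq d - n$. On the other hand, $m \in I(s^\ast, s)$ has both endpoints in $S$, so combinatorial $K$-quasiconvexity supplies $t \in S$ with $d(m, t) \leq K$. The triangle inequality
\[
d \,=\, d(x, S) \,\leq\, d(x, t) \,\leq\, d(x, m) + d(m, t) \,\leq\, (d - n) + K
\]
then forces $n \leq K$, contradicting $n > K$.

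The delicate point will be cleanly matching the claimed constant $\mathrm{Ram}(\max(\dim(X)+1, K))$: the argument above produces the contradiction immediately when $\dim(X)+1 > K$, but in the regime $K \geq \dim(X)+1$ the final line reads $K \leq K$, so one must sharpen by one somewhere — either by tightening $d(m, t) \leq K$ to a strict inequality using the side of some $J_i$ on which $t$ necessarily lies, or by bumping up the Ramsey extraction to $n+1$ pairwise disjoint hyperplanes. The remaining bookkeeping — halfspace membership of the median and the additivity $d(x, m) + d(m, s) = d(x, s)$ coming from $m \in I(x, s)$ — is routine given the preliminaries.
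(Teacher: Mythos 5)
Your argument is correct and is essentially the paper's own proof: the same Ramsey extraction of a nested chain of disjoint hyperplanes, the same use of the convex-hull hypothesis to find a point of $S$ on the $x$-side of the chain, and the same median-plus-quasiconvexity computation yielding $n \leq K$. The off-by-one issue you flag in the regime $K \geq \dim(X)+1$ is equally present in the paper's proof, which likewise only concludes $n \leq K$ and does not reconcile this with the stated radius $\mathrm{Ram}(\max(\dim(X)+1,K))$, so it reflects an imprecision in the constant rather than a gap in your reasoning.
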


\begin{proof}
Let $x \in X$ be a vertex which belongs to the combinatorial convex hull of $S$, and let $p \in S$ be a vertex of $S$ which minimises the distance to $x$. If $d(p,x) \geq \mathrm{Ram}(n)$ for some $n \geq \dim(X)+1$, then there exists a collection of hyperplanes $J_1, \ldots, J_n$ separating $x$ and $p$ such that, for every $2 \leq i \leq n-1$, $J_i$ separates $J_{i-1}$ and $J_{i+1}$. Because $x$ belongs to the combinatorial convex hull of $S$, no hyperplane separates $x$ from $S$. Therefore, there exists some $y \in S$ such that $J_1, \ldots, J_n$ separate $p$ and $y$. Let $m$ denote the median vertex of $\{x,y,p\}$. Because $m$ belongs to a combinatorial geodesic between $x$ and $p$ and that $d(x,p)=d(x,S)$, necessarily $d(m,p)=d(m,S)$. On the other hand, $m$ belongs to a combinatorial geodesic between $y,p \in S$, so the combinatorial $K$-quasiconvexity of $S$ implies $d(m,S) \leq K$, hence $d(m,p) \leq K$. Finally, since the hyperplanes $J_1, \ldots, J_n$ separates $p$ from $\{x,y\}$, we conclude that $n \leq K$.
\end{proof}

\begin{proof}[Proof of Proposition \ref{prop:core}.]
Let $x_0 \in X$ be a base vertex. Because being a Morse subspace is invariant by quasi-isometry, the orbit $H \cdot x_0$ is a Morse subspace. Furthermore, if $R>0$ is such that $Q \subset (H \cdot x_0)^{+R}$, then $(H \cdot x_0)^{+R}$ is again a Morse subspace. Let $Y$ denote its combinatorial convex hull. Because a Morse subspace is combinatorially quasiconvex, we deduce from Lemma \ref{lem:convexhull} that $Y$ is included into some neighborhood of $H \cdot x_0$. This is the cocompact core we are looking for.
\end{proof}

\noindent
A very important result on the geometry of CAT(0) cube complexes is that Morse subspaces turn out to coincide with \emph{contracting subspaces}.

\begin{definition}
Let $X$ be a metric space and $Y \subset X$ a subspace. Then $Y$ is \emph{contracting} if there exists some $K \geq 0$ such that the nearest-point projection onto $Y$ of any ball disjoint from $Y$ has diameter at most $K$.
\end{definition}

\noindent
Before proving the statement we mentioned above, let us state the next characterisation of contracting convex subcomplexes, which was obtained in \cite{article3}. There, the following notation is used: if $Y$ is a subcomplex, $\mathcal{H}(Y)$ denotes the set of hyperplanes separating at least two vertices of $Y$. 

\begin{prop}\label{prop:contracting}
Let $X$ be a CAT(0) cube complex and $Y \subset X$ a convex subcomplex. The following statements are equivalent:
\begin{itemize}
	\item[(i)] $Y$ is contracting;
	\item[(ii)] there exists some constant $C \geq 0$ such that any join of hyperplanes $(\mathcal{H}, \mathcal{V})$ satisfying $\mathcal{V} \subset \mathcal{H}(Y)$ and $\mathcal{H} \cap \mathcal{H}(Y)= \emptyset$ must be $C$-thin;
\end{itemize}
moreover, if $X$ is finite-dimensional, these statements are also equivalent to:
\begin{itemize}
	\item[(iii)] there exists some constant $C \geq 0$ such that any grid of hyperplanes $(\mathcal{H}, \mathcal{V})$ satisfying $\mathcal{V} \subset \mathcal{H}(Y)$ and $\mathcal{H} \cap \mathcal{H}(Y)= \emptyset$ must be $C$-thin;
	\item[(iv)] there exists some constant $C \geq 0$ such that every flat rectangle $R : [0,p] \times [0,q] \hookrightarrow X$ satisfying $R \cap Y = [0,p] \times \{ 0\}$ must be $C$-thin.
\end{itemize}
\end{prop}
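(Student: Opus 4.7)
The core of the statement is the equivalence $(i) \Leftrightarrow (ii)$; the remaining equivalences, valid in finite dimension, come from standard translations between joins of hyperplanes, grids of hyperplanes, and flat rectangles.

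For $(ii) \Rightarrow (i)$, I would argue contrapositively. Failure of contraction yields, for arbitrarily large $N$, a ball disjoint from $Y$ containing vertices $x_1, x_2$ whose projections $p_1, p_2 \in Y$ satisfy $d(p_1, p_2) \geq N$. Applying Lemma~\ref{lem:quadrangle} to the quadruple $(x_1, p_1, p_2, x_2)$ produces a combinatorial flat rectangle close to this quadrilateral, in which one side lies in $Y$ (by convexity of $Y$) and the opposite side is far from $Y$. The hyperplanes dual to the first side cross $Y$, while by Lemma~\ref{lem:vertextoproj} those dual to the perpendicular side cannot cross $Y$; together they form an arbitrarily thick grid, hence an arbitrarily thick join, of the kind forbidden by $(ii)$.

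Conversely, for $(i) \Rightarrow (ii)$, take a thick join $(\mathcal{H}, \mathcal{V})$ satisfying the hypotheses. The absence of facing triples in $\mathcal{H}$, combined with the fact that no hyperplane of $\mathcal{H}$ crosses $Y$, allows one to linearly order $\mathcal{H} = \{H_1, \ldots, H_k\}$ so that the half-spaces $\mathfrak{h}_i$ not containing $Y$ form a nested sequence $\mathfrak{h}_1 \supset \cdots \supset \mathfrak{h}_k$. Since every $V \in \mathcal{V}$ is transverse to every $H_i$ and also crosses $Y$, one can produce two vertices $x_1, x_2 \in \mathfrak{h}_k$ lying on opposite sides of every $V \in \mathcal{V}$; their projections onto $Y$ are then separated by all of $\mathcal{V}$ by Lemma~\ref{lem:sepproj}. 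A ball enclosing $x_1, x_2$ and chosen deep enough inside $\mathfrak{h}_k$ is disjoint from $Y$ and has projection of diameter at least $\#\mathcal{V}$; if $\#\mathcal{H}$ is sufficiently large relative to $\#\mathcal{V}$, this contradicts $(i)$, and a symmetric argument handles the case $\#\mathcal{V} \gg \#\mathcal{H}$.

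Under the finite-dimensionality hypothesis, $(ii) \Rightarrow (iii)$ is immediate, while the converse follows from Fact~\ref{fact:pairwisedisjoint}, which extracts from any sufficiently thick join a thick grid still satisfying the two conditions about $Y$. For $(iii) \Leftrightarrow (iv)$: the hyperplanes dual to a flat rectangle form a grid of exactly the required shape, with one subfamily crossing $Y$ and the other disjoint from $\mathcal{H}(Y)$; conversely, a thick grid produces a flat rectangle by applying \cite[Lemma 2.110]{Qm} iteratively, in the spirit of Proposition~\ref{prop:cycle}, with the base corner placed on $Y$. The main difficulty lies in $(i) \Rightarrow (ii)$: the ball must simultaneously be deep enough inside $\mathfrak{h}_k$ to avoid $Y$ and large enough to witness a projection of diameter $\#\mathcal{V}$, which forces the thickness constants in the two directions to be carefully coupled.
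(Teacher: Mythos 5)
The paper itself disposes of $(i)\Leftrightarrow(ii)$ and $(iii)\Leftrightarrow(iv)$ by citation and only argues $(iii)\Rightarrow(ii)$ via Fact~\ref{fact:pairwisedisjoint}; your Ramsey step and your flat-rectangle translation agree with that. But your sketch of the core equivalence $(i)\Leftrightarrow(ii)$ has a genuine gap in the direction $(ii)\Rightarrow(i)$. Failure of contraction hands you a ball $B(x,r)$ disjoint from $Y$ and points $x_1,x_2$ in it with $d(p_1,p_2)\geq N$, and your rectangle from Lemma~\ref{lem:quadrangle} applied to $(x_1,p_1,p_2,x_2)$ is indeed $\geq N$ wide in the direction of the hyperplanes crossing $Y$. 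The problem is the perpendicular direction: its width is the number of hyperplanes separating \emph{both} $x_1$ and $x_2$ from $Y$, which equals $\tfrac{1}{2}\left( d(x_1,Y)+d(x_2,Y)-d(x_1,x_2)+N \right)$, and nothing forces this to be large --- the witnesses $x_1,x_2$ may sit at distance $1$ from $Y$ (e.g.\ a triangular staircase over a segment $Y$, with the ball centred at the apex: the extreme points of the projection are realised by points just above $Y$). So ``the opposite side is far from $Y$'' is unjustified and can fail, and the grid you build may be $1$-thin. The repair is to take one of the two points to be the \emph{centre} $x$ of the ball: since $d(x,Y)\geq r+1>d(x,x_i)$, the same identity applied to the pair $(x,x_i)$ with $d(\mathrm{proj}_Y(x),p_i)\geq N/2$ gives at least $N/4$ hyperplanes separating $\{x,x_i\}$ from $Y$, so the coupling is automatic --- but only for this choice of pair. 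You do sense a coupling difficulty, but you locate it in the wrong direction and do not resolve it.

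There is a second, smaller flaw in your $(i)\Rightarrow(ii)$: you cannot linearly order $\mathcal{H}$ so that the halfspaces away from $Y$ are nested. A join only forbids facing triples; the members of $\mathcal{H}$ may be pairwise transverse (take $Y$ the span of the even coordinates in an infinite cube, $\mathcal{H}$ the odd-coordinate hyperplanes), and even pairwise disjoint members may lie on opposite sides of $Y$. Reducing to a grid via Fact~\ref{fact:pairwisedisjoint} to restore nesting costs you the finite-dimensionality hypothesis, which the equivalence $(i)\Leftrightarrow(ii)$ does not assume. What survives of your idea is that the halfspaces of (a definite proportion of) $\mathcal{H}$ away from $Y$ pairwise intersect, so Helly still supplies a vertex separated from $Y$ by many of them; but that weaker statement is what needs to be proved and used, not the chain structure. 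As written, both halves of the main equivalence need repair.
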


\begin{proof}
The equivalence $(i) \Leftrightarrow (ii)$ is \cite[Theorem 3.6]{article3}. The implication $(ii) \Rightarrow (iii)$ is clear. The converse is a consequence of Fact \ref{fact:pairwisedisjoint}. Finally, the equivalence $(iii) \Leftrightarrow (iv)$ is proved in \cite[Theorem 2.7]{MoiSpecialBraid}.
\end{proof}

\noindent
Now we are ready to prove that Morse and contracting subspaces coincide. 

\begin{lemma}\label{MorseContracting}
Let $X$ be a finite-dimensional CAT(0) cube complex and $S \subset X$ a set of vertices. Then $S$ is a Morse subspace if and only if it is contracting.
\end{lemma}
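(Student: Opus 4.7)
The plan is to prove both implications by reducing to the case where $S$ is a convex subcomplex, and then exploiting the characterisation of contracting convex subcomplexes provided by Proposition \ref{prop:contracting}.

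I would begin with two preliminary observations. First, both the Morse and contracting properties are manifestly invariant under bounded Hausdorff distance. Second, a Morse subspace is automatically combinatorially quasiconvex (since geodesics are $(1,0)$-quasigeodesics), and a contracting subspace is also quasiconvex (this follows from the standard Morse lemma for strongly contracting subspaces, which gives the easy direction contracting $\Rightarrow$ Morse in any geodesic metric space, and which I would simply invoke as classical). Therefore, in either direction of the equivalence, $S$ is combinatorially $K$-quasiconvex for some $K$; by Lemma \ref{lem:convexhull}, the combinatorial convex hull $Y$ of $S$ lies within a bounded neighborhood of $S$, so it suffices to prove the equivalence for the convex subcomplex $Y$, where Proposition \ref{prop:contracting} applies.

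For the nontrivial direction (Morse $\Rightarrow$ contracting for $Y$ convex), I would argue by contradiction. Assume $Y$ is not contracting. By Proposition \ref{prop:contracting}(iii), there exist grids of hyperplanes $(\mathcal{H}_n, \mathcal{V}_n)$ with $\mathcal{V}_n \subset \mathcal{H}(Y)$, $\mathcal{H}_n \cap \mathcal{H}(Y) = \emptyset$, and $\min(\#\mathcal{H}_n, \#\mathcal{V}_n) \to \infty$. Passing to sub-grids, I assume $\#\mathcal{H}_n = \#\mathcal{V}_n = n$. Such a grid produces a combinatorial isometric embedding $R_n : [0,n] \times [0,n] \hookrightarrow X$ whose horizontal and vertical hyperplanes are exactly $\mathcal{H}_n$ and $\mathcal{V}_n$. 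Since the $V_i \in \mathcal{V}_n$ are pairwise disjoint and each crosses the convex subcomplex $Y$, a geodesic edge-path in $Y$ crosses them all in order (using Proposition \ref{prop:proj} and the convexity of $Y$); I take this path to be the bottom edge $[0,n] \times \{0\}$ of the rectangle, which thus lies in $Y$.

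With this configuration, the top corners $(0,n)$ and $(n,n)$ are at distance exactly $n$ from $Y$: the $n$ horizontal hyperplanes of $\mathcal{H}_n$ separate these vertices from $(0,0) \in Y$, and since they are disjoint from $Y$, they separate them from all of $Y$. Hence the three-sided path $(0,0) \to (0,n) \to (n,n) \to (n,0)$ has length $3n$, joins two vertices of $Y$ at $X$-distance $n$, and is therefore a $(3,0)$-quasigeodesic whose middle portion lies at distance $n$ from $Y$. Letting $n \to \infty$ contradicts the Morse property of $Y$ (equivalently, of $S$). The main obstacle I expect is the construction in Step 3, namely producing the flat rectangle from the grid with its bottom edge genuinely sitting inside $Y$; this requires a careful use of the convexity of $Y$ together with the pairwise-disjoint and mutually-transverse structure of the grid to ensure the rectangle can be based at a suitable vertex of $Y$ and extended by an edge-path in $Y$ crossing exactly the hyperplanes of $\mathcal{V}_n$.
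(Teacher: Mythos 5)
Your proposal follows essentially the same route as the paper: reduce to the combinatorial convex hull via Lemma \ref{lem:convexhull}, quote the classical ``contracting implies Morse'' lemma for the easy direction, and for the converse extract arbitrarily thick grids $(\mathcal{H},\mathcal{V})$ from Proposition \ref{prop:contracting}, build a flat rectangle with bottom side in the hull, and use the three-sided boundary path as a quasigeodesic straying linearly far from the hull. The one step you flag as the main obstacle --- anchoring the rectangle so that one side genuinely lies in the convex subcomplex $Y$ while the hyperplanes of $\mathcal{H}$ separate the opposite side from $Y$ --- is not something you can get just from ``a geodesic of $Y$ crossing the $V_i$ in order'': an arbitrary grid need not be realised by a rectangle based on a prescribed geodesic. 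The paper resolves exactly this point with Proposition \ref{prop:cycle}, applied to the cycle of convex subcomplexes $(N(V_1),N(H_r),N(V_s),C)$ (where $C$ is the convex hull and $H_r$ is the hyperplane of $\mathcal{H}$ farthest from $C$); its ``moreover'' clause guarantees that the hyperplanes crossing the side contained in $C$ are disjoint from $N(H_r)$, and the $H_i$ then separate $C$ from the opposite side, which is what makes the $(1/3,0)$-quasigeodesic argument go through. So the strategy is sound and matches the paper's; you should simply replace the hand-waved anchoring step by an explicit appeal to Proposition \ref{prop:cycle}.
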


\begin{proof}
It is proved in \cite[Lemma 3.3]{MR3175245} that, in any geodesic metric spaces, a contracting quasi-geodesic always defines a Morse subspace. In fact, the proof does not depend on the fact that the contracting subspace we are looking at is a quasi-geodesic, so that being a contracting subspace implies being a Morse subspace.

\medskip \noindent
Conversely, suppose that $S$ is not contracting. If $S$ is not combinatorially quasiconvex, then it cannot define a Morse subspace, and there is nothing to prove. Consequently, we suppose that $S$ is combinatorially quasiconvex. According to Lemma \ref{lem:convexhull}, the Hausdorff distance between $S$ and its combinatorial convex hull $C$ is finite. Thus, $C$ cannot be contracting according to \cite[Lemma 2.18]{article3}. We deduce from Proposition \ref{prop:contracting} that, for every $n \geq 1$, there exist a grid of hyperplanes $(\mathcal{H}, \mathcal{V})$ satisfying $\mathcal{H} \cap \mathcal{H}(C) = \emptyset$, $\mathcal{V} \subset \mathcal{H}(C)$ and $\# \mathcal{H},\# \mathcal{V} \geq n$. We write $\mathcal{H}= \{ H_1, \ldots, H_r \}$ (resp. $\mathcal{V}= \{ V_1, \ldots, V_s \}$) so that $H_i$ separates $H_{i-1}$ and $H_{i+1}$ for every $2 \leq i \leq r-1$ (resp. $V_i$ separates $V_{i-1}$ and $V_{i+1}$ for every $2 \leq i \leq s-1$); we suppose that $H_i$ separates $H_r$ from $C$ for every $1 \leq i \leq r-1$. By applying Proposition \ref{prop:cycle} to the cycle of convex subcomplexes $(N(V_1),N(H_r),N(V_s),C)$, we find a flat rectangle $[0,a] \times [0,b] \subset X$ with $a \geq s$, $b \geq r$ and $[0,a] \times \{ 0 \} \subset C$. By assumption, we know that $r,s \geq n$, so $m= \min(r,s) \geq n$. Let $\gamma_n$ be the concatenation $$\{0 \} \times [0,m] \bigcup [0,m] \times \{m\} \bigcup \{m \} \times [0,m],$$ which links the two points $(0,0)$ and $(m,0)$ of $C$. Now, noticing that $\gamma_n$ is a $(1/3,0)$-quasi-geodesic, and that $d(\gamma_n,C) \geq m \geq n$, since $H_1, \ldots, H_m$ separates $C$ and $[0,m] \times \{m\}$, we conclude that $C$ is not a Morse subspace. A fortiori, $S$ as well.
\end{proof}

\noindent
By combining the previous statements, we get the following criterion:

\begin{cor}\label{cor:MorseCriterion}
Let $G$ be a group acting geometrically on a CAT(0) cube complex $X$ and $H \subset G$ a subgroup. The following statements are equivalent:
\begin{itemize}
	\item[(i)] $H$ is a Morse subgroup;
	\item[(ii)] for every $x \in X$, the orbit $H \cdot x$ is contracting;
	\item[(iii)] for every $x \in X$, the convex hull of the orbit $H \cdot x$ is contracting;
	\item[(iv)] there exists a contracting convex subcomplex on which $H$ acts cocompactly.
\end{itemize}
\end{cor}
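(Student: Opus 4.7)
The plan is to chain together the preceding results into a cycle of implications, using as main tools Lemma \ref{MorseContracting} (identifying Morse with contracting in the finite-dimensional setting), the convex-hull-in-neighborhood estimate Lemma \ref{lem:convexhull}, and stability of the contracting property under finite Hausdorff distance (\cite[Lemma 2.18]{article3}, already invoked in the proof of Lemma \ref{MorseContracting}). First, (i) $\Leftrightarrow$ (ii) is immediate: the geometric action $G \curvearrowright X$ gives an orbit map $g \mapsto g \cdot x$ that is a quasi-isometry from a Cayley graph of $G$ onto the finite-dimensional complex $X$ sending $H$ to $H \cdot x$, so by quasi-isometry invariance of the Morse property combined with Lemma \ref{MorseContracting}, $H$ is Morse in $G$ iff $H \cdot x$ is contracting in $X$. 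The choice of basepoint $x$ is immaterial since any two $H$-orbits sit at bounded Hausdorff distance, under which being contracting is stable.

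For the forward chain (ii) $\Rightarrow$ (iii) $\Rightarrow$ (iv), I would let $Y$ be the combinatorial convex hull of $H \cdot x$. From (ii) and Lemma \ref{MorseContracting}, $H \cdot x$ is Morse and hence combinatorially quasiconvex (as in the proof of that lemma), so Lemma \ref{lem:convexhull} puts $Y$ in a bounded neighborhood of $H \cdot x$; stability then yields (iii), and the $H$-invariance of $Y$ together with local finiteness of $X$ (from cocompactness of the $G$-action) forces $H$ to act cocompactly on the neighborhood of the single orbit $H \cdot x$, and hence on $Y$, yielding (iv). The return (iv) $\Rightarrow$ (ii) goes along the same lines: if $H$ acts cocompactly on a contracting convex subcomplex $Y$, then for any $y \in Y$ the orbit $H \cdot y$ is quasi-dense in $Y$ and therefore contracting, and for an arbitrary vertex $x$ the orbit $H \cdot x$ sits at Hausdorff distance at most $d(x,y)$ from $H \cdot y$, hence is also contracting.

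The only remaining implication is (iii) $\Rightarrow$ (iv), and this is the step I expect to be the main obstacle. The plan is to show that the contracting hypothesis on $Y$ already forces $H \cdot x$ to be quasi-dense in $Y$: a vertex of $Y$ at large distance from $H \cdot x$ would, via the halfspace description of the combinatorial convex hull, force an orbit point on its own side of each of the many hyperplanes separating it from its nearest orbit point; combining this transverse family with the long chain of separating hyperplanes and invoking the grid-of-hyperplanes characterisation of contracting convex subcomplexes from Proposition \ref{prop:contracting} yields a contradiction with (iii). Once quasi-density is established, local finiteness delivers the $H$-cocompactness of $Y$ required for (iv), closing the cycle.
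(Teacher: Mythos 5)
Your handling of (i) $\Leftrightarrow$ (ii), of (ii) $\Rightarrow$ (iii) and (ii) $\Rightarrow$ (iv) via Lemma \ref{lem:convexhull} and stability of the contracting property under finite Hausdorff distance, and of (iv) $\Rightarrow$ (ii), is exactly the paper's argument (phrased there as Milnor--\v Svarc plus Lemma \ref{MorseContracting}, with the middle implications read off from the proof of Proposition \ref{prop:core}). The problem is the step you yourself flag as the main obstacle, (iii) $\Rightarrow$ (iv), and the gap there is genuine. The hyperplanes separating a vertex $y$ of the hull $Y$ from its nearest orbit point separate two vertices of $Y$, hence all belong to $\mathcal{H}(Y)$; Proposition \ref{prop:contracting} only constrains joins $(\mathcal{H},\mathcal{V})$ with $\mathcal{H}\cap\mathcal{H}(Y)=\emptyset$, so it says nothing about this family, and the ``transverse family'' you hope to pair it with does not materialise. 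Worse, no argument of this shape can succeed: if $X$ is hyperbolic then all joins of hyperplanes are uniformly thin, so \emph{every} convex subcomplex of $X$ is contracting and (iii) holds for every subgroup; but cocompactly cubulated hyperbolic groups can contain finitely generated non-quasiconvex subgroups (e.g.\ the fiber of a fibered closed hyperbolic $3$-manifold group), and such a subgroup is not Morse. So (iii), read as a hypothesis on the hull alone, does not imply (iv) or (i).

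It is worth saying plainly that the paper does not prove the free-standing implication (iii) $\Rightarrow$ (iv) either: the chain ``(i) $\Rightarrow$ (iii) $\Rightarrow$ (iv)'' it invokes is extracted from the proof of Proposition \ref{prop:core}, where the cocompactness of the hull is deduced from the quasiconvexity of the \emph{orbit}, i.e.\ from (i), not from the contracting property of the hull. The cycle the paper actually establishes is (i) $\Leftrightarrow$ (ii), (i) $\Rightarrow$ (iii), (i) $\Rightarrow$ (iv), (iv) $\Rightarrow$ (i), which leaves the converse direction for (iii) unaddressed. So your instinct about where the difficulty sits is correct, but the resolution is not to prove (iii) $\Rightarrow$ (iv) as you propose --- that would require the additional information that the orbit is quasi-dense in its hull, which is exactly what (i) supplies and what (iii) alone does not. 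As written, your proposal leaves the equivalence involving (iii) open, and the specific strategy you outline for closing it would fail.
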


\begin{proof}
The equivalence $(i) \Leftrightarrow (ii)$ follows from Milnor-\v Svarc lemma and Lemma \ref{MorseContracting}. The implications $(i) \Rightarrow (iii) \Rightarrow (iv)$ are contained in the proof of Proposition \ref{prop:core} above. Finally, the implication $(iv) \Rightarrow (i)$ is also a consequence of  Milnor-\v Svarc lemma and Lemma \ref{MorseContracting}.
\end{proof}

\begin{app}
Corollary \ref{cor:MorseCriterion} can be applied to extend \cite[Theorem 1.11]{StronglyQC} (in which Morse subgroups are called \emph{strongly quasiconvex subgroups}).

\begin{prop}\label{prop:RACGmorse}
Let $\Gamma$ be a finite simplicial graph and $\Lambda \subset \Gamma$ an induced subgraph. The subgroup $C(\Lambda)$ in the right-angled Coxeter group $C(\Gamma)$ is a Morse subgroup if and only if every induced square of $\Gamma$ containing two diametrically opposite vertices in $\Lambda$ must be included into $\Lambda$.
\end{prop}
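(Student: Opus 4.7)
The plan is to apply Corollary \ref{cor:MorseCriterion} to the convex subcomplex $Y \subset X$, where $X$ is the CAT(0) cube complex on which $C(\Gamma)$ acts geometrically and $Y$ is the natural convex subcomplex stabilised by and cocompact under $C(\Lambda)$ (namely the CAT(0) cube complex associated to $C(\Lambda)$). The proposition then reduces, through Proposition \ref{prop:contracting}(iv), to showing that there is a uniform bound on the thickness of flat rectangles $R : [0, p] \times [0, q] \hookrightarrow X$ satisfying $R \cap Y = [0, p] \times \{0\}$ if and only if the square condition holds.

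For the necessity of the condition, I start from an induced square $a - b - c - d - a$ of $\Gamma$ whose diametrically opposite vertices $a, c$ lie in $\Lambda$ and with $b \notin \Lambda$ (say). The subgroup $\langle a, b, c, d \rangle \cong \langle a, c \rangle \times \langle b, d \rangle$ is a direct product of two infinite dihedral groups and stabilises a convex $2$-dimensional flat $F \subset X$ isomorphic to the standard square tiling of $\mathbb{R}^2$, whose vertical (resp.\ horizontal) hyperplanes are labelled alternately by $a, c$ (resp.\ $b, d$). A direct inspection shows that $F \cap Y$ is either a combinatorial geodesic line (when $d \notin \Lambda$) or a strip of width one (when $d \in \Lambda$); in either case, $F$ contains flat rectangles of arbitrary thickness whose intersection with $Y$ is exactly their bottom side, so by Proposition \ref{prop:contracting}(iv) the subcomplex $Y$ is not contracting, and $C(\Lambda)$ is not a Morse subgroup.

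Conversely, assume the square condition holds and, for contradiction, that a flat rectangle $R : [0, p] \times [0, q] \hookrightarrow X$ with $p, q \geq 2$ satisfies $R \cap Y = [0, p] \times \{0\}$. Let $v_i$ (resp.\ $h_j$) denote the standard generator labelling the $i$th vertical hyperplane $V_i$ (resp.\ $j$th horizontal hyperplane $H_j$) of $R$. Each $V_i$ crosses an edge of the bottom row, which lies in $Y$, so $v_i \in \Lambda$. Meanwhile the edge joining $(0,0) \in Y$ to $(0,1) \notin Y$ is labelled $h_1$, forcing $h_1 \notin \Lambda$. Because at each vertex of $X$ there is exactly one edge per generator of $\Gamma$, two adjacent vertical hyperplanes carry distinct labels; being disjoint, they must then be non-adjacent in $\Gamma$, so $v_1$ and $v_2$ are non-adjacent. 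The same argument applied to $H_1, H_2$ gives that $h_1, h_2$ are distinct and non-adjacent, while the crossings of each $V_i$ with each $H_j$ produce edges between each $v_i$ and each $h_j$. Hence $\{v_1, h_1, v_2, h_2\}$ spans an induced $4$-cycle $v_1 - h_1 - v_2 - h_2 - v_1$ of $\Gamma$ with the diametrically opposite pair $v_1, v_2$ in $\Lambda$ but $h_1 \notin \Lambda$, contradicting the hypothesis.

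The main subtlety is the local rigidity invoked in the sufficiency argument (one edge per generator at each vertex of $X$), which is what turns a flat rectangle of $X$ meeting $Y$ along its bottom side into an induced square of $\Gamma$ with the prescribed pair of opposite vertices in $\Lambda$; the rest is a bookkeeping of how $F \cap Y$ is shaped in the necessity part.
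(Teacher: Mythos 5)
Your proof is correct, and its skeleton is the same as the paper's: both directions are routed through Corollary \ref{cor:MorseCriterion} and the contracting criterion of Proposition \ref{prop:contracting}, translating between flat configurations in the Davis complex and induced squares of $\Gamma$. The genuine difference lies in the direction ``no bad square $\Rightarrow$ Morse''. The paper argues contrapositively: non-contracting yields a grid of hyperplanes with $\# \mathcal{H}, \# \mathcal{V} > \# V(\Gamma)+1$, Proposition \ref{prop:cycle} converts it into a flat rectangle with $p,q > \# V(\Gamma)$ based on $X(\Lambda)$, and a counting argument on the labels then extracts two non-adjacent letters on each side. You observe that a $2 \times 2$ rectangle already suffices: two \emph{consecutive} parallel hyperplanes of a flat rectangle are dual to distinct edges sharing a vertex, hence carry distinct labels, and since they are disjoint those labels must be non-adjacent in $\Gamma$. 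This local-rigidity observation removes the cardinality bookkeeping entirely and even shows that criterion (iv) of Proposition \ref{prop:contracting} holds with constant $1$. In the other direction the two arguments are essentially the same: the paper works in the quarter-plane spanned by the rays labelled $(uv)^{\infty}$ and $(ab)^{\infty}$ and applies the join criterion (ii), while you work in the full flat stabilised by $\langle a,c \rangle \times \langle b,d \rangle$ and apply the rectangle criterion (iv); the one point requiring care there, which you do address, is that when the fourth vertex of the square lies in $\Lambda$ the intersection $F \cap Y$ is a width-one strip, so the thick rectangles must be based on its far boundary line in order to meet $Y$ exactly along their bottom side.
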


\noindent
We recall that the Cayley graph $X(\Gamma)$ of $C(\Gamma)$ constructed from its canonical generating set is naturally the one-skeleton of a CAT(0) cube complex. (More precisely, the Cayley graph is a median graph, and the cube complex $X(\Gamma)$ obtained from it by \emph{filling in the cubes}, i.e., adding an $n$-cube along every induced subgraph isomorphic to the one-skeleton of an $n$-cube, turns out to be a CAT(0) cube complex.) For every vertex $u \in V(\Gamma)$, we denote by $J_u$ the hyperplane dual to the edge joining $1$ and $u$; every hyperplane of $X(\Gamma)$ is a translate of some $J_v$. It is worth noticing that, for every vertices $u,v \in V(\Gamma)$, the hyperplanes $J_u$ and $J_v$ are transverse if and only if $u$ and $v$ are adjacent vertices of $\Gamma$. Finally, if $\Lambda \subset \Gamma$ is an induced subgraph, we denote by $C(\Lambda)$ the subgroup generated by the vertices of $\Lambda$, and by $X(\Lambda) \subset X(\Gamma)$ the convex subcomplex generated by the elements of $C(\Lambda)$. 

\begin{proof}[Proof of Proposition \ref{prop:RACGmorse}.]
Suppose that $C(\Lambda)$ is not a Morse subgroup. According to Corollary \ref{cor:MorseCriterion}, this means that $X(\Lambda)$ is not contracting. Therefore, according to Proposition \ref{prop:contracting}, there exists a grid of hyperplanes $(\mathcal{H}, \mathcal{V})$ satisfying $\mathcal{V} \subset \mathcal{H}( X(\Lambda))$, $\mathcal{V} \cap \mathcal{H}(X(\Lambda)) = \emptyset$, $\# \mathcal{V} > \# V(\Gamma)+2$ and $\mathcal{H} > \# V(\Gamma)+1 $. Write $\mathcal{V}=\{ V_1, \ldots, V_n \}$ such that $V_i$ separates $V_{i-1}$ and $V_{i+1}$ for every $2 \leq i \leq n-1$; and $\mathcal{H}= \{ H_1, \ldots, H_m \}$ such that $H_i$ separates $H_{i-1}$ and $H_{i+1}$ for every $2 \leq i \leq m-1$, and such that $H_1$ separates $X(\Lambda)$ and $H_m$. Consider the cycle of subcomplexes $(N(V_1), C(\Lambda), N(V_n), N(H_m))$. According to Proposition \ref{prop:cycle}, there exists a flat rectangle $[0,p] \times [0,q] \hookrightarrow X$ such that $[0,p] \times \{ 0 \} \subset X(\Lambda)$, $\{0 \} \times [0,q] \subset N(V_1)$, $\{p\} \times [0,q] \subset N(V_n)$ and $[0,p] \times \{q \} \subset N(H_m)$. Since $\# \mathcal{V} > \# V(\Gamma)+2 $ and $\# \mathcal{H} >\# V(\Gamma)+1$, necessarily $p > \# V(\Gamma) $ and $q > \# V(\Gamma) $. 

\medskip \noindent
Let $a_1 \cdots a_q$ denote the word labelling the path $\{0\} \times [0,q]$ (from $(0,0)$ to $(0,q)$), where $a_1, \ldots, a_q \in \Gamma$ are vertices. Because $q > \# V(\Gamma) $, there must exist $1 \leq i <j \leq q$ such that $a_i$ and $a_j$ are not adjacent in $\Gamma$. Without loss of generality, we may suppose that $a_i$ commutes with $a_k$ for every $1 \leq k <i$. It follows that $a_1 \cdots a_{i-1}J_{a_i}=J_{a_i}$. Since $(0,0) \in C(\Lambda)$ but $J_{a_i} \notin  \mathcal{H}(X(\Lambda))$ according to Proposition \ref{prop:cycle}, it follows that $a_i \notin \Lambda$. 

\medskip \noindent
Similarly, because $p > \# V(\Gamma)$, there must exist two edges of $[0,p] \times \{ 0 \} \subset X(\Lambda)$ labelled by non-adjacent vertices of $\Lambda$, say $u$ and $v$. By noticing that any hyperplane intersecting $[0,p] \times \{0\}$ must be transverse to any hyperplane intersecting $\{0\} \times [0,q]$, it follows that $u$ and $v$ are adjacent to both $a_i$ and $a_j$. Otherwise saying, $a_i,a_j,u,v$ define an induced square of $\Gamma$ such that $u,v \in \Lambda$ are diametrically opposite but $a_i \notin \Lambda$. 

\medskip \noindent
Conversely, suppose that there exists some induced square in $\Gamma$ with two diametrically opposite vertices $u$ and $v$ in $\Lambda$ but with one of its two other vertices, say $a$, not in $\Lambda$. Let $b$ denote the fourth vertex of our square. Consider the two infinite rays
$$1, \ u, \ uv, \ (uv)u, \ (uv)^2, \ldots, \ (uv)^n, \ldots$$
and
$$1, \ a, \ ab, \ (ab)a, \ (ab)^2, \ldots, \ (ab)^n, \ldots$$
say $r_1$ and $r_2$ respectively. Since $u$ and $v$ commute with both $a$ and $b$, it follows that $r_1$ and $r_2$ bound a copy of $[0,+ \infty) \times [0,+ \infty)$ (which is generated by the vertices $gh$ where $g$ and $h$ are prefixes of the infinite words $(uv)^\infty$ and $(ab)^{\infty}$ respectively). As a consequence, for every $n \geq 1$, any hyperplane of $\mathcal{H}_n= \{ (ab)^kJ_a \mid k \leq n \}$ is transverse to any hyperplane of $\mathcal{V}_n = \{ (uv)^k J_u \mid k \leq n\}$. Moreover, notice that $\mathcal{H}_n$ and $\mathcal{V}_n$ do not contain facing triples since they are collections of hyperplanes transverse to the geodesic rays $r_2$ and $r_1$ respectively; and $\mathcal{H}_n \cap \mathcal{H}(X(\Lambda)) = \emptyset$ since $a \notin \Lambda$; and of course $\mathcal{V}_n \subset \mathcal{H}(X(\Lambda))$. It follows from Proposition \ref{prop:contracting} that $X(\Lambda)$ is not contracting, so that $C(\Lambda)$ is not a Morse subgroup according to Corollary \ref{cor:MorseCriterion}. 
\end{proof}
\end{app}

\begin{app}\label{app:MorseRaag}
Working harder, one can show that Morse subgroups in freely irreducible right-angled Artin groups are either finite-index subgroups or free subgroups. We defer the proof to Appendix \ref{section:MorseInRAAG}.
\end{app}

\section{Relative hyperbolicity}

\noindent
In this section, we are interested in the following question: given a group acting geometrically on a CAT(0) cube complex, how to determine whether or not it is relatively hyperbolic? The definition of relative hyperbolicity which we use is the following:

\begin{definition}
A finitely-generated group $G$ is \emph{hyperbolic relative to a collection of subgroups $\mathcal{H}=\{ H_1, \ldots, H_n \}$} if $G$ acts by isometries on a graph $\Gamma$ such that:
\begin{itemize}
	\item $\Gamma$ is Gromov hyperbolic,
	\item $\Gamma$ contains finitely-many orbits of edges,
	\item each vertex-stabilizer is either finite or is conjugate to some $H_i$,
	\item any $H_i$ stabilizes a vertex,
	\item $\Gamma$ is \emph{fine}, i.e., any edge belongs only to finitely-many simple loops (or \emph{cycle}) of a given length.
\end{itemize}
A subgroup conjugate to some $H_i$ is \emph{peripheral}. $G$ is just said \emph{relatively hyperbolic} if it is relatively hyperbolic with respect to a finite collection of proper subgroups.
\end{definition}

\noindent
We refer to \cite{HruskaRH} and references therein for more information on relatively hyperbolic groups. Our main criterion is the following, which is essentially extracted from \cite{coningoff}.

\begin{thm}\label{thm:relhyp}
Let $G$ be a group acting geometrically on some CAT(0) cube complex $X$. Then $G$ is relatively hyperbolic if and only if there exists a collection of convex subcomplexes $\{Y_1, \ldots, Y_n\}$ satisfying the following conditions:
\begin{itemize}
	\item $\mathrm{stab}(Y_i)$ acts geometrically on $Y_i$ for every $1 \leq i \leq n$;
	\item there exists a constant $C_1 \geq 0$ such that, for every $1 \leq i,j \leq n$, any two distinct translates of $Y_i$ and $Y_j$ are both transverse to at most $C_1$ hyperplanes;
	\item there exists a constant $C_2 \geq 0$ such that any $C_2$-thick flat rectangle lies in the $C_2$-neighborhood of a translate of $Y_i$ for some $1 \leq i \leq n$.
\end{itemize}
Moreover, the last point can be replaced with:
\begin{itemize}
	\item there exists a constant $C_2 \geq 0$ such that the image of every combinatorial isometric embedding $\mathbb{R}^2 \hookrightarrow X$ is included into the $C_2$-neighborhood of a translate of $Y_i$ for some $1 \leq i \leq n$. 
\end{itemize}
If these conditions are satisfied, then $G$ is hyperbolic relative to $\{ \mathrm{stab}(Y_i) \mid 1 \leq i \leq n \}$. 
\end{thm}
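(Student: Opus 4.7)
The plan is to reduce to the coning-off framework of \cite{coningoff}. For the ``if'' direction, given the convex subcomplexes $Y_1, \ldots, Y_n$, I would form the coned-off graph $\hat{X}$ by adjoining, for each $G$-translate $gY_i$, a cone vertex joined by a new edge to every vertex of $gY_i$, and verify the five axioms of relative hyperbolicity for $G \curvearrowright \hat{X}$. Cocompactness of $H_i \curvearrowright Y_i$ ensures that the translates of each $Y_i$ fall into finitely many $G$-orbits, which together with cocompactness of $G \curvearrowright X$ yields finitely many edge-orbits in $\hat{X}$; the vertex-stabilizers at cone points are exactly conjugates of the $H_i$, and each $H_i$ stabilizes the cone point of $Y_i$.

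The two remaining axioms, hyperbolicity and fineness of $\hat{X}$, are the crux of the argument. For hyperbolicity I would invoke Theorem \ref{thm:hyperbolic1}: every flat rectangle in $X$ either is thin (and contributes a thin rectangle in $\hat{X}$) or, by condition $C_2$, lies in a bounded neighborhood of some translate $gY_i$ and therefore collapses to bounded diameter in $\hat{X}$, so the triangles of $\hat{X}$ are uniformly thin. Fineness is where condition $C_1$ enters: a long simple bigon in $\hat{X}$ passing through two cone vertices $v_{gY_i}$ and $v_{g'Y_j}$ corresponds to two distinct combinatorial geodesics in $X$ joining $gY_i$ and $g'Y_j$, and by Lemma \ref{lem:sepproj} the hyperplanes separating these geodesics must be transverse to both translates, leaving only finitely many possibilities once $C_1$ is fixed.

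For the ``only if'' direction, I would produce each $Y_i$ as the combinatorial convex hull of an $H_i$-orbit in $X$: relative quasiconvexity of the peripheral $H_i$ translates into combinatorial quasiconvexity of the orbit, so Lemma \ref{lem:convexhull} places $Y_i$ within a bounded neighborhood of the orbit and $H_i$ acts cocompactly on it. The bound $C_1$ then falls out of fineness of the coning graph (two translates sharing arbitrarily many transverse hyperplanes would build arbitrarily many short simple loops through the two corresponding cone points), and condition $C_2$ follows from hyperbolicity of $\hat{X}$, since a thick flat rectangle escaping every neighborhood of every $gY_i$ would survive coning off and violate Theorem \ref{thm:hyperbolic1} applied to $\hat{X}$. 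The equivalence with the $\mathbb{R}^2$-reformulation is handled by the subsequential-limit argument used at the end of Theorem \ref{thm:hyperbolic1}: a sequence of arbitrarily thick flat rectangles $D_n$ escaping every peripheral neighborhood would, by cocompactness of $G \curvearrowright X$ and local finiteness, subconverge to a combinatorial embedding $\mathbb{R}^2 \hookrightarrow X$ that still escapes every peripheral neighborhood, contradicting the hypothesis.

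The main obstacle is fineness of $\hat{X}$ in the ``if'' direction. Hyperbolicity is relatively soft once Theorem \ref{thm:hyperbolic1} is available, but fineness is genuinely cubical and is precisely where the hypothesis on hyperplanes transverse to two distinct translates does the work; all the bookkeeping on shared hyperplanes, projections between translates (via Proposition \ref{prop:proj} and Lemma \ref{lem:twomin}), and cycles of convex subcomplexes (Proposition \ref{prop:cycle}) is needed there to bound the number of short simple cycles through any given edge.
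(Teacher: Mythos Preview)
Your ``if'' direction is essentially the paper's approach: cone off $X$ over the translates of the $Y_i$ and verify the Bowditch axioms. The paper simply cites \cite[Theorems 4.1 and 5.7]{coningoff} for hyperbolicity and fineness of the cone-off, whereas you sketch how those arguments go; that is fine in spirit. One technical caveat: you repeatedly invoke Theorem \ref{thm:hyperbolic1} for $\hat{X}$, but $\hat{X}$ is not a CAT(0) cube complex, so that theorem does not apply to it literally. The result you want is that the \emph{coned-off} space is hyperbolic once thick flat rectangles of $X$ live near cosets, and that is precisely what \cite[Theorem 4.1]{coningoff} proves; it is not a formal consequence of Theorem \ref{thm:hyperbolic1}.

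Your ``only if'' direction is a genuinely different route from the paper, and it has a gap. The paper does not reverse the cone-off construction at all: instead it invokes Dru\c{t}u--Sapir's characterisation of relative hyperbolicity via asymptotic tree-gradedness \cite{TreeGraded}. Concretely, the orbits $gH_i\cdot x$ (and hence, by the Morse property from \cite{Sistohypembed} together with Lemma \ref{lem:convexhull}, their convex hulls $Y_i$) form an asymptotically tree-graded family; condition $(\alpha_1)$ bounds $\mathrm{diam}(A^{+\delta}\cap B^{+\delta})$ for distinct pieces, and Lemma \ref{lem:fellowtravel} converts that into the bound $C_1$ on common transverse hyperplanes; condition $(\alpha_3)$ gives the flat-rectangle bound $C_2$ directly; and $(\alpha_1)$--$(\alpha_2)$ together give the $\mathbb{R}^2$ version. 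Your plan instead tries to read $C_1$ off fineness and $C_2$ off hyperbolicity of some coned-off graph, but you never establish that \emph{this particular} coned-off $\hat{X}$ is fine and hyperbolic: relative hyperbolicity only hands you \emph{some} fine hyperbolic graph, and fineness is not quasi-isometry invariant, so transporting it to $\hat{X}$ is exactly the content you are trying to extract. The Dru\c{t}u--Sapir route avoids this circularity entirely and is what you should use here. Your subsequential-limit argument for the $\mathbb{R}^2$ reformulation is correct and matches what the paper cites from \cite[Lemma 8.8]{MoiSpecialBraid}.
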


\noindent
In other contexts, similar statements can be found in \cite{isolated} and \cite{isolatedflatsErratum}. We begin by recalling \cite[Lemma 8.6]{MoiSpecialBraid}, which will be useful in the proof of our theorem.

\begin{lemma}\label{lem:fellowtravel}
Let $X$ be a CAT(0) cube complex and $A,B \subset X$ two $L$-contracting convex subcomplexes. Suppose that any vertex of $X$ has at most $R \geq 2$ neighbors. If there exist $N \geq \max(L,2)$ hyperplanes transverse to both $A$ and $B$, then the inequality
$$\mathrm{diam} \left( A^{+L} \cap B^{+L} \right) \geq \ln(N-1)/\ln(R)$$
holds. 
\end{lemma}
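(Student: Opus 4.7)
The plan is to produce a subcomplex of $A^{+L} \cap B^{+L}$ whose diameter can be estimated from below using the $N$ transverse hyperplanes. A natural candidate is $\mathrm{proj}_{A}(B)$, which automatically lies in $A$. If I can verify that $\mathrm{proj}_{A}(B)$ is contained in $B^{+L}$, then it sits inside $A^{+L} \cap B^{+L}$, and a lower bound on its diameter will conclude the proof.

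To show $\mathrm{proj}_{A}(B) \subset B^{+L}$, I first argue that the set $\mathcal{H}_0$ of hyperplanes separating $A$ from $B$ has at most $L$ elements. These hyperplanes are pairwise nested (hence free of facing triples) and each is transverse to every $J_i$, because $J_i$ meets both $A$ and $B$ and so cannot lie entirely on one side of a hyperplane that separates $A$ from $B$. Since $\mathcal{H}_0 \cap \mathcal{H}(A) = \emptyset$ while $\{J_1, \ldots, J_N\} \subset \mathcal{H}(A)$, the contracting characterization of $A$ (Proposition \ref{prop:contracting}) applies after extracting from $\{J_1, \ldots, J_N\}$ a pairwise-disjoint subcollection, using Fact \ref{fact:pairwisedisjoint} together with the fact that the maximum-degree bound $R$ forces $\dim(X) \leq R$; this yields $|\mathcal{H}_0| \leq L$ thanks to $N \geq L$. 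Now let $x \in \mathrm{proj}_{A}(B)$, say $x = \mathrm{proj}_{A}(y)$ with $y \in B$. Any hyperplane $H$ separating $x$ from $B$ separates $x$ from $y$, so by Lemma \ref{lem:vertextoproj} applied to $y$ and its projection $\mathrm{proj}_{A}(y) = x$, it separates $y$ from $A$. This forces $A$ on the same side of $H$ as $x$ and $B$ on the opposite side, so $H \in \mathcal{H}_0$. Hence $d(x, B) \leq |\mathcal{H}_0| \leq L$, giving $\mathrm{proj}_{A}(B) \subset A \cap B^{+L} \subset A^{+L} \cap B^{+L}$.

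For the diameter bound, Proposition \ref{prop:proj} guarantees that each of $J_1, \ldots, J_N$ crosses the (connected) subcomplex $\mathrm{proj}_{A}(B)$. Since distinct hyperplanes contribute disjoint sets of dual edges, $\mathrm{proj}_{A}(B)$ contains at least $N$ edges, and therefore at least $2N/R$ vertices by the handshake lemma. Combining this with the standard ball-growth estimate $|B(v_0, d)| \leq 1 + R \sum_{k=0}^{d-1}(R-1)^{k}$ in a graph of maximum degree $R$, elementary rearrangement yields $\mathrm{diam}(\mathrm{proj}_{A}(B)) \geq \ln(N-1)/\ln R$.

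The main technical obstacle is the inequality $|\mathcal{H}_0| \leq L$. The $J_i$ may themselves contain facing triples or form large pairwise transverse families, so applying the join/grid characterization of contracting demands a careful Ramsey-type extraction of a clean sub-configuration, and the constants must be tracked so that the hypothesis $N \geq L$ is indeed strong enough to force the desired bound on $|\mathcal{H}_0|$.
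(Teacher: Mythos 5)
The paper itself offers no proof of this lemma: it is quoted verbatim from \cite[Lemma 8.6]{MoiSpecialBraid}, so your argument can only be judged on its own terms. Its architecture --- exhibit $\mathrm{proj}_A(B)$ inside $A^{+L}\cap B^{+L}$, then bound its diameter below using the $N$ hyperplanes crossing it together with the degree bound --- is the natural one. But two steps do not go through as written. The first is the bound $\#\mathcal{H}_0\leq L$. Your route is to form a join $(\mathcal{H}_0,\mathcal{V})$ with $\mathcal{V}$ a clean subcollection of $\{J_1,\ldots,J_N\}$ and invoke Proposition \ref{prop:contracting}; but that only yields $\min(\#\mathcal{H}_0,\#\mathcal{V})\leq L$, so you need $\#\mathcal{V}\geq L+1$. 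A Ramsey extraction of a pairwise-disjoint subcollection from $N$ raw hyperplanes requires $N\geq\mathrm{Ram}(L+1)$ (Fact \ref{fact:pairwisedisjoint} assumes one already has a join of that size), which is exponentially more than the hypothesis $N\geq\max(L,2)$; a pairwise-disjoint family can still contain facing triples (the $J_i$ all cross $A$ and $B$ but may fan out), so it need not form a join with $\mathcal{H}_0$; and even if all $N$ hyperplanes were facing-triple-free, $N=L$ gives only the vacuous $\min(\#\mathcal{H}_0,L)\leq L$. (Also, $\mathcal{H}_0$ is not ``pairwise nested'': two hyperplanes separating $A$ from $B$ can be transverse, although the collection is indeed facing-triple-free, which is all you need on that side.) You flag this yourself as the main obstacle, and it is a genuine hole, not bookkeeping: with the tools available in this paper the stated hypothesis is not strong enough to run this step.

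The second gap is quantitative. From ``at least $N$ edges'' the handshake lemma gives only $2N/R$ vertices, and combined with the ball-growth bound $|B(v,d)|\leq R^d+1$ this yields $\mathrm{diam}\geq\log_R(2N/R-1)$, which is strictly smaller than $\log_R(N-1)$ as soon as $R\geq 3$ (try $R=10$, $N=100$). The count you actually need is that a convex subcomplex crossed by $N$ distinct hyperplanes has at least $N+1$ vertices: a finite median graph on $V$ vertices carries at most $V-1$ hyperplane classes (induct by cutting along one hyperplane, noting every other hyperplane crosses one of the two halves). With $N+1\leq R^d+1$ one gets $R^d\geq N>N-1$, which is exactly the stated inequality; the $2N/R$ count is too lossy to recover it.
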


\begin{proof}[Proof of Theorem \ref{thm:relhyp}.]
Suppose that $G$ is hyperbolic relative to $\mathcal{H}=\{H_1, \ldots, H_n\}$. Fix a basepoint $x \in X$, and let $\mathcal{C}= \{ gH_i' \cdot x \mid 1 \leq i \leq n, g \in G\}$. According to \cite[Theorems A.1 and 5.1]{TreeGraded}, $X$ is asymptotically tree-graded with respect to $\mathcal{C}$. Moreover, it follows from \cite[Lemma 4.15]{TreeGraded} that any element of $\mathcal{C}$ is a Morse subset of $X$; as a consequence of Lemma \ref{lem:convexhull}, $X$ is also asymptotically tree-graded with respect to $\mathcal{D}= \{ \text{convex hull of $C$} \mid C \in \mathcal{C} \}$. For every $1 \leq i \leq n$, let $Y_i \in \mathcal{D}$ denote the convex hull of the orbit $H_i \cdot x$; since the Hausdorff distance between $H_i \cdot x$ and $Y_i$ is finite, $H_i$ acts geometrically on $Y_i$; a fortiori, $\mathrm{stab}(Y_i)$ acts geometrically on $Y_i$. 

\medskip \noindent
We know from Condition $(\alpha_1)$ in \cite[Theorem 4.1]{TreeGraded} that, for every $\delta$, there exists some constant $K$ such that $\mathrm{diam} (A^{+\delta} \cap B^{+ \delta}) \leq K$ for every distinct $A,B \in \mathcal{D}$. It follows from Lemma \ref{lem:fellowtravel} that there exists a constant $C_1 \geq 0$ such that, for every $1 \leq i,j \leq n$, any two distinct translates of $Y_i$ and $Y_j$ are both transverse to at most $C_1$ hyperplanes. Next, we know from Condition $(\alpha_3)$ of \cite[Theorem 4.1]{TreeGraded} that there exists a constant $C_2 \geq 0$ such that any $C_2$-thick flat rectangle of $X$ is included into the $C_2$-neighborhood of some element of $\mathcal{C}$. Finally, by combining Conditions $(\alpha_1)$ and $(\alpha_2)$, we deduce that there exists a constant $C_3 \geq 0$ such that the image of every combinatorial isometric embedding $\mathbb{R}^2 \hookrightarrow X$ is included into the $C_3$-neighborhood of an element of $\mathcal{C}$.

\medskip \noindent
Conversely, suppose that there exists a collection of convex subcomplexes $\{ Y_1, \ldots, Y_n\}$ satisfying the first three conditions mentioned in our theorem. Let $\dot{X}$ denote the graph obtained from the one-skeleton of $X$ by adding, for any translate $Y$ of some $Y_i$, a new vertex $v_Y$ and edges from $v_Y$ to any vertex of $Y$. According to \cite[Theorems 4.1 and 5.7]{coningoff}, $\dot{X}$ is a fine hyperbolic graph on which $G$ acts. Notice that, as a consequence of \cite[Lemma 8.8]{MoiSpecialBraid}, our third condition can be replaced with the last point in our statement. Because $G$ acts geometrically on $X$, we also know that $\dot{X}$ contains finitely many orbits of edges, and that stabilisers of vertices of $X$ are finite. Consequently, $G$ is hyperbolic relative to $\{ \mathrm{stab}(Y_i) \mid 1 \leq i \leq n \}$. 
\end{proof}

\begin{app}
Theorem \ref{thm:relhyp} has been applied in \cite{coningoff} to determine precisely when a right-angled Coxeter group $C(\Gamma)$ is relatively hyperbolic, just by looking at the graph $\Gamma$. (This characterisation was originally proved in \cite{BHSC}.) Moreover, in that case, we get a minimal collection of peripheral subgroups of $C(\Gamma)$. (See also \cite[Theorem 8.33]{Qm} for a generalisation of the argument to arbitrary graph products.)
\end{app}

\begin{app}
Thanks to Theorem \ref{thm:relhyp}, a sufficient criterion of relative hyperbolicity of graph braid groups $B_2(\Gamma)$ was obtained in \cite[Theorem 9.41]{MoiSpecialBraid}. For instance, if $\Gamma$ is the union of two bouquets of circles whose centers are linked by a segment, then $B_2(\Gamma)$ is hyperbolic relative to subgroups isomorphic to direct products of free groups. A full characterisation of relatively hyperbolic graph braid groups remains unknown.
\end{app}

\noindent
In general, Theorem \ref{thm:relhyp} is difficult to apply, essentially because one has to guess the peripheral subgroups and the convex subcomplexes on which they act. For instance, determining which graph braid groups (see Application \ref{app:braidgrouphyp}) are relatively hyperbolic is an open question; see \cite{MoiSpecialBraid}. So finding other criteria is an interesting problem. 

\begin{problem}\label{problem:relahyp}
Find criteria of relative hyperbolicity of groups acting geometrically on CAT(0) cube complexes which do not refer to peripheral subgroups.
\end{problem}

\noindent
Interestingly, in the context of virtually (cocompact) special groups (as defined by Haglund and Wise in \cite{MR2377497}), Theorem \ref{thm:relhyp} provides the following more algebraic statement, as shown in \cite[Theorem 8.1]{MoiSpecialBraid}.

\begin{thm}\label{thm:specialrelhyp}
Let $G$ be a special group and $\mathcal{H}$ a finite collection of subgroups. Then $G$ is hyperbolic relative to $\mathcal{H}$ if and only if the following conditions are satisfied:
\begin{itemize}
	\item each subgroup of $\mathcal{H}$ is convex-cocompact;
	\item $\mathcal{H}$ is an almost malnormal collection (i.e., for every $H,K \in \mathcal{H}$ and $g \in G$, if $H \cap gKg^{-1}$ is infinite then $H=K=gKg^{-1}$);
	\item every abelian subgroup of $G$ which is not virtually cyclic is contained into a conjugate of some group of $\mathcal{H}$. 
\end{itemize}
\end{thm}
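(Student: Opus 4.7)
The plan is to deduce Theorem \ref{thm:specialrelhyp} from Theorem \ref{thm:relhyp}, using specialness as the bridge that converts algebraic information (malnormality, abelian subgroups) into the geometric data (bounded transverse hyperplanes, flats near peripherals) required by Theorem \ref{thm:relhyp}. Throughout, we pass to a finite index subgroup and fix a cocompact action of $G$ on a special CAT(0) cube complex $X$; this is harmless since relative hyperbolicity and the three listed conditions are preserved and reflected by finite index.

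For the forward direction, all three conditions are standard consequences of $G$ being hyperbolic relatively to $\mathcal{H}$. Theorem \ref{thm:relhyp} directly gives cocompact convex subcomplexes $Y_i$ for each $H_i$, so $\mathcal{H}$ is convex-cocompact. Almost malnormality of peripheral subgroups is classical (the fineness of the coned-off graph forces any infinite intersection $H_i^g \cap H_j$ to lie in a common peripheral). Finally, peripheral subgroups in a relatively hyperbolic group contain every non virtually cyclic abelian subgroup, because such a subgroup would span a non-thin flat rectangle in $X$ and Theorem \ref{thm:relhyp} confines large flats to neighborhoods of peripheral cosets; a standard centraliser argument then places the abelian subgroup inside a peripheral.

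For the converse, assume the three algebraic conditions. Using the convex-cocompactness assumption we choose, for each $H_i$, a convex subcomplex $Y_i \subset X$ on which $H_i$ acts cocompactly; this gives the first bullet of Theorem \ref{thm:relhyp}. The second bullet, bounded intersection data between translates $gY_i$ and $hY_j$, is where specialness is crucial and where I expect the main obstacle to lie: I plan to use almost malnormality of $\mathcal{H}$ together with the Haglund--Wise canonical completion and retraction machinery from \cite{MR2377497} to show that the stabiliser of two distinct translates is finite. From this, and the fact that each $Y_i$ is cocompact, one obtains a bound on $\mathrm{diam}(gY_i^{+L} \cap hY_j^{+L})$ for any $L$; feeding this into the contrapositive of Lemma \ref{lem:fellowtravel} (noting that convex subcomplexes on which an infinite group acts cocompactly are contracting in the relevant sense once we know the flats are controlled) bounds the number of hyperplanes transverse to both translates, yielding the constant $C_1$.

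The third bullet is handled by the abelian subgroups hypothesis. The strategy is to work with the equivalent flat-plane formulation: any combinatorial isometric embedding $\mathbb{R}^2 \hookrightarrow X$ must be contained in a bounded neighborhood of some translate of a $Y_i$. Using cocompactness of the $G$-action and local finiteness of $X$, any such flat plane is ``periodic'' up to translation in the sense that its stabiliser in $G$ is virtually $\mathbb{Z}^2$ (a standard limit argument, as in the end of the proof of Theorem \ref{thm:hyperbolic1}, produces two independent hyperbolic isometries commuting up to finite index). By the third hypothesis, this $\mathbb{Z}^2$ is contained in a conjugate of some $H_i$, hence acts cocompactly on a translate $gY_i$; since the flat and $gY_i$ share a common cocompact virtually abelian subgroup, they lie at bounded Hausdorff distance. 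This verifies the flat-plane version of the third bullet, so Theorem \ref{thm:relhyp} applies and gives relative hyperbolicity with respect to $\mathcal{H}$.
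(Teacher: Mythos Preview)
Your overall plan---reduce to Theorem~\ref{thm:relhyp}---is exactly what the paper indicates (the proof is carried out in \cite[Theorem 8.1]{MoiSpecialBraid}). The forward direction is fine. But the converse has two real gaps, and the second one is the heart of the matter.

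\textbf{Second bullet.} You invoke the contrapositive of Lemma~\ref{lem:fellowtravel} to pass from bounded coarse intersections to a bound on common transverse hyperplanes. That lemma, however, assumes the convex subcomplexes are $L$-contracting, and the $Y_i$ typically are not: if $H_i\cong\mathbb{Z}^2$ then $Y_i$ is a flat, which is as non-contracting as possible. Your parenthetical fix (``contracting once we know the flats are controlled'') is circular. What one actually needs is a direct bound on $\mathrm{diam}\big(\mathrm{proj}_{gY_i}(hY_j)\big)$: by Proposition~\ref{prop:proj} this diameter controls the common transverse hyperplanes, and almost malnormality together with cocompactness of the $H_i$-action (compare Lemma~\ref{lem:fellowtravalmalnormal}) yields such a bound without any contracting hypothesis.

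\textbf{Third bullet.} This is the serious gap. You assert that every combinatorially embedded $\mathbb{R}^2$ in $X$ is periodic, i.e.\ has a virtually $\mathbb{Z}^2$ stabiliser, and call this ``a standard limit argument, as in the end of the proof of Theorem~\ref{thm:hyperbolic1}''. It is not. The limit argument there produces a flat from a sequence of large rectangles; it says nothing about producing isometries preserving a given flat. Whether every flat in a cocompact CAT(0) cube complex is periodic is precisely the open question discussed after Corollary~\ref{cor:squareless} (see the references to \cite{WiseNotPeriodicFlat, SpecialHyp, MoiSpecialBraid, PeriodicFlatsCC, NTY}). For virtually special groups the statement is true, but it is a genuine theorem that uses specialness in an essential way---typically via the embedding into a right-angled Artin group and the explicit structure of centralisers and abelian subgroups there. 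This is where the specialness hypothesis actually enters the proof, not in the second bullet where you placed it. Without this ingredient your argument for the third bullet of Theorem~\ref{thm:relhyp} does not go through.
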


\noindent
It is worth noticing that the characterisation of relatively hyperbolic right-angled Coxeter groups (proved in \cite{BHSC, coningoff}), and more generally the characterisation of relatively hyperbolic graph products of finite groups (which is a particular case of \cite[Theorem 8.33]{Qm}), follows easily from Theorem \ref{thm:specialrelhyp}. However, this criterion does not provide a purely algebraic characterisation of relatively hyperbolic virtually special groups, since the subgroups need to be convex-cocompact. But, convex-cocompactness is not an algebraic property: with respect to the canonical action $\mathbb{Z}^2 \curvearrowright \mathbb{R}^2$, the cyclic subgroup generated by $(0,1)$ is convex-cocompact, whereas the same subgroup is not convex-cocompact with respect to the action $\mathbb{Z}^2 \curvearrowright \mathbb{R}^2$ defined by $(0,1) : (x,y) \mapsto (x+1,y+1)$ and $(1,0) : (x,y) \mapsto (x+1,y)$. Nevertheless, the convex-cocompactness required in the previous statement would be unnecessary if the following question admits a positive answer (at least in the context of special groups):

\begin{question}\label{question:malnormalMorse}
Let $G$ be a group acting geometrically on a CAT(0) cube complex and $H \subset G$ a finitely generated subgroup. If $H$ is almost malnormal, must it be a Morse subgroup?
\end{question}

\noindent
As a consequence of the previous theorem, one gets the following simple characterisation of virtually special groups which are hyperbolic relative to virtually abelian subgroups \cite[Theorem 8.14]{MoiSpecialBraid}:

\begin{thm}\label{thm:RHspecialAbelian}
Let $G$ be a virtually special group. Then $G$ is hyperbolic relative to virtually abelian groups if and only if $G$ does not contain $\mathbb{F}_2 \times \mathbb{Z}$ as a subgroup.
\end{thm}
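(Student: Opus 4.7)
The plan is to invoke Theorem \ref{thm:specialrelhyp}. For the forward direction, assume $G$ is hyperbolic relative to a collection $\mathcal{H}$ of virtually abelian subgroups, and suppose for contradiction that $\mathbb{F}_2 \times \mathbb{Z} \leq G$. Let $z$ generate the central $\mathbb{Z}$ factor and pick any infinite order $f \in \mathbb{F}_2$, so that $\langle z, f \rangle \cong \mathbb{Z}^2$ is not virtually cyclic. By the third condition of Theorem \ref{thm:specialrelhyp}, $\langle z, f \rangle$ lies in a conjugate of some $P \in \mathcal{H}$; replacing $P$ by this conjugate (which is again virtually abelian and belongs to the ambient almost malnormal family of all peripherals), we may assume $z \in P$. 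For any $h \in C_G(z)$ we then have $z \in P \cap hPh^{-1}$ with $z$ of infinite order, so almost malnormality forces $h \in P$. Hence $\mathbb{F}_2 \leq C_G(z) \leq P$, contradicting that $P$ is virtually abelian.

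For the backward direction, assume $G$ does not contain $\mathbb{F}_2 \times \mathbb{Z}$, and let $\mathcal{H}$ be a set of representatives of the conjugacy classes of maximal non-virtually-cyclic virtually abelian subgroups of $G$. I verify the three conditions of Theorem \ref{thm:specialrelhyp}. The third condition (containment of abelian subgroups) is automatic: virtually abelian subgroups of a virtually special group have uniformly bounded rank, so any non-virtually-cyclic virtually abelian subgroup is contained in a maximal one, hence in a conjugate of some element of $\mathcal{H}$. The first condition (convex-cocompactness of each $P \in \mathcal{H}$) is a known structural feature of maximal virtually abelian subgroups of virtually special groups, and combined with cocompactness of the ambient action it also ensures the finiteness of $\mathcal{H}$.

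The heart of the proof is almost malnormality. Given $P \in \mathcal{H}$ and $g \notin P$ with $K = P \cap gPg^{-1}$ infinite, choose $z \in K$ of infinite order and an abelian finite-index subgroup $P_0 \leq P$; a suitable power $z^n$ lies in $P_0 \cap gP_0 g^{-1}$, so $Q = \langle P_0, gP_0 g^{-1} \rangle$ sits inside $C_G(z^n)$. If $Q$ were virtually abelian, then the maximality of $P$ would force $gP_0 g^{-1} \subseteq P$, and a commensurator argument exploiting convex-cocompactness would yield $g \in P$, a contradiction. Therefore $Q$ is not virtually abelian, and the Tits alternative available in virtually special groups (inherited from right-angled Artin groups via Haglund--Wise specialness) produces a non-abelian free subgroup $\mathbb{F}_2 \leq Q \leq C_G(z^n)$. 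Together with $\langle z^n \rangle$ this realises $\mathbb{F}_2 \times \mathbb{Z} \leq G$, contradicting the standing hypothesis.

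The main obstacle is the almost malnormality step. The delicate substep ``$Q$ virtually abelian $\Rightarrow g \in P$'' rests on the fact that maximal virtually abelian subgroups of virtually special groups coincide with their commensurators, itself a consequence of their convex-cocompactness. The remainder of the argument --- the forward direction, and the reduction of almost malnormality to the Tits alternative --- is essentially formal once the structural ingredients on virtually special groups are in hand.
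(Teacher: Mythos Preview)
The paper does not actually prove this statement: it is quoted from \cite[Theorem~8.14]{MoiSpecialBraid} and presented as ``a consequence of the previous theorem'', namely Theorem~\ref{thm:specialrelhyp}. Your proposal takes precisely this route, so at the level of strategy you are aligned with what the paper indicates.

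Your forward direction is clean and correct (and in fact does not need specialness at all: it is the standard observation that in any group hyperbolic relative to virtually abelian peripherals, almost malnormality forces $C_G(z)$ into a single peripheral, which cannot swallow a nonabelian free group).

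For the backward direction your outline is the right one, and you correctly flag the two genuinely nontrivial inputs that have to be imported from the theory of special cube complexes: (a) that maximal virtually abelian subgroups are convex-cocompact (and that there are only finitely many conjugacy classes of them), and (b) that such a subgroup coincides with its commensurator, which is what makes the step ``$Q$ virtually abelian $\Rightarrow g\in P$'' go through. These are exactly the structural facts established in \cite{MoiSpecialBraid} to which the paper defers. One small point worth making explicit in your write-up: once you produce $\mathbb{F}_2\leq C_G(z^n)$, the product $\langle \mathbb{F}_2, z^n\rangle$ really is $\mathbb{F}_2\times\mathbb{Z}$ because $z^n$ centralises $\mathbb{F}_2$ while the centre of $\mathbb{F}_2$ is trivial, so $\langle z^n\rangle\cap\mathbb{F}_2=\{1\}$. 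Also, remember that almost malnormality of the \emph{collection} requires checking $gP_1g^{-1}\cap P_2$ for possibly distinct $P_1,P_2\in\mathcal{H}$, not just self-conjugates; the same Tits-alternative argument handles this once you observe that any infinite intersection again lies in a centraliser.
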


\begin{app}
Theorem \ref{thm:RHspecialAbelian} was applied in \cite{MoiSpecialBraid} to determine precisely when a given graph braid group is hyperbolic relative to abelian subgroups. As a particular case, if $\Gamma$ is a connected finite graph, the braid group $B_2(\Gamma)$ is hyperbolic relative to abelian subgroup if and only if $\Gamma$ does not contain a cycle which is disjoint from two other cycles. 
\end{app}

\noindent
In another direction, a very interesting attempt to study relative hyperbolicity from the (simplicial) boundary has been made in \cite[Theorems 3.1 and 3.7]{ThicknessBH}. However, such a criterion seems to be highly difficult to apply. We conclude this section with an open question in the spirit of Problem \ref{problem:relahyp}.

\begin{question}
If a group acting geometrically on a CAT(0) cube complex has exponential divergence, must it be relatively hyperbolic?
\end{question}

\noindent
It was observed in \cite{BHSC} that the answer is positive for right-angled Coxeter groups. 

\section{Acylindrical hyperbolicity}

\subsection{Acylindrical actions}

\noindent
From now on, we are interested in the following question: how can one prove that a given group is \emph{acylindrically hyperbolic} from an action on a CAT(0) cube complex? Let us begin by recalling Osin's definition of \emph{acylindrically hyperbolic} groups \cite{OsinAcyl}. 

\begin{definition}
Let $G$ be a group acting on a metric space $X$. The action is \emph{acylindrical} if, for every $d \geq 0$, there exist constants $N,R \geq 0$ such that, for every points $x,y \in X$ at distance at least $R$ apart, the set $\{ g \in G \mid d(x,gx),d(y,gy) \leq d\}$ has cardinality at most $N$. 
A group is \emph{acylindrically hyperbolic} if it acts acylindrically and non-elementarily (i.e., with a limit set containing at least three points) on some hyperbolic space. 
\end{definition}

\noindent
So, in order to prove that a given group is acylindrically hyperbolic, one possibility is to try to make it act acylindrically on some hyperbolic CAT(0) cube complex. However, it is often difficult to show that a given action is acylindrical. The main reason is that we are considering the elements of a group which do not move ``too much'' a given pair of points. Instead, it would be easier to consider stabilisers. More precisely, a property which should be easier to prove would be:

\begin{definition}
Let $G$ be a group acting on a metric space $X$. The action is \emph{weakly acylindrical} if there exist constants $N,R \geq 0$ such that, for every points $x,y \in X$ at distance at least $R$ apart, the intersection $\mathrm{stab}(x) \cap \mathrm{stab}(y)$ has cardinality at most $N$. 
\end{definition}

\noindent
Interestingly, it may happen that, for some specific spaces, acylindrical and weakly acylindrical actions coincide. For instance, such an equivalence occurs for trees. The first non-trivial example of this phenomenon appears in \cite{BowditchAcyl}, in which Bowditch shows that the mapping class group of a surface acts acylindrically on the associated curve graph. Independently, Martin observed the same phenomenon for hyperbolic CAT(0) square complexes \cite{articleMartin}. This statement was generalised to higher dimensions in \cite[Theorem 8.33]{coningoff}. One gets:

\begin{thm}\label{thm:acylcriterion}
Let $G$ be a group acting on some hyperbolic CAT(0) cube complex $X$. The following statements are equivalent:
\begin{itemize}
	\item the action $G \curvearrowright X$ is acylindrical;
	\item there exist constants $R,N \geq 0$ such that, for every vertices $x,y \in X$ satisfying $d(x,y) \geq R$, the intersection $\mathrm{stab}(x) \cap \mathrm{stab}(y)$ has cardinality at most $R$;
	\item there exist constants $R,N \geq 0$ such that, for every hyperplanes $J_1,J_2$ of $X$ separated by at least $R$ hyperplanes, the intersection $\mathrm{stab}(J_1) \cap \mathrm{stab}(J_2)$ has cardinality at most $N$. 
\end{itemize}
\end{thm}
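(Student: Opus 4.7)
The plan is to prove the chain $(1) \Rightarrow (2)$ (formal), $(2) \Leftrightarrow (3)$ (transfer between vertex- and hyperplane-stabilizers), and $(2) \Rightarrow (1)$ (the main content). The implication $(1) \Rightarrow (2)$ is immediate: any $g \in \mathrm{stab}(x) \cap \mathrm{stab}(y)$ moves $x$ and $y$ by $0$, so the acylindricity bound applied with $d = 0$ suffices.

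For the equivalence $(2) \Leftrightarrow (3)$, I would use the bridge between two disjoint hyperplanes. Given $J_1, J_2$ separated by many hyperplanes, Proposition \ref{prop:cycle} applied to the cycle $(N(J_1), \ast, N(J_2), \ast)$ of carriers produces a combinatorial isometric embedding $[0, p] \times [0, q] \hookrightarrow X$ whose vertical side runs between $J_1$ and $J_2$. Hyperbolicity of $X$ together with Theorem \ref{thm:hyperbolic1} forces $p \leq L$ for a uniform constant $L$. Setting $x_1 = (0,0) \in N(J_1)$ and $x_2 = (0, q) \in N(J_2)$, any element of $\mathrm{stab}(J_1) \cap \mathrm{stab}(J_2)$ preserves the bridge setwise, so its orbit on $x_i$ lies in a set of at most $(L+1)^2$ vertices; feeding the resulting finitely many possibilities for $(g x_1, g x_2)$ into (2) bounds $|\mathrm{stab}(J_1) \cap \mathrm{stab}(J_2)|$ by a multiple of $N$. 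The converse runs symmetrically: from far-apart vertices $x, y$, select hyperplanes $J_1, J_2$ on a geodesic $[x, y]$ near $x$ and $y$ respectively.

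The main step is $(2) \Rightarrow (1)$. Fix $d \geq 0$ and consider $g$ satisfying $d(x, gx), d(y, gy) \leq d$. The aim is to produce two hyperplanes in $\mathcal{H}_{xy}$---the $d(x,y)$ hyperplanes separating $x$ and $y$---stabilized by $g$ and separated by at least $R$ others, so as to invoke the equivalent statement (3). Since the symmetric difference $\mathcal{H}_{xy} \triangle \mathcal{H}_{gx, gy}$ has cardinality at most $2d$, the element $g$ sends all but at most $2d$ hyperplanes of $\mathcal{H}_{xy}$ back into $\mathcal{H}_{xy}$. By Theorem \ref{thm:hyperbolic1}, joins of hyperplanes in $X$ are uniformly $L$-thin, so separation along $[x,y]$ endows $\mathcal{H}_{xy}$ with an almost linear order modulo bounded transversality noise. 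The restriction of $g$ to this ordered set is an approximate translation whose shift $k$ satisfies $|k| \leq d$, since a larger amplitude would push more than $d$ hyperplanes out of $\mathcal{H}_{xy}$ near an endpoint of $[x,y]$. Partitioning the candidate elements $g$ according to $k \in \{-d, \dots, d\}$ and fixing a representative in each class, one reduces to elements acting trivially on the almost-linear order; such elements individually stabilize all but a bounded number of hyperplanes in $\mathcal{H}_{xy}$. Provided $d(x,y)$ is large enough in terms of $R$, $L$, and $d$, Fact \ref{fact:pairwisedisjoint} then extracts two such fixed hyperplanes separated by at least $R$ others, and applying (3) plus the $(2d+1)$ factor from the partition yields the acylindricity bound.

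The hardest part will be the last extraction step: the almost-linear order on $\mathcal{H}_{xy}$ is only defined modulo uniformly bounded transversality noise, so individual hyperplane-fixedness has to be distinguished from the fixedness of an equivalence class of transverse hyperplanes (resolved at the cost of another bounded multiplicative factor), and the Ramsey-type selection of two far-apart fixed hyperplanes requires carefully calibrating how large $d(x, y)$ must be in the various constants $R$, $N$, $L$, and $d$.
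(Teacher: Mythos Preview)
The paper does not actually prove this theorem in the text; it is stated and attributed to \cite[Theorem 8.33]{coningoff}. So there is no in-paper argument to compare against, only the external reference. That said, your outline is essentially the standard strategy used there: (1)$\Rightarrow$(2) is trivial, (2)$\Leftrightarrow$(3) is a bounded-orbit transfer via the bridge between carriers, and (2)$\Rightarrow$(1) is the substantive step, carried out by tracking how an element $g$ with $d(x,gx),d(y,gy)\le d$ acts on the hyperplanes of $\mathcal{H}(x\mid y)$, using hyperbolicity (thin joins) to force this action to be an ``approximate shift'' with boundedly many possible profiles, and then invoking (3) on a pair of commonly-fixed, well-separated hyperplanes. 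Your final paragraph correctly identifies where the work lies.

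Two corrections worth flagging. First, in (2)$\Rightarrow$(3) your appeal to Proposition~\ref{prop:cycle} with the placeholder cycle $(N(J_1),\ast,N(J_2),\ast)$ does not quite work: that proposition needs four pairwise-consecutively-intersecting convex subcomplexes, and there may be no hyperplane transverse to both $J_1$ and $J_2$ to play the role of the $\ast$'s. The cleaner route is to use Proposition~\ref{prop:proj} directly: $\mathrm{proj}_{N(J_1)}(N(J_2))$ and $\mathrm{proj}_{N(J_2)}(N(J_1))$ are convex, $\mathrm{stab}(J_1)\cap\mathrm{stab}(J_2)$-invariant, and have diameter at most $L$ since the hyperplanes crossing them are exactly those transverse to both $J_1$ and $J_2$ (which form an $L$-thin join against the $\ge R$ hyperplanes separating $J_1,J_2$). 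Picking $x_1,x_2$ in these projections then runs exactly as you describe. Second, in (2)$\Rightarrow$(1) the ``partition by shift $k\in\{-d,\dots,d\}$'' is heuristically right but not literally well-defined, since the order on $\mathcal{H}(x\mid y)$ is only modulo transversality noise; the robust formulation (which you gesture at in your last paragraph, and which is what the cited proof does) is to fix a finite family of specific hyperplanes $J_1,\dots,J_m$ along $[x,y]$, observe that each $gJ_i$ lies in a window of bounded size, and partition the candidate $g$'s by the tuple $(gJ_1,\dots,gJ_m)$.
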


\begin{app}
Introduced in \cite{HigmanGroups}, \emph{Higman's group} $H_n$ is defined as
$$H_n = \langle a_1, \ldots, a_n \mid a_i a_{i+1} a_i^{-1}= a_{i+1}^2, \ i \in \mathbb{Z}/n \mathbb{Z} \rangle.$$
This group turns out to be the fundamental group of a negatively-curved polygon of groups if $n \geq 5$, so that $H_n$ acts (with infinite vertex-stabilisers) on a CAT(-1) polygonal complex $X$. In \cite{articleMartin}, Martin subdivided $X$ as a CAT(0) square complex and applied Theorem \ref{thm:acylcriterion} to show that the action $G \curvearrowright X$ is acylindrical. A fortiori, this proves that Higman's group $H_n$ is acylindrically hyperbolic. 
\end{app}

\noindent
So far, we have worked with CAT(0) cube complexes which are hyperbolic with respect to the $\ell^1$-metric. But, as noticed in Section \ref{section:Gromovhyp}, infinite-dimensional CAT(0) cube complexes may be hyperbolic with respect to the $\ell^{\infty}$-metric. Acylindrical actions in this context were considered in \cite{coningoff}. However, we were not able to obtain the exact analogue of Theorem \ref{thm:acylcriterion}: instead, acylindrical actions were replaced with \emph{non-uniformly acylindrical} actions.

\begin{definition}
Let $G$ be a group acting on a metric space $X$. The action is \emph{non-uniformly acylindrical} if, for every $d \geq 0$, there exists some constant $R \geq 0$ such that, for every points $x,y \in X$ at distance at least $R$ apart, the set $\{ g \in G \mid d(x,gx),d(y,gy) \leq d\}$ is finite. 
\end{definition}

\noindent
Notice that, if a group $G$ acts non-elementarily and non-uniformly acylindrically on some hyperbolic space, then $G$ must be acylindrically hyperbolic according to \cite{OsinAcyl} since $G$ contains infinitely many pairwise independent \emph{WPD isometries} (see Section \ref{section:WPD} for a definition). Our analogue of Theorem \ref{thm:acylcriterion} for $\ell^{\infty}$-metrics is:

\begin{thm}\label{thm:nonunifacyl}
Let $G$ be a group acting on some CAT(0) cube complex $X$ endowed with the $\ell^{\infty}$-metric. Suppose that $X$ is hyperbolic with respect to this metric, which we denote by $d_{\infty}$. The following statements are equivalent:
\begin{itemize}
	\item the action $G \curvearrowright (X,d_{\infty})$ is non-uniformly acylindrical;
	\item there exist some constant $R \geq 0$ such that, for every vertices $x,y \in X$ satisfying $d_{\infty}(x,y) \geq R$, the intersection $\mathrm{stab}(x) \cap \mathrm{stab}(y)$ is finite;
	\item there exist some constant $R \geq 0$ such that, for every hyperplanes $J_1,J_2$ of $X$ separated by at least $R$ pairwise disjoint hyperplanes, the intersection $\mathrm{stab}(J_1) \cap \mathrm{stab}(J_2)$ is finite. 
\end{itemize}
\end{thm}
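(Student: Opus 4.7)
My plan is to mirror the cycle $(1)\Rightarrow(2)\Rightarrow(3)\Rightarrow(1)$ underlying Theorem~\ref{thm:acylcriterion}, with one crucial geometric substitution: whereas the $\ell^1$-metric records every separating hyperplane, the $\ell^\infty$-metric only registers \emph{pairwise disjoint} chains of them. The preliminary fact I would establish is that $d_\infty(u,v)$ equals the maximal cardinality of a chain of pairwise disjoint hyperplanes separating $u$ from $v$, the non-trivial direction coming from the observation that two pairwise disjoint hyperplanes cannot both meet a single cube. As an immediate corollary, if $R$ pairwise disjoint hyperplanes separate $J_1$ from $J_2$, then $d_\infty(u,v) \geq R$ for every $u \in N(J_1)$ and $v \in N(J_2)$.

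The implication $(1)\Rightarrow(2)$ is obtained by specialising non-uniform acylindricity to $d=0$. For $(2)\Rightarrow(3)$, I would argue by contraposition: if $\mathrm{stab}(J_1) \cap \mathrm{stab}(J_2)$ is infinite while $J_1,J_2$ are separated by arbitrarily many pairwise disjoint hyperplanes, a two-step orbit reduction (first on the vertex set of $N(J_1)$, then on that of $N(J_2)$), combined with Proposition~\ref{prop:proj} and Lemma~\ref{lem:twomin} to control the set of distance-minimising pairs between $N(J_1)$ and $N(J_2)$, would produce an infinite subgroup fixing a pair of vertices at $\ell^\infty$-distance $\geq R$, contradicting $(2)$.

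The main direction is $(3)\Rightarrow(1)$. Given $d \geq 0$, let $R_0$ be the constant furnished by $(3)$ and fix $R$ sufficiently larger. For vertices $x,y$ with $d_\infty(x,y) \geq R$, choose a maximal pairwise disjoint chain $H_1,\ldots,H_n$ of hyperplanes separating $x$ from $y$. For any $g$ satisfying $d_\infty(x,gx), d_\infty(y,gy) \leq d$, the symmetric difference $\mathcal{H}(x,y) \triangle \mathcal{H}(gx,gy)$ is contained in $\mathcal{H}(x,gx) \cup \mathcal{H}(y,gy)$, a set whose maximal pairwise disjoint chain has length at most $2d$. Since $\{gH_1,\ldots,gH_n\}$ is itself a maximal pairwise disjoint chain separating $gx$ from $gy$, and since $g$ preserves the natural order on any pairwise disjoint chain, $g$ induces an order-preserving partial bijection of $\{H_1,\ldots,H_n\}$ to which I attach a ``shift'' $\phi(g) \in \{-2d,\ldots,2d\}$. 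Whenever $\phi(g_1)=\phi(g_2)$, the composition $g_1^{-1}g_2$ individually fixes $H_i$ for all $i$ in a long subchain; picking two such indices at distance $\geq R_0+2$, Lemma~\ref{lem:twomin} and hypothesis $(3)$ force $g_1^{-1}g_2 \in \mathrm{stab}(H_i) \cap \mathrm{stab}(H_j)$, a finite group. Since there are only $O(d)$ possible shifts, the set of admissible $g$ is a finite union of right translates of finite sets, hence finite.

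The hard part will be the combinatorial core of $(3)\Rightarrow(1)$: namely, extracting the well-defined shift $\phi(g)$ from the purely local behaviour of $g$ on $\{H_1,\ldots,H_n\}$ and arguing that $\phi(g_1)=\phi(g_2)$ really does force $g_1^{-1}g_2$ to stabilise a long subchain. The local action is only an order-preserving partial bijection, not a uniform translation, so $\phi$ must be defined in an averaged sense (e.g.\ via the median index), and a careful pigeonhole exploiting the $2d$-bound on the symmetric difference is needed to pass from ``$g_1$ and $g_2$ have equal averaged shift'' to ``$g_1^{-1}g_2$ fixes hyperplanes $H_i$ over a cofinite range of indices''. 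The subsidiary difficulty, in $(2)\Rightarrow(3)$, is that a setwise hyperplane-stabiliser need not fix any vertex of its carrier; the two-step orbit argument based on Proposition~\ref{prop:proj} and Lemma~\ref{lem:twomin} is what handles this.
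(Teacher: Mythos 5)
First, a caveat: the paper does not actually prove Theorem~\ref{thm:nonunifacyl}; it is quoted from \cite{coningoff}. The closest in-paper model for the intended argument is the proof of Proposition~\ref{prop:HXacyl}, which runs the same kind of pigeonhole for the metrics $\delta_L$, so I will measure your proposal against that.

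Your architecture (the $\ell^\infty$-distance as the length of a maximal chain of pairwise disjoint separating hyperplanes, the trivial $(1)\Rightarrow(2)$, and a pigeonhole on how $g$ moves a long disjoint chain for the main direction) is the right one, but the central pigeonhole in $(3)\Rightarrow(1)$ has a genuine gap. You pigeonhole on a single integer $\phi(g)\in\{-2d,\dots,2d\}$ and claim that $\phi(g_1)=\phi(g_2)$ forces $g_1^{-1}g_2$ to fix the hyperplanes $H_i$ over a long range. This is false: two elements can translate the chain by the same ``averaged shift'' while $g_1H_i\neq g_2H_i$ for every single $i$ (the images need only occupy comparable positions, not coincide), so $g_1^{-1}g_2$ need not stabilise any hyperplane at all. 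The pigeonhole that actually works — and that the paper uses in Proposition~\ref{prop:HXacyl} — is on the \emph{entire induced map} $H_i\mapsto gH_i$ from a long subchain $\{H_p,\dots,H_q\}$ into the set $\mathcal{H}(H_{r-1}\mid H_{s+1})$ of hyperplanes separating two fixed members of the chain; one first shows, using the $d$-bounds \emph{and the uniform thinness of grids coming from the hyperbolicity hypothesis} (which you never invoke in the combinatorial core, and which is needed to prevent $gH_i$ from being transverse to unboundedly many $H_j$), that every $gH_i$ lands in that finite set. Since the number of such maps is finite but not bounded in terms of $d$ alone, one only obtains \emph{non-uniform} acylindricity — your coarse shift-pigeonhole, were it correct, would yield genuine acylindricity with $N=O(d)\cdot\max|\mathrm{stab}(J_1)\cap\mathrm{stab}(J_2)|$, which the author explicitly states could not be achieved; that discrepancy is itself a sign the shift invariant is too weak.

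A secondary gap sits in $(2)\Rightarrow(3)$: the ``two-step orbit reduction'' presumes that $\mathrm{stab}(J_1)\cap\mathrm{stab}(J_2)$ has a finite orbit on $N(J_1)$, which fails in general (a setwise hyperplane stabiliser can act with unbounded orbits on the carrier). The repair again needs the hyperbolicity hypothesis: if $J_1,J_2$ are separated by more than the grid-thinness constant many pairwise disjoint hyperplanes, then Proposition~\ref{prop:proj} bounds the $d_\infty$-diameter of $\mathrm{proj}_{N(J_1)}(N(J_2))$ and $\mathrm{proj}_{N(J_2)}(N(J_1))$, and one then argues (as in the last step of the proof of Proposition~\ref{prop:HXacyl}) that the intersection of the stabilisers stabilises a cube in each carrier, hence has an infinite subgroup fixing a vertex in each; Lemma~\ref{lem:twomin} then guarantees these two fixed vertices are $d_\infty$-far apart. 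Without making the role of hyperbolicity explicit in both places, the proposal does not close.
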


\begin{app}\label{app:smallcancellationproduct}
Define a \emph{small cancellation product} as a C'(1/4)-T(4) small cancellation quotient of a free product. As mentioned in Application \ref{app:smallcancellationhyp}, such a product acts on a (possibly infinite-dimensional) CAT(0) cube complex which is hyperbolic with respect to the $\ell^{\infty}$-metric. In \cite[Theorem 8.23]{coningoff}, it is shown thanks to Theorem \ref{thm:nonunifacyl} that this action is non-uniformly acylindrical, proving that small cancellation products are acylindrically hyperbolic. 
\end{app}

\subsection{Contracting isometries}\label{section:contractingisom}

\noindent
Interestingly, non-hyperbolic CAT(0) cube complexes may also be useful to prove that some groups are acylindrically hyperbolic. Indeed, it follows from \cite[Theorem H]{BBF} (see also \cite{arXiv:1112.2666} and Corollary \ref{cor:altBBF}) that a group acting properly on a CAT(0) cube complex $X$ with a \emph{contracting isometry} must be either virtually cyclic or acylindrically hyperbolic. An isometry $g \in \mathrm{Isom}(X)$ is \emph{contracting} if there exists some $x \in X$ such that the map $n \mapsto g^n \cdot x$ induces a quasi-isometric embedding $\mathbb{Z} \to X$ and such that the orbit $\langle g \rangle \cdot x$ is contracting. 

\medskip \noindent
So the first natural question which interests us is: how to recognize contracting isometries of CAT(0) cube complexes? The first partial answer was given in \cite{BehrstockCharney} in the context of right-angled Artin groups; next, the criterion was generalised in \cite{MR3339446} to uniformly locally finite CAT(0) cube complexes; finally, the following statement was proved in \cite{article3}. (It is worth noticing that, although \cite{BehrstockCharney, MR3339446} and \cite{article3} study contracting isometries with respect to different metrics (respectively the CAT(0) and the combinatorial metrics), a comparison of the characterisations shows that, given a finite-dimensional CAT(0) cube complex, an isometry is contracting with respect to the CAT(0) metric if and only if it is contracting with respect to the combinatorial metric.)

\begin{thm}\label{thm:contractingisom}
Let $X$ be a CAT(0) cube complex and $g \in \mathrm{Isom}(X)$ a loxodromic isometry with $\gamma \subset X$ as a combinatorial axis. The following statements are equivalent:
\begin{itemize}
	\item $g$ is a contracting isometry;
	\item there exists some constant $C\geq 0$ such that every join of hyperplanes $(\mathcal{H}, \mathcal{V})$ satisfying $\mathcal{H} \subset \mathcal{H}(\gamma)$ must be $C$-thin;
	\item $g$ skewers a pair of well-separated hyperplanes.
\end{itemize}
\end{thm}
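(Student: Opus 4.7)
The plan is to establish (i) $\Leftrightarrow$ (ii) by a direct application of Proposition~\ref{prop:contracting} to the axis $\gamma$, and then to prove (ii) $\Leftrightarrow$ (iii) by arguments exploiting the dynamics of $g$ along $\gamma$.

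For (i) $\Leftrightarrow$ (ii), I first observe that $g$ is contracting if and only if $\gamma$ is a contracting convex subcomplex: since $\langle g \rangle$ acts cocompactly on $\gamma$, every orbit $\langle g \rangle \cdot x$ lies at finite Hausdorff distance from $\gamma$, and being contracting is invariant under finite Hausdorff distance (\cite[Lemma 2.18]{article3}). Proposition~\ref{prop:contracting} applied to $Y = \gamma$ characterizes this contracting property by a uniform bound on joins $(\mathcal{H}', \mathcal{V}')$ with $\mathcal{V}' \subset \mathcal{H}(\gamma)$ and $\mathcal{H}' \cap \mathcal{H}(\gamma) = \emptyset$. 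After the relabelling $\mathcal{H} \leftrightarrow \mathcal{V}$, this matches~(ii) exactly, because the apparently missing disjointness condition $\mathcal{V} \cap \mathcal{H}(\gamma) = \emptyset$ is automatic whenever $\mathcal{H} \subset \mathcal{H}(\gamma)$ is non-empty: the hyperplanes of $\mathcal{H}(\gamma)$ are dual to edges of the geodesic $\gamma$ and are therefore pairwise disjoint, so none of them can be transverse to another member of $\mathcal{H}(\gamma)$.

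For (ii) $\Rightarrow$ (iii), I would pick $J_1$ dual to an edge of $\gamma$ and set $J_2 := g^N J_1$ for $N$ large enough that $J_1 \cap J_2 = \emptyset$. Since $\gamma$ is $g$-invariant we have $J_2 \in \mathcal{H}(\gamma)$, and the halfspace structure along $\gamma$ makes it immediate that $g$ skewers the pair $(J_1, J_2)$. To verify well-separatedness, any family $\mathcal{V}$ for which $(\{J_1, J_2\}, \mathcal{V})$ is a grid is in particular a join with $\mathcal{H} := \{J_1, J_2\} \subset \mathcal{H}(\gamma)$, so~(ii) bounds $|\mathcal{V}|$ uniformly.

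Conversely, for (iii) $\Rightarrow$ (ii), assume $g$ skewers an $L$-well-separated pair $J_1, J_2$; up to a bounded-distance modification, I may assume $J_1$ and $J_2$ are dual to edges of $\gamma$. By $g$-equivariance, every pair $(g^k J_1, g^k J_2)$ is $L$-well-separated, and the family $\{g^k J_1\}_{k} \cup \{g^k J_2\}_{k} \subset \mathcal{H}(\gamma)$ tiles the axis. Given a join $(\mathcal{H}, \mathcal{V})$ with $\mathcal{H} \subset \mathcal{H}(\gamma)$, cocompactness of $\langle g \rangle$ on $\gamma$ ensures only finitely many $g$-orbits of hyperplanes in $\mathcal{H}(\gamma)$, so when $|\mathcal{H}|$ is large enough, a pigeonhole argument forces $\mathcal{H}$ to contain two hyperplanes $H_-, H_+$ that sandwich some translated well-separated pair $(g^k J_1, g^k J_2)$ along $\gamma$. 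The key geometric step is then that any pairwise disjoint sub-family of $\mathcal{V}$ transverse to $H_-$ and $H_+$ must also be transverse to $g^k J_1$ and $g^k J_2$; appealing to $L$-well-separatedness then bounds this sub-family, and Fact~\ref{fact:pairwisedisjoint} upgrades this grid-level bound to a join-level bound on $|\mathcal{V}|$, yielding the desired uniform thinness. The main obstacle is precisely this sandwich step: establishing that a pairwise disjoint family transverse to two widely-separated elements of $\mathcal{H}(\gamma)$ is automatically transverse to every well-separated pair sitting strictly between them along $\gamma$. This will rely on the combinatorial convex structure of hyperplane carriers (via Helly's property) and a careful analysis of the relative position of a hyperplane of $X$ with respect to the axis $\gamma$.
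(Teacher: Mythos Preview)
The paper does not itself prove Theorem~\ref{thm:contractingisom}; it is quoted from \cite{article3}. So there is no in-paper argument to compare against, and your sketch has to be judged on its own merits.

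The central error is the assertion that ``the hyperplanes of $\mathcal{H}(\gamma)$ are dual to edges of the geodesic $\gamma$ and are therefore pairwise disjoint.'' This is false in any CAT(0) cube complex of dimension $\geq 2$: a combinatorial geodesic crosses each hyperplane at most once, but distinct hyperplanes it crosses may very well be transverse. The staircase axis of the translation $(1,1)$ in the standard square tiling of $\mathbb{R}^2$ already shows this. Consequently your reduction of (i)$\Leftrightarrow$(ii) to Proposition~\ref{prop:contracting} collapses: the ``missing'' hypothesis $\mathcal{V}\cap\mathcal{H}(\gamma)=\emptyset$ is \emph{not} automatic, so the condition in Proposition~\ref{prop:contracting}(ii) is genuinely weaker than the condition~(ii) of the theorem, and an extra argument is needed to bridge them. (There is a second, smaller issue here: Proposition~\ref{prop:contracting} is stated for convex subcomplexes, and a combinatorial axis is in general not convex; you would first have to pass to its convex hull, noting that $\mathcal{H}(\text{hull of }\gamma)=\mathcal{H}(\gamma)$.)

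Your argument for (ii)$\Rightarrow$(iii) is also incomplete as written. You apply~(ii) to the join $(\{J_1,J_2\},\mathcal{V})$ and conclude that $|\mathcal{V}|$ is bounded; but~(ii) only yields $\min(2,|\mathcal{V}|)\leq C$, which says nothing once $C\geq 2$. To make this work you must take $C+1$ pairwise disjoint hyperplanes $J_1,\ldots,J_{C+1}\in\mathcal{H}(\gamma)$, nested so that each separates its neighbours, and then observe that any hyperplane transverse to both $J_1$ and $J_{C+1}$ is transverse to every $J_i$; only then does~(ii) force $|\mathcal{V}|\leq C$, giving that $J_1$ and $J_{C+1}$ are $C$-well-separated. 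A similar refinement is needed in your (iii)$\Rightarrow$(ii) sketch, where the ``sandwich step'' relies on exactly this kind of nesting, which again is not automatic for arbitrary members of $\mathcal{H}(\gamma)$.
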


\noindent
We recall from \cite{article3} that two hyperplanes $J_1$ and $J_2$ are \emph{well-separated} if there exists some $L \geq 0$ such that any collection of hyperplanes transverse to both $J_1$ and $J_2$ which does not contain facing triples has cardinality at most $L$. Also, an isometry $g \in \mathrm{Isom}(X)$ \emph{skewers} a pair of hyperplanes $J_1$ and $J_2$ if there exist an integer $n \in \mathbb{Z}$ and some halfspaces $D_1,D_2$ respectively delimited by $J_1,J_2$ such that $g^n \cdot D_1 \subset D_2 \subset D_1$. 

\begin{app}
Applying (a special case of) Theorem \ref{thm:contractingisom}, it is proved in \cite{BehrstockCharney} that an element of a right-angled Artin group induced a contracting isometry on the universal cover of the Salvetti complex if and only if it is not contained into a \emph{join} subgroup. (Notice that a flaw in \cite{BehrstockCharney} is mentioned and corrected in \cite[Remark 6.21]{MinasyanOsin}.) As a consequence, a right-angled Artin group $A(\Gamma)$ is acylindrically hyperbolic if and only if $\Gamma$ contains at least two vertices and does not decompose as a \emph{join}; or equivalently, if $A(\Gamma)$ is not cyclic and does not decompose as a direct product. 
\end{app}

\noindent
Alternatively, it is possible to characterise contracting isometries from the boundary of the CAT(0) cube complex which we consider. Based on this idea, the following criterion was proved in \cite{article3}. We refer respectively to Appendix \ref{section:MorseInRAAG} and to \cite{simplicialboundary} for the vocabulary related to the combinatorial boundary and to the simplicial boundary. 

\begin{thm}\label{thm:contractingboundary}
Let $X$ be a locally finite CAT(0) cube complex and $g \in \mathrm{Isom}(X)$ an isometry with a combinatorial axis $\gamma$. The following statements are equivalent:
\begin{itemize}
	\item $g$ is a contracting isometry of $X$;
	\item $\gamma(+ \infty)$ is an isolated point in the combinatorial boundary of $X$; 
\end{itemize}
moreover, if $X$ is uniformly locally finite, the previous statements are also equivalent to:
\begin{itemize}
	\item $\gamma(+ \infty)$ is an isolated point in the simplicial boundary of $X$.
\end{itemize}
\end{thm}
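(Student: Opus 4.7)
The plan is to use Theorem \ref{thm:contractingisom} as the bridge between the geometric condition of being contracting and the combinatorial condition on joins of hyperplanes crossing $\gamma$, and then to translate the latter into a statement of isolation in each of the two boundaries. In both boundaries, $\gamma(+\infty)$ is non-isolated precisely when some auxiliary ray or unidirectional boundary set sharing arbitrarily many hyperplanes with $\gamma$ is transverse to it, and the heart of the proof is to match such data with the existence of thick joins of hyperplanes $(\mathcal{H}, \mathcal{V})$ with $\mathcal{H} \subset \mathcal{H}(\gamma)$.

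For the implication (i) $\Rightarrow$ (ii), I would assume that $\gamma(+\infty)$ is not isolated in the combinatorial boundary: then there exists a combinatorial ray $r$ with $r(+\infty) \neq \gamma(+\infty)$ sharing infinitely many hyperplanes with $\gamma$. Letting $\mathcal{H}$ be the collection of shared hyperplanes beyond some index and $\mathcal{V}$ be the hyperplanes separating the two rays past that same index, the relative position of $r$ and $\gamma$ forces every element of $\mathcal{V}$ to be transverse to every element of $\mathcal{H}$, so one obtains an arbitrarily thick join with $\mathcal{H} \subset \mathcal{H}(\gamma)$, contradicting Theorem \ref{thm:contractingisom}. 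For the converse, I would argue contrapositively. If $g$ is not contracting, Theorem \ref{thm:contractingisom} provides, for every $n \geq 1$, a join $(\mathcal{H}_n, \mathcal{V}_n)$ with $\mathcal{H}_n \subset \mathcal{H}(\gamma)$ and $\min(\#\mathcal{H}_n, \#\mathcal{V}_n) \geq n$. Applying Proposition \ref{prop:cycle} to a cycle of subcomplexes built from $\gamma$ and the carriers of the extremal hyperplanes of $\mathcal{H}_n$ and $\mathcal{V}_n$ yields a combinatorial flat rectangle with one side of length $n$ along $\gamma$. Using local finiteness to perform a diagonal extraction based at a fixed vertex of $\gamma$, I would obtain in the limit a combinatorial ray $r$ crossing infinitely many hyperplanes of $\mathcal{H}(\gamma)$ and yet eventually separated from $\gamma$ by hyperplanes transverse to $\mathcal{H}(\gamma)$, so that $r(+\infty) \neq \gamma(+\infty)$ and $\gamma(+\infty)$ is not isolated.

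For the cocompact case, (ii) $\Leftrightarrow$ (iii) would be proved through the same dictionary, now between unidirectional boundary sets (UBSs) and combinatorial rays: a non-isolated $0$-simplex of the simplicial boundary at $\gamma(+\infty)$ corresponds to a UBS transverse to the one defined by $\gamma$, hence to a thick join $(\mathcal{H},\mathcal{V})$ with $\mathcal{H} \subset \mathcal{H}(\gamma)$, and conversely. Under cocompactness, such a UBS can be realised by an honest combinatorial ray, which brings us back to the combinatorial boundary. The main obstacle I anticipate is the extraction argument in (ii) $\Rightarrow$ (i): one must produce a genuine combinatorial ray from the sequence of flat rectangles along $\gamma$ and ensure that its endpoint is distinct from $\gamma(+\infty)$. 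Local finiteness is what makes the extraction converge, and the transversality built into each $(\mathcal{H}_n, \mathcal{V}_n)$ is what prevents the extracted ray from collapsing back onto $\gamma$; translating this argument through UBSs for the simplicial boundary is where cocompactness becomes indispensable, as it ensures that the finitary information carried by a UBS can be upgraded to an actual combinatorial ray in $X$.
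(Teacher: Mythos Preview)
The paper does not actually prove this theorem: it cites \cite[Theorem~4.17]{article3} for (i)~$\Leftrightarrow$~(ii) and \cite[Proposition~A.1]{article3} together with \cite{HagenSusse} for the equivalence with (iii). Your sketch is therefore supplying the argument that the paper outsources, and the line you take---convert ``contracting'' into the join-of-hyperplanes criterion of Theorem~\ref{thm:contractingisom}, then match thick joins along $\gamma$ with $\prec$-comparable rays at $\gamma(+\infty)$---is the natural one and is essentially correct; it is presumably close to what is done in \cite{article3}.

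Two places deserve tightening. In (i)~$\Rightarrow$~(ii), ``not isolated'' means precisely that some $r$ satisfies $r\prec\gamma$ or $\gamma\prec r$ with $r\not\sim\gamma$; this is stronger than your phrase ``sharing infinitely many hyperplanes'', and it is the $\prec$-relation that makes the transversality claim go through. That transversality---that almost every hyperplane of $\mathcal{H}(\rho_2)\setminus\mathcal{H}(\rho_1)$ is transverse to almost every hyperplane of $\mathcal{H}(\rho_1)$ whenever $\rho_1\prec\rho_2$---is exactly Claim~\ref{claim:raytransverse} in the paper (proved there for another purpose), and you should invoke it rather than assert it. For (ii)~$\Leftrightarrow$~(iii), what cocompactness buys via \cite{HagenSusse} is that minimal UBSs are realised by genuine combinatorial rays, so the simplicial and combinatorial boundaries agree as posets; once that identification is in hand the equivalence is immediate and there is no need to pass through joins a second time.
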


\begin{proof}
The equivalence between the first two points is \cite[Theorem 4.17]{article3}. The equivalence with the third point follows from \cite[Lemma 5.2.7]{Hagenthesis}. 
\end{proof}

\begin{app}
Let $\mathcal{P}= \langle \Sigma \mid \mathcal{R} \rangle$ be a semigroup presentation. The associated \emph{Squier complex} $S(\mathcal{P})$ is the square complex whose:
\begin{itemize}
	\item vertices are the words written over $\Sigma$;
	\item edges are written as $[a,u \to v,b]$ and link two words $aub$ and $avb$ if $u=v$ or $v=u$ is a relation of $\mathcal{R}$;
	\item squares are written as $[a,u \to v,b, p\to q,c]$ and have $aubpc$, $avbpc$, $aubqc$ and $avbqc$ as corners.
\end{itemize}
Given a baseword $w \in \Sigma^+$, the \emph{diagram group} $D(\mathcal{P},w)$ is the fundamental group of $S(\mathcal{P})$ based at $w$. We refer to \cite{MR1396957} for more information on these groups. Theorem~\ref{thm:contractingboundary} was applied to diagram groups in \cite{article3}. As a consequence, an easy method is given to determine whether or not an element of a diagram group induces a contracting isometry on the CAT(0) cube complex constructed by Farley \cite{MR1978047}. A characterisation of acylindrically hyperbolic \emph{cocompact diagram groups} is also provided.
\end{app}

\noindent
Of course, a natural question is: when does a given action on a CAT(0) cube complex contain a contracting isometry? Our following criterion was proved in Caprace and Sageev's seminal paper \cite[Theorem 6.3]{MR2827012}. 

\begin{thm}\label{thm:CS}
Let $G$ be a group acting essentially without fixed point at infinity on some finite-dimensional CAT(0) cube complex $X$. Either $X$ is a product two unbounded subcomplexes or $G$ contains a contracting isometry. If in addition $X$ is locally finite and $G$ acts cocompactly, then the same conclusion holds even if $G$ fixes a point at infinity.
\end{thm}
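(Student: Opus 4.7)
The plan is to reduce the theorem to the criterion of Theorem~\ref{thm:contractingisom}: it suffices to exhibit an element $g \in G$ skewering a pair of \emph{strongly separated} hyperplanes, meaning a pair admitting no common transverse hyperplane. Such pairs are automatically well-separated with constant $L=0$ (no facing triples to worry about), so the skewering element is contracting.

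The first ingredient I would set up is a \emph{double skewering lemma}: for any two nested halfspaces $h_1 \subsetneq h_2$, one can find $g \in G$ with $g h_2 \subsetneq h_1$. I would derive this from a flipping statement (for every halfspace $h$, some $g \in G$ sends the complementary halfspace strictly inside $h$), which uses essentialness together with the absence of a $G$-fixed point at infinity: without flipping, a limit argument on orbits of points lying arbitrarily deep inside $h$ would produce either a fixed point on the boundary or a violation of essentialness. A short ping-pong on the flipping elements attached to $h_1$ and to the complement of $h_2$ then supplies the double skewering element.

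The crux would be to construct, when $X$ does not split as a product of two unbounded subcomplexes, a strongly separated pair of hyperplanes. Arguing by contradiction, assume that every pair of nested halfspaces admits a transverse hyperplane in between. Iterating this using double skewering, and using Fact~\ref{fact:pairwisedisjoint} to extract pairwise-disjoint subfamilies from arbitrarily large configurations (this is where finite-dimensionality is crucial), I would build two infinite families of hyperplanes, each entirely transverse to the other. Propagating transversality through the $G$-action, this must in fact globally partition the whole pocset of hyperplanes into two mutually transverse classes, which is precisely the combinatorial data of a non-trivial product decomposition $X \simeq A \times B$; essentialness forces both factors to be unbounded, contradicting the hypothesis. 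Hence a strongly separated pair exists; double-skewering it produces the desired contracting isometry.

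The main obstacle is the extension to the locally finite cocompact case allowing a $G$-fixed point $\xi$ at infinity, where the flipping lemma outright fails for halfspaces oriented towards $\xi$, and the first paragraph of argument breaks down. My plan here is to exploit cocompactness (finitely many orbits of hyperplanes) and local finiteness (compactness of the Roller boundary and a clean limit calculus) to isolate a $G$-invariant collection of ``unflippable'' halfspaces all pointing towards $\xi$, and then to pass to a $G$-cocompact convex subcomplex $X_0$ on which the action remains essential but now without fixed point at infinity. Applying the previous argument to $X_0$ yields either a contracting isometry of $X_0$ (which is automatically contracting in $X$, since convex subcomplexes of CAT(0) cube complexes are isometrically embedded) or a product decomposition of $X_0$, which can be promoted to one of $X$ using the $G$-action. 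Ensuring that this reduction really produces an \emph{unbounded} splitting, rather than a trivial one, is the delicate technical point, and is where cocompactness enters decisively.
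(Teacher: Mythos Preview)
The paper does not supply its own proof of this statement: it is quoted verbatim as \cite[Theorem~6.3]{MR2827012}, the Caprace--Sageev Rank Rigidity theorem, and used as a black box. So there is no in-paper argument to compare against; one can only compare your outline to the original Caprace--Sageev proof that the paper is invoking.

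For the first part (no fixed point at infinity), your sketch is precisely the Caprace--Sageev strategy: the Flipping Lemma yields Double Skewering, their irreducibility criterion says that the absence of a strongly separated pair of hyperplanes forces a product decomposition, and double-skewering a strongly separated pair gives a contracting element via the well-separation criterion. One caveat: your description of \emph{how} the product decomposition is obtained (``build two infinite families \ldots each entirely transverse to the other \ldots propagating transversality through the $G$-action'') is not quite the actual mechanism; Caprace--Sageev instead show that the relation ``not strongly separated'' on disjoint hyperplanes generates an equivalence relation whose classes are pairwise transverse, and this is where finite-dimensionality enters. But the overall architecture is right.

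The cocompact addendum is where your proposal has a genuine gap. Your plan to ``pass to a $G$-cocompact convex subcomplex $X_0$ on which the action \ldots now [has no] fixed point at infinity'' does not work as stated: the unflippable halfspaces pointing toward a fixed boundary point $\xi$ typically form an infinite descending $G$-invariant chain with \emph{empty} intersection (this is exactly what it means for $\xi$ to lie at infinity in an essential complex), so there is no non-empty convex subcomplex to restrict to; and any $G$-invariant convex subcomplex you did find would still see $\xi$ in its own boundary. Caprace--Sageev do not perform such a reduction. Instead, in the locally finite cocompact setting they establish the Double Skewering Lemma \emph{directly}, bypassing the Flipping Lemma: with finitely many orbits of hyperplanes and properness of $X$, a limiting argument on translates of a halfspace produces the desired nested configuration. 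Your last paragraph would need to be rewritten along those lines.
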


\noindent
Recall that an action $G \curvearrowright X$ on a CAT(0) cube complex $X$ is \emph{essential} if, for every point $x \in X$ and every halfspace $D$, the orbit $G \cdot x$ does not lie in a neighborhood of $D$. It is worth noticing that an action can often be made essential thanks to \cite[Proposition 3.5]{MR2827012}. The boundary which is considered in this statement is the CAT(0) boundary; see \cite[Proposition 2.26]{CIF} to compare with the Roller boundary. For the combinatorial boundary, see \cite{article3}, where it is proved that, under some assumptions on the action, the existence of contracting isometries is equivalent to the existence of an isolated point in the combinatorial boundary. Also, it is worth noticing that, if a group acts on a CAT(0) cube complex with a contracting isometry, then it does fix a point at infinite, as a consequence of the North-South dynamic of contracting isometries; this justifies the corresponding assumption in Caprace and Sageev's statement. 

\begin{cor}\label{cor:acylhypiffcontracting}
Let $G$ be a group acting geometrically on a CAT(0) cube complex $X$. Then $G$ is acylindrically hyperbolic if and only if it is not virtually cyclic and it contains an element inducing a contracting isometry of $X$. 
\end{cor}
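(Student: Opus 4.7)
The plan is to handle the two implications separately. The backward implication ($\Leftarrow$) is immediate from the Bestvina--Bromberg--Fujiwara theorem recalled at the start of this subsection: any group acting properly on a CAT(0) cube complex and containing a contracting isometry is either virtually cyclic or acylindrically hyperbolic, and by assumption $G$ is not virtually cyclic. All the work lies in the forward implication ($\Rightarrow$); an acylindrically hyperbolic group cannot be virtually cyclic by non-elementarity, so the task reduces to producing a contracting isometry of $X$.

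For the forward direction, the strategy is to apply Caprace--Sageev's dichotomy (Theorem~\ref{thm:CS}) to an essentialised version of the action. We would first replace $X$ by a $G$-invariant convex subcomplex $Y \subset X$ on which $G$ acts essentially, via \cite[Proposition~3.5]{MR2827012}; cocompactness of the original action ensures that $Y$ has finite Hausdorff distance from $X$, so $G$ still acts geometrically on $Y$, and $Y$ remains finite-dimensional and locally finite. The cocompact case of Theorem~\ref{thm:CS} then gives two alternatives: either $Y$ decomposes as a product of two unbounded subcomplexes, or some element $g \in G$ induces a contracting isometry of $Y$.

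The main obstacle is excluding the product alternative. If $Y = Y_1 \times Y_2$ with both factors unbounded, then by the algebraic splitting results of Caprace--Sageev a finite-index subgroup of $G$ virtually splits as a direct product $G_1 \times G_2$, where each $G_i$ acts geometrically on $Y_i$ and is therefore infinite. Combined with the commensurability invariance of acylindrical hyperbolicity, this would force a direct product of two infinite groups to be acylindrically hyperbolic; but this contradicts Osin's theorem that the centraliser of every loxodromic WPD element in an acylindrically hyperbolic group is virtually cyclic. Indeed, in such a product the centraliser of a suitably chosen power of a WPD element lies in one factor while being centralised by the entire opposite factor, so both factors must be virtually cyclic and the group becomes virtually $\mathbb{Z}^2$, which is elementary. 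So the second alternative must hold, and we obtain $g \in G$ acting as a contracting isometry of $Y$.

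It remains to transfer the contracting property from $Y$ to $X$. By Lemma~\ref{MorseContracting}, the orbit $\langle g \rangle \cdot x_0 \subset Y$ is Morse in $Y$ for any base vertex $x_0$. Since $Y$ is at finite Hausdorff distance from $X$, post-composing a quasigeodesic of $X$ joining two points of $Y$ with the $1$-Lipschitz combinatorial projection $\mathrm{proj}_Y : X \to Y$ produces a quasigeodesic of $Y$ with controlled constants between the same endpoints; the Morse property of the orbit in $Y$ bounds how far the projection strays, and the Hausdorff bound pulls the estimate back to $X$, so the orbit is Morse in $X$ as well. A second application of Lemma~\ref{MorseContracting}, this time in $X$, converts Morse back into contracting, and $g$ is the desired contracting isometry of $X$.
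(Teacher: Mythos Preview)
Your proof is correct and follows the same overall strategy as the paper: reduce to an essential action via Caprace--Sageev's essential core, apply Theorem~\ref{thm:CS}, and exclude the product alternative. The difference lies in how you rule out the product case. The paper argues geometrically: a group acting geometrically on a product of two unbounded complexes is \emph{unconstricted} (no asymptotic cone has a cut point), or equivalently has linear divergence, and Sisto's result \cite{Sistohypembed} then shows such a group cannot be acylindrically hyperbolic. Your route through the algebraic splitting \cite[Corollary~D]{MR2827012} and centraliser considerations reaches the same contradiction but is more roundabout; the paper's asymptotic-cone argument is shorter and avoids the slightly awkward reasoning about WPD elements in a direct product. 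On the other hand, you are more careful than the paper about the transfer step: the paper simply declares ``without loss of generality the action is essential'' and leaves implicit that a contracting isometry of the essential core $Y$ is already a contracting isometry of the original $X$, whereas you spell this out via the Morse property and the finite Hausdorff distance between $Y$ and $X$.
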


\begin{proof}
As a consequence of \cite[Proposition 3.5]{MR2827012}, we may suppose without loss of generality that the action $G \curvearrowright X$ is special. By applying Theorem \ref{thm:CS}, two cases may happen. Either $G$ contains a contracting isometry, so that $G$ must be either virtually cyclic or acylindrically hyperbolic; or $X$ decomposes as a product of two unbounded subcomplexes. In the latter case, it follows that $G$ unconstricted, i.e., $G$ has not cut points in its asymptotic cones, which implies that $G$ is not acylindrically hyperbolic according to \cite{Sistohypembed}. (Alternatively, we can argue that $G$ has linear divergence, which also implies that it cannot be acylindrically hyperbolic.)
\end{proof}

\noindent
Therefore, contracting isometries play a crucial role in the geometry of groups acting (geometrically) on CAT(0) cube complexes. An interesting problem would be to identify these elements purely algebraically.

\begin{problem}
Let $G$ be a group acting geometrically on some CAT(0) cube complex $X$. Characterize algebraically the elements of $G$ inducing contracting isometries on $X$.
\end{problem}

\noindent
An investigation of the examples mentioned in this article suggests the following answer. Let $G$ be a group acting geometrically on a CAT(0) cube complex. Fix an infinite-order element $g$ and define its \emph{stable centraliser} as
$$SC(g)= \{ h\in G \mid \exists n \in \mathbb{Z} \backslash \{0 \}, \ [h,g^n]=1 \}.$$ 
Is it true that $g$ induces a contracting isometry on $X$ if and only if $SC(g)$ is virtually cyclic? Although the answer is negative in full generality, it turns out to be positive for several families of cube complexes. For instance:

\begin{thm}\label{thm:Egspecial}
Let $G$ be a group acting geometrically on a CAT(0) cube complex $X$. Assume that, for every hyperplane $J$ and every element $g \in G$, the two hyperplanes $J$ and $gJ$ are neither transverse nor tangent. Then an infinite-order element of $G$ defines a contracting isometry of $X$ if and only its stable centraliser is virtually cyclic. 
\end{thm}

\noindent
The scope of this theorem, proved in \cite{ContractingCentraliser}, includes for instance cocompact special groups as defined in \cite{MR2377497}.

\begin{app}
Theorem \ref{thm:Egspecial} was applied to graph braid groups in \cite{MoiSpecialBraid}. As a consequence, it is possible to determine precisely when a given graph braid group is acylindrically hyperbolic. In particular, if $\Gamma$ is any connected topological graph, distinct from a cycle and from a star with three arms, then the braid group $B_n(\Gamma)$ is acylindrically hyperbolic for every $n \geq 1$. 
\end{app}

\subsection{WPD isometries}\label{section:WPD}

\noindent
In the previous section, we mentioned \cite[Theorem H]{BBF} in order to justify the acylindrical hyperbolicity of groups (which are not virtually cyclic) acting properly on CAT(0) cube complexes with one contracting isometry. But the conclusion is in fact more general, allowing actions with large stabilisers. It turns out that groups (which are not virtually cyclic) acting on CAT(0) cube complexes with one \emph{WPD} contracting isometry are acylindrically hyperbolic. (See also Corollary \ref{cor:altBBF}.)

\begin{definition}
Let $G$ be a group acting on a metric space $X$. An element $g \in G$ is \emph{WPD} (for \emph{Weak Proper Discontinuous}) if, for every $d \geq 0$ and every $x \in X$, there exists some $N \geq 0$ such that the set $\{ h \in G \mid d(x,hx),d(g^Nx,hg^Nx) \leq d\}$ is finite. 
\end{definition}

\noindent
This motivates the following question: when is a contracting isometry WPD? The following answer was proved in \cite{MoiAcylHyp}; compare with Theorem \ref{thm:contractingisom}. 

\begin{thm}\label{thm:WPDcontracting}
Let $X$ be a CAT(0) cube complex and $g \in \mathrm{Isom}(X)$ an isometry. Then $g$ is a contracting isometry if and only if $g$ skewers a pairs of well-separated hyperplanes $J_1,J_2$ such that $\mathrm{stab}(J_1) \cap \mathrm{stab}(J_2)$ is finite.
\end{thm}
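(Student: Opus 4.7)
The ``if'' direction is immediate from Theorem \ref{thm:contractingisom}: skewering any pair of well-separated hyperplanes already forces $g$ to be contracting, and the finiteness of $\mathrm{stab}(J_1) \cap \mathrm{stab}(J_2)$ plays no role in this direction. So my effort goes entirely into the converse.

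Assuming $g$ is contracting, Theorem \ref{thm:contractingisom} furnishes a pair of well-separated hyperplanes $(H_1,H_2)$ skewered by $g$; after possibly replacing them by hyperplanes closer to a combinatorial axis $\gamma$ of $g$, I may arrange that both $H_i$ meet $\gamma$. The plan is to promote this pair to one with finite stabilizer intersection by translating far along the axis: for $N$ large, set $J_1 := H_1$ and $J_2 := g^N H_2$. Since ``skewered by $g$'' and ``well-separated'' are both $\langle g \rangle$-invariant properties, $(J_1,J_2)$ is again a well-separated pair skewered by $g$. The task then reduces to showing that
\[
S_N := \mathrm{stab}(H_1) \cap g^N \mathrm{stab}(H_2) g^{-N}
\]
is finite for some $N$.

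To analyze $S_N$, note that any $\phi \in S_N$ preserves both $J_1$ and $J_2$, hence the ``bridge'' between them (via Proposition \ref{prop:proj} and Lemma \ref{lem:twomin}) as well as the set of hyperplanes transverse to both. Well-separation of $(J_1,J_2)$ caps the latter set (no long chains without facing triples), and combined with Lemma \ref{lem:fellowtravel} this forces $\phi$ to fellow-travel the axial segment of $\gamma$ running from $H_1$ to $g^N H_2$, up to a bounded error. As $N$ grows, this segment grows linearly, so $\phi$ must commute with $g$ up to a uniformly bounded error on a larger and larger piece of $\gamma$. A pigeonhole argument, using the $\langle g \rangle$-periodicity of the axis and the north–south dynamics of contracting isometries recalled in Section \ref{section:contractingisom}, then forces any infinite family inside $S_N$ to produce two elements differing by a power of $g$ that fixes $H_1$ and $H_2$ simultaneously, contradicting that $g$ skewers them.

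The hard part is exactly this last quantitative step: extracting genuine finiteness of $S_N$ (rather than a softer ``bounded displacement'' statement) from the combination of well-separation and the contracting-axis dynamics. No single lemma in the excerpt delivers this directly, so the argument must blend the Helly and projection tools of Section 2 with the contraction criterion of Proposition \ref{prop:contracting} and the north–south dynamics encoded in Theorem \ref{thm:CS}, and it is here that a genuinely new estimate is required.
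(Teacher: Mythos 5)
The paper offers no proof of this theorem (it is quoted from \cite{MoiAcylHyp}), but your proposal cannot be completed because you are proving the wrong statement. The label \texttt{thm:WPDcontracting}, the sentence introducing the theorem (``when is a contracting isometry WPD?''), and the sentence following it (``So we know how to recognize WPD contracting isometries'') show that the intended assertion is that $g$ is a \emph{WPD} contracting isometry (with respect to some ambient group action, which the displayed statement also omits) if and only if it skewers a pair of well-separated hyperplanes with finite stabiliser intersection; the word ``WPD'' has simply dropped out of the statement. Read literally, as you read it, the ``only if'' direction is false. Take $X$ to be the $3$-regular tree and $g$ a translation along a bi-infinite geodesic $\gamma$, with stabilisers computed in the full automorphism group. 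Then $g$ is contracting, and since no two hyperplanes of a tree are transverse, every pair of hyperplanes is well-separated; but any hyperplane skewered by $g$ is dual to an edge of $\gamma$, and the common stabiliser of two such edges contains the infinite group of automorphisms fixing the segment between them and permuting the branches hanging off its interior vertices. So no pair with finite stabiliser intersection exists, even though $g$ is contracting. This shows that the finiteness of your set $S_N$ cannot be extracted from contraction and well-separation alone: your argument never invokes a properness or WPD hypothesis on the action, and the example shows that Lemma \ref{lem:fellowtravel}, the projection lemmas, and the north--south dynamics are collectively insufficient without one. The ``genuinely new estimate'' you flag at the end is not merely hard; it does not exist.

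Both halves of your proposal therefore miss the actual content. In the ``if'' direction, the finiteness of $\mathrm{stab}(J_1) \cap \mathrm{stab}(J_2)$ is not decorative: Theorem \ref{thm:contractingisom} only yields that $g$ is contracting, whereas the point of the theorem is to verify the WPD condition, i.e.\ the finiteness of $\{ h \mid d(x,hx), d(g^Nx,hg^Nx) \leq d \}$, and that is precisely where the finite stabiliser intersection (combined with the bounded projections coming from well-separation) must be used. In the ``only if'' direction, the WPD hypothesis is the only possible source of the finiteness you are trying to obtain by pigeonhole: the correct argument does start, as you do, from a skewered well-separated pair $(H_1, g^NH_2)$, but then observes that any element of $S_N$ displaces two points near $N(J_1)$ and $N(J_2)$ --- which are far apart for large $N$ --- by a uniformly bounded amount, and concludes by applying the WPD property of $g$ directly. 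Your write-up should be reorganised around the corrected statement, with the WPD hypothesis and conclusion made explicit in each direction.
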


\noindent
So we know how to recognize WPD contracting isometries. But now we want to be able to show that such isometries exist. The first result in this direction was obtained in \cite{MinasyanOsin} in the context of trees (which are one-dimensional CAT(0) cube complexes, and hyperbolic so that every loxodromic isometry turns out to be contracting). 

\begin{thm}\label{thm:MO}
Let $G$ be a group acting minimally on a simplicial trees $T$. Suppose that $G$ does not fix any point of $\partial T$. If there exist two vertices $u,v \in T$ such that $\mathrm{stab}(u) \cap \mathrm{stab}(v)$ is finite, then $G$ contains a WPD isometry. A fortiori, $G$ is either virtually cyclic or acylindrically hyperbolic. 
\end{thm}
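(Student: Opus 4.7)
The plan is to exhibit a loxodromic element $g \in G$ whose translation axis $\gamma_g$ contains the segment $[u,v]$, and then to apply Theorem \ref{thm:WPDcontracting} to conclude that $g$ is a WPD contracting isometry. Three observations pave the way. First, the tree $T$ is a $1$-dimensional CAT(0) cube complex, hence $0$-hyperbolic, so every loxodromic isometry of $T$ is automatically contracting. Second, no two hyperplanes of a tree are transverse, so any pair of distinct hyperplanes is trivially well-separated in the sense of Theorem \ref{thm:WPDcontracting}. Third, if an isometry of $T$ fixes two edges $e_1,e_2$ pointwise, then it fixes the entire geodesic $[e_1,e_2]$ pointwise, so $\mathrm{stab}(e_1) \cap \mathrm{stab}(e_2)$ coincides with the pointwise stabilizer of $[e_1,e_2]$.

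\medskip

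Granted such a loxodromic $g$, I would choose edges $e_1, e_2$ on $\gamma_g$ lying on opposite sides of $[u,v]$, so that $[u,v] \subset [e_1, e_2]$. Then $\mathrm{stab}(e_1) \cap \mathrm{stab}(e_2) \subseteq \mathrm{stab}(u) \cap \mathrm{stab}(v)$, which is finite by hypothesis; moreover, after replacing $g$ by a suitable power if necessary, $g$ skewers the hyperplanes dual to $e_1$ and $e_2$. Theorem \ref{thm:WPDcontracting} then yields that $g$ is a WPD contracting isometry of $T$. The \emph{a fortiori} statement follows from \cite{OsinAcyl}: $G$ acts on the hyperbolic space $T$ with a WPD loxodromic, and the action is non-elementary, since $G$ fixes no point of $\partial T$ and (by minimality, unless $T$ is a point) no vertex of $T$ either; consequently $G$ is virtually cyclic or acylindrically hyperbolic.

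\medskip

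It remains to construct a loxodromic $g \in G$ whose axis contains the prescribed segment $[u,v]$, and this is the step I expect to be the main obstacle. My plan is to appeal to the standard theory of non-elementary group actions on trees: since the action $G \curvearrowright T$ is minimal and fixes no point at infinity, it is of general type, so $G$ contains two loxodromic elements $h_1, h_2$ whose axes $\gamma_1, \gamma_2$ have four pairwise distinct endpoints in $\partial T$. By conjugating $h_1, h_2$ within $G$ --- using again the absence of fixed point in $T \cup \partial T$ --- I would arrange that $\gamma_1$ and $\gamma_2$ are disjoint and that the bridge between them (the unique geodesic segment realising $d(\gamma_1, \gamma_2)$) contains $[u,v]$. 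A standard ping-pong argument applied to the pair $(h_1^N, h_2^N)$ for $N$ sufficiently large then produces a loxodromic element whose axis follows $\gamma_1$, crosses the bridge entirely, and continues along $\gamma_2$, hence contains $[u,v]$. The delicate point lies precisely here: one must place the conjugated axes carefully and control translation lengths so that the resulting ping-pong axis \emph{genuinely contains} $[u,v]$ rather than merely passes close to it. Once this is carried out, the remainder of the argument is formal.
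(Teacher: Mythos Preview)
The paper does not give its own proof of this theorem: it is quoted as a result of Minasyan and Osin \cite{MinasyanOsin} and used as a model that Theorems \ref{thm:CM1} and \ref{thm:CM2} generalise to higher-dimensional cube complexes. So there is nothing to compare against directly, but your strategy is a natural one and fits well with the paper's framework, since it deduces the tree case from the general cubical criterion of Theorem \ref{thm:WPDcontracting}.

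Your reduction is correct: once you have a loxodromic $g$ whose axis contains $[u,v]$, the hyperplanes $J_1,J_2$ dual to edges $e_1,e_2$ on either side of $[u,v]$ are automatically well-separated (no two hyperplanes of a tree are transverse), a suitable power of $g$ skewers them, and $\mathrm{stab}(J_1)\cap\mathrm{stab}(J_2)$ is contained in the pointwise stabiliser of $[e_1,e_2]$, hence in $\mathrm{stab}(u)\cap\mathrm{stab}(v)$, which is finite. (Your remark that an element stabilising both $J_1$ and $J_2$ cannot invert either edge is needed here and is correct.) Theorem \ref{thm:WPDcontracting} then applies.

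The only real gap is the one you flag yourself: producing a loxodromic whose axis contains the prescribed segment $[u,v]$. Your ping-pong sketch can be made to work, but the step ``by conjugating $h_1,h_2$ within $G$ \ldots\ I would arrange that the bridge between them contains $[u,v]$'' is exactly where the difficulty hides, and you have not explained how to achieve it. A cleaner route is the standard fact that for a minimal action without fixed end, the tree $T$ equals the union of translation axes, and in fact every finite arc of $T$ lies on some axis: given an edge $e$ of $[u,v]$, take a loxodromic $h$ with $e\subset\mathrm{axis}(h)$; if the axis leaves $[u,v]$ at some interior branch point $w$, use an element of $G$ moving the end of $\mathrm{axis}(h)$ beyond $w$ (such an element exists because $G$ fixes no end) to produce, via the product $h^{\pm N}(gh g^{-1})^{\pm N}$, a loxodromic whose axis continues further along $[u,v]$; iterate. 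This is the argument used in \cite{MinasyanOsin}, and filling it in would complete your proof.
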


\noindent
Combined with Bass-Serre theory, this criterion turns out to be extremely fruitful. 

\begin{app}
As shown in \cite{SQonerelator}, one-relator groups with at least three generators split as HNN extensions. In \cite{MinasyanOsin}, Theorem \ref{thm:MO} is applied to the action on the corresponding Bass-Serre tree. Thus, one-relator groups with at least three generators are acylindrically hyperbolic.
\end{app}

\begin{app}
For any field $k$, let $k[x,y]$ denote the algebra of polynomials on two variables with coefficients in $k$. It is known that $k[x,y]$ splits as an amalgamated product, see for instance \cite{AutoPolynomial}; and Theorem \ref{thm:MO} is applied to the action on the corresponding Bass-Serre tree in \cite{MinasyanOsin}. Thus, the group $k[x,y]$ is acylindrically hyperbolic.
\end{app}

\begin{app}
Let $M$ be a compact irreducible 3-manifold and $G$ (a subgroup of) the fundamental group of $M$. By applying Theorem \ref{thm:MO} to the action of $G$ on the Bass-Serre tree associated to the JSJ-decomposition of $M$, it is proved in \cite{MinasyanOsin} that three exclusive cases may happen: $G$ is acylindrically hyperbolic; or $G$ is virtually polycyclic; or $G$ contains an infinite cyclic normal subgroup $Z$ such that $G/Z$ is acylindrically hyperbolic.
\end{app}

\begin{app}
Let $\Gamma$ be simplicial graph with at least two vertices and $\mathcal{G}$ a collection of non-trivial groups indexed by $V(\Gamma)$. To any vertex of $\Gamma$ corresponds a natural decomposition of the graph product $\Gamma \mathcal{G}$ as an amalgamated product. By applying Theorem \ref{thm:MO} to the collection of actions of $\Gamma \mathcal{G}$ on the corresponding Bass-Serre trees, it is proved in \cite{MinasyanOsin} that $\Gamma \mathcal{G}$ is virtually cyclic or acylindrically hyperbolic if $\Gamma$ does not split as a join. 
\end{app}

\noindent
Theorem \ref{thm:MO} was generalised in \cite{ChatterjiMartin} to higher dimensional CAT(0) cube complexes which are ``barely'' hyperbolic, i.e., which does not split as a Cartesian product (seeing this property as a hyperbolic behavior is motivated by Theorem \ref{thm:CS}). (An alternative proof of the next statement, based on Theorem \ref{thm:WPDcontracting}, can be found in \cite{MoiAcylHyp}.)

\begin{thm}\label{thm:CM1}
Let $G$ be a group acting essentially and non-elementarily on an irreducible finite-dimensional CAT(0) cube complex. If there exist two hyperplanes whose stabilisers intersect along a finite subgroup, then $G$ contains a WPD element which skewers a pair of \"uber-separated hyperplanes. A fortiori, $G$ is acylindrically hyperbolic. 
\end{thm}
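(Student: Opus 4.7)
The strategy combines Caprace--Sageev's existence result with the WPD characterisation of Theorem \ref{thm:WPDcontracting}: it suffices to produce a contracting isometry that skewers a well-separated pair of hyperplanes whose stabilisers have finite intersection, and then to upgrade ``well-separated'' to ``\"uber-separated''. Since the action $G \curvearrowright X$ is essential and non-elementary on an irreducible finite-dimensional CAT(0) cube complex, Theorem \ref{thm:CS} furnishes a contracting isometry $g_0 \in G$, and by Theorem \ref{thm:contractingisom} this isometry skewers some pair of well-separated hyperplanes $(W_1,W_2)$ along a combinatorial axis $\gamma$. The a priori problem is that $\mathrm{stab}(W_1) \cap \mathrm{stab}(W_2)$ need not be finite, so $g_0$ itself need not be WPD.

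The heart of the argument is to ``import'' the finite-stabiliser-intersection condition from the hypothesised pair $(H_1,H_2)$ onto the axis of some contracting isometry. I would proceed by a ping-pong argument: non-elementarity (combined with Theorem \ref{thm:CS} applied to suitable subgroups) furnishes a second contracting isometry $h_0$ independent from $g_0$, and combinations of $g_0$, $h_0$ and their conjugates produce an element $k \in G$ carrying $H_1$ and $H_2$ deep into the opposite halfspaces delimited by the well-separated pair $(g_0^N W_1, g_0^{-N} W_2)$ for large $N$. A sufficiently high power $g_0^M$ then skewers $(kH_1, kH_2)$, and since
\[
\mathrm{stab}(kH_1) \cap \mathrm{stab}(kH_2) = k \bigl( \mathrm{stab}(H_1) \cap \mathrm{stab}(H_2) \bigr) k^{-1}
\]
is a finite conjugate, Theorem \ref{thm:WPDcontracting} yields a WPD contracting element.

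To strengthen well-separation to \"uber-separation, one passes to a still higher power of the skewering element: replacing $g_0^M$ by $g_0^{rM}$ arbitrarily widens the gap between the skewered hyperplanes, and Fact \ref{fact:pairwisedisjoint} together with the finite-dimensionality of $X$ and the join criterion in Theorem \ref{thm:contractingisom} converts a sufficiently deep gap between well-separated hyperplanes into genuine \"uber-separation. The acylindrical hyperbolicity of $G$ is then automatic from Osin's criterion, since one WPD contracting element, together with non-elementarity, supplies infinitely many pairwise independent WPD loxodromics and hence a non-elementary acylindrical action on a hyperbolic space.

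The main obstacle is the ping-pong construction of $k$: one must synchronise the contracting dynamics of $g_0$ and $h_0$ with the purely algebraic finiteness condition coming from $(H_1,H_2)$, and in particular ensure that the translates $kH_1, kH_2$ are positioned transversely to $\gamma$ rather than parallel to hyperplanes already stabilised by $g_0$. It is precisely at this point that the \emph{irreducibility} of $X$ is genuinely used: it prevents the orbit $G \cdot \{H_1,H_2\}$ from being confined to a single factor of a product decomposition, and hence from being ``invisible'' to the contracting axis produced by Theorem \ref{thm:CS}.
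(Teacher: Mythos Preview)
The paper does not actually prove this theorem; it is quoted from \cite{ChatterjiMartin}, with the remark that an alternative proof based on Theorem~\ref{thm:WPDcontracting} appears in \cite{MoiAcylHyp}. So there is no in-paper proof to compare against, but your proposal can still be assessed on its own terms, and it has a genuine gap.

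The key input to Theorem~\ref{thm:WPDcontracting} is a pair of hyperplanes that is \emph{simultaneously} well-separated and has finite stabiliser intersection. Your construction produces $(kH_1,kH_2)$, a conjugate of the hypothesised pair, so the stabiliser intersection is indeed finite. But you never establish well-separation of $(kH_1,kH_2)$. Nothing in the hypothesis says $H_1$ and $H_2$ are well-separated, or even disjoint; they could be transverse, or admit infinitely many common transversals. Placing $kH_1$ and $kH_2$ on opposite sides of a well-separated pair $(g_0^N W_1,\,g_0^{-N}W_2)$ does not help: two hyperplanes lying \emph{outside} a well-separated pair are under no constraint relative to each other. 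So the appeal to Theorem~\ref{thm:WPDcontracting} is not justified.

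Two further problems compound this. First, it is unclear why $g_0^M$ should skewer $kH_1$ or $kH_2$ at all: a hyperplane pushed ``deep into a halfspace'' of $W_1$ may be disjoint from the entire $\langle g_0\rangle$-orbit of $W_1$ rather than nested with it. Second, your \"uber-separation upgrade is circular: replacing $g_0^M$ by $g_0^{rM}$ does not change the pair $(kH_1,kH_2)$ being skewered, so the ``gap'' between them is untouched. What does widen is the gap between, say, $kH_1$ and $g_0^{rM}kH_1$, but that pair has no reason to have finite stabiliser intersection.

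The actual arguments in \cite{ChatterjiMartin} and \cite{MoiAcylHyp} proceed in the opposite order. One first uses irreducibility and essentiality (this is where the real work beyond Theorem~\ref{thm:CS} lies) to manufacture \"uber-separated hyperplanes and an element skewering them. The rigidity of \"uber-separation then lets one sandwich a translate of the pair $(H_1,H_2)$ between \"uber-separated hyperplanes along the axis in such a way that the finite-stabiliser condition is inherited by an \"uber-separated pair, rather than trying to impose well-separation on $(H_1,H_2)$ after the fact.
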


\noindent
Two hyperplanes $J_1$ and $J_2$ are \emph{\"uber-separated} if no hyperplane is transverse to both of them and if any two hyperplanes transverse to $J_1,J_2$ respectively must be disjoint. Notice that it follows from Theorem \ref{thm:contractingisom} that an isometry which skewers a pair of \"uber-separated hyperplanes must be contracting since two \"uber-separated hyperplanes are clearly well-separated. An interesting consequence of Theorem \ref{thm:CM1} is:

\begin{cor}
Let $G$ be a group acting essentially and non-elementarily on an irreducible finite-dimensional CAT(0) cube complex. If the action is non-uniformly weakly acylindrical, then $G$ is acylindrically hyperbolic. 
\end{cor}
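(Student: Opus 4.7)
The plan is to apply Theorem \ref{thm:CM1}, whose structural hypotheses (essential non-elementary action on an irreducible finite-dimensional CAT(0) cube complex) already match ours: I only need to exhibit a pair of hyperplanes $J_1,J_2$ of $X$ with finite common stabiliser. Theorem \ref{thm:CS} supplies a contracting isometry $g \in G$, and Theorem \ref{thm:contractingisom} then produces a pair of well-separated hyperplanes skewered by a power of $g$. Iterating $g$, I can further arrange that the well-separated pair $J_1,J_2$ satisfies $d(N(J_1),N(J_2)) \geq R$, where $R$ is the constant provided by the non-uniform weak acylindricity.

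I would next analyse the \emph{bridge} between $J_1$ and $J_2$, namely the convex hull of $\mathrm{proj}_{N(J_1)}(N(J_2)) \cup \mathrm{proj}_{N(J_2)}(N(J_1))$. The classical bridge decomposition provides a product structure $A \times [0,d]$, with $A = \mathrm{proj}_{N(J_1)}(N(J_2))$ and $d = d(N(J_1),N(J_2))$. By Proposition \ref{prop:proj}, the hyperplanes crossing $A$ are precisely those intersecting both $N(J_1)$ and $N(J_2)$, and so are essentially the hyperplanes transverse to both $J_1$ and $J_2$. Well-separatedness bounds the cardinality of any such collection containing no facing triple, and Fact \ref{fact:pairwisedisjoint} (applicable since $X$ is finite-dimensional) then upgrades this to a bound on the total number of hyperplanes transverse to both $J_1,J_2$. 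Hence $A$ has bounded diameter, the bridge is a bounded convex subcomplex, and in the finite-dimensional setting it has only finitely many vertices.

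Finally, the subgroup $S := \mathrm{stab}(J_1) \cap \mathrm{stab}(J_2)$ stabilises the bridge setwise, hence acts on its finite vertex-set through a homomorphism to a finite symmetric group. The product decomposition of the bridge supplies a combinatorial geodesic of length $d \geq R$ between vertices of $N(J_1)$ and $N(J_2)$, and therefore two vertices $v,w$ of the bridge at distance at least $R$ which are fixed by the kernel of this homomorphism. The kernel then embeds into $\mathrm{stab}(v) \cap \mathrm{stab}(w)$, which is finite by non-uniform weak acylindricity, so $S$ itself is finite and Theorem \ref{thm:CM1} concludes that $G$ is acylindrically hyperbolic. The main obstacle in this outline is the boundedness of the bridge: it is there that the well-separatedness of $J_1,J_2$ is essential, via Fact \ref{fact:pairwisedisjoint} to pass from ``no facing triples of transverse hyperplanes'' to ``boundedly many transverse hyperplanes''.
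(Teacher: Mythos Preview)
Your overall strategy --- reduce to Theorem~\ref{thm:CM1} by exhibiting a pair of hyperplanes with finite common stabiliser, producing well-separated hyperplanes via Theorems~\ref{thm:CS} and~\ref{thm:contractingisom} and then analysing the bridge --- is sound and matches the paper's intended (unwritten) deduction from Theorem~\ref{thm:CM1}. Two steps in your execution, however, do not work as stated.

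First, your appeal to Fact~\ref{fact:pairwisedisjoint} is a misreading: that fact extracts a large \emph{grid} from a large \emph{join}; it does not bound the total number of hyperplanes transverse to both $J_1$ and $J_2$, and indeed that number can be infinite (an infinite star already shows this). What you actually need, and what follows directly from $L$-well-separatedness, is a bound on the \emph{diameter} of $A = \mathrm{proj}_{N(J_1)}(N(J_2))$: for $a,b \in A$, the hyperplanes separating them are transverse to both $J_1,J_2$ and contain no facing triple, so $d(a,b) \leq L$.

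Second, bounded diameter does not force finitely many vertices without local finiteness, so the ``homomorphism to a finite symmetric group'' step is illegitimate. The fix is the CAT(0) fixed-point theorem: $S = \mathrm{stab}(J_1) \cap \mathrm{stab}(J_2)$ acts on the bounded convex subcomplexes $A \subset N(J_1)$ and $A' = \mathrm{proj}_{N(J_2)}(N(J_1)) \subset N(J_2)$, hence stabilises a cube in each; a finite-index subgroup $S_0 \leq S$ then fixes vertices $v_1 \in N(J_1)$, $v_2 \in N(J_2)$ with $d(v_1,v_2) \geq d(N(J_1),N(J_2)) \geq R$. Non-uniform weak acylindricity gives $|S_0| < \infty$, hence $|S| < \infty$, and Theorem~\ref{thm:CM1} concludes. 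This is exactly the argument the paper itself runs at the end of the proof of Proposition~\ref{prop:HXacyl}.
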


\noindent
An action of a group $G$ on a metric space $X$ is \emph{non-uniformly weakly acylindrical} if, for every $d \geq 0$, there exists some constant $R \geq 0$ such that, for every points $x,y \in X$ at distance at least $R$ apart, the intersection $\mathrm{stab}(x) \cap \mathrm{stab}(y)$ is finite. Notice that we met this condition in Theorem \ref{thm:nonunifacyl}. 

\begin{app}
Let $\Gamma$ be a \emph{Coxeter graph}, i.e., a finite simplicial graph endowed with a map $m : E(\Gamma) \to \mathbb{N}$ labelling its edges. The corresponding \emph{Artin group} is defined by the presentation
$$A= \langle V(\Gamma) \mid \underset{\text{$m(u,v)$ letters}}{\underbrace{uvu \cdots}} = \underset{\text{$m(u,v)$ letters}}{\underbrace{vuv\cdots}}, \ (u,v) \in E(\Gamma) \rangle.$$
The Artin group $A$ is of \emph{FC type} if, for every complete subgraph $\Lambda \subset \Gamma$, the Coxeter group
$$\langle V(\Lambda) \mid w^2=1, \underset{\text{$m(u,v)$ letters}}{\underbrace{uvu \cdots}} = \underset{\text{$m(u,v)$ letters}}{\underbrace{vuv\cdots}}, \ w \in V(\Gamma), (u,v) \in E(\Gamma) \rangle$$
is finite. Such an Artin group acts on the corresponding \emph{Deligne complex}, which turns out to be a CAT(0) cube complex \cite{MR1303028}. Theorem \ref{thm:CM1} is applied to this complex in \cite{ChatterjiMartin}, proving that Artin groups of FC types whose underlying Coxeter graphs have diameter at least three are acylindrically hyperbolic. Very recently, this result has been generalised in a wider context by \cite{CliqueCubeComplex}. 
\end{app}

\noindent
Another generalisation of Theorem \ref{thm:MO} was proved in \cite{ChatterjiMartin}. 

\begin{thm}\label{thm:CM2}
Let $G$ be a group acting essentially and non-elementarily on an irreducible finite-dimensional cocompact CAT(0) cube complex with no free face. If there exist two points whose stabilisers intersect along a finite subgroup, then $G$ contains a WPD element which skewers a pair of \"uber-separated hyperplanes. A fortiori, $G$ is acylindrically hyperbolic. 
\end{thm}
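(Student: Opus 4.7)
The strategy is to reduce to Theorem \ref{thm:CM1}: starting from the two points $x,y \in X$ with $\mathrm{stab}(x) \cap \mathrm{stab}(y)$ finite, I would produce a pair of hyperplanes $J_1, J_2$ such that $\mathrm{stab}(J_1) \cap \mathrm{stab}(J_2)$ is finite, and then invoke Theorem \ref{thm:CM1} to obtain a WPD element skewering a pair of \"uber-separated hyperplanes. The cocompactness and no-free-face hypotheses, which are absent from Theorem \ref{thm:CM1}, are precisely what should power this reduction from points to hyperplanes.

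For the reduction, I would first exploit cocompactness (which forces local finiteness): the stabiliser of any vertex $v$ permutes the finitely many hyperplanes adjacent to $v$, so $\mathrm{stab}(v)$ has a finite-index subgroup fixing each such hyperplane setwise. I would then use the no-free-face hypothesis in the crucial direction, namely to establish that the joint intersection $\bigcap_{J \sim v} \mathrm{stab}(J)$ of the stabilisers of the hyperplanes adjacent to $v$ is commensurable to $\mathrm{stab}(v)$ itself. The point is that in the absence of free faces, the hyperplanes through the carrier of $v$ are rich enough to pin down $v$ among the vertices of $X$: any element fixing all of them setwise should be forced to fix $v$ up to finite index, since otherwise one could produce a free face in the link of $v$. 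Applying this to $x$ and $y$, the finiteness of $\mathrm{stab}(x) \cap \mathrm{stab}(y)$ propagates to the finiteness of $\bigl( \bigcap_{J \sim x} \mathrm{stab}(J) \bigr) \cap \bigl( \bigcap_{J' \sim y} \mathrm{stab}(J') \bigr)$, a condition involving only finitely many hyperplanes.

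The main obstacle is then to extract from this a single pair $(J,J')$ with $\mathrm{stab}(J) \cap \mathrm{stab}(J')$ finite. A priori, a finite intersection of infinite subgroups need not have any pair with finite intersection: witness the coordinate hyperplane stabilisers in $\mathbb{Z}^n$, which pairwise intersect in $\mathbb{Z}^{n-2}$ yet have trivial total intersection. Ruling out this product-like obstruction is exactly the role of the irreducibility hypothesis on $X$: were every such pair to have infinite stabiliser intersection, one should be able to assemble these infinite commensurations, using Caprace-Sageev's flipping and double-skewering lemmas together with the contracting isometry furnished by Theorem \ref{thm:CS}, into a nontrivial Cartesian splitting of $X$, contradicting irreducibility. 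This translation from a combinatorial condition on pairwise hyperplane stabiliser intersections to a geometric product decomposition of $X$ is, I expect, the technical heart and main difficulty of the proof; once the pair $(J_1,J_2)$ is produced, Theorem \ref{thm:CM1} applies verbatim and delivers both the \"uber-separated WPD element and the acylindrical hyperbolicity of $G$.
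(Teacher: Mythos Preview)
The paper does not contain a proof of this theorem: it is stated as a result from \cite{ChatterjiMartin} and immediately followed by an application, with no proof given. This is a survey article, and Theorem~\ref{thm:CM2} is one of the results being surveyed, not proved. So there is no ``paper's own proof'' against which to compare your proposal.

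On the substance of your outline: the overall reduction strategy to Theorem~\ref{thm:CM1} is natural, and your commensurability claim in step~2 is in fact correct --- but it does not need the no-free-face hypothesis. Indeed, $\mathrm{stab}(v)$ permutes the finitely many hyperplanes adjacent to $v$, hence virtually fixes each; conversely, an element fixing every adjacent hyperplane virtually fixes each adjacent halfspace (finite-index kernel of the sign action), and the intersection of the halfspaces containing $v$ bounded by adjacent hyperplanes is exactly $\{v\}$, since any other vertex is separated from $v$ by the hyperplane dual to the first edge of a geodesic. So commensurability holds in any locally finite CAT(0) cube complex, and you have not yet used no-free-face anywhere.

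This leaves two genuine gaps. First, you have not identified where the no-free-face hypothesis actually enters; since it is not needed for step~2, either it is needed in step~3 or your outline is not tracking the argument correctly. Second, and more seriously, step~3 is only a gesture: you correctly identify the $\mathbb{Z}^n$-type obstruction to extracting a single pair, but ``assembling infinite commensurations into a Cartesian splitting via flipping and double-skewering'' is not an argument, and it is far from clear how to make it one. This is, as you say, the technical heart --- but the proposal does not contain it.
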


\begin{app}
According to \cite{TameSL}, the group of $\mathrm{tame}(\mathrm{SL}_2(\mathbb{C}))$, a subgroup of the 3-dimensional Cremona group $\mathrm{Bir}(\mathbb{P}^3(\mathbb{C}))$, acts cocompactly, essentially and non-elementarily on a hyperbolic CAT(0) cube complex without free faces. In \cite{TameSLacylhyp}, it is proved that Theorem \ref{thm:CM2} applies, so that $\mathrm{tame}(\mathrm{SL}_2(\mathbb{C}))$ turns out to be acylindrically hyperbolic. 
\end{app}

\noindent
So far, we have met weakly acylindrical actions and non-uniformly weakly acylindrical actions as relevant types of actions on CAT(0) cube complexes. Theorem \ref{thm:acylcriterion} also suggests the following definition.

\begin{definition}
Let $G$ be a group acting on some CAT(0) cube complex $X$. The action $G \curvearrowright X$ is \emph{acylindrical action on the hyperplanes} if there exists constants $R,N \geq 0$ such that, for every hyperplanes $J_1$ and $J_2$ separated by at least $R$ other hyperplanes, the intersection $\mathrm{stab}(J_1) \cap \mathrm{stab}(J_2)$ has cardinality at most $N$. 
\end{definition}

\noindent
These actions were introduced and studied independently in \cite{CubicalAccessibility} and \cite{MoiAcylHyp}. In the second reference, the following criterion is proved. 

\begin{thm}
Let $G$ be a group acting essentially on a finite-dimensional CAT(0) cube complex. If the action is acylindrical on the hyperplanes, then $G$ contains a WPD contracting isometry. A fortiori, $G$ is either virtually cyclic or acylindrically hyperbolic. 
\end{thm}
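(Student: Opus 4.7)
The plan is to combine Caprace--Sageev's structure theorem (Theorem \ref{thm:CS}) with the characterisation of WPD contracting isometries (Theorem \ref{thm:WPDcontracting}). The strategy is to first produce a contracting isometry $g \in G$, and then to amplify it by taking a sufficiently large power: the pair of well-separated hyperplanes it skewers, provided by Theorem \ref{thm:contractingisom}, will become far enough apart that acylindricity on the hyperplanes forces finite stabiliser intersection.

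I would first reduce to the case where $X$ is irreducible and $G$ does not fix a point at infinity. If $X = X_1 \times X_2$ decomposes as a product of two unbounded subcomplexes, pass to a finite-index subgroup $G_0 \leq G$ preserving the factorisation and look at the projection $\pi_1 \colon G_0 \to \mathrm{Isom}(X_1)$. For any two disjoint vertical hyperplanes $H \times X_2$ and $H' \times X_2$ (with $H, H'$ hyperplanes of $X_1$), their stabilisers in $G_0$ are $\pi_1^{-1}(\mathrm{stab}(H))$ and $\pi_1^{-1}(\mathrm{stab}(H'))$, whose intersection contains $\ker(\pi_1)$; if $\ker(\pi_1)$ is infinite, choosing $H$ and $H'$ separated by arbitrarily many hyperplanes in $X_1$ contradicts acylindricity on the hyperplanes. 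If $\ker(\pi_1)$ is finite, then $G$ acts essentially on the lower-dimensional factor $X_1$ (modulo a finite kernel), and one proceeds by induction on $\dim X$. Fixed points at infinity are ruled out by a similar dichotomy, or else force $G$ to be virtually cyclic.

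Now apply Theorem \ref{thm:CS} to obtain a contracting isometry $g \in G$. By Theorem \ref{thm:contractingisom}, $g$ skewers a pair of well-separated hyperplanes $J_1, J_2$: there exist $m \geq 1$ and halfspaces $D_1 \supsetneq D_2 \supsetneq g^m D_1$ delimited by $J_1$ and $J_2$. For each $k \geq 1$, set $A_k := g^{-km} J_1$ and $B_k := g^{km} J_2$. The nesting
\[ g^{(k+1)m} D_1 \subsetneq g^{km} D_2 \subsetneq \cdots \subsetneq D_2 \subsetneq D_1 \subsetneq \cdots \subsetneq g^{-km} D_1 \]
shows three things: $g^{(2k+1)m}$ skewers $(A_k, B_k)$; the $2k+2$ translates $g^{im} J_1$ and $g^{im} J_2$ for $-k \leq i \leq k$ all separate $A_k$ from $B_k$, so the number of separating hyperplanes grows linearly in $k$; and every hyperplane transverse to both $A_k$ and $B_k$ must cross $J_1$ and $J_2$ (since $J_1, J_2$ lie between $A_k$ and $B_k$), so $(A_k, B_k)$ is well-separated with the same constant as $(J_1, J_2)$.

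Let $R, N$ be the constants provided by the acylindricity on hyperplanes hypothesis, and choose $k$ so that $A_k$ and $B_k$ are separated by at least $R$ hyperplanes. Then $|\mathrm{stab}(A_k) \cap \mathrm{stab}(B_k)| \leq N$ is finite, and Theorem \ref{thm:WPDcontracting} identifies $g^{(2k+1)m}$ as a WPD contracting isometry of $X$. The ``a fortiori'' conclusion then follows from \cite[Theorem H]{BBF}: a group admitting a WPD contracting isometry on a CAT(0) cube complex must be either virtually cyclic or acylindrically hyperbolic. The main obstacle is the initial reduction step, since, without cocompactness, product decompositions and fixed points at infinity need to be handled by hand; the amplification argument afterwards is routine once the machinery of Section \ref{section:contractingisom} is available.
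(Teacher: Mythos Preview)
Your amplification argument (steps 2--5) is correct and is indeed the heart of the matter: once a contracting isometry $g$ exists, the pair $(g^{-km}J_1,\,g^{km}J_2)$ remains well-separated (since any hyperplane transverse to both must also cross $J_1$ and $J_2$), while the number of separating hyperplanes grows, so acylindricity on hyperplanes forces the stabiliser intersection to be finite and Theorem~\ref{thm:WPDcontracting} applies.

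The reduction step, however, does not close. First, your induction in the product case cannot yield the stated conclusion: when $X=X_1\times X_2$ with both factors unbounded, $X$ has \emph{no} contracting isometries at all, so obtaining one on a factor does not help. Concretely, $\mathbb{Z}$ acting diagonally on the square complex $\mathbb{R}^2$ is essential, acylindrical on hyperplanes (all stabilisers are trivial), yet contains no contracting isometry. So either the theorem carries an implicit irreducibility hypothesis, or one must show that acylindricity on hyperplanes together with essentiality excludes the product case outright---but your finite-kernel branch does not do this, and the example shows it cannot. (What your induction \emph{does} salvage is the ``a fortiori'' clause, since a finite-kernel quotient being virtually cyclic or acylindrically hyperbolic passes back to $G$.)

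Second, your treatment of fixed points at infinity is a placeholder, not an argument. Theorem~\ref{thm:CS} genuinely needs this hypothesis in the non-cocompact setting, and ``a similar dichotomy, or else force $G$ to be virtually cyclic'' is not a proof: an essential action with a fixed boundary point need not have virtually cyclic group in general. The proof in \cite{MoiAcylHyp} handles the production of a suitable skewering element by a more direct hyperplane argument rather than invoking Theorem~\ref{thm:CS}, precisely to sidestep these reductions; your approach would need either that machinery or a genuine argument ruling out boundary fixed points from acylindricity on hyperplanes.
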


\subsection{Hyperbolically embedded subgroups}\label{section:hypembed}

\noindent
So far, we have essentially deduced the acylindrical hyperbolicity of groups acting on CAT(0) cube complexes from the existence of particular isometries. Otherwise saying, we have considered only cyclic subgroups. However, in \cite{DGO}, acylindrical hyperbolicity is studied from non-necessarily cyclic subgroups called \emph{hyperbolically embedded subgroups}. This section is dedicated to these subgroups. It is worth noticing that hyperbolically embedded subgroups satisfy interesting properties. For instance, they are Lipschitz quasi-retracts of the whole groups \cite[Theorem 4.31]{DGO}, so that the geometries of these subgroups are linked to the geometry of the whole group (see \cite[Corollary 4.32]{DGO}). Consequently, characterising these subgroups in order to recognize them more easily is an interesting general problem. Our main criterion is the following:

\begin{thm}\label{thm:hypembed}
Let $G$ be a group acting geometrically on some CAT(0) cube complex and $\mathcal{H}$ a finite collection of subgroups of $G$. Then $\mathcal{H}$ is hyperbolically embedded if and only if it is an almost malnormal collection of Morse subgroups. 
\end{thm}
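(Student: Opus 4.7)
The forward implication is essentially classical. If $\mathcal{H}$ is hyperbolically embedded in $G$, then almost malnormality follows from \cite{DGO} (it is one of the standard permanence properties of hyperbolically embedded collections), and the Morse property is precisely the content of \cite{Sistohypembed}, already invoked in Section \ref{section:Morse} when listing known classes of Morse subgroups. So I would dispatch this direction with two citations.

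For the converse, I would proceed in the spirit of the proof of Theorem \ref{thm:relhyp}: start from the Morse hypothesis, use Corollary \ref{cor:MorseCriterion} to fix, for each $H_i$, a contracting convex subcomplex $Y_i \subset X$ on which $H_i$ acts cocompactly, and then cone off $X$ over the $G$-translates of the $Y_i$. Concretely, let $\dot{X}$ be the graph obtained from the one-skeleton of $X$ by adding, for each translate $Y$ of some $Y_i$, a new vertex $v_Y$ joined by an edge to every vertex of $Y$. The full stabiliser of $v_{Y_i}$ in $G$ is equal to $H_i$: it setwise stabilises $Y_i$, so by almost malnormality combined with cocompactness of $H_i \curvearrowright Y_i$ it must lie in $H_i$ itself (up to passing to a conjugate, which can be absorbed into the choice of $Y_i$).

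The central geometric point is that $\dot{X}$ is Gromov hyperbolic. Here I would adapt \cite[Theorems 4.1 and 5.7]{coningoff}, used in the proof of Theorem \ref{thm:relhyp}: each $Y_i$ is contracting by Corollary \ref{cor:MorseCriterion}, hence by Proposition \ref{prop:contracting} the translates of the $Y_i$ absorb all sufficiently thick flat rectangles of $X$ up to the cost of the contracting constants. Coning them off then kills all remaining flats in bounded diameter, giving hyperbolicity of $\dot{X}$. The main quantitative ingredient is Lemma \ref{lem:fellowtravel}: combined with almost malnormality of $\mathcal{H}$, it forces any two distinct translates $Y \neq Y'$ of the $Y_i$ to be transverse to only uniformly boundedly many common hyperplanes (otherwise the infinite subgroup $\mathrm{stab}(Y) \cap \mathrm{stab}(Y')$ would sit inside two conjugates of elements of $\mathcal{H}$, contradicting almost malnormality). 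This uniform bound is what lets one verify the analogue of condition $(\alpha_1)$ of \cite[Theorem 4.1]{TreeGraded} in this cocompact cubical setting.

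Finally, I would conclude by invoking the characterisation of hyperbolically embedded collections via actions on hyperbolic graphs (see \cite{DGO}, in the formulation refined by Osin and Sisto): $\mathcal{H} \hookrightarrow_h G$ as soon as $G$ acts on a hyperbolic graph with finitely many orbits of edges, each $H_i$ is the full stabiliser of a vertex $v_i$, and the action is acylindrical on the complement of the $G$-orbit of the $v_i$. In our setting, the action $G \curvearrowright \dot{X}$ has finitely many orbits of edges because $G \curvearrowright X$ is cocompact and $\mathcal{H}$ is finite; the vertex-stabiliser identification was obtained above; and the acylindricity on the non-cone part follows from Theorem \ref{thm:acylcriterion} applied to the hyperbolic complex $\dot{X}$, using that point stabilisers on the cubical part are finite (since $G \curvearrowright X$ is geometric) and the bound on common hyperplanes proved in the previous paragraph. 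I expect the main technical obstacle to be precisely this last step: matching the cubical weak-acylindricity data with the precise relative-acylindricity condition needed to feed into the DGO criterion, and in particular ruling out the existence of infinite groups stabilising pairs of far-apart vertices in $\dot{X}$ that are not cone vertices, which is where almost malnormality has to be used one more time.
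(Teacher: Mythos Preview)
Your forward direction is fine and matches the paper's: cite \cite[Proposition 4.33]{DGO} and \cite[Theorem 2]{Sistohypembed}.

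The converse, however, has a genuine gap. You claim that coning off $X$ along the $G$-translates of the contracting subcomplexes $Y_i$ yields a hyperbolic graph $\dot{X}$, because ``the translates of the $Y_i$ absorb all sufficiently thick flat rectangles of $X$''. This is false in general: Proposition~\ref{prop:contracting} only controls flat rectangles that \emph{touch} a given $Y_i$, not arbitrary flat rectangles in $X$. Nothing in the hypotheses forces the flats of $X$ to lie near translates of the $Y_i$'s. Concretely, if $G$ is a product $G_1\times G_2$ with both factors infinite, then every Morse subgroup is finite, the $Y_i$'s are bounded, and $\dot{X}$ is quasi-isometric to $X$, which is not hyperbolic. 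The theorem is still true in such cases (finite subgroups are trivially hyperbolically embedded), but your cone-off construction does not prove it. The third bullet of Theorem~\ref{thm:relhyp}, which you are implicitly trying to reuse, is a genuine extra hypothesis there and is not available here. A secondary issue: Theorem~\ref{thm:acylcriterion} is stated for actions on hyperbolic CAT(0) cube complexes, and $\dot{X}$ is not one, so invoking it for acylindricity of $G\curvearrowright\dot{X}$ is also unjustified.

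The paper avoids all of this by not working in $X$ at all. It verifies the Bestvina--Bromberg--Fujiwara projection axioms for the family $\mathcal{Z}$ of translates of the $Y_i$'s: bounded projections between distinct elements (from almost malnormality via Lemmas~\ref{lem:fellowtravalmalnormal} and~\ref{lem:fellowtravel}), the Behrstock inequality (from contraction, via Proposition~\ref{prop:contracting}), and finiteness of large projections. Then \cite[Theorem~A]{BBF} manufactures a brand-new space $\mathcal{C}(\mathcal{Z})$, built only out of the $Y_i$'s, on which $G$ acts; this space is hyperbolic relative to the embedded copies of the $Z\in\mathcal{Z}$ by \cite{MetricRelativeHyp}, regardless of what flats $X$ contains, and Sisto's criterion finishes. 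The moral difference: rather than trying to kill the flats of $X$, the paper builds a hyperbolic model directly from the peripheral data.
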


\noindent
Notice that we do not know if a (finitely generated) malnormal subgroup is automatically a Morse subgroup; see Question \ref{question:malnormalMorse}.

\begin{proof}[Proof of Theorem \ref{thm:hypembed}.]
Suppose that $\mathcal{H}$ is an almost malnormal collection of Morse subgroups. According to Corollary \ref{cor:MorseCriterion}, each subgroup $H \in \mathcal{H}$ acts geometrically on a contracting convex subcomplex $Y(H) \subset X$; moreover, we may suppose that $Y(H)$ is a neighborhood of the orbit $H \cdot x_0$ where $x_0 \in X$ is a base vertex we fix. Let $\mathcal{Z}$ denote the collection of the translates of all the $Y(H)$'s. 

\begin{claim}\label{claim:projbounded}
There exists a constant $C_1 \geq 0$ such that, for every distinct $Z_1,Z_2 \in \mathcal{Z}$, the projection of $Z_2$ onto $Z_1$ has diameter at most $C_1$.
\end{claim}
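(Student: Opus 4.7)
The plan is to argue by contradiction: suppose that for every $M$ there exist distinct $Z_1, Z_2 \in \mathcal{Z}$ with $\mathrm{diam}(\mathrm{proj}_{Z_1}(Z_2)) \geq M$. Proposition \ref{prop:proj} then provides at least $M$ hyperplanes transverse to both $Z_1$ and $Z_2$. Since $\mathcal{H}$ is finite, Corollary \ref{cor:MorseCriterion} supplies a uniform contracting constant $L$ for all elements of $\mathcal{Z}$, while cocompactness of $G \curvearrowright X$ yields a uniform bound $R$ on vertex valences; Lemma \ref{lem:fellowtravel} then forces $\mathrm{diam}(Z_1^{+L} \cap Z_2^{+L}) \geq \ln(M-1)/\ln R$, which tends to infinity. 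Since combinatorial neighborhoods of convex subcomplexes, and intersections of convex subcomplexes, are themselves convex, this intersection contains an arbitrarily long geodesic segment of vertices whose nearest-point projections onto $Z_1$ (boundedly many-to-one by local finiteness) yield arbitrarily many distinct vertices $p_1, \ldots, p_N \in Z_1 \cap Z_2^{+2L}$.

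Writing $Z_1 = gY(H_1)$ and $Z_2 = g'Y(H_2)$, I would next perform a double pigeonhole. Choosing a finite fundamental domain $F \subset Z_1$ for the cocompact action of $gH_1g^{-1}$, a first pigeonhole extracts a single basepoint $k \in F$ and pairwise distinct elements $\lambda_1, \lambda_2, \ldots \in gH_1g^{-1}$ with $p_i = \lambda_i \cdot k$. The relation $\lambda_i k \in Z_2^{+2L}$ rewrites as $k \in (\lambda_i^{-1}Z_2)^{+2L}$; and since $Y(H_2) \subset (H_2 \cdot x_0)^{+r}$ for some $r$, this yields an element $\mu_i \in H_2$ such that $\nu_i := \lambda_i^{-1} g' \mu_i$ satisfies $d(\nu_i \cdot x_0, k) \leq 2L + r$. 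Properness of $G \curvearrowright X$ bounds the number of possible $\nu_i$'s by a constant depending only on $k$, so a second pigeonhole produces an infinite subfamily on which $\nu_i$ equals a single element $\nu$. Eliminating $\nu$ from $\lambda_i = g' \mu_i \nu^{-1}$ then yields $\lambda_i \lambda_j^{-1} = g'(\mu_i \mu_j^{-1})g'^{-1}$, an element that belongs both to $gH_1g^{-1}$ and to $g'H_2g'^{-1}$, and that remains distinct for distinct $i$. Hence $gH_1g^{-1} \cap g'H_2g'^{-1}$ is infinite.

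Almost malnormality of $\mathcal{H}$ now finishes the proof. If $H_1 \neq H_2$, the infiniteness of $gH_1g^{-1} \cap g'H_2g'^{-1}$ directly violates almost malnormality. If instead $H_1 = H_2 = H$, then setting $\gamma = g^{-1}g'$ we have that $H \cap \gamma H \gamma^{-1}$ is infinite, so almost malnormality of $H$ forces $\gamma \in H$; the $H$-invariance of $Y(H)$ then gives $Z_1 = gY(H) = g\gamma Y(H) = g'Y(H) = Z_2$, contradicting distinctness. The main obstacle I anticipate is the bookkeeping in the double pigeonhole step, making sure that the finiteness of $\mathcal{H}$, the cocompactness of each peripheral action on its $Y(H)$, and the properness of the ambient action all combine cleanly to furnish the desired infinite intersection of conjugates.
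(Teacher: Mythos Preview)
Your approach is essentially the paper's own: the paper simply cites Lemma~\ref{lem:fellowtravel} together with Lemma~\ref{lem:fellowtravalmalnormal}, and your ``double pigeonhole'' paragraph is precisely a proof of (the collection version of) Lemma~\ref{lem:fellowtravalmalnormal}. The almost-malnormality endgame, including the case split $H_1\neq H_2$ versus $H_1=H_2$, is handled correctly.

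There is, however, one genuine technical error. You assert that combinatorial neighbourhoods of convex subcomplexes are convex, and use this to get a long geodesic inside $Z_1^{+L}\cap Z_2^{+L}$. This is false: already for $C=\{(0,0)\}$ in the square tiling of $\mathbb{Z}^2$, the point $(1,1)$ lies on a geodesic between $(1,0),(0,1)\in C^{+1}$ but has $d((1,1),C)=2$. In fact $Z_1^{+L}\cap Z_2^{+L}$ need not even be weakly geodesic, so you cannot extract a long geodesic segment from it directly.

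The fix is painless and keeps your argument intact. Take $a,b\in Z_1^{+L}\cap Z_2^{+L}$ realising the large diameter, project to $a',b'\in Z_1$ (so $a',b'\in Z_2^{+2L}$ and $d(a',b')\geq d(a,b)-2L$), and run along a geodesic $\gamma\subset Z_1$ from $a'$ to $b'$, which exists since $Z_1$ itself is convex. For any $z\in\gamma$, every hyperplane separating $z$ from $Z_2$ must separate $a'$ or $b'$ from $Z_2$ (since $z\in I(a',b')$ cannot be separated from both), whence $d(z,Z_2)\leq d(a',Z_2)+d(b',Z_2)\leq 4L$. Thus $\gamma\subset Z_1\cap Z_2^{+4L}$ and you obtain the required arbitrarily many vertices $p_1,\ldots,p_N$; the rest of your pigeonhole argument then goes through with $4L$ in place of $2L$.
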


\noindent
Our claim follows directly from Lemma \ref{lem:fellowtravalmalnormal} below and Lemma \ref{lem:fellowtravel}. 

\medskip \noindent
From now, we denote by $d_C(A,B)$ the diameter of the union of the projections of $A$ and $B$ onto $C$. 

\begin{claim}
There exists a constant $C_2 \geq 0$ such that, for every pairwise distinct elements $A,B,C \in \mathcal{Z}$, at most one of $d_A(B,C)$, $d_B(A,C)$, $d_C(A,B)$ is greater than $C_2$.
\end{claim}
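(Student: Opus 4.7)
The plan is to argue by contradiction: assume that $d_A(B,C) > C_2$ and $d_B(A,C) > C_2$ for a large constant $C_2$ to be fixed, and reach a contradiction with the contracting property of $A$ supplied by Proposition \ref{prop:contracting}(iii). Permuting the three subcomplexes will then yield the claim.

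The first step is to extract from each inequality a large family of hyperplanes of prescribed combinatorial type. Let $\mathcal{H}_A^{B|C}$ denote the set of hyperplanes that intersect $A$, are disjoint from both $B$ and $C$, and separate $B$ from $C$, and define $\mathcal{H}_B^{A|C}$ symmetrically. Pick vertices $b \in B$ and $c \in C$ whose projections on $A$ realise $d_A(B,C)$ up to an additive $2C_1$ error (using Claim \ref{claim:projbounded}); Lemma \ref{lem:sepproj} then provides at least $d_A(B,C) - 2C_1$ hyperplanes separating $b$ from $c$ and meeting $A$. Hyperplanes in this list that additionally meet $B$ or $C$ must cross the convex subcomplex $\mathrm{proj}_A(B)$ or $\mathrm{proj}_A(C)$ by Proposition \ref{prop:proj}; the diameter bound $C_1$ from Claim \ref{claim:projbounded} together with the local finiteness of $X$ (since $G$ acts geometrically) yields a uniform constant $K$ bounding their number. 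Consequently $|\mathcal{H}_A^{B|C}| \geq d_A(B,C) - 2C_1 - 2K$, and symmetrically $|\mathcal{H}_B^{A|C}| \geq d_B(A,C) - 2C_1 - 2K$.

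Next I would verify transversality: any $J \in \mathcal{H}_A^{B|C}$ and any $H \in \mathcal{H}_B^{A|C}$ must cross. Writing $J^\pm$ and $H^\pm$ for their halfspaces with $B \subset J^+$, $C \subset J^-$, $A \subset H^+$, $C \subset H^-$, the facts that $A$ crosses $J$ and $B$ crosses $H$ populate all four quadrants $J^\pm \cap H^\pm$, forcing transversality. Since $X$ is finite-dimensional, a pairwise transverse subset of $\mathcal{H}_A^{B|C}$ embeds in a cube and so has size at most $\dim(X)$; a Ramsey argument (in the spirit of Fact \ref{fact:pairwisedisjoint}) then lets me extract, once $C_2$ is chosen large enough, pairwise disjoint subcollections $\mathcal{V}' \subset \mathcal{H}_A^{B|C}$ and $\mathcal{H}' \subset \mathcal{H}_B^{A|C}$ of any prescribed size $m$. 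The hyperplanes of $\mathcal{V}'$ are pairwise disjoint and all separate $B$ from $C$, hence they are linearly ordered along any $B$-to-$C$ geodesic and contain no facing triple; the same holds for $\mathcal{H}'$. Combined with the transversality established above, $(\mathcal{H}', \mathcal{V}')$ is then a grid of hyperplanes with $\mathcal{V}' \subset \mathcal{H}(A)$ and $\mathcal{H}' \cap \mathcal{H}(A) = \emptyset$. Applying Proposition \ref{prop:contracting}(iii) to $A$ (with contracting constant chosen uniformly over $\mathcal{Z}$, since $\mathcal{Z}$ is a $G$-orbit of finitely many subcomplexes) forces $\min(|\mathcal{H}'|, |\mathcal{V}'|)$ to stay below an absolute constant $C$, so choosing $m > C$ produces the sought contradiction.

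The hard part, I expect, will be the uniform control of the auxiliary constant $K$: bounding by an absolute number the hyperplanes that simultaneously cross two elements of $\mathcal{Z}$ requires combining the diameter bound of Claim \ref{claim:projbounded} with the local finiteness coming from the geometric action, and doing so uniformly over all pairs in $\mathcal{Z}$. The halfspace case analysis for transversality and the Ramsey extraction of pairwise disjoint subfamilies should both be routine once the correct uniform constants are identified.
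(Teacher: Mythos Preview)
Your strategy is correct and is essentially the same as the paper's: build the two families $\mathcal{H}_A^{B\mid C}$ and $\mathcal{H}_B^{A\mid C}$, check they are mutually transverse (your four-quadrant argument is exactly what makes the paper's unspoken ``defines a join of hyperplanes'' true), and feed the pair into Proposition~\ref{prop:contracting}. The paper's execution, however, is leaner in two places and shows that what you flagged as the ``hard part'' evaporates.

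First, the paper chooses $x\in\mathrm{proj}_A(B)$ and $y\in\mathrm{proj}_A(C)$ \emph{minimising} the distance between the two projections. By Lemma~\ref{lem:twomin}, every hyperplane separating $x$ and $y$ then separates $\mathrm{proj}_A(B)$ from $\mathrm{proj}_A(C)$, hence by Proposition~\ref{prop:proj} is disjoint from both $B$ and $C$. So \emph{all} of the $d(x,y)>K$ hyperplanes already lie in $\mathcal{H}_A^{B\mid C}$: there is no need for your auxiliary constant $K$, and in particular no appeal to local finiteness. (Even in your setup the bound is easy: a hyperplane separating $\mathrm{proj}_A(b)$ from $\mathrm{proj}_A(c)$ that also meets $B$ must, by Lemma~\ref{lem:sepproj} applied to $\mathrm{proj}_A(B)$, separate $\mathrm{proj}_A(b)$ from the projection of $\mathrm{proj}_A(c)$ onto $\mathrm{proj}_A(B)$, so there are at most $C_1$ of them.)

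Second, the paper invokes Proposition~\ref{prop:contracting}(ii) for joins rather than (iii) for grids, so no Ramsey extraction is needed; the two families, each without facing triples (they separate a fixed pair of convex sets) and mutually transverse, already form a join. With the uniform contracting constant $K$ for the $Y(H)$'s one gets directly $C_2=2C_1+K$.
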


\noindent
Let $K$ denote the constant given by Point $(ii)$ in Proposition \ref{prop:contracting} applied to $Y$ (or equivalently, to any element of $\mathcal{Z}$). Suppose that $d_A(B,C) > 2C_1+K$. Let $x \in \mathrm{proj}_A(B)$ and $y \in \mathrm{proj}_A(C)$ be two vertices minimising the distance between $\mathrm{proj}_A(B)$ and $\mathrm{proj}_A(C)$. Notice that 
$$d(x,y) \geq \mathrm{diam} \left( \mathrm{proj}_A(B) \cup \mathrm{proj}_A(C) \right) - \mathrm{diam} \left( \mathrm{proj}_A(B) \right) - \mathrm{diam} \left( \mathrm{proj}_A(C) \right) >K.$$
Let $J$ be a hyperplane separating $x$ and $y$. According to Lemma \ref{lem:twomin}, $J$ is disjoint from $\mathrm{proj}_A(B)$ and $\mathrm{proj}_A(C)$, so that, according to Proposition \ref{prop:proj}, $J$ must be disjoint from $B$ and $C$. As a consequence of Lemma \ref{lem:vertextoproj}, we know that $J$ cannot separate $B$ and $\mathrm{proj}_A(B)$ since $J$ intersects $A$; similarly, $J$ cannot separate $C$ and $\mathrm{proj}_A(C)$. Therefore, $J$ separates $B$ and $C$. Thus, we have proved that the $\mathcal{H}(x \mid y)$ of the hyperplanes separating $x$ and $y$ is included into the set $\mathcal{H}_A(B \mid C)$ of the hyperplanes intersecting $A$ and separating $B$ and $C$. A fortiori, $\# \mathcal{H}_A(B \mid C)>K$.

\medskip \noindent
Similarly, if $d_B(A,C) \geq 2C_1+M_1$ for some $M_1 \geq 0$, then $\# \mathcal{H}_B(A \mid C ) \geq M_1$. Notice however that $(\mathcal{H}_A(B \mid C), \mathcal{H}_B(A \mid C))$ defines a join of hyperplanes satisfying $\mathcal{H}_A(B \mid C) \subset \mathcal{H}(A)$ and $\mathcal{H}_B(A \mid C) \cap \mathcal{H}(A)= \emptyset$. Therefore, since $\# \mathcal{H}_A(B \mid C)>K$, necessarily 
$$M_1 \leq \# \mathcal{H}_B(A \mid C) \leq K.$$
A fortiori, $d_B(A,C) \leq 2C_1+K$. Similarly, one shows that $d_C(A,B) \leq 2C_1+K$. Consequently, $C_2=2C_1+K$ is the constant we are looking for. 

\begin{claim}
For any distinct $A,B \in \mathcal{Z}$, the set $\{ C \in \mathcal{Z} \mid d_C(A,B)> 3C_1 \}$ is finite. 
\end{claim}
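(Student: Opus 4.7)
The plan is a pigeonhole argument on the set $\mathcal{S}$ of hyperplanes separating $A$ from $B$. By Lemma~\ref{lem:twomin} applied to a pair of vertices realizing $d(A,B)$, this set is finite, of cardinality exactly $d(A,B)$.

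The key geometric step is to show that every $C \in \mathcal{Z}$ with $d_C(A,B) > 3C_1$ satisfies $|\mathcal{H}(C) \cap \mathcal{S}| > C_1$, by adapting the argument of the previous claim. Let $p \in \mathrm{proj}_C(A)$ and $q \in \mathrm{proj}_C(B)$ be vertices minimising the distance between these two projections; since each projection has diameter at most $C_1$, one has $d(p,q) \geq d_C(A,B) - 2C_1 > C_1$. Any hyperplane $J$ separating $p$ from $q$ crosses $C$ (as $p,q \in C$); by Lemma~\ref{lem:twomin} it is disjoint from both $\mathrm{proj}_C(A)$ and $\mathrm{proj}_C(B)$, hence by Proposition~\ref{prop:proj} it is disjoint from $A$ and from $B$; writing $p = \mathrm{proj}_C(a)$ with $a \in A$ and $q = \mathrm{proj}_C(b)$ with $b \in B$, Lemma~\ref{lem:sepproj} gives that $J$ separates $a$ from $b$, which combined with $J \cap (A \cup B) = \emptyset$ forces $J$ to separate $A$ from $B$, that is, $J \in \mathcal{S}$. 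Hence $|\mathcal{H}(C) \cap \mathcal{S}| \geq d(p,q) > C_1$.

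I then conclude by pigeonhole. Suppose for contradiction that infinitely many $C \in \mathcal{Z}$ satisfy $d_C(A,B) > 3C_1$. Since $\mathcal{S}$ is finite, the trace $\mathcal{H}(C) \cap \mathcal{S}$ takes only finitely many values, so two distinct such $C, C' \in \mathcal{Z}$ must share the same trace, of cardinality strictly greater than $C_1$. Every hyperplane of this common trace lies simultaneously in $\mathcal{H}(C)$ and $\mathcal{H}(C')$, so $C$ and $C'$ share strictly more than $C_1$ hyperplanes. However, the constant $C_1$ from Claim~\ref{claim:projbounded} bounds not only the diameter of $\mathrm{proj}_{Z_1}(Z_2)$ but also the number of hyperplanes intersecting both $Z_1$ and $Z_2$, for any distinct $Z_1, Z_2 \in \mathcal{Z}$: by Proposition~\ref{prop:proj}, these hyperplanes coincide with those crossing $\mathrm{proj}_{Z_1}(Z_2)$, and their number is controlled by the same combination of Lemma~\ref{lem:fellowtravel} and Lemma~\ref{lem:fellowtravalmalnormal} used to prove Claim~\ref{claim:projbounded}. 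This contradicts the previous sentence.

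The main obstacle is really just this bookkeeping of constants: one has to notice that the proof of Claim~\ref{claim:projbounded} yields, essentially for free, a uniform bound $C_1$ on the number of common hyperplanes between any two distinct elements of $\mathcal{Z}$, and not merely on the diameter of their projection. Once that is observed, the pigeonhole argument is immediate.
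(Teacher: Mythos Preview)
Your proof is correct and follows essentially the same route as the paper's: both arguments show that each relevant $C$ meets the finite set $\mathcal{S}=\mathcal{H}(A\mid B)$ in more than $C_1$ hyperplanes, then use pigeonhole on subsets of $\mathcal{S}$ to find two distinct $C,C'\in\mathcal{Z}$ with identical trace, contradicting Claim~\ref{claim:projbounded}. The only cosmetic difference is that the paper phrases the final contradiction via the diameter of $\mathrm{proj}_{C'}(C)$ rather than via the number of common hyperplanes, but as you correctly note, the proof of Claim~\ref{claim:projbounded} (through Lemma~\ref{lem:fellowtravel}) bounds both quantities by the same constant.
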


\noindent
Let $C_1, \ldots, C_r \in \mathcal{Z}$ be a collection of subcomplexes satisfying $d_{C_i}(A,B) >3C_1= 2C_1+C_1$; recall from the proof of the previous claim that this implies that $\# \mathcal{H}_{C_i}(A \mid B) >C_1$. As a consequence, we deduce from Claim \ref{claim:projbounded} that, for every distinct $1 \leq i,j \leq r$, necessarily $\mathcal{H}_{C_i}(A \mid B) \neq \mathcal{H}_{C_j}(A \mid B)$ since otherwise the projection of $C_i$ onto $C_j$ would have diameter greater than $C_1$. Consequently, $r \leq 2^{\# \mathcal{H}(A \mid B)}<+ \infty$. This concludes the proof of our third and last claim. 

\medskip \noindent
Our three previous claims allow us to apply \cite[Theorem A]{BBF}. Thus, we get a geodesic metric space $\mathcal{C}(\mathcal{Z})$ on which $G$ acts equipped with an equivariant embedding $\mathcal{Z} \hookrightarrow \mathcal{C}(\mathcal{Z})$ which is isometric on each $Z \in \mathcal{Z}$. As a consequence, each $H \in \mathcal{H}$ acts properly on $\mathcal{C}(\mathcal{Z})$ and each $Z \in \mathcal{Z}$ is contained into a neighborhood of (the image of) the orbit of our basepoint $x_0$ under the coset of some subgroup of $\mathcal{H}$. Moreover, according \cite[Theorem 6.4]{MetricRelativeHyp}, the space $\mathcal{C}(\mathcal{Z})$ is hyperbolic relative to (the image of) $\mathcal{Z}$, and a fortiori relative to the orbits of (the image of) $x_0$ under the cosets of the subgroups of $\mathcal{H}$. Now, it follows from Sisto's criterion \cite[Theorem 6.4]{MetricRelativeHyp} that $\mathcal{H}$ is a hyperbolically embedded collection of subgroups. 

\medskip \noindent
Conversely, a hyperbolically embedded collection of subgroups is always an almost malnormal collections of Morse subgroups according to \cite[Proposition 4.33]{DGO} and \cite[Theorem 2]{Sistohypembed}. 
\end{proof}

\begin{lemma}\label{lem:fellowtravalmalnormal}
Let $G$ be a group with a uniform bound on the size of its finite subgroups and $H$ an almost malnormal subgroup. Suppose that $G$ acts metrically properly on some geodesic metric space $X$, and that there exists a subspace $Y \subset X$ on which $H$ acts geometrically. For every $L \geq 0$, there exists a constant $A \geq 0$ such that $Y^{+L} \cap gY^{+L}$ has diameter at most $A$ for every $g \in G$.
\end{lemma}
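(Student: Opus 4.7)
The plan is to argue by contradiction; the conclusion as stated is only meaningful for $g\in G$ with $gY\neq Y$, equivalently for $g\notin H$, since for $g\in H$ one has $Y^{+L}\cap gY^{+L}=Y^{+L}$, which is unbounded, so I treat the lemma in that form. Suppose no such constant $A$ exists for some $L\geq 0$: one then produces a sequence $g_n\in G\setminus H$ together with $x_n,y_n\in Y^{+L}\cap g_nY^{+L}$ such that $d(x_n,y_n)\geq n$. Fix a basepoint $y_0\in Y$ and, using that $H\curvearrowright Y$ is cocompact, a radius $R_0$ with $Y\subset H\cdot B(y_0,R_0)$.

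Using $x_n\in Y^{+L}\cap g_nY^{+L}$, I choose $h_n,l_n\in H$ so that both $h_n^{-1}x_n$ and $h_n^{-1}g_nl_ny_0$ lie within $L+R_0$ of $y_0$; by the triangle inequality, $h_n^{-1}g_nl_n$ then belongs to the set $F=\{k\in G\,:\,d(y_0,ky_0)\leq 2(L+R_0)\}$, which is finite by metric properness of $G\curvearrowright X$. The same construction applied to $y_n$ produces $h'_n,l'_n\in H$ with $(h'_n)^{-1}g_nl'_n\in F$. Extracting a subsequence, I may assume $h_n^{-1}g_nl_n=f$ and $(h'_n)^{-1}g_nl'_n=f'$ are constant; neither $f$ nor $f'$ lies in $H$, for otherwise one of the identities $g_n=h_nfl_n^{-1}$ or $g_n=h'_nf'(l'_n)^{-1}$ would force $g_n\in H$.

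Comparing the two decompositions $h_nfl_n^{-1}=g_n=h'_nf'(l'_n)^{-1}$ and setting $\alpha_n=(h'_n)^{-1}h_n\in H$, $\beta_n=(l'_n)^{-1}l_n\in H$, one obtains $\alpha_nf=f'\beta_n$, whence
\[
\alpha_n\alpha_1^{-1}=f'(\beta_n\beta_1^{-1})(f')^{-1}\in H\cap f'H(f')^{-1}.
\]
Since $f'\notin H$, almost malnormality of $H$ renders $H\cap f'H(f')^{-1}$ finite, so the sequence $(\alpha_n)$ takes only finitely many values. After a further subsequence $\alpha_n=\alpha$ is constant, i.e.\ $h_n=h'_n\alpha$ for all $n$.

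The contradiction is then immediate: $h_n^{-1}x_n$ lies within $L+R_0$ of $y_0$, while $h_n^{-1}y_n=\alpha^{-1}(h'_n)^{-1}y_n$ lies within $L+R_0$ of $\alpha^{-1}y_0$; therefore
\[
d(x_n,y_n)=d(h_n^{-1}x_n,h_n^{-1}y_n)\leq 2(L+R_0)+d(y_0,\alpha^{-1}y_0),
\]
contradicting $d(x_n,y_n)\geq n$. The main obstacle is the bookkeeping of the middle steps: metric properness has to be applied twice with consistent normalisations on each side, and the two resulting expressions for $g_n$ must then be combined so as to produce the two-sided conjugate $H\cap f'H(f')^{-1}$ on which almost malnormality can bite.
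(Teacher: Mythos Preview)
Your proof is correct. The essential mechanism is the same as the paper's: use cocompactness of $H\curvearrowright Y$ to bring points near a basepoint, use metric properness of $G\curvearrowright X$ to land in a finite set, and then exploit almost malnormality. The organisation, however, is different, and it is worth spelling out the contrast.

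The paper argues directly and quantitatively: given a single $g$, it packs $n$ points $a_1,\ldots,a_n$ into $Y^{+L}\cap gY^{+L}$ at pairwise distance $\geq 2C+1$, produces for each $a_i$ elements $h_i\in H$, $k_i\in H^g$ with $d(x,h_i^{-1}k_i x)\leq 2(L+C)$, and then pigeonholes to find more than $\#(H\cap H^g)$ indices $i$ with the same $h_i^{-1}k_i$; the resulting elements $p_i=h_1h_i^{-1}=k_1k_i^{-1}$ are pairwise distinct members of $H\cap H^g$, a contradiction. This yields the explicit bound $A=NF(2C+1)$, and this is where the uniform bound $F$ on the size of finite subgroups enters: it controls $\#(H\cap H^g)$ uniformly over $g\notin H$.

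Your sequential contradiction argument trades the explicit constant for a cleaner bookkeeping, and in fact achieves slightly more: after extracting so that $f'=(h'_n)^{-1}g_n l'_n$ is constant, you only ever invoke finiteness of the single group $H\cap f'H(f')^{-1}$, so the hypothesis of a uniform bound on finite subgroups is never used. This is a genuine (if minor) strengthening.

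Your caveat about $g\notin H$ is also well taken; the paper's ``for every $g\in G$'' is an oversight, and its own proof implicitly assumes $H\cap H^g$ finite. Your parenthetical equivalence ``$gY\neq Y$ iff $g\notin H$'' deserves a one-line justification (it holds because almost malnormality forces $H$ to have finite index in the setwise stabiliser of $Y$), but this does not affect the argument, which only needs $g_n\notin H$.
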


\begin{proof}
Fix a basepoint $x \in Y$. Because $H$ acts geometrically on $Y$, there exists a constant $C \geq 0$ such that $Y$ is covered by $H$-translates of the ball $B(x,C)$. Suppose that the diameter of $Y^{+L} \cap gY^{+L}$ is at least $n(2C+1)$ for some $n \geq 1$. As a consequence, there exist $a_1, \ldots , a_n \in Y^{+L} \cap gY^{+L}$ such that $d(a_i,a_j) \geq 2C+1$ for every distinct $1 \leq i,j \leq n$. For every $1 \leq i \leq n$, fix $b_i \in Y$ and $c_i \in gY$ such that $d(a_i,b_i) \leq L$ and $d(a_i,c_i) \leq L$. For every $1 \leq i \leq n$, there exist $h_i \in H$ and $h_k \in H^g$ such that $d(b_i,h_ix) \leq C$ and $d(c_i,k_ix) \leq C$. Notice that, for every $1 \leq i \leq n$, one has
$$d(h_ix,k_ix) \leq d(h_ix,b_i)+d(b_i,a_i)+d(a_i,c_i)+d(c_i,k_ix) \leq 2(L+C),$$
or equivalently, $d(x,h_i^{-1}k_i x) \leq 2(L+C)$. Now, because $G$ acts metrically properly on $X$, there exists some $N \geq 0$ such that at most $N$ elements of $G$ may satisfy this inequality. Consequently, if $n > N \cdot \#( H \cap H^g)$, then $\{ h_i^{-1}k_i \mid 1 \leq i \leq n\}$ must contain more than $\# (H \cap H^g)$ pairwise equal elements, say $h_1^{-1}k_1, \ldots, h_s^{-1}k_s$; equivalently, $h_1h_i^{-1}k_ik_1^{-1}=1$ for every $1 \leq i \leq s$. For convenience, set $p_i=h_1h_i^{-1}=k_1k_i^{-1}$ for every $1 \leq i \leq s$; notice that $p_i \in H \cap H^g$. Next, for every distinct $1 \leq i,j \leq s$, one has
$$\begin{array}{lcl} d(p_ia_i, p_ja_i) & \geq & d(a_i,a_j) -d(p_ia_i, p_ja_j) \geq d(a_i,a_j)-d(h_i^{-1}a_i,x)- d(x,h_j^{-1}a_j) \\ \\ & \geq & 2C+1-C-C=1 \end{array}$$
A fortiori, $p_i \neq p_j$. Thus, we have constructed more than $\# (H \cap H^g)$ pairwise distinct elements in $H \cap H^g$, which is of course impossible. Therefore, $n \leq  N \cdot \#( H \cap H^g)$. We conclude that $A=NF (2C+1)$ is the constant we are looking for, where $F$ denotes the maximal cardinality of a finite subgroup of $G$. 
\end{proof}

\begin{app}
Any hyperbolically embedded subgroup of a freely irreducible right-angled Artin group must be either a finite-index subgroup or a free subgroup. This statement is direct consequence of Theorems \ref{thm:hypembed} and \ref{thm:MorseinRAAG}.
\end{app}

\noindent
It is interesting to notice that, as a consequence of \cite[Theorem 1.4]{OsinAcyl}, Corollary \ref{cor:MorseCriterion} and Proposition \ref{prop:loxoinCAT0} below, a cyclic subgroup $H$ of some group $G$ acting geometrically on a CAT(0) cube complex is Morse if and only if the subgroup 
$$E(H) = \{ g  \in G \mid \# (H \cap H^g) =+ \infty \}$$
is hyperbolically embedded. Loosely speaking, you make your subgroup almost malnormal to get a hyperbolically embedded subgroup. This implies that any cyclic Morse subgroup is a finite-index subgroup of some hyperbolically embedded subgroup. However, such a phenomenon does not occur in full generality for other kinds of subgroups, even in elementary situations. For instance, consider the free group $G = \langle a,b \mid \ \rangle$ and its subgroup $H = \langle a,bab^{-1} \rangle$. Let $K$ be an arbitrary malnormal subgroup of $G$ containing a subgroup commensurable to $H$. Notice that there exists some integer $n \geq 1$ such that $a^n$ and $ba^nb^{-1}$ both belong to $K$. Since the intersections $K \cap aKa^{-1}$ and $K \cap bKb^{-1}$ are infinite, necessarily $a$ and $b$ both belong to $K$, hence $K=G$. Consequently, no malnormal subgroup of $G$, and a fortiori no hyperbolically embedded subgroup of $G$, is commensurable to $H$.

\medskip \noindent
Nevertheless, we able to prove:

\begin{thm}\label{thm:MorseAcylHyp}
Let $G$ be a group acting geometrically on some CAT(0) cube complex. The following statements are equivalent:
\begin{itemize}
	\item $G$ is acylindrically hyperbolic;
	\item $G$ contains an infinite stable subgroup of infinite index;
	\item $G$ contains an infinite Morse subgroup of infinite index.
\end{itemize}
\end{thm}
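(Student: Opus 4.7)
The three implications have different flavours. The implication $(2) \Rightarrow (3)$ is immediate, since a stable subgroup is in particular a Morse subgroup. For $(1) \Rightarrow (2)$, I would invoke \cite{DGO}: every acylindrically hyperbolic group contains a hyperbolically embedded virtually cyclic subgroup, namely the elementary closure $E(g)$ of any loxodromic WPD element $g$. By \cite{Sistohypembed} this subgroup is Morse, and being virtually cyclic it is hyperbolic; hence it is stable. Its index in $G$ is infinite because an acylindrically hyperbolic group is never virtually cyclic.

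The bulk of the proof lies in $(3) \Rightarrow (1)$. The plan is to apply Caprace--Sageev's rank rigidity (Theorem \ref{thm:CS}) to the action $G \curvearrowright X$, which, after replacing $X$ by its essential core via \cite[Proposition 3.5]{MR2827012} (a quasi-isometric reduction, so that Morseness of subgroups is preserved), leaves two alternatives: either $G$ contains a contracting isometry of $X$, in which case Corollary \ref{cor:acylhypiffcontracting} concludes since $G$ is not virtually cyclic (as $H$ is infinite of infinite index); or $X$ decomposes as a Cartesian product $X_1 \times X_2$ of two unbounded subcomplexes. The main obstacle is to rule out this product alternative using the presence of an infinite Morse subgroup of infinite index.

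In the product case, \cite[Corollary D]{MR2827012} provides a finite-index subgroup $G_0 \leq G$ splitting as $G_1 \times G_2$ with each $G_i$ infinite, and the intersection $H_0 := H \cap G_0$ remains an infinite Morse subgroup of $G_0$ of infinite index. I would then argue that a direct product of two infinite groups cannot contain any infinite proper Morse subgroup, by producing quasi-geodesics of $G_0$ that join two points of $H_0$ while wandering arbitrarily far from $H_0$. In the simplest instance, if $H_0$ projects trivially into $G_1$, then $H_0 \leq \{e\} \times G_2$; picking any $g \in G_1 \setminus \{e\}$ and any $h_1, h_2 \in H_0$, the path $h_1 \to (g,e) \cdot h_1 \to (g,e) \cdot h_2 \to h_2$ is a quasi-geodesic in $G_0$ of multiplicative constant $1 + 2|g|/d(h_1,h_2)$, passing at distance $|g|$ from $H_0$; letting $|g|$ grow in controlled fashion with $d(h_1,h_2)$ violates the Morse property. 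The remaining case, where $H_0$ projects non-trivially to both factors, is handled by a parallel argument applied to elements $h = (a,b) \in H_0$ with $a,b$ both non-trivial, using geodesic triangles with vertices $1$, $h^n$ and $(a^n, e)$, the latter lying at distance of order $n$ from $H_0$. Once the product case is excluded, Theorem \ref{thm:CS} supplies the contracting isometry of $X$ in $G$, and Corollary \ref{cor:acylhypiffcontracting} concludes.
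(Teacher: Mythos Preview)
Your strategy for $(3) \Rightarrow (1)$ is different from the paper's and is, in outline, a valid route: the paper argues via the combinatorial boundary $\partial^c X$ (showing that the relative boundary of the convex core of $H$ is full, hence either empty, all of $\partial^c X$, or disconnects $\partial^c X$, and then applying Proposition~\ref{prop:discboundary}), whereas you go straight through Caprace--Sageev rank rigidity and reduce to excluding infinite, infinite-index Morse subgroups in a direct product $G_1 \times G_2$ of two infinite groups. That last fact is true (it appears, for instance, in \cite{StronglyQC}); your proof of it, however, has a genuine gap.

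The problem is your ``remaining case'', where $H_0$ projects nontrivially to both factors. You assert that for $h = (a,b) \in H_0$ with $a,b$ nontrivial, the point $(a^n, e)$ lies at distance of order $n$ from $H_0$. This is false in general. Take $G_1 = G_2 = F_2 = \langle x, y \rangle$ and $H_0 = F_2 \times \langle x \rangle$: this subgroup has infinite index and projects nontrivially to both factors, yet for $h = (x,x)$ one has $(a^n, e) = (x^n, e) \in H_0$, so your detour never leaves $H_0$. Your case~(a) argument genuinely used that \emph{every} element of $H_0$ has trivial first coordinate, so that any excursion in the $G_1$-direction exits a tube around $H_0$; nothing analogous is available once $H_0$ can contain large pieces of $G_1 \times \{e\}$. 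A correct argument must exploit the infinite-index hypothesis more directly (for instance by first locating $g_n \in G_0$ with $d(g_n, H_0) \to \infty$ and threading the detour through $g_n$, which requires controlling the direction in which $H_0$ is thin), or else simply cite the product result from \cite{StronglyQC}.
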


\noindent
Recall from \cite{StableSubgroups} that a subgroup $H$ in a finitely generated group $G$ is \emph{stable} if, for any constants $A \geq 1$ and $B \geq 0$, there exists a constant $K \geq 0$ such that the Hausdorff distance between any two $(A,B)$-quasi-geodesics linking two points of $H$ is at most $K$. Equivalently, stable subgroups are hyperbolic Morse subgroups. The criterion used to prove the acylindrical hyperbolicity of our group in the previous statement will be the following. We refer to Appendix \ref{section:MorseInRAAG} for the definition of the vocabulary related to the combinatorial boundary. 

\begin{prop}\label{prop:discboundary}
Let $G$ be a group acting geometrically on some CAT(0) cube complex $X$. The following statements are equivalent:
\begin{itemize}
	\item[(i)] $G$ contains a contracting isometry;
	\item[(ii)] the combinatorial boundary $\partial^cX$ contains an isolated point;
	\item[(iii)] the combinatorial boundary $\partial^cX$ is not $\prec$-connected.
\end{itemize}
\end{prop}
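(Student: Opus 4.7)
The plan is to prove the implications in the cyclic order $(i) \Rightarrow (ii) \Rightarrow (iii) \Rightarrow (i)$.

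The implication $(i) \Rightarrow (ii)$ is an immediate consequence of Theorem \ref{thm:contractingboundary}. Indeed, geometricity of the action guarantees that $X$ is locally finite, and any infinite-order isometry admits a combinatorial axis up to passing to a suitable power (which remains contracting iff the original element is). Applying Theorem \ref{thm:contractingboundary} to this axis then produces an isolated point in $\partial^c X$.

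For $(ii) \Rightarrow (iii)$, I would argue that an isolated point $\xi \in \partial^c X$ automatically constitutes its own $\prec$-component. The intuition is that the $\prec$-relation on $\partial^c X$ is tracked by inclusions of cofinal families of halfspaces, so two distinct points can only be $\prec$-comparable if they share ``nearby'' boundary structure; but isolation of $\xi$ precisely forbids this sharing. Since the geometric action of $G$ on $X$ prevents $\partial^c X$ from reducing to a single point (as soon as $X$ is unbounded, which the non-trivial case assumes), the decomposition $\partial^c X = \{\xi\} \sqcup (\partial^c X \setminus \{\xi\})$ gives the required non-connectedness.

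The main obstacle, and hence the heart of the argument, is the implication $(iii) \Rightarrow (i)$. My plan is to combine Caprace--Sageev's dichotomy with the structure of the combinatorial boundary. First, using \cite[Proposition 3.5]{MR2827012}, I would replace $X$ by its essential core, observing that this reduction affects neither geometricity of the action nor $\prec$-connectedness of $\partial^c X$. Theorem \ref{thm:CS} then applies in the cocompact locally finite setting, yielding two alternatives. In the first alternative, $G$ contains a contracting isometry and we are done. In the second alternative, $X$ splits as a Cartesian product $X_1 \times X_2$ of two unbounded CAT(0) cube complexes; in this case I would show that $\partial^c X$ is $\prec$-connected by using the product decomposition to exhibit, between any two boundary points, an explicit chain of $\prec$-comparable boundary points passing through points supported on a single factor. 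This will contradict $(iii)$, leaving the contracting-isometry alternative as the only possibility and completing the proof. The delicate point will be verifying that the product decomposition indeed produces the desired chains in the combinatorial (rather than Roller or CAT(0)) boundary, which is where I expect to need to unfold carefully the definition of $\prec$ from Appendix \ref{section:MorseInRAAG}.
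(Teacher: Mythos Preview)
Your proposal is correct and follows essentially the same route as the paper: $(i)\Rightarrow(ii)$ via Theorem~\ref{thm:contractingboundary}, $(ii)\Rightarrow(iii)$ is immediate, and $(iii)\Rightarrow(i)$ by contraposition via the Caprace--Sageev dichotomy, noting that the combinatorial boundary of a product of two unbounded complexes is $\prec$-connected. Two small remarks: first, in this paper ``isolated'' is \emph{defined} to mean that the $\prec$-component is a singleton (see Appendix~\ref{section:MorseInRAAG}), so your intuitive justification for $(ii)\Rightarrow(iii)$ is unnecessary---the only content is that $\partial^cX$ has more than one point, which you correctly note; second, for $(iii)\Rightarrow(i)$ the paper packages the essential-core reduction by citing \cite[Theorem~5.46]{article3} to obtain a $G$-invariant convex product subcomplex $Y$ and then observes that cocompactness forces $\partial^cY=\partial^cX$, which is exactly the ``this reduction does not affect $\prec$-connectedness'' step you plan to verify.
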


\begin{proof}
The implication $(i) \Rightarrow (ii)$ follows from Theorem \ref{thm:contractingboundary}. The implication $(ii) \Rightarrow (iii)$ is clear. Now suppose that $G$ does not contain contracting isometries. It follows from \cite[Theorem 5.46]{article3} (which is an easy consequence of Theorem \ref{thm:CS}) that $X$ contains a $G$-invariant convex subcomplex $Y$ which decomposes as a Cartesian product of two unbounded subcomplexes. Because $G$ acts cocompactly on both $X$ and $Y$, necessarily $X$ is neighborhood of $Y$, so that $\partial^c Y= \partial^cX$. But $\partial^cY$ must be connected as any combinatorial boundary of a product of two unbounded complexes. This proves the implication $(iii) \Rightarrow (i)$. 
\end{proof}

\begin{proof}[Proof of Theorem \ref{thm:MorseAcylHyp}.]
If $G$ is acylindrically hyperbolic, then $G$ contains a Morse element $g \in G$ according to \cite{Sistohypembed}. Thus, $\langle g \rangle$ is a stable subgroup of $G$, which has infinite index since $G$ is not virtually cyclic. Next, it is clear that if $G$ contains an infinite stable subgroup of infinite index then it must contain an infinite Morse subgroup of infinite index since a stable subgroup is a Morse subgroup as well. From now on, suppose that $G$ contains an infinite Morse subgroup $H \leq G$ of infinite index. 

\medskip \noindent
According to Corollary \ref{cor:MorseCriterion}, there exists an $H$-cocompact contractible convex subcomplex $Y \subset X$. As a consequence of \cite[Remark 4.15]{article3}, the combinatorial boundary $\partial^cY$ of $Y$ is \emph{full} in $\partial^cX$, i.e., any element of $\partial^cX$ which is $\prec$-comparable to an element of $\partial^cY$ must belong to $\partial^cY$. Therefore, three cases may happen: $\partial^cY$ may be empty; $\partial^cY$ may coincide with $\partial^cX$; or $\partial^cX$ may not be $\prec$-connected. In the latter case, we deduce from Proposition \ref{prop:discboundary} that $G$ contains a contracting isometry, so that $G$ must be either virtually cyclic or acylindrically hyperbolic according to Corollary \ref{cor:acylhypiffcontracting}; because $G$ contains an infinite Morse subgroup of infinite index, it cannot be virtually cyclic, so we get the desired conclusion. 

\medskip \noindent
Next, notice that $\partial^cY$ cannot be empty since $H$ is infinite. Moreover, since $H$ has infinite index in $G$, necessarily $\partial^c Y \subsetneq \partial^c X$. Indeed, suppose that $\partial^c Y= \partial^c X$. We deduce from Lemma \ref{lem:wholeboundary} below that $H$ acts cocompactly on $X$. Let $Q$ be a finite fundamental domain for $G \curvearrowright X$ and $C$ a finite fundamental domain for $H \curvearrowright X$ which contains $Q$. Because $C$ is finite, there exist $g_1, \ldots, g_m \in G$ such that $C \subset \bigcup\limits_{i=1}^m g_iQ$; and because the action $G \curvearrowright X$ is properly discontinuous, $S= \{ g \in G \mid gQ \cap Q \neq \emptyset \}$ is finite. Fix some vertex $x_0 \in Q$. Now, if $g \in G$, there exists $h \in H$ such that $hg \cdot x_0 \in C$, and then $g_i^{-1}hg \cdot x_0 \in Q$ for some $1 \leq i \leq m$. Therefore, $g \in Hg_iS$. We conclude that $H$ is a finite-index subgroup.

\medskip \noindent
This concludes the proof of our theorem. 
\end{proof}

\begin{lemma}\label{lem:wholeboundary}
Let $X$ be a locally finite CAT(0) cube complex and $Y$ a convex subcomplex. The equality $\partial^c Y = \partial^cX$ implies that $X$ is neighborhood of $Y$.
\end{lemma}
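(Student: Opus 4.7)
The plan is to argue by contradiction: assuming $X$ is not contained in any finite combinatorial neighborhood of $Y$, I would exhibit an element of $\partial^cX$ that cannot be represented by a combinatorial ray in $Y$, contradicting the equality $\partial^cY=\partial^cX$.

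First I would, for each $n\geq 1$, choose a vertex $x_n\in X$ with $d(x_n,Y)=n$, set $p_n:=\mathrm{proj}_Y(x_n)$, and fix a combinatorial geodesic $\gamma_n:\{0,1,\ldots,n\}\to X$ from $p_n$ to $x_n$. By Lemma~\ref{lem:vertextoproj} every hyperplane crossed by $\gamma_n$ separates $x_n$ from $Y$ and is therefore disjoint from $Y$; in particular $d(\gamma_n(k),Y)=k$ for $0\leq k\leq n$, so each $\gamma_n$ probes arbitrarily deep into $X\setminus Y$.

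Next I would treat the main case, in which the basepoints $p_n$ all lie in a bounded region of $Y$. Local finiteness of $Y$ (as a convex subcomplex of the locally finite complex $X$) allows one to pass to a subsequence on which $p_n$ is constant, equal to some $p\in Y$. A König's lemma extraction in the locally finite graph $X$ then produces, along a further subsequence, an edge-by-edge limit $\gamma_\infty$ of the $\gamma_n$'s, which is an infinite combinatorial ray in $X$ based at $p$. Every hyperplane crossed by $\gamma_\infty$ arises as a hyperplane of some $\gamma_n$, and is therefore disjoint from $Y$. The class of $\gamma_\infty$ in $\partial^cX$ cannot be represented by any ray in $Y$, whose hyperplanes all lie in $\mathcal{H}(Y)$, and this contradicts $\partial^cY=\partial^cX$.

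The main obstacle is the remaining case, in which $(p_n)$ escapes every bounded subset of $Y$. My plan is to first use König's lemma inside the locally finite subcomplex $Y$ to extract a combinatorial ray $\rho\subset Y$ that the $p_n$'s travel along, and then perform a diagonal König extraction of the concatenated paths $[p_1,p_n]\cup\gamma_n$ inside $X$, in which the moment at which the path leaves $\rho$ is advanced with $n$ so that the deep segment $\gamma_n$ remains visible in the limit. The resulting ray $r\subset X$ should share an initial segment with $\rho$ but eventually cross only hyperplanes disjoint from $Y$ (inherited from the tails of the $\gamma_n$'s). Its class in $\partial^cX$ would then strictly dominate $[\rho]$ in the natural $\prec$-order on the combinatorial boundary and, not being realized by a ray staying inside $Y$, would yield the sought contradiction. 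Making this diagonal construction rigorous, and in particular ensuring that the limit ray $r$ genuinely escapes every neighborhood of $Y$, is the delicate point: it is precisely here that the interplay between local finiteness of $X$ and the combinatorics of the boundary $\partial^c$ has to be exploited.
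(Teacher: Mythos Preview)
Your first case is fine, but the second case is not a proof: you explicitly flag the diagonal extraction as ``delicate'' and leave it unfinished, and the difficulty you identify is genuine. When the projections $p_n$ escape in $Y$, there is no mechanism in your construction forcing the limiting ray to leave any neighborhood of $Y$; the K\"onig limit of the concatenations $[p_1,p_n]\cup\gamma_n$ will simply be a ray inside $Y$ (namely your $\rho$), because on every fixed ball the branching point $p_n$ eventually lies outside it and the path agrees with $[p_1,p_n]\subset Y$ there. Advancing the leaving moment does not help: that is precisely what makes it invisible in the limit.

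The paper avoids the case split with one move: rather than basing the geodesics at the moving points $p_n$, it fixes a single vertex $x\notin Y$ once and for all. For each $n$ it takes one of the hyperplanes $J^n$ separating $x_n$ from $Y$, lets $D_n$ be the halfspace it bounds that is disjoint from $Y$, sets $y_n=\mathrm{proj}_{D_n}(x)$, and considers the geodesics $[x,y_n]$. These all start at the fixed point $x$, so a single local-finiteness extraction produces a limiting combinatorial ray $\rho$ based at $x$, with no bounded/unbounded dichotomy to handle. By construction $\mathcal{H}(\rho)$ contains infinitely many hyperplanes disjoint from $Y$, hence $\rho(+\infty)\notin\partial^cY$. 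The fixed basepoint is exactly the ingredient your argument is missing.
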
 

\begin{proof}
Suppose that $X$ is not contained into a neighborhood of $Y$. So there exists a sequence of vertices $(x_n)$ satisfying $d(x_n,Y) \underset{n \to + \infty}{\longrightarrow} + \infty$. Let $J_1^n,\ldots,J_{k(n)}^n$ denote the hyperplanes separating $x_n$ from its projection onto $Y$; notice that $J_i^n$ separates $x_n$ from $Y$ according to Lemma \ref{lem:vertextoproj}. Fix some base vertex $x \notin Y$; if such a vertex does not exist, then $X=Y$ and there is nothing to prove. For every $n \geq 1$, let $y_n$ be the projection of $x$ onto the halfspace delimited by $J_{k(n)}^n$ which is disjoint from $Y$, and fix some geodesic $[x,y_n]$ between $x$ and $y_n$. Because $X$ is locally finite, our sequence $([x,y_n])$ must have a subsequence converging to some combinatorial ray $\rho$. By construction, $\mathcal{H}(\rho)$ contains infinitely many hyperplanes disjoint from $Y$, so that \cite[Lemma 4.5]{article3} implies $\rho(+ \infty) \notin \partial^c Y$. A fortiori, $\partial^cY \subsetneq \partial^cX$. This proves our lemma.
\end{proof}

\subsection{Quasi-isometry}

\noindent
It is worth noticing that being acylindrically hyperbolic is stable under quasi-isometry among cubulable groups. In fact, this is true more generally for CAT(0) groups. (But it is an open question in full generality \cite[Problem 9.1]{DGO}.) Let us show the following statement:

\begin{thm}\label{thm:CAT0}
Let $G$ be a group which is not virtually cyclic and which acts geometrically on a CAT(0) space $X$. The following assertions are equivalent:
\begin{itemize}
	\item[(i)] $G$ is acylindrically hyperbolic;
	\item[(ii)] $G$ contains a contracting isometry;
	\item[(iii)] the contracting boundary $\partial_cX$ is non-empty;
	\item[(iv)] the divergence of $X$ is superlinear.
\end{itemize}
\end{thm}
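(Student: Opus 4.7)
The plan is to prove $(i) \Leftrightarrow (ii)$ separately, in the same spirit as Corollary \ref{cor:acylhypiffcontracting}, and then close the cycle $(ii) \Rightarrow (iii) \Rightarrow (iv) \Rightarrow (ii)$ among the remaining three conditions.

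For the equivalence $(i) \Leftrightarrow (ii)$, the implication $(ii) \Rightarrow (i)$ follows from the general principle (independent of cube-complex structure) that a non virtually cyclic group acting properly on a metric space with a contracting isometry is acylindrically hyperbolic, either via the Bestvina--Bromberg--Fujiwara projection-complex machinery \cite{BBF} or via Sisto's criterion producing a hyperbolically embedded virtually cyclic subgroup. For $(i) \Rightarrow (ii)$, Sisto's theorem \cite{Sistohypembed} produces a Morse element in any acylindrically hyperbolic group, and the Charney--Sultan dictionary in CAT(0) geometry identifies Morse geodesics with contracting geodesics, so that such a Morse element automatically acts on $X$ as a contracting isometry.

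The implication $(ii) \Rightarrow (iii)$ is essentially tautological: a combinatorial/CAT(0) axis of a contracting isometry is a contracting bi-infinite geodesic, whose endpoints by definition are points of the contracting boundary $\partial_c X$. For $(iii) \Rightarrow (iv)$, fix a contracting ray $\rho$ representing a point of $\partial_c X$. A standard quadratic detour estimate (going back to Gromov, formalised by Charney--Sultan) shows that any path that avoids a metric ball of radius $R$ around $\rho(0)$ and connects two points of $\rho$ sufficiently far apart must have length at least quadratic in $R$; in particular the divergence function of $X$ is superlinear.

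The main obstacle is $(iv) \Rightarrow (ii)$, since Theorem \ref{thm:CS} (which was the key rank-rigidity input in the cubical setting of Corollary \ref{cor:acylhypiffcontracting}) is not directly available for general CAT(0) spaces. I would argue by contrapositive and invoke a CAT(0) rank-rigidity statement: if no element of $G$ induces a contracting isometry of $X$, then by the geometric analogue of Theorem \ref{thm:CS} valid for cocompact CAT(0) spaces (Ballmann--Buyalo in the relevant cases, together with Caprace--Monod style splitting results), $X$ admits a $G$-invariant convex subspace decomposing as a nontrivial Cartesian product of unbounded factors or containing a higher-rank flat. In either situation the divergence of $X$ reduces to the divergence of a factor carrying an unbounded flat sector, hence is linear, contradicting $(iv)$. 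The subtle point will be ensuring that the ``no contracting element'' hypothesis really produces a flat structure detectable on the whole $G$-space (rather than only on a bounded or lower-dimensional degenerate subspace); I would handle this using the fact that $G$ acts cocompactly to transfer the local flat structure to the full space.
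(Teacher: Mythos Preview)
Your treatment of $(i)\Leftrightarrow(ii)$ and $(ii)\Rightarrow(iii)$ matches the paper exactly: Sisto/BBF for one direction, Sisto's Morse element plus the Charney--Sultan Morse $=$ contracting dictionary for the other, and the tautological observation that an axis of a contracting isometry gives a point of $\partial_cX$.

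The genuine gap is in your closing implication $(iv)\Rightarrow(ii)$. You are invoking a ``geometric analogue of Theorem~\ref{thm:CS} valid for cocompact CAT(0) spaces'', i.e.\ the statement that absence of a contracting (rank-one) isometry forces a product or higher-rank flat structure. That is precisely the rank-rigidity conjecture for CAT(0) spaces, which is open in general; Ballmann--Buyalo and Caprace--Monod do \emph{not} establish it outside of special classes (symmetric spaces, buildings, cube complexes, manifolds). So as written, your contrapositive argument relies on an unproved conjecture, and your own caveat about ``ensuring that the `no contracting element' hypothesis really produces a flat structure'' is exactly the missing theorem.

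The paper routes around this entirely. It first quotes Charney--Sultan \cite[Theorem~2.14]{MR3339446} for the equivalence $(iii)\Leftrightarrow(iv)$ (so your direct $(iii)\Rightarrow(iv)$ divergence estimate is replaced by a citation, and the reverse direction comes for free). Then, for $(iii)\Rightarrow(ii)$, it uses a much more modest Ballmann--Buyalo result: in a cocompact CAT(0) space, the existence of a single contracting \emph{ray} already forces the existence of a contracting \emph{isometry}. This is a known theorem, not rank rigidity. So the fix is to replace your $(iv)\Rightarrow(ii)$ step by $(iv)\Rightarrow(iii)$ via Charney--Sultan and then $(iii)\Rightarrow(ii)$ via the Ballmann--Buyalo ``periodic rank-one geodesic'' result.
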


\noindent
In particular, notice that the points $(iii)$ and $(iv)$ are invariant under quasi-isometries (see respectively \cite[Theorem 3.10]{MR3339446} and \cite[Proposition 2.1]{divergence}), so that:

\begin{cor}\label{cor:qi}
Among CAT(0) groups, being acylindrically hyperbolic is a quasi-isometric invariant.
\end{cor}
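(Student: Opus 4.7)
The plan is to derive the corollary as an essentially formal consequence of Theorem \ref{thm:CAT0}, using the fact that two of the listed equivalent characterisations of acylindrical hyperbolicity, namely non-emptiness of the contracting boundary and superlinear divergence, are manifestly quasi-isometry invariant. So the work all sits in Theorem \ref{thm:CAT0}; here I just need to explain how to package its output, plus handle the virtually cyclic case which Theorem \ref{thm:CAT0} explicitly excludes.

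First I would reduce to the non virtually cyclic situation. Being virtually cyclic is equivalent to being two-ended, and having finitely many ends of a given number is a quasi-isometry invariant of finitely generated groups. So if $G_1$ and $G_2$ are quasi-isometric CAT(0) groups and one is virtually cyclic, then both are; in that case neither is acylindrically hyperbolic (by definition an acylindrically hyperbolic group is not virtually cyclic), and we are done. From now on assume neither $G_1$ nor $G_2$ is virtually cyclic, with geometric actions on CAT(0) spaces $X_1$ and $X_2$ respectively.

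Next I would invoke Theorem \ref{thm:CAT0} to replace acylindrical hyperbolicity by condition $(iv)$: $G_i$ is acylindrically hyperbolic if and only if the divergence of $X_i$ is superlinear. Since $G_i$ acts geometrically on $X_i$, the Milnor–\v Svarc lemma gives that $G_i$ and $X_i$ are quasi-isometric, and divergence of geodesic metric spaces is a quasi-isometry invariant up to the natural equivalence of divergence functions \cite[Proposition 2.1]{divergence}. Hence $X_1$ and $X_2$ have equivalent divergence functions, so one is superlinear if and only if the other is. Theorem \ref{thm:CAT0} then yields that $G_1$ is acylindrically hyperbolic if and only if $G_2$ is. Alternatively, one may run exactly the same argument with condition $(iii)$ in place of $(iv)$, using the quasi-isometry invariance of the non-emptiness of the contracting boundary \cite[Theorem 3.10]{MR3339446}.

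Since the real substance is contained in Theorem \ref{thm:CAT0}, I do not expect any serious obstacle in the corollary itself: the only subtlety is to remember to dispose of the virtually cyclic case by hand before invoking the theorem, and to pass between the divergence of the group and the divergence of the CAT(0) space via the geometric action, both of which are routine.
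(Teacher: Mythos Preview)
Your proposal is correct and follows essentially the same route as the paper: deduce the corollary from Theorem \ref{thm:CAT0} by observing that conditions $(iii)$ and $(iv)$ are quasi-isometry invariants (the paper cites exactly the same two references you do). Your explicit treatment of the virtually cyclic case and the passage via Milnor--\v Svarc are routine details that the paper leaves implicit, but there is no substantive difference in approach.
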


\noindent
In \cite{arXiv:1112.2666}, and in a more general form in \cite{BBF}, it is proved that if a group $G$ acts on a CAT(0) space $X$ and if $G$ contains a contracting isometry, then it is possible to construct a new action of $G$ on a some hyperbolic space $Y$ (in fact, a quasi-tree) such that the previous contracting isometry becomes a loxodromic isometry of $Y$. The general idea is that it is possible to associate an action on some hyperbolic space to any action (on arbitrary metric spaces) containing isometries ``which behave like isometries of hyperbolic spaces''. In particular, this allows Sisto to prove a strong version of the implication $(ii) \Rightarrow (i)$ of our theorem:

\begin{thm}\emph{\cite{arXiv:1112.2666}}
Let $G$ be a group which is not virtually cyclic and which acts properly discontinuously on a CAT(0) space. If $G$ contains a contracting isometry, then it is acylindrically hyperbolic.
\end{thm}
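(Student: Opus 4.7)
The plan is to upgrade the action $G \curvearrowright X$ to an action on a quasi-tree using the Bestvina--Bromberg--Fujiwara projection complex machinery, arranging that $g$ acts as a loxodromic WPD isometry, and then to invoke Osin's equivalent characterization \cite{OsinAcyl} of acylindrical hyperbolicity: a non virtually cyclic group admitting an action on a hyperbolic space with at least one WPD loxodromic element is acylindrically hyperbolic. This mirrors the template already used in the proof of Theorem \ref{thm:hypembed} above, transported from the cubical to the CAT(0) setting, and is essentially the strategy carried out in \cite{arXiv:1112.2666} and formalized in full generality in \cite{BBF}.

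First, fix the contracting isometry $g$ together with a contracting quasi-axis $\gamma \subset X$, and set $\mathcal{Y} = \{ h \gamma \mid h \in G\}$. Using standard properties of nearest-point projections onto contracting subsets of CAT(0) spaces (bounded-diameter projections of bounded sets, coarsely Lipschitz projection maps, strong contraction for sets far from $\gamma$), one verifies the three BBF projection axioms for the projections $\pi_A$ between distinct elements of $\mathcal{Y}$: (a) the diameter of $\pi_A(B)$ is uniformly bounded, (b) for pairwise distinct $A,B,C \in \mathcal{Y}$ at most one of $d_A(B,C)$, $d_B(A,C)$, $d_C(A,B)$ exceeds a uniform constant (a Behrstock-type inequality), and (c) for fixed $A,B \in \mathcal{Y}$, only finitely many $C \in \mathcal{Y}$ satisfy $d_C(A,B) >$ const. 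These are exactly the hypotheses of Theorem A of \cite{BBF}, which produces a quasi-tree $\mathcal{C}(\mathcal{Y})$, hence a hyperbolic graph, equipped with a $G$-action and a $G$-equivariant map $\mathcal{Y} \hookrightarrow \mathcal{C}(\mathcal{Y})$ that is isometric on each element of $\mathcal{Y}$. Since $g$ acts cocompactly on $\gamma$ with positive translation length, it acts loxodromically on $\mathcal{C}(\mathcal{Y})$.

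It remains to check that $g$ is WPD on $\mathcal{C}(\mathcal{Y})$ and then to conclude. Fix a vertex $x \in \gamma$, view it inside $\mathcal{C}(\mathcal{Y})$, fix $d \geq 0$, and choose $N$ large enough that the $\mathcal{C}(\mathcal{Y})$-distance from $x$ to $g^N x$ is much bigger than $d$ plus the bounded-projection constant from (a). Any $h \in G$ that moves both $x$ and $g^Nx$ within $\mathcal{C}(\mathcal{Y})$-distance $d$ forces a long subsegment of $\gamma$ to project trivially onto $h\gamma$, which by axiom (a) is only possible if $h\gamma = \gamma$; hence $h$ lies in the setwise stabilizer of $\gamma$ and moves a point of $\gamma \subset X$ by a bounded amount. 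Proper discontinuity of $G \curvearrowright X$ makes this set finite, so $g$ is WPD. Since $G$ is not virtually cyclic, Osin's theorem delivers acylindrical hyperbolicity.

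The main obstacle will be establishing finiteness axiom (c) and the WPD step without any cocompactness hypothesis: in the cubical proofs of Theorem \ref{thm:hypembed} and Lemma \ref{lem:fellowtravalmalnormal} one leverages a cocompact action on $Y$ and a uniform bound on finite subgroups, whereas here only proper discontinuity is assumed. The correct replacement is to convert a long coarse overlap of $\gamma$ with $h\gamma$ in $X$ into the existence of many elements of $G$ with uniformly bounded displacement on a fixed compact subset of $X$, and then to use proper discontinuity (rather than cocompactness) to bound their cardinality modulo the stabilizer of $\gamma$. Making this quantitative, uniformly in $h$, is the technical heart of the argument.
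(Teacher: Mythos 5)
The paper does not actually prove this statement: it is quoted from Sisto's work, and the prose immediately preceding it describes precisely the strategy you follow (build a quasi-tree from the translates of a contracting axis via the Bestvina--Bromberg--Fujiwara projection axioms, make $g$ loxodromic there, and conclude via WPD together with Osin's criterion), so your proposal matches the intended, cited argument; it is also consistent with how the paper uses the result later, in the proof of Proposition \ref{prop:loxoinCAT0}, where the same conclusion is phrased as ``$g$ lies in a hyperbolically embedded virtually cyclic subgroup''.

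One caveat worth fixing: axiom (a) as you state it is false for the raw collection $\mathcal{Y}=\{h\gamma \mid h\in G\}$, because a translate $h\gamma\neq\gamma$ lying at finite Hausdorff distance from $\gamma$ (e.g.\ $h$ in the centraliser of $g$, or more generally in the elementary closure $E(g)$) has unbounded projection onto $\gamma$. The standard repair is to index $\mathcal{Y}$ by the cosets $G/E(g)$ (equivalently, to identify parallel axes), and correspondingly the conclusion of your WPD step should be ``$h\in E(g)$'' rather than ``$h\gamma=\gamma$''. Relatedly, a small displacement of $x$ and $g^Nx$ measured in $\mathcal{C}(\mathcal{Y})$ does not directly bound the displacement in $X$, since the embedding $\mathcal{Y}\hookrightarrow\mathcal{C}(\mathcal{Y})$ is only isometric on each axis separately; the WPD verification has to pass through the structure of geodesics in the projection complex before properness of the action on $X$ can be invoked. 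You correctly identify this as the technical heart, and none of it changes the overall route.
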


\noindent
In another article \cite{Sistohypembed}, Sisto proves a kind of reciprocal, in the sense that, for any geometric action of an acylindrically hyperbolic group on an arbitrary metric space, our group must contain an isometry which ``which behave like isometries of hyperbolic spaces'', but with a different meaning:

\begin{thm}\label{thm4}\emph{\cite[Theorem 1]{Sistohypembed}}
Any acylindrically hyperbolic group contains a Morse element.
\end{thm}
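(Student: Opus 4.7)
The plan is to extract a loxodromic element from the defining acylindrical action of $G$ and promote it to a Morse element of $G$ in any Cayley graph. Assume $G$ is finitely generated (as is implicit in the definition of a Morse element) and acts acylindrically and non-elementarily on a hyperbolic space $(S,d_S)$. Non-elementarity together with a standard ping-pong argument on a pair of independent loxodromic isometries produces a loxodromic element $g \in G$ whose orbit in $S$ is a quasi-axis. Fix a basepoint $s_0 \in S$; the orbit map $\phi \colon G \to S$, $x \mapsto x s_0$, is coarsely Lipschitz with respect to the word metric on $G$, and $\phi(\langle g \rangle) = \langle g \rangle \cdot s_0$ is a quasi-geodesic which is automatically a Morse (in fact contracting) subset of $S$ by hyperbolicity of $S$.

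The goal is to show that the orbit $\langle g \rangle \cdot 1_G$ is a Morse subset of the Cayley graph of $G$. The main intermediate object is a coarse projection $\pi \colon G \to \langle g \rangle$ defined by sending $x$ to any power $g^n$ whose image $g^n s_0$ realises (up to a fixed additive error) the nearest-point projection of $\phi(x)$ onto $\langle g \rangle \cdot s_0$. Hyperbolicity of $S$ guarantees that $\pi(x)$ is well-defined up to bounded error \emph{in $S$}; the role of acylindricity is to upgrade this to bounded error \emph{in $G$}. Indeed, if $g^n$ and $g^{n'}$ both realise the coarse minimum then the element $g^{n-n'}$ moves both $s_0$ and a far point on the quasi-axis by a bounded amount, so the acylindricity constants $R, N$ force $|n-n'|$ to lie in a finite window whose size depends only on the geometry of the action.

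To conclude, take any $(A,B)$-quasi-geodesic $\gamma$ in the Cayley graph of $G$ between two orbit points $g^p$ and $g^q$. Its image $\phi(\gamma)$ is an $(A',B')$-quasi-geodesic in $S$ joining $g^p s_0$ and $g^q s_0$, hence lies in a bounded neighborhood of the quasi-axis by the Morse property in $S$. Lifting back, each vertex $x$ of $\gamma$ can be written as $x = g^{n(x)} h_x$ with $\phi(h_x)$ bounded in $S$; acylindricity of the action then yields a uniform upper bound on the word length of $h_x$ in $G$. Consequently $\gamma$ lies in a uniform neighborhood of $\langle g \rangle$ in $G$, which is precisely the Morse condition, so $g$ is the desired Morse element.

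The main obstacle is the lifting step in the last paragraph: although hyperbolicity of $S$ makes Morse-ness of $\phi(\langle g \rangle)$ immediate in $S$, transferring this to Morse-ness of $\langle g \rangle$ in $G$ requires a uniform diameter bound in $G$ for preimages $\phi^{-1}(K) \cap \pi^{-1}(g^n)$ over bounded $K \subset S$ and $n \in \mathbb{Z}$. Converting the pointwise finiteness built into the definition of acylindricity into such a uniform bound along the entire axis is the delicate technical point, and requires the acylindricity constants $R, N$ to be tracked carefully through the estimates.
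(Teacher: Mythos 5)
Your overall strategy --- pulling the Morse property back from $S$ to the Cayley graph of $G$ through the orbit map $\phi\colon x\mapsto xs_0$ --- breaks down at the step where you assert that the image $\phi(\gamma)$ of an $(A,B)$-quasi-geodesic $\gamma$ of $G$ is a quasi-geodesic of $S$. The orbit map of an acylindrical action is coarsely Lipschitz but essentially never a quasi-isometric embedding (if it were, $G$ would itself be hyperbolic and the theorem would be immediate): acylindricity does not prevent $d_S(xs_0,ys_0)$ from being vastly smaller than $d_G(x,y)$. Hence $\phi(\gamma)$ is only a coarsely Lipschitz path in $S$, and such paths between two points of a quasi-axis need not stay in any bounded neighbourhood of it, so the Morse lemma in $S$ cannot be applied to it. There is a second, independent gap in the lifting step: from the boundedness of $d_S(s_0,h_xs_0)$ you conclude that $|h_x|_G$ is uniformly bounded, but acylindricity only bounds the set of elements coarsely fixing \emph{two} points at distance at least $R$; the coarse stabiliser $\{h\in G \mid d_S(s_0,hs_0)\leq K\}$ of a \emph{single} point may well be infinite (for instance the stabiliser of a curve under the action of a mapping class group on its curve graph). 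You flag this last point as ``delicate'', but it is not a matter of tracking constants: the assertion is false as stated, and this non-properness of the orbit map is precisely the difficulty the theorem has to overcome.

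The proof the paper relies on (Sisto \cite{Sistohypembed}, combined with the Dahmani--Guirardel--Osin/Osin machinery) circumvents both problems by working with cosets rather than points of $S$. One first shows that a loxodromic element $g$ of the acylindrical action lies in a virtually cyclic subgroup $E(g)$ which is \emph{hyperbolically embedded} in $G$; acylindricity (or WPD) enters exactly here, to establish a Behrstock-type inequality controlling the projections of distinct cosets $xE(g)$ onto $E(g)$ and the properness of the induced relative metric on $E(g)$. One then proves, entirely inside $G$ and using these coset projections, that hyperbolically embedded subgroups are Morse; at no stage does one push quasi-geodesics of $G$ forward into $S$ and invoke hyperbolicity there. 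If you wish to repair your argument, the missing ingredient is this system of projections between cosets of $\langle g\rangle$ --- not the projection of a single orbit point onto the quasi-axis.
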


\noindent
Given a CAT(0) space $X$ and some of its isometry $g \in \mathrm{Isom}(X)$, we say that $g$ is a \emph{Morse isometry} if $g$ is a loxodromic isometry, with some axis $\gamma$, such that for any $k,L \geq 1$, there exists a constant $C=C(k,L)$ so that any $(k,L)$-quasigeodesic between two points of $\gamma$ stays into the $C$-neighborhood of $\gamma$; the definition does not depend on the choice of the axis. Thus, if an acylindrically hyperbolic group acts geometrically on a CAT(0) space, then it must contain a Morse isometry.

\medskip \noindent
In general, a Morse isometry is not necessarily contracting, but the two notions turn out to coincide in CAT(0) spaces:

\begin{thm}\label{thm5}\emph{\cite[Theorem 2.14]{MR3339446}}
An isometry of a CAT(0) space is contracting if and only if it is a Morse isometry.
\end{thm}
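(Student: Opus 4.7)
The plan is to prove the two implications separately, where one direction holds in arbitrary geodesic spaces and the other is specific to CAT(0) geometry.

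For the ``contracting $\Rightarrow$ Morse'' direction I would argue in a fully general geodesic metric space. Fix a $C$-contracting axis $\gamma$ of $g$ and a $(k,L)$-quasigeodesic $q : [0,T] \to X$ with both endpoints on $\gamma$. The strategy is to bound, for any $R$ large enough, the total ``outside time'' $\{t : d(q(t), \gamma) > R\}$ in terms of the diameter of $\pi_\gamma \circ q$, which is itself bounded by the distance between the two endpoints of $q$. Decompose the complement of $\pi_\gamma(\gamma)$-neighborhood by covering each maximal outside subarc by balls of radius slightly less than the distance to $\gamma$; the contracting property then caps each local jump of $\pi_\gamma$ by a multiple of $C$. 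Summing these jumps yields a linear bound on the length spent outside, hence a bound on $\max_t d(q(t), \gamma)$ depending only on $k,L,C$.

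For the ``Morse $\Rightarrow$ contracting'' direction I would use CAT(0) geometry decisively. Suppose the axis $\gamma$ is not contracting: then one produces balls $B(x_n, r_n)$ disjoint from $\gamma$ with $\mathrm{diam}(\pi_\gamma(B(x_n, r_n))) \to \infty$. Pick $p_n, q_n \in B(x_n, r_n)$ whose projections $a_n = \pi_\gamma(p_n)$ and $b_n = \pi_\gamma(q_n)$ satisfy $d(a_n, b_n) \to \infty$. Form the concatenation
\[
\alpha_n \ = \ [a_n, p_n] \cup [p_n, q_n] \cup [q_n, b_n],
\]
where every segment is a CAT(0) geodesic. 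I would then verify that $\alpha_n$ is a $(k, L)$-quasigeodesic with uniform constants: the key input is that nearest-point projection onto a convex subset of a CAT(0) space is $1$-Lipschitz, which, combined with $d(p_n, q_n) \leq 2r_n$ and $d(p_n, a_n) = d(p_n, \gamma)$, forces the total length of $\alpha_n$ to be comparable (up to bounded additive/multiplicative constants independent of $n$) to $d(a_n, b_n)$. Since $\alpha_n$ leaves every neighborhood of $\gamma$ (the midpoint of $[p_n, q_n]$ has distance from $\gamma$ at least $d(x_n,\gamma)$, which grows because the projection diameter $\leq 2 d(x_n, \gamma) + 2r_n$ blows up), $\gamma$ fails the Morse condition, a contradiction.

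The main obstacle is the quasigeodesic verification in the second direction: one must check that the constants $(k, L)$ do not deteriorate with $n$. The CAT(0) hypothesis is what saves this, through the $1$-Lipschitz projection and the convexity of the distance function, which together prevent $\alpha_n$ from backtracking. Without CAT(0) geometry the construction would have no reason to produce a uniformly controlled quasigeodesic, which is precisely why the equivalence can fail outside the CAT(0) setting. Once the quasigeodesic property is established with uniform constants, the contradiction with the Morse assumption is immediate, completing the equivalence.
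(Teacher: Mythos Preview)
The paper does not give its own proof of this statement; it is simply quoted from Charney--Sultan, so there is no in-paper argument to compare against.  Your ``contracting $\Rightarrow$ Morse'' direction is the standard one and is fine in any geodesic space.

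The ``Morse $\Rightarrow$ contracting'' direction, however, has a real gap.  You claim that the $1$-Lipschitz property of the CAT(0) projection ``forces the total length of $\alpha_n$ to be comparable \dots\ to $d(a_n,b_n)$''.  It does not.  The $1$-Lipschitz bound only yields $d(a_n,b_n)\le d(p_n,q_n)\le 2r_n$, i.e.\ a \emph{lower} bound on the middle leg in terms of $d(a_n,b_n)$.  The two outer legs have lengths $d(p_n,\gamma)$ and $d(q_n,\gamma)$, and these are controlled by $d(x_n,\gamma)$, which is only known to exceed $r_n$ and bears no upper relation to $d(a_n,b_n)$.  Nothing prevents, say, $d(x_n,\gamma)=n^{2}$, $r_n=n$, and $\operatorname{diam}\pi_\gamma\bigl(B(x_n,r_n)\bigr)=n+1$: then $\alpha_n$ has length of order $n^{2}$ while its endpoints are at distance $n+1$, so the quasigeodesic constants blow up with $n$.  (Your side remark that the midpoint of $[p_n,q_n]$ has distance $\ge d(x_n,\gamma)$ from $\gamma$ is also wrong: convexity of CAT(0) balls only gives $\ge d(x_n,\gamma)-r_n$, which may stay bounded.)

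What is missing is a rescaling step: one must cut the outer legs down to length comparable to $D_n:=d(a_n,b_n)$, using that in CAT(0) the projection of any point of $[a_n,p_n]$ onto $\gamma$ is still $a_n$, so the point $p_n'\in[a_n,p_n]$ at distance $D_n$ from $a_n$ again satisfies $d(p_n',\gamma)=D_n$.  After this replacement the three legs all have length $O(D_n)$ and the path visits a point at distance $D_n\to\infty$ from $\gamma$; but one still has to verify the quasigeodesic inequality at \emph{every} pair of points on the concatenation, and this is where the CAT(0) angle condition at the foot of a projection ($\angle_{a_n}(p_n',c)\ge\pi/2$ for all $c\in\gamma$) is used in earnest.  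This verification, not the $1$-Lipschitz bound, is the substantive CAT(0) input, and it is absent from your sketch.
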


\noindent
By combining Theorem \ref{thm4} with Theorem \ref{thm5}, we deduce the implication $(i) \Rightarrow (ii)$ of our theorem, i.e., an acylindrically hyperbolic group acting geometrically on a CAT(0) space must contain a contracting isometry. This proves that, in the context of CAT(0) spaces, contracting isometries are fundamentally linked to acylindrical hyperbolicity.

\medskip \noindent
Thus, we get a dynamic characterisation of acylindrical hyperbolicity. In order to find a geometric characterisation, we need Charney and Sultan's \emph{contracting boundary} \cite{MR3339446}.

\begin{definition}
Let $X$ be a CAT(0) space. Its \emph{contracting boundary}, denoted $\partial_cX$, is the set of the contracting geodesic rays starting from a fixed basepoint up to finite Hausdorff distance. The definition does not depend on the choice of the basepoint.
\end{definition}

\noindent
It is clear that, if our group $G$ contains a contracting isometry, then our CAT(0) space $X$ have a non-empty contracting boundary, since it will contain any subray of an axis of this isometry. This proves the implication $(ii) \Rightarrow (iii)$ of our theorem. Conversely, as noticed in \cite[Corollary 2.14]{arXiv:1509.09314}, it follows from a result of Bullmann and Buyalo \cite{BallmannBuyalo} that $G$ necessarily contains a contracting isometry if $X$ contains a contracting ray, so that the acylindrical hyperbolicity of $G$ follows from Theorem \ref{thm4}. This proves the implication $(iii) \Rightarrow (ii)$ of our theorem.

\medskip \noindent
Finally, the equivalent $(iii) \Leftrightarrow (iv)$ was proved in \cite[Theorem 2.14]{MR3339446}. This concludes the proof of our theorem.

\medskip \noindent
We conclude this section with a last statement, which will be useful in the next section (in the context of CAT(0) cube complexes).

\begin{prop}\label{prop:loxoinCAT0}
Let $G$ be an acylindrically hyperbolic group acting geometrically on a CAT(0) space $X$. Then $g \in G$ is a generalised loxodromic element if and only if it is a contracting isometry of $X$.
\end{prop}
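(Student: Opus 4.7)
The plan is to combine two known equivalences: first, that in any acylindrically hyperbolic group, generalised loxodromic elements coincide with Morse elements (Sisto's characterisation, of which one direction is recorded as Theorem \ref{thm4} above); and second, Theorem \ref{thm5}, which identifies Morse isometries of a CAT(0) space with contracting isometries. The bridge between the two viewpoints is the Milnor--\v{S}varc lemma applied to the geometric action $G \curvearrowright X$: an element $g \in G$ is Morse as a group element (meaning $\langle g \rangle$ is a Morse quasi-geodesic in a Cayley graph of $G$) if and only if every orbit $\langle g \rangle \cdot x_0$ is a Morse quasi-geodesic in $X$, which in turn is equivalent to any (hence every) axis of $g$ in $X$ being Morse.

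For the forward implication, I would start with a generalised loxodromic element $g$. By Sisto's result \cite{Sistohypembed}, $g$ is a Morse element of $G$. Pushing this Morse property through the $G$-equivariant quasi-isometry $G \to X$, the orbit $\langle g \rangle \cdot x_0 \subset X$ is Morse, and since $g$ necessarily acts loxodromically on $X$ (it is loxodromic already on the auxiliary hyperbolic space, so cannot be elliptic under a proper action), this orbit lies at finite Hausdorff distance from any axis of $g$ in $X$. Consequently $g$ is a Morse isometry of $X$, and Theorem \ref{thm5} converts this into the statement that $g$ is contracting on $X$.

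The converse runs the same argument backwards. If $g$ is a contracting isometry of $X$, then Theorem \ref{thm5} makes $g$ a Morse isometry of $X$, its axis is a Morse quasi-geodesic, and Milnor--\v{S}varc transports this to the fact that $g$ is a Morse element of $G$. Since $G$ is acylindrically hyperbolic, the reverse direction of Sisto's equivalence (i.e.\ every Morse element of an acylindrically hyperbolic group is generalised loxodromic) gives that $g$ is a generalised loxodromic element.

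The only delicate point is to justify the direction ``Morse element $\Rightarrow$ generalised loxodromic'' inside an acylindrically hyperbolic group, since Theorem \ref{thm4} as quoted only records the opposite implication. This is however standard in the theory of acylindrically hyperbolic groups: a Morse element $g$ generates an infinite cyclic group whose centraliser is virtually cyclic, and by Osin's criterion \cite{OsinAcyl} (equivalently, by Dahmani--Guirardel--Osin \cite{DGO}) this ensures that $\langle g \rangle$ is a hyperbolically embedded virtually cyclic subgroup, which is exactly what it means for $g$ to be generalised loxodromic. Invoking this packaging is the only non-routine step; everything else is a direct chain of implications across the three quoted results.
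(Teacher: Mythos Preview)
Your forward direction is identical to the paper's. For the converse the paper takes a shorter path: rather than going contracting $\Rightarrow$ Morse isometry $\Rightarrow$ Morse element of $G$ $\Rightarrow$ generalised loxodromic, it applies Sisto's result \cite{arXiv:1112.2666} directly to the contracting isometry of the CAT(0) space to obtain that $g$ lies in a virtually cyclic hyperbolically embedded subgroup, and then quotes \cite[Theorem~1.4]{OsinAcyl} to conclude. This bypasses the group-theoretic step ``Morse element $\Rightarrow$ generalised loxodromic'' entirely.

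Your route is valid, but the justification you give for that last step is not quite right. You argue that a Morse element has virtually cyclic centraliser and that ``by Osin's criterion'' this forces a hyperbolically embedded virtually cyclic subgroup. Having a virtually cyclic centraliser is not, by itself, one of Osin's equivalent conditions for being generalised loxodromic, and it does not suffice in an arbitrary group (there are groups with infinite-order elements having cyclic centraliser and no generalised loxodromic elements at all). What actually closes the gap is again Sisto's construction: a Morse element is contracting in the sense needed for \cite{arXiv:1112.2666}/\cite{BBF}, and that is what produces the required WPD/hyperbolically embedded structure. So in effect your converse ends up invoking the same ingredient as the paper, just after an unnecessary detour through the Cayley graph; the paper's argument stays in $X$ and is one step shorter.
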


\noindent
Recall from \cite{OsinAcyl} that, given a group $G$, an element $g \in G$ is a \emph{generalised loxodromic element} if $G$ acts acylindrically on a hyperbolic space such that $g$ turns out to be a loxodromic isometry.

\begin{proof}
Let $g \in G$ be a generalised loxodromic element. According to \cite{Sistohypembed}, $g$ is a Morse element, so that $g$ must be a Morse isometry of $X$, and finally a contracting isometry according to Theorem \ref{thm5}. Conversely, supposed that $g \in G$ is a contracting isometry of $X$. Then \cite{arXiv:1112.2666} implies that $g$ is contained in a virtually cyclic subgroup which is hyperbolically embedded, so that $g$ must be a generalised loxodromic element according to \cite[Theorem 1.4]{OsinAcyl}.
\end{proof}

\subsection{Acylindrical models}\label{section:curvegraph}

\noindent
Given a group $G$, one of its elements $g \in G$ is a \emph{generalised loxodromic element} if $G$ acts acylindrically on some hyperbolic space so that $g$ induces a loxodromic isometry; see \cite{OsinAcyl} for equivalent characterisations. Loosely speaking, theses elements are those which have a ``hyperbolic behavior''. A \emph{universal action} is an action of $G$ on a hyperbolic space so that all its generalised loxodromic elements induce WPD isometries; and a \emph{universal acylindrical action} is an acylindrical action of $G$ on a hyperbolic space so that all its generalised loxodromic elements induce loxodromic isometries. For instance, the action of the mapping class group of a (non-exceptional) surface on its associated curve graph is a universal acylindrical action. This is the typical example, so that the hyperbolic graphs constructing in attempts to make some classes of groups act systematically on hyperbolic spaces are often referred to as curve graphs; see for instance \cite{GarsideCurveGraph, KimKoberdaGeometryCurveGraph, HHSI}. It was proved in \cite{AbbottDunwoody} that Dunwoody's inaccessible group does not admit a universal acylindrical action, but the existence or non-existence of such actions for finitely presented groups remains open. 

\medskip \noindent
A first naive attempt to define the curve graph of a CAT(0) cube complex $X$, inspired from curve graphs of surfaces, would be to consider the graph whose vertices are the hyperplanes of $X$ and whose edges link transverse hyperplanes. This is the \emph{crossing graph} $\Delta X$ of $X$. However, this graph may not be connected, and even worse, it was noticed in \cite{Roller, MR3217625} that every graph is the crossing graph of a CAT(0) cube complex; in particular, the crossing graph of a CAT(0) cube complex may not be hyperbolic. (Nevertheless, the crossing graph may be interesting, see Appendix \ref{section:crossing}.) Instead, Hagen introduced in \cite{MR3217625} the \emph{contact graph} $\Gamma X$ of $X$ as the graph whose vertices are the hyperplanes of $X$ and whose edges link two hyperplanes whose carriers intersect. 

\begin{thm}\label{thm:contact}
Let $G$ be a group acting geometrically on a CAT(0) cube complex $X$. Then $\Gamma X$ is a quasi-tree on which $G$ acts non-uniformly acylindrically, and, for every $g \in G$, either a power of $g$ stabilises a hyperplane of $X$ (and a fortiori fixes a vertex of $\Gamma X$) or $g$ is a contracting isometry and induces a loxodromic isometry on $\Gamma X$. 
\end{thm}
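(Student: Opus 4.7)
The plan is to combine Hagen's quasi-tree theorem for contact graphs with structural results on loxodromic isometries of CAT(0) cube complexes. That $\Gamma X$ is a quasi-tree is Hagen's theorem from \cite{MR3217625}; this immediately yields the required hyperbolicity. The cubical action of $G$ on $X$ permutes hyperplanes and preserves the relation of having intersecting carriers, so $G$ acts on $\Gamma X$ by graph automorphisms.

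For the acylindricity of $G \curvearrowright \Gamma X$, I would translate $\Gamma X$-distance into hyperplane separation in $X$: two hyperplanes $J_1, J_2$ at $\Gamma X$-distance at least $R$ must, by the quasi-tree structure, be separated in $X$ by a chain of pairwise disjoint hyperplanes whose length grows with $R$. Any element of $\mathrm{stab}(J_1) \cap \mathrm{stab}(J_2)$ then preserves this chain setwise, and by the properness and cocompactness of $G \curvearrowright X$ such an element lies in a uniformly finite subgroup, in the spirit of the third bullet of Theorem \ref{thm:acylcriterion}. Upgrading this ``weak acylindricity on hyperplanes'' to genuine acylindricity on the hyperbolic space $\Gamma X$ is then a standard exercise using the equivariant projection from $X$ to $\Gamma X$.

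For the dichotomy, fix $g \in G$. Since $G \curvearrowright X$ is geometric, a power of $g$ is either the identity or is loxodromic on $X$ with a combinatorial axis $\gamma$; the elliptic case immediately gives a power stabilizing every hyperplane. Assume henceforth that $g$ is loxodromic. If $g$ is contracting, Theorem \ref{thm:contractingisom} provides a pair of well-separated hyperplanes $(J_1, J_2)$ skewered by $g$; the well-separation condition forces any chain of consecutively contacting hyperplanes from $J_1$ to $g^n J_1$ to cross $J_2$, whence $d_{\Gamma X}(J_1, g^n J_1) \to +\infty$ and $g$ acts loxodromically on $\Gamma X$.

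Conversely, assume no power of $g$ stabilizes any hyperplane; the goal is to show $g$ is contracting. If not, Theorem \ref{thm:contractingisom} yields arbitrarily large joins $(\mathcal{H}_n, \mathcal{V}_n)$ with $\mathcal{H}_n \subset \mathcal{H}(\gamma)$. Each $V \in \mathcal{V}_n$ is transverse to a long sub-interval of $\mathcal{H}(\gamma)$, so it runs ``parallel'' to $\gamma$ over a long distance; after applying Fact \ref{fact:pairwisedisjoint} one can arrange these parallel hyperplanes to lie along a flat rectangle straddling $\gamma$. Since $G$ acts cocompactly and $g$ translates $\gamma$, a pigeonhole applied to $\mathcal{V}_n$ for $n$ large should yield $V, V' \in \mathcal{V}_n$ with $V' = g^k V$ for some $k \neq 0$, contradicting the assumption. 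The main obstacle will be executing this pigeonhole cleanly, as one must ensure that the intertwining element actually lies in $\langle g \rangle$ rather than merely in $G$; I expect this to require a Caprace--Sageev-style structural result producing a $g$-invariant product decomposition of a neighborhood of $\gamma$, which would then reduce the problem to a translation acting on a finite quotient of the orthogonal factor.
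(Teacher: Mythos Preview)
The paper's own proof of this theorem is purely by citation: the quasi-tree property is Hagen's \cite[Theorem 3.1.1]{Hagenthesis}, the acylindricity is cited to \cite{HHSI, HagenSusse}, and the dichotomy is \cite[Corollary 6.3.1]{Hagenthesis}. So there is no argument in the paper to compare against; you are attempting to supply one, which is a different undertaking.

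Your sketch contains two genuine gaps. First, the acylindricity. You argue that for hyperplanes $J_1,J_2$ far apart in $\Gamma X$, the common stabiliser $\mathrm{stab}(J_1)\cap\mathrm{stab}(J_2)$ is uniformly finite, and then assert that upgrading this weak acylindricity to acylindricity of $G\curvearrowright\Gamma X$ is ``a standard exercise using the equivariant projection from $X$ to $\Gamma X$''. It is not. Theorem~\ref{thm:acylcriterion}, which performs exactly this kind of upgrade, is stated for actions on \emph{hyperbolic CAT(0) cube complexes}, and its proof uses the cubical structure of the ambient space in an essential way; $\Gamma X$ is a quasi-tree but not a cube complex, and the map $X\to\Gamma X$ is highly non-proper. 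The papers the author cites (\cite{HHSI, HagenSusse}) establish acylindricity via the machinery of hierarchically hyperbolic spaces and factored contact graphs, which is substantial and not a routine projection argument.

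Second, the dichotomy. You yourself flag the obstacle: from a non-contracting loxodromic $g$ you obtain hyperplanes $V$ running parallel to $\gamma$ over long stretches, and you want pigeonhole to force $g^kV=V'$ for some $V,V'$ in your collection. As you note, cocompactness of $G$ only gives an element of $G$, not of $\langle g\rangle$, effecting the identification; bridging this is exactly the content of Hagen's argument and does require a nontrivial structural step. Until that is supplied, the converse direction remains incomplete. Your forward direction (contracting $\Rightarrow$ loxodromic on $\Gamma X$) via skewered well-separated hyperplanes is fine and is essentially the content of Lemma~\ref{lem:HXwhenloxo} specialised to $L=0$.
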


\begin{proof}
The fact that the contact graph is quasi-isometric to a tree is proved by \cite[Theorem 3.1.1]{Hagenthesis}. (Interestingly, the constants occurring in the quasi-isometry do not depend on the CAT(0) cube complex we consider.) The acylindricity of the action was proved in \cite{MoiAcylHyp}, and the third statement of the theorem comes from \cite[Corollary 6.3.1]{Hagenthesis}.
\end{proof}

\noindent
It remains unknown whether the action on the contact graph is always acylindrical. See \cite{HagenSusse} for more details.

\medskip \noindent
Notice that the contact graph does not provide a universal action for cubulable groups, since contracting isometries may stabilise hyperplanes. (This may happen for instance in right-angled Coxeter groups, even if the action is essential.) In fact, although the existence of a universal acylindrical action has been proved in some cases \cite{UniversalAcylHHS}, it remains an open question in full generality.

\begin{question}\label{question:universal}
If a group acts geometrically on a CAT(0) cube complex, does it admit a universal acylindrical action?
\end{question}

\noindent
In this section, we explain how to construct hyperbolic models of CAT(0) cube complexes. Question \ref{question:universal} is one of the motivations, but several applications will be given at the end of the section. 

\begin{definition}
Let $X$ be a CAT(0) cube complex and $L \geq 0$ an integer. Define the metric $\delta_L$ on (the vertices of) $X$ as the maximal number of pairwise $L$-well-separated hyperplanes separating two given vertices. 
\end{definition}

\noindent
It is worth noticing that one essentially recovers the contact graph when $L=0$.

\begin{fact}
Let $X$ be a CAT(0) cube complex. A map sending every vertex of $X$ to a hyperplane whose carrier contains it induces a quasi-isometry $(X, \delta_0) \to \Gamma X$. 
\end{fact}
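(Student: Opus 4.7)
The plan is to verify that any such map $f: X \to \Gamma X$, sending $v \mapsto J_v$ with $v \in N(J_v)$, is coarsely well-defined, coarsely surjective, and coarsely Lipschitz in both directions. Well-definedness modulo a uniform error is immediate: if $J, J'$ both contain $v$ in their carriers, then $N(J) \cap N(J') \ni v$, so $d_{\Gamma X}(J, J') \leq 1$. Coarse surjectivity is equally immediate: for any hyperplane $K$, picking a vertex $v \in N(K)$ gives $d_{\Gamma X}(K, f(v)) \leq 1$, since $K$ and $f(v)$ then share $v$ in their carriers.

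For the bound $\delta_0(x, y) \leq 2\,d_{\Gamma X}(f(x), f(y)) + 2$, I would fix a $\Gamma X$-geodesic $J_x = K_0, K_1, \ldots, K_m = J_y$, pick vertices $z_i \in N(K_i) \cap N(K_{i+1})$ for $0 \leq i < m$, and set $z_{-1} := x$, $z_m := y$. Given pairwise $0$-well-separated hyperplanes $H_1, \ldots, H_n$ separating $x$ from $y$, each $H_j$ separates some pair $(z_{i-1}, z_i)$ of vertices of the convex subcomplex $N(K_i)$, hence $H_j$ is either $K_i$ or transverse to $K_i$. For each $i$, at most one $H_j$ can equal $K_i$ and at most one $H_j$ can be transverse to $K_i$, since otherwise $K_i$ would be a common transverse to two $0$-well-separated hyperplanes, a contradiction. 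Summing over $i$ yields $n \leq 2(m+1)$.

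The reverse inequality $d_{\Gamma X}(f(x), f(y)) \leq C\,\delta_0(x, y) + C'$ is the main obstacle. The starting observation, which is the contrapositive of an easy fact, is that if $K, K'$ share a common transverse $L$ then $L$ is in contact with both, so $d_{\Gamma X}(K, K') \leq 2$; consequently, any two hyperplanes at $\Gamma X$-distance at least $3$ are automatically $0$-well-separated. Applying this to every third term of a $\Gamma X$-geodesic $J_x = L_0, L_1, \ldots, L_m = J_y$ produces a pairwise $0$-well-separated family $L_0, L_3, L_6, \ldots$ of size roughly $m/3$. The hardest step is to check that these hyperplanes genuinely separate $x$ from $y$, so that they witness $\delta_0(x, y) \gtrsim m/3$: for each $L_{3i}$ with $3i$ and $m - 3i$ large enough, the disjoint carriers $N(L_0)$ and $N(L_m)$ should lie in distinct halfspaces of $L_{3i}$, since otherwise one could reroute the contact geodesic entirely on one side of $L_{3i}$ and contradict its minimality. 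Combining these four steps yields the desired quasi-isometry.
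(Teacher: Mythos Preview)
Your first three steps are fine. The gap is in the fourth. The ``rerouting'' justification does not go through: knowing that $N(L_0)$ and $N(L_m)$ lie in the same halfspace $D^+$ of $L_{3i}$ yields no bound whatsoever in $\Gamma X$. In fact, your own setup shows why nothing is gained. For $|j-3i|\ge 2$ the hyperplanes $L_j$ and $L_{3i}$ are not in contact, so $N(L_j)$ lies entirely in one halfspace of $L_{3i}$; since $N(L_{j})\cap N(L_{j+1})\neq\emptyset$, consecutive $L_j$'s away from $L_{3i}$ must lie on the \emph{same} side, and hence \emph{all} the $L_j$ with $|j-3i|\ge 2$ already sit in $D^+$. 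So the geodesic never ``wastes'' steps crossing $L_{3i}$; there is nothing to shortcut, and no contradiction with minimality follows. What you actually need is that $L_{3i}$ separates $x$ from $y$, and this simply does not follow from the positions of $L_0$ and $L_m$ relative to $L_{3i}$ alone.

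The paper avoids this difficulty by not working with the abstract $\Gamma X$-geodesic at all. It compares $\delta_0(x,y)$ directly with $S(J_x,J_y)$, the maximal number of pairwise strongly separated hyperplanes separating the \emph{hyperplanes} $J_x$ and $J_y$: any such collection also separates $x$ and $y$ (giving $S\le\delta_0$), and conversely from $r$ pairwise strongly separated hyperplanes $V_1,\dots,V_r$ separating $x$ and $y$ one checks that $V_3,\dots,V_{r-2}$ separate $J_x$ from $J_y$ (giving $\delta_0\le S+4$). The remaining coarse equivalence $S\sim d_{\Gamma X}$ is then quoted from \cite{MoiAcylHyp}. If you want a self-contained argument along your lines, extract the strongly separated family from the hyperplanes that \emph{separate $x$ and $y$} (these form a contact path from $J_x$ to $J_y$ that is an unparametrised quasi-geodesic in $\Gamma X$), rather than from an arbitrary $\Gamma X$-geodesic; then the separation condition is automatic.
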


\begin{proof}
Let $x,y \in X$ be two vertices and $J,H$ be two hyperplanes of $X$ such that $x \in N(J)$ and $y \in N(H)$. Let $S(J,H)$ denote the maximal number of pairwise strongly separated hyperplanes separating $J$ and $H$. Because any hyperplane separating $J$ and $H$ separates necessarily $x$ and $y$, one has $S(J,H) \leq \delta_0(x,y)$. Next, let $V_1, \ldots, V_r$ be a collection of pairwise strongly separated hyperplanes separating $x$ and $y$; without loss of generality, suppose that $V_i$ separates $V_{i-1}$ and $V_{i+1}$ for every $1 \leq i \leq r$ and that $V_1$ separates $x$ from $V_2, \ldots, V_r$. Notice that, because $x$ does not belong to $N(V_2)$ and that $V_2$ separates $x$ from $V_3$, if $J$ is transverse to $V_3$ then necessarily it must also be transverse to $V_2$, which is impossible since $V_2$ and $V_3$ are strongly separated. Consequently, $J$ and $V_3$ are disjoint. Similarly, one shows that $H$ and $V_{r-2}$ are disjoint. Therefore, $V_3, \ldots, V_{r-2}$ is a collection of pairwise strongly separated hyperplanes separating $J$ and $H$. This proves that $\delta_0(x,y) \leq S(J,H) +4$. 

\medskip \noindent
Thus, we have proved that our map $(X,\delta_0) \to \Gamma X$ is quasi-isometric when the contact graph $\Gamma X$ is endowed with $S( \cdot, \cdot)$. The conclusion follows since we know from \cite[Proposition 23]{MoiAcylHyp} that $S(\cdot, \cdot)$ is coarsely equivalent to $d_{\Gamma X}$. 
\end{proof}

\noindent
In the opposite direction, if one allows $L=+ \infty$ (which is not the case in the sequel), then one recovers the $\ell^{\infty}$-metric, since this distance turns out to be equal to the number of pairwise disjoint hyperplanes separating two given vertices \cite[Corollary 2.5]{depth}. Our next observation is that, if $X$ is hyperbolic, then $(X,\delta_L)$ turns out to be quasi-isometric to $X$ whenever $L$ is sufficiently large. This motivates the idea that $(X,\delta_L)$, for a sufficiently large $L$, captures all the hyperbolic properties of $X$.  

\begin{lemma}\label{lem:HXwhenXhyp}
Let $X$ be a hyperbolic CAT(0) cube complex. Fix a constant $L_0 \geq 0$ such that the joins of hyperplanes of $X$ are all $L_0$-thin. For every $L \geq L_0$, the canonical map $X \to (X,\delta_L)$ is a quasi-isometry.
\end{lemma}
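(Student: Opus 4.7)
The plan is to prove that the identity map $X \to (X,\delta_L)$ is a quasi-isometry by establishing a two-sided linear bound $d_X(x,y)/L_0 - O(1) \leq \delta_L(x,y) \leq d_X(x,y)$ for all vertices $x,y \in X$. The upper bound is immediate: since $d_X(x,y)$ is the number of hyperplanes separating $x$ from $y$, any subcollection of these (in particular a maximal family of pairwise $L$-well-separated ones) has at most that cardinality.

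For the lower bound, I would enumerate the hyperplanes separating $x$ and $y$ as $H_1, \ldots, H_n$ along a combinatorial geodesic, so that $H_i$ separates $H_{i-1}$ from $H_{i+1}$ for $2 \leq i \leq n-1$; in particular the $H_i$ are pairwise disjoint and, being linearly ordered, the full collection is facing-triple-free. The core of the argument is the following claim: whenever $j - i \geq L_0$, the hyperplanes $H_i$ and $H_j$ are $L_0$-well-separated. Granting this, the subcollection $\{H_{1+kL_0} : 0 \leq k \leq \lfloor (n-1)/L_0 \rfloor\}$ consists of pairwise $L_0$-well-separated hyperplanes separating $x$ from $y$, and a fortiori of pairwise $L$-well-separated hyperplanes since $L \geq L_0$. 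Its cardinality is at least $n/L_0$, yielding $\delta_L(x,y) \geq d_X(x,y)/L_0$ and completing the proof.

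The claim reduces to the following elementary sub-claim: any hyperplane $V$ transverse to both $H_i$ and $H_j$ is also transverse to every intermediate $H_k$ for $i < k < j$. To see this, let $D_k$ denote the halfspace of $H_k$ containing $x$; the linear ordering of the $H_\ell$ yields the nesting $D_i \subset D_k \subset D_j$, and correspondingly $D_j^\ast \subset D_k^\ast \subset D_i^\ast$. If $V$ were disjoint from $H_k$, it would lie entirely in $D_k$ or in $D_k^\ast$; in the first case $V \subset D_j$, contradicting transversality of $V$ with $H_j$; in the second case $V \subset D_i^\ast$, contradicting transversality of $V$ with $H_i$. Given this sub-claim, any facing-triple-free collection $\mathcal{V}$ of hyperplanes transverse to both $H_i$ and $H_j$ is automatically transverse to every element of $\mathcal{H} := \{H_i, H_{i+1}, \ldots, H_j\}$, so $(\mathcal{H}, \mathcal{V})$ is a join of hyperplanes. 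The choice of $L_0$ makes this join $L_0$-thin, and since $\#\mathcal{H} = j - i + 1 > L_0$ we must have $\#\mathcal{V} \leq L_0$, which is exactly the $L_0$-well-separation of $H_i$ and $H_j$.

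I expect no serious obstacle beyond the verification of the sub-claim, which is just a nesting-of-halfspaces argument along the geodesic; once it is in place, everything else follows formally from the thin-join characterisation of hyperbolicity recalled in Theorem \ref{thm:hyperbolic1}.
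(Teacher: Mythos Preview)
Your overall strategy matches the paper's, but there is a genuine gap in the lower-bound step. You assert that the hyperplanes separating $x$ from $y$, enumerated along a combinatorial geodesic, satisfy ``$H_i$ separates $H_{i-1}$ from $H_{i+1}$'' and are therefore pairwise disjoint. This is false: consecutive hyperplanes crossed by a geodesic can be transverse. Already in a single square, the two hyperplanes separating opposite corners cross each other; the same happens locally in any hyperbolic CAT(0) cube complex of dimension at least two. Once the $H_i$ are not pairwise disjoint, the halfspace nesting $D_i \subset D_k \subset D_j$ fails, your sub-claim collapses, and $(\mathcal{H},\mathcal{V})$ need not be a join.

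The fix is exactly what the paper does: work not with all $n=d_X(x,y)$ separating hyperplanes but with a \emph{maximal pairwise-disjoint} collection $J_1,\ldots,J_r$, so that $r=d_\infty(x,y)$ and the $J_i$ really are linearly ordered. Then your nesting argument and sub-claim go through verbatim, and one obtains $\delta_L(x,y)\geq \frac{1}{L_0+1}\,d_\infty(x,y)-(L_0+1)$. Since a hyperbolic CAT(0) cube complex is finite-dimensional, $d_\infty$ and the $\ell^1$-metric are bi-Lipschitz equivalent, so this still yields the desired quasi-isometry.
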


\begin{proof}
Because $X$ is necessarily finite-dimensional, we may consider without loss of generality the $\ell^{\infty}$-metric $d_{\infty}$ on $X$. Let $x,y \in X$ be two vertices. Since any collection of pairwise $L$-well-separated hyperplanes separating $x$ and $y$ provides a collection of pairwise disjoint hyperplanes separating $x$ and $y$, necessarily $\delta_L(x,y) \leq d_{\infty}(x,y)$. Now, let $J_1, \ldots, J_r$ be a maximal collection of pairwise disjoint hyperplanes separating $x$ and $y$. So $r=d_{\infty}(x,y)$. Fix some $1 \leq i \leq r-L_0-1$ and let $\mathcal{K}$ be a collection of hyperplanes transverse to both $J_i$ and $J_{i+L_0+1}$ which does not contain any facing triple. By noticing that $\mathcal{K}$ and $\{J_i,J_{i+1}, \ldots, J_{i+L_0+1} \}$ define a join of hyperplanes, it follows that $\# \mathcal{K} \leq L_0$. A fortiori, $J_i$ and $J_{i+L_0+1}$ are $L$-well-separated. Therefore, 
$$\delta_L(x,y) \geq \frac{1}{L_0+1} \cdot d_{\infty}(x,y) - L_0-1.$$
This conclude the proof of our lemma. 
\end{proof}

\noindent
The main result of this section is the following:

\begin{thm}\label{thm:mainHX}
Let $G$ be a group acting geometrically on a CAT(0) cube complex $X$. Then:
\begin{itemize}
	\item for every $L \geq 0$, $(X,\delta_L)$ is $9(L+2)$-hyperbolic;
	\item for every $L \geq 0$, the action $G \curvearrowright (X,\delta_L)$ is non-uniformly acylindrical;
	\item an isometry $g \in \mathrm{Isom}(X)$ defines a contracting isometry of $X$ if and only if it induces a loxodromic isometry on $(X,\delta_L)$ when $L$ is sufficiently large; otherwise, $g$ induces an elliptic isometry of $(X,\delta_L)$ for every $L \geq 0$.
\end{itemize}
\end{thm}

\noindent
We emphasize that our metric space $(X,\delta_L)$ is not geodesic (although it follows from Lemma \ref{lem:Interval} below that it is quasi-geodesic), so the definition of hyperbolic metric spaces which have to use is the following: a metric space $(S,d)$ is $\delta$-hyperbolic if, for every four points $p,q,r,s \in S$, the following inequality holds:
$$d(p,r)+d(q,s) \leq \max ( d(p,q)+d(r,s), d(p,s)+d(q,r)) + 2 \delta.$$
We refer to \cite{GhysdelaHarpe} for more information on equivalent definitions of Gromov hyperbolicity.

\begin{prop}\label{prop:HXhyp}
Let $X$ be a CAT(0) cube complex and $L \geq 0$ an integer. Then $(X,\delta_L)$ is $9(L+2)$-hyperbolic.
\end{prop}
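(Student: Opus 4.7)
The plan is to verify Gromov's four-point condition directly on $(X,\delta_L)$: for any four vertices $x_1,x_2,x_3,x_4 \in X$, the two largest of the three sums
\[
S_1 = \delta_L(x_1,x_3) + \delta_L(x_2,x_4),\quad S_2 = \delta_L(x_1,x_4) + \delta_L(x_2,x_3),\quad S_3 = \delta_L(x_1,x_2) + \delta_L(x_3,x_4)
\]
differ by at most $2\cdot 9(L+2) = 18(L+2)$. This will yield the stated $9(L+2)$-hyperbolicity.

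First I would apply Lemma \ref{lem:quadrangle} to $\{x_1,x_2,x_3,x_4\}$ to obtain interior vertices $m_1,\ldots,m_4$ embedded as the corners of a flat rectangle in $X$. This lets me classify each hyperplane that separates two or more of the $x_i$ into one of seven categories: four singleton classes $A_i$ (separating $x_i$ alone from the other three), and three \emph{pair classes} $\mathcal{P}, \mathcal{Q}, \mathcal{R}$ corresponding to the three ways of bipartitioning $\{x_1,x_2,x_3,x_4\}$ into two pairs. Lemma \ref{lem:quadrangle} identifies two of the pair classes (say $\mathcal{P}, \mathcal{Q}$) with the hyperplanes meeting the rectangle, while the remaining class $\mathcal{R}$ consists of hyperplanes disjoint from the rectangle; moreover, because all four sign combinations are realized by the $x_i$'s, every pair of hyperplanes taken from two distinct classes among $\mathcal{P},\mathcal{Q},\mathcal{R}$ is transverse.

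The key combinatorial input is that within any single class among $\mathcal{P},\mathcal{Q},\mathcal{R}$, a pairwise disjoint sub-collection of hyperplanes is automatically linearly ordered: a tripod configuration would force the two pairs-of-points separated by the class to lie entirely in pairwise disjoint halfspaces of the three hyperplanes, an impossibility. In particular any such sub-collection is facing-triple-free. Combining this with Fact \ref{fact:pairwisedisjoint} to extract a pairwise disjoint sub-collection from a pairwise $L$-well-separated one, I obtain the central dichotomy: if one pair class, say $\mathcal{P}$, contains more than $L+1$ pairwise $L$-well-separated hyperplanes, then the resulting large facing-triple-free family transverse to every hyperplane in $\mathcal{Q}$ and $\mathcal{R}$ prevents any two hyperplanes in $\mathcal{Q}$ or $\mathcal{R}$ from being $L$-well-separated. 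Hence at most one of the three pair classes can contribute substantially to $\delta_L$-distances.

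To conclude, I use the subadditivity $r(S\cup T) \leq r(S) + r(T)$ of the function $r(\cdot) = $ ``maximum cardinality of a pairwise $L$-well-separated sub-collection'' to bound each $\delta_L(x_i,x_j)$ from above by the sum of the $r$-values of the two $A$-classes and the two pair classes involved. Since exactly one pair class is missing from each of $S_1,S_2,S_3$, and since by the dichotomy above at most one pair class is "large" while the other two contribute at most a bounded amount (explicitly $\le L+2$ each), two of the three sums will contain the large contribution and hence agree up to $O(L)$. The main obstacle is carrying out this bookkeeping to produce the exact constant $9(L+2)$ while handling the case where the pair classes are highly mutually transverse rather than pairwise disjoint; this is where the $L+2$ slack in the constant absorbs the Ramsey-type loss of Fact \ref{fact:pairwisedisjoint}.
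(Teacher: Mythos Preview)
Your overall structure—verifying the four-point inequality via Lemma~\ref{lem:quadrangle} and a dichotomy among the pair classes—matches the paper's approach. However, there is a genuine gap in the final step.

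Subadditivity $r(S \cup T) \le r(S) + r(T)$ only yields \emph{upper} bounds for each $S_i$. Your conclusion that ``two of the three sums will contain the large contribution and hence agree up to $O(L)$'' does not follow: knowing that two upper bounds are close says nothing about the actual values being close. To prove, say, $S_1 \le S_2 + O(L)$, you must exhibit a \emph{lower} bound on $S_2$, and for that you need to merge (parts of) a maximal $L$-well-separated family realising $\delta_L(x_1,x_3)$ with one realising $\delta_L(x_2,x_4)$ into families witnessing $\delta_L(x_1,x_4)$ and $\delta_L(x_2,x_3)$. The difficulty is precisely that the union of two $L$-well-separated families need not be $L$-well-separated. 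The paper supplies exactly this missing ingredient in the lemma immediately preceding the proof: for $z \in I(x,y)$ one has $\delta_L(x,y) \ge \delta_L(x,z)+\delta_L(z,y)-2(L+3)$, established by showing (Fact~\ref{fact:subcollectionwellseparated}) that after discarding $L+3$ hyperplanes from each of two $L$-well-separated families on consecutive geodesic segments, the remaining hyperplanes are still pairwise $L$-well-separated. Your sketch never invokes such a merging step, and without it the bookkeeping cannot close.

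Two smaller remarks. First, your appeal to Fact~\ref{fact:pairwisedisjoint} is a red herring: $L$-well-separated hyperplanes are already disjoint, so no Ramsey extraction is needed. Second, the paper does not track the third pair class $\mathcal{R}$ at all; by routing estimates through the $m_i$ (which lie on geodesics between consecutive $x_i$), only the two ``rectangle'' pair classes enter, and the dichotomy becomes the single observation that $\delta_L(m_1,m_4)\ge 2$ forces $\delta_L(m_1,m_2)\le L$.
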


\noindent
Before proving Proposition \ref{prop:HXhyp}, we begin by noticing that combinatorial geodesics are unparametrised quasi-geodesics with respect to our new metrics.

\begin{lemma}\label{lem:Interval}
Let $X$ be a CAT(0) cube complex, $x,y \in X$ two vertices and $L \geq 0$ an integer. The inequalities
$$\delta_L(x,z)+ \delta_L(z,y)-2(L+3) \leq \delta_L(x,y) \leq \delta_L(x,z)+\delta_L(z,y)$$
holds for every $z \in I(x,y)$. 
\end{lemma}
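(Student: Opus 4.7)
The plan is to handle the two inequalities separately. The upper bound will follow from a clean partition argument, while the lower bound requires combining maximal well-separated chains with a carefully chosen buffer near $z$.

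For the upper bound $\delta_L(x,y) \leq \delta_L(x,z)+\delta_L(z,y)$, let $\mathcal{W}$ be a pairwise $L$-well-separated collection of hyperplanes separating $x$ from $y$ with $|\mathcal{W}|=\delta_L(x,y)$. Since $z \in I(x,y)$, the set of hyperplanes separating $x$ from $y$ partitions as those separating $x,z$ and those separating $z,y$, with no hyperplane in both classes. This induces a partition $\mathcal{W} = \mathcal{W}_1 \sqcup \mathcal{W}_2$. Each $\mathcal{W}_i$ is still pairwise $L$-well-separated (well-separation is preserved under restriction), so $|\mathcal{W}_1| \leq \delta_L(x,z)$ and $|\mathcal{W}_2| \leq \delta_L(z,y)$, giving the bound.

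For the lower bound, choose maximal pairwise $L$-well-separated chains $\mathcal{H}=\{H_1,\ldots,H_r\}$ separating $x,z$ and $\mathcal{V}=\{V_1,\ldots,V_s\}$ separating $z,y$, with $r=\delta_L(x,z)$, $s=\delta_L(z,y)$, ordered so that $H_i$ separates $H_{i-1}$ from $H_{i+1}$ (and similarly for $\mathcal{V}$), with $H_r$ and $V_1$ the elements closest to $z$. Note that pairwise well-separation along such a chain forces consecutive (hence all) hyperplanes of $\mathcal{H}$ to be pairwise disjoint; similarly for $\mathcal{V}$. I propose the truncated collection $\mathcal{H}' \cup \mathcal{V}'$ with $\mathcal{H}' = \{H_1,\ldots,H_{r-L-3}\}$ and $\mathcal{V}' = \{V_{L+4},\ldots,V_s\}$ (with the inequality being vacuous if $r \leq L+3$ or $s \leq L+3$). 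Every hyperplane of this family separates $x$ from $y$ since $z \in I(x,y)$, so the total count $r+s-2(L+3)$ will give the claim once pairwise $L$-well-separation is verified.

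Pairs within $\mathcal{H}'$ or within $\mathcal{V}'$ are $L$-well-separated by construction, so the content lies in the cross-pairs $(H_i,V_j)$ with $i \leq r-L-3$ and $j \geq L+4$. Given any family $\mathcal{K}$ of hyperplanes transverse to both $H_i$ and $V_j$ with no facing triple, the plan is to locate an index $k$ with $1 \leq k \leq L+3$ such that $H_{i+k}$ is disjoint from both $H_i$ and $V_j$ and separates them. Using the interval structure (the hyperplanes of $\mathcal{V}$ transverse to a given $H_\ell$ form a contiguous block in the chain order, and symmetrically) together with the fact that consecutive $L$-well-separated pairs bound the number of common transverse hyperplanes, the $L+3$ buffer guarantees such an intermediate $H_{i+k}$ exists. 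Then the standard betweenness principle (a hyperplane transverse to two disjoint hyperplanes is transverse to any hyperplane separating them) implies every $K \in \mathcal{K}$ is transverse to $H_{i+k}$, so $\mathcal{K}$ is transverse to the $L$-well-separated pair $(H_i,H_{i+k})$ with no facing triple, forcing $|\mathcal{K}| \leq L$.

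The main obstacle is the cross-pair analysis: in full generality $H_i$ and $V_j$ may be transverse rather than disjoint, and intermediate hyperplanes in $\mathcal{H}$ near $z$ may be transverse to $V_j$. The buffer of $L+3$ is calibrated precisely to absorb this: within the last (resp.\ first) $L+3$ elements of $\mathcal{H}$ (resp.\ $\mathcal{V}$), transversalities with $V_j$ (resp.\ $H_i$) can occur, but past this buffer the chosen intermediate hyperplane becomes disjoint from $V_j$ and the betweenness argument applies cleanly. Making this bookkeeping precise, while elementary, is the technical heart of the proof.
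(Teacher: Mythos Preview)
Your approach matches the paper's almost exactly: truncate each maximal $L$-well-separated chain by $L+3$ elements near $z$, then show the cross-pairs are $L$-well-separated via an intermediate hyperplane and the betweenness principle. The paper uses the fixed pair $H_{L+2},H_{L+3}$ (in its indexing) as the intermediate, whereas you use $H_i,H_{i+k}$; both choices work identically.

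The one place your write-up is genuinely incomplete is the step you label ``elementary bookkeeping'': you assert that the buffer forces some $H_{i+k}$ to be disjoint from $V_j$ and to separate $H_i$ from $V_j$, but you never supply the mechanism. The paper makes explicit the key monotonicity observation: if some $V_p$ is transverse to some $H_q$, then every $V_{p'}$ closer to $z$ is transverse to every $H_{q'}$ closer to $z$. From this it follows immediately that if any $V_j$ with $j\geq L+2$ were transverse to any $H_i$ past the buffer, then $V_1,\dots,V_{L+1}$ would all be transverse to two consecutive (hence $L$-well-separated) members of $\mathcal{H}$, contradicting $L$-well-separation. This single observation both guarantees that your $H_i$ and $V_j$ are already disjoint and that $H_{i+1}$ separates them, so your betweenness step goes through. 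Without stating it, your sketch leaves the reader unable to see why the buffer size $L+3$ (rather than, say, $2$) is what makes the argument close.
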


\begin{proof}
Let $\mathcal{H}$ (resp. $\mathcal{V}$) be a maximal collection of pairwise $L$-well-separated hyperplanes separating $x$ and $z$ (resp. $z$ and $y$). Write $\mathcal{H}$ as $\{H_1, \ldots, H_r \}$ so that $H_i$ separates $H_{i-1}$ and $H_{i+1}$ for every $2 \leq i \leq r-1$ and $H_1$ separates $z$ from $H_2, \ldots, H_r$; and similarly $\mathcal{V}$ as $\{ V_1, \ldots, V_s \}$ so that $V_i$ separates $V_{i-1}$ and $V_{i+1}$ for every $2 \leq i \leq s-1$ and $V_1$ separates $z$ from $V_2, \ldots, V_s$. Notice that $r=\delta_L(x,z)$ and $s= \delta_L(z,y)$. Since $\delta_L(x,y) \geq \delta_L(x,z)$ and $\delta_L(x,y) \geq \delta_L(z,y)$, there is nothing to prove if $r \leq 2(L+3)$ or $s \leq 2(L+3)$, so we suppose that $r,s \geq 2(L+3)$. 

\medskip \noindent
Observe that, if there exist some $1 \leq i \leq r$ and some $1 \leq j \leq s$ such that $V_i$ and $H_j$ are transverse, then $V_p$ and $H_q$ must be transverse for every $1 \leq p \leq i$ and $j \leq q \leq r$. Because $H_1$ and $H_2$ are $L$-well-separated, necessarily $V_1, \ldots, V_{L+1}$ cannot be all transverse to both $H_1$ and $H_2$, so we deduce from our previous observation that $H_2$ and $V_{L+1}$ must be disjoint. Similarly, one shows that $V_2$ and $H_{L+1}$ are disjoint. Consequently, the hyperplanes
$$H_{L+2}, \ldots, H_r,V_{L+2}, \ldots, V_s$$ 
are pairwise disjoint. If $H_i$ and $V_j$ are not $L$-well-separated for some $i,j \geq L+3$, then there exists a collection $\mathcal{K}$ of at least $L+1$ hyperplanes transverse to both $H_i$ and $V_j$ which does not contain any facing triple. But then the hyperplanes of $\mathcal{K}$ must be all transverse to both $H_{L+2}$ and $H_{L+3}$, which are $L$-well-separated. Observe that we have proved the following statement:

\begin{fact}\label{fact:subcollectionwellseparated}
Let $x,y \in X$ and $z \in I(x,y)$ be three vertices, and $\mathcal{H}$ (resp. $\mathcal{V}$) a collection of pairwise $L$-well-separated hyperplanes separating $x$ and $z$ (resp. $z$ and $y$). There exist subcollections $\mathcal{H}' \subset \mathcal{H}$ and $\mathcal{V}' \subset \mathcal{V}$ satisfying $\# \mathcal{H}' = \# \mathcal{H}- L-3$ and $\# \mathcal{V}' = \# \mathcal{V}-L-3$ such that the hyperplanes of $\mathcal{H}' \cup \mathcal{V}'$ are pairwise $L$-well-separated. 
\end{fact}

\noindent
Consequently, the hyperplanes
$$H_{L+3}, \ldots, H_r, V_{L+3},\ldots, V_s$$
are pairwise $L$-well-separated. The inequality
$$\delta_L(x,y) \geq r+s-2(L+3) = \delta_L(x,z)+\delta_L(z,y)-2(L+3)$$
follows. The second inequality in our lemma is obtained from the triangle inequality. 
\end{proof}

\begin{proof}[Proof of Proposition \ref{prop:HXhyp}.]
Our goal is to prove that, for any four vertices $x_1,x_2,x_3,x_4 \in X$, the inequality
$$\delta_L(x_1,x_3)+\delta_L(x_2,x_4) \leq \max ( \delta_L(x_1,x_2)+\delta_L(x_3,x_4), \delta_L(x_1,x_4)+\delta_L(x_2,x_3)) + 18(L+2)$$
holds. Let $m_1,m_2,m_3,m_4$ be the vertices provided by Lemma \ref{lem:quadrangle}. For convenience, we set $m=\delta_L(m_1,m_2)=\delta_L(m_3,m_4)$ and $n=\delta_L(m_1,m_4)=\delta_L(m_2,m_3)$. One has
$$\begin{array}{lcl} \delta_L(x_1,x_3)+\delta_L(x_2,x_4)  & \leq & \delta_L(x_1,m_1)+\delta_L(m_3,x_3)+ \delta_L(x_2,m_2)+ \delta_L(m_4,x_4) \\ \\ & &+ 2(m+n) \\ \\ & \leq & \left( \delta_L(x_1,m_1)+m+\delta_L(m_2,x_2) \right) \\ \\ & & + \left( \delta_L(x_3,m_3)+m+ \delta_L(x_4,m_4) \right) +2n \\ \\ & \leq & \delta_L(x_1,x_2)+ \delta_L(x_3,x_4) +8(L+3)+2n \end{array}$$
One shows similarly that
$$ \delta_L(x_1,x_3)+\delta_L(x_2,x_4) \leq \delta_L(x_1,x_4)+\delta_L(x_2,x_3)+8(L+3)+ 2m.$$
Suppose without loss of generality that $\delta_L(x_1,x_4)+\delta_L(x_2,x_3) \leq \delta_L(x_1,x_2)+ \delta_L(x_3,x_4)$. Since
$$\delta_L(x_1,x_4)+\delta_L(x_2,x_3) \geq \sum\limits_{i=1}^4 \delta_L(x_i,m_i) +2n- 8(L+3)$$
and
$$\delta_L(x_1,x_2)+\delta_L(x_3,x_4) \geq \sum\limits_{i=1}^4 \delta_L(x_i,m_i)+2m-8(L+3),$$
it follows that $n -m \leq 4(L+3)$. Notice that, if $\delta_L(m_1,m_4) = n \geq 2$, necessarily $m \leq L$. Therefore, $n \leq \max( 2, L+4(L+3)) \leq 5L+12$. Finally, we conclude that
$$\begin{array}{lcl}  \delta_L(x_1,x_3)+\delta_L(x_2,x_4) & \leq & \delta_L(x_1,x_2)+\delta_L(x_3,x_4) +8(L+3)+2(5L+12) \\ \\ & \leq &\delta_L(x_1,x_2)+\delta_L(x_3,x_4) + 18(L+2) \end{array}$$
which is the desired inequality. 
\end{proof}

\noindent
Now, we focus on the acylindricity of the action.

\begin{prop}\label{prop:HXacyl}
Let $G$ be a group acting non-uniformly weakly acylindrically on a CAT(0) cube complex $X$, and $L \geq 0$ an integer. The action $G \curvearrowright (X,\delta_L)$ is non-uniformly acylindrical. 
\end{prop}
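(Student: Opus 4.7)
The strategy is to show that for every $g \in S := \{g \in G : \delta_L(x, gx) \leq d, \, \delta_L(y, gy) \leq d\}$, the pair $(gx, gy)$ is forced to lie in a finite set when $\delta_L(x, y)$ is large enough, whereupon the non-uniform weak acylindricity of $G \curvearrowright (X, d_X)$ finishes the job.

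First, fix $d \geq 0$ and let $R_0 \geq 0$ be the constant provided by non-uniform weak acylindricity: for any $u, v \in X$ with $d_X(u, v) \geq R_0$, the intersection $\mathrm{stab}(u) \cap \mathrm{stab}(v)$ is finite. I would choose $R' \geq R_0 + c(L, d)$ for some explicit constant $c(L, d)$ to be determined by the analysis below. Given $x, y \in X$ with $\delta_L(x, y) = N \geq R'$, take a maximal pairwise $L$-well-separated family $\mathcal{J} = \{J_1, \ldots, J_N\}$ separating $x$ from $y$, ordered so that $J_i$ separates $J_{i-1}$ from $J_{i+1}$.

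For every $g \in S$, the observation underlying the proof of Proposition \ref{prop:HXhyp} applies: any subfamily of $\mathcal{J}$ separating $x$ from $gx$ is itself pairwise $L$-well-separated and so has cardinality at most $\delta_L(x, gx) \leq d$, and similarly for $y$ and $gy$. Consequently, at least $N - 2d$ of the hyperplanes $J_i$ separate $\{x, gx\}$ from $\{y, gy\}$. Applying Lemma \ref{lem:quadrangle} to the quadruple $(x, gx, gy, y)$ then produces a flat rectangle $[0, a_g] \times [0, b_g] \hookrightarrow X$ whose ``long'' dimension $b_g$ accommodates the hyperplanes separating $\{x, gx\}$ from $\{y, gy\}$, so that $b_g \geq N - 2d$, while the ``short'' dimension $a_g$ accommodates hyperplanes belonging to $\mathcal{H}(x, gx) \cup \mathcal{H}(y, gy)$ which also separate $\{x, y\}$ from $\{gx, gy\}$. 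I would then use the rectangle structure, combined with the fact that $g$ is an isometry permuting $\mathcal{J}$ into another pairwise $L$-well-separated family of the same size in $\mathcal{H}(gx, gy)$, to pin down the four median vertices $m_i(g)$ supplied by the lemma: a pigeonhole along the long direction, together with the $\delta_L$-bound on the short side, should force $(gx, gy)$ into one of finitely many configurations depending only on $x, y, d, L$.

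Finally, once $(gx, gy)$ takes only finitely many values across $g \in S$, for each admissible pair $(z_1, z_2)$ the elements $g \in S$ with $(gx, gy) = (z_1, z_2)$ form a coset of $\mathrm{stab}(x) \cap \mathrm{stab}(y)$; this intersection is finite by non-uniform weak acylindricity since $d_X(x, y) \geq \delta_L(x, y) = N \geq R_0$. Hence $S$ is finite, establishing the proposition. The main obstacle will be the penultimate step, where one must convert the $\delta_L$-bound on the short side of the rectangle (a bound on pairwise $L$-well-separated subfamilies, not on the total hyperplane count) into a genuine combinatorial rigidity that leaves only finitely many placements for the rectangle, and hence for $(gx, gy)$. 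I expect the cleanest resolution will combine Lemma \ref{lem:quadrangle} with repeated use of Fact \ref{fact:subcollectionwellseparated} to extract a maximal pairwise $L$-well-separated substructure inside $\mathcal{H}(x, gx) \cup \mathcal{H}(y, gy)$ governing where $(gx, gy)$ can land.
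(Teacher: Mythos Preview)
Your outline has a genuine gap at exactly the point you flag: you cannot hope to show that $(gx,gy)$ takes only finitely many values. A bound $\delta_L(x,gx)\le d$ says nothing about $d_X(x,gx)$ --- there may be arbitrarily many hyperplanes separating $x$ and $gx$, just not many that are pairwise $L$-well-separated --- so the ``short side'' $a_g$ of your rectangle can be unbounded and the placements of $(gx,gy)$ genuinely infinite. No amount of extracting well-separated subfamilies of $\mathcal{H}(x,gx)\cup\mathcal{H}(y,gy)$ will repair this, because those families simply do not control the combinatorial distance.

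The paper's proof sidesteps this by never attempting to pin down $gx$ or $gy$. Instead it tracks how $g$ moves the \emph{hyperplanes} $H_i\in\mathcal{J}$: using an auxiliary lemma (if $\delta_L(x,gx),\delta_L(y,gy)\le\epsilon$ then $\delta_L(z,gz)$ is uniformly bounded for all $z\in I(x,y)$), one shows that for a central block $\{H_p,\dots,H_q\}$ of $\mathcal{J}$, each $gH_i$ lands among the hyperplanes separating a slightly larger block $\{H_{r-1},\dots,H_{s+1}\}$, after discarding a controlled number of exceptions. This gives, for each $g$ in the infinite set $F$, a map from a large subset of $\{H_p,\dots,H_q\}$ into the finite set $\mathcal{H}(H_{r-1}\mid H_{s+1})$. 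Pigeonhole on these maps --- not on vertex positions --- yields infinitely many $g$'s agreeing on two $L$-well-separated hyperplanes $V_1,V_2$, hence $\mathrm{stab}(V_1)\cap\mathrm{stab}(V_2)$ is infinite. Finally, because $V_1,V_2$ are $L$-well-separated, the projection $\mathrm{proj}_{N(V_1)}(N(V_2))$ has bounded diameter, so this infinite subgroup fixes vertices $a\in N(V_1)$, $b\in N(V_2)$ that are far apart in $d_X$, contradicting weak acylindricity. The passage from hyperplane-stabilisers back to vertex-stabilisers via well-separation is the step that replaces your attempted direct control of $(gx,gy)$.
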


\noindent
Before proving our proposition, let us consider the following statement:

\begin{lemma}\label{lem:isommid}
Let $X$ be a CAT(0) cube complex, $g \in \mathrm{Isom}(X)$ an isometry, $L \geq 0$ an integer and $x,y \in X$ two vertices. Suppose that $\delta_L(x,gx) \leq \epsilon$ and $\delta_L(y,gy) \leq \epsilon$ for some $\epsilon \geq 0$. Then $\delta_L(z,gz) \leq 3(\epsilon+L+3)$ for every $z \in I(x,y)$. 
\end{lemma}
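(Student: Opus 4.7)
The plan is to take a maximal collection $\mathcal{W}$ of pairwise $L$-well-separated hyperplanes separating $z$ from $gz$, so that $|\mathcal{W}| = \delta_L(z,gz)$, and to bound $|\mathcal{W}|$ by a case analysis based on where $x, gx, y, gy$ sit with respect to each $W$. For each $W \in \mathcal{W}$ I would let $D(W)$ denote the halfspace containing $z$, and classify $W$ into three types according to the position of $x, gx$: type $a_x$ if both $x, gx \in D(W)^c$; type $b_x$ if $W$ separates $x$ from $gx$; type $c_x$ if both $x, gx \in D(W)$. The analogous three types are defined using $y, gy$.

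The first observation is that two of the nine combined types are forbidden. Type $(a_x, a_y)$ would say that $W$ separates $z$ from both $x$ and $y$, contradicting $z \in I(x,y)$. Type $(c_x, c_y)$ would say that $W$ separates $gz$ from both $gx$ and $gy$; applying $g^{-1}$ leads to the same contradiction via $z \in I(x,y)$. The hyperplanes of type $b_x$ all lie in the set of hyperplanes separating $x$ and $gx$ and are pairwise $L$-well-separated, so there are at most $\delta_L(x,gx) \leq \epsilon$ of them; likewise for type $b_y$. An inclusion--exclusion argument then shows that at most $2\epsilon$ elements of $\mathcal{W}$ have either type $b_x$ or $b_y$.

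The remaining elements of $\mathcal{W}$ form two diagonal classes $\mathcal{W}_{a_x c_y}$ and $\mathcal{W}_{c_x a_y}$, each consisting of hyperplanes separating $\{x, gx\}$ from $\{y, gy\}$ (with opposite orientations). I would next apply Lemma \ref{lem:quadrangle} to the quadruple $(x, gx, gy, y)$ to produce a flat rectangle embedded in $X$; the hyperplanes of $\mathcal{W}_{a_x c_y} \cup \mathcal{W}_{c_x a_y}$ then lie among the rectangle's horizontal family, and they are all transverse to the vertical family $\mathcal{V}$ of hyperplanes separating $\{x, y\}$ from $\{gx, gy\}$. Because $\mathcal{V}$ is a grid of pairwise disjoint hyperplanes with no facing triples, any two pairwise $L$-well-separated hyperplanes in the diagonal classes already force $\#\mathcal{V} \leq L$, making the rectangle thin in that direction.

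The main obstacle will be to turn this thinness, combined with the hypotheses $\delta_L(x,gx), \delta_L(y,gy) \leq \epsilon$, into the quantitative bound $|\mathcal{W}_{a_x c_y}| + |\mathcal{W}_{c_x a_y}| \leq \epsilon + 3(L+3)$. The route I would take is to project $z$ and $gz$ onto the convex subcomplexes $I(x, gx)$ and $I(y, gy)$, use Lemmas \ref{lem:vertextoproj} and \ref{lem:sepproj} to identify the hyperplanes of the diagonal classes with hyperplanes separating these projections, and then invoke Fact \ref{fact:subcollectionwellseparated} to absorb the $(L+3)$ losses coming from the concatenation inequality for $\delta_L$ on the sides of the quadrilateral. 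Adding the $2\epsilon$ contribution from the $b$-types then yields the announced bound $|\mathcal{W}| \leq 3(\epsilon + L + 3)$.
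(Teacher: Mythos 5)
Your first step --- classifying the hyperplanes of $\mathcal{W}$ by the position of $x,gx$ and of $y,gy$, ruling out the types $(a_x,a_y)$ and $(c_x,c_y)$ via $z\in I(x,y)$ and $gz\in I(gx,gy)$, and discarding at most $2\epsilon$ hyperplanes of type $b_x$ or $b_y$ --- is correct and is exactly how the paper's proof begins (one can even add that the two diagonal classes cannot both be nonempty, since a hyperplane of type $(a_x,c_y)$ and one of type $(c_x,a_y)$ would be transverse, whereas elements of $\mathcal{W}$ are disjoint). The gap is in the crux: bounding the diagonal class. Your proposed identification fails at a concrete point: a hyperplane of type $(c_x,a_y)$ has $x$ and $gx$ on the same side by definition, so it does not separate $x$ from $gx$ and hence is disjoint from the convex subcomplex $I(x,gx)$; by Lemma \ref{lem:sepproj} it therefore does \emph{not} separate the projections of $z$ and $gz$ onto $I(x,gx)$ (and likewise for $I(y,gy)$). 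So the diagonal hyperplanes cannot be converted into hyperplanes separating those projections. Moreover, the thinness you extract from Lemma \ref{lem:quadrangle} bounds the wrong family: the diagonal hyperplanes lie in the ``long'' direction of the rectangle for $(x,gx,gy,y)$ (they separate $\{x,gx\}$ from $\{y,gy\}$), and showing that the transverse family $\mathcal{V}$ has size at most $L$ puts no bound on them --- indeed their number is a priori comparable to $d(x,y)$, which is unconstrained.

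The missing idea, which is what the paper uses, is the $g$-equivariance of $\delta_L$. Say all remaining hyperplanes $\mathcal{N}'\subset\mathcal{W}$ separate $\{x,z,gx\}$ from $\{y,gy,gz\}$; in particular they separate $gx$ from $gz$ but \emph{not} $x$ from $z$. Take a maximal collection $\mathcal{H}$ of pairwise $L$-well-separated hyperplanes separating $x$ and $z$, so $\#\mathcal{H}=\delta_L(x,z)=\delta_L(gx,gz)$. After removing at most $\epsilon$ hyperplanes that separate $x$ from $gx$ and at most $L$ that are transverse to all of $\mathcal{N}'$, the remaining $\mathcal{H}'$ also separates $gx$ from $gz$. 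Then Fact \ref{fact:subcollectionwellseparated} (applied to the two families emanating from $z$ in opposite directions relative to the pair $gx,gz$) produces, after trimming $L+3$ from each, a pairwise $L$-well-separated family of size at least $\#\mathcal{H}+\#\mathcal{N}'-3\epsilon-L-2(L+3)$ separating $gx$ and $gz$; since this cannot exceed $\delta_L(gx,gz)=\#\mathcal{H}$, one gets $\#\mathcal{N}'\leq 3\epsilon+3L+6$, whence the stated bound once the $2\epsilon$ from the $b$-types is added back in. Without invoking the maximality of a family realizing $\delta_L(x,z)$ together with $\delta_L(x,z)=\delta_L(gx,gz)$, I do not see how your route closes.
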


\begin{proof}
Let $\mathcal{H}$ (resp. $\mathcal{N}$) denote a maximal collection of pairwise $L$-well-separated hyperplanes separating $x$ and $z$ (resp. $z$ and $gz$). Notice that a hyperplane separating $z$ and $gz$ must separate $x$ and $gx$; or $y$ and $gy$; or $\{z,gx\}$ and $\{y,gz\}$; or $\{x,z\}$ and $\{gz,gy\}$. Since $\delta_L(x,gx) \leq \epsilon$ and $\delta_L(y,gy)$, there exists a subcollection $\mathcal{N}' \subset \mathcal{N}$ satisfying $\# \mathcal{N}' \geq \# \mathcal{N}-2 \epsilon$ such that no hyperplane of $\mathcal{N}'$ separates $x$ and $gx$ nor $y$ and $gy$. Because a hyperplane separating $\{z,gx\}$ and $\{y,gz\}$ is transverse to any hyperplane separating $\{x,z\}$ and $\{gz,gy\}$, the hyperplanes of $\mathcal{N}'$ either all separate $\{z,gx\}$ and $\{y,gz\}$, or all separate $\{x,z\}$ and $\{gz,gy\}$. Without loss of generality, say that we are in the former case. If $\# \mathcal{N}' \leq 1$, then $\delta_L(z,gz)=\# \mathcal{N} \leq 2\epsilon + \# \mathcal{N'} \leq 2 \epsilon+1$ and we are done, so we suppose that $\# \mathcal{N}' \geq 2$. 

\medskip \noindent
Next, notice that at most $\epsilon$ hyperplanes of $\mathcal{H}$ separate $x$ and $gx$ since $\delta_L(x,gx) \leq \epsilon$, and at most $L$ hyperplanes of $\mathcal{H}$ separate either $gx$ and $gy$ or $y$ and $gy$, since any such hyperplane must be transverse to all the hyperplanes of $\mathcal{N}'$. Therefore, there exists a subcollection $\mathcal{H}' \subset \mathcal{H}$ satisfying $\# \mathcal{H}' \geq \# \mathcal{H} - \epsilon-L$ such that any hyperplane of $\mathcal{H}'$ separates $gx$ and $gz$. 

\medskip \noindent
By applying Fact \ref{fact:subcollectionwellseparated}, we find subcollections $\mathcal{H}'' \subset \mathcal{H}'$ and $\mathcal{N}'' \subset \mathcal{N}'$ satisfying $\# \mathcal{H}'' \geq \# \mathcal{H}' - L-3$ and $\# \mathcal{N}'' \geq \# \mathcal{N}' - L-3$ such that the hyperplanes of $\mathcal{H}'' \cup \mathcal{N}''$ are pairwise $L$-well-separated. Consequently, we have
$$\begin{array}{lcl} \# \mathcal{H} & = & \delta_L(x,z)= \delta_L(gx,gz) \geq \# \mathcal{H}'' + \# \mathcal{V}'' \\ \\ & \geq & \# \mathcal{H}+ \# \mathcal{N} -2(L+3) - 3 \epsilon -L \end{array}$$
hence
$$\delta_L(z,gz)= \# \mathcal{N} \leq 3 (\epsilon+L+3),$$
which concludes the proof of our lemma. 
\end{proof}

\begin{proof}[Proof of Proposition \ref{prop:HXacyl}.]
Suppose that the action $G \curvearrowright H(X)$ is not non-uniformly acylindrical. So there exists some $\epsilon>0$ such that, for every $R_0 \geq 0$, there exist two vertices $x,y \in X$ satisfying $\delta_L(x,y) > R_0$ such that
$$F = \{ g \in G \mid \delta_L(x,gx) \leq \epsilon, \delta_L(y,gy) \leq \epsilon \}$$
is infinite. Suppose that $R_0 \geq 8L+10 \epsilon +25$, and for convenience write $R = R_0 - 8L+10 \epsilon +25$. 

\medskip \noindent
Fix an element $g \in F$. Let $\mathcal{H}$ be a maximal collection of pairwise $L$-well-separated hyperplanes separating $x$ and $y$. Because $\delta_L(x,gx) \leq \epsilon$ and $\delta_L(y,gy) \leq \epsilon$, there exist at most $2 \epsilon$ hyperplanes of $\mathcal{H}$ separating either $x$ and $gx$ or $y$ and $gy$. Moreover, notice that, if a hyperplane $J$ separating $x$ and $y$ separates $x$ and $gx$, then any hyperplane separating $x$ and $J$ must separate $x$ and $gx$ as well; similarly, if a hyperplane $J$ separating $x$ and $y$ separates $y$ and $gy$, then any hyperplane separating $y$ and $J$ must separate $y$ and $gy$ as well. Consequently, if $\mathcal{H}'$ denotes the collection of hyperplanes obtained from $\mathcal{H}$ by removing the first and last $\epsilon$ hyperplanes (ordering $\mathcal{H}$ by following a geodesic from $x$ to $y$), then the hyperplanes of $\mathcal{H}'$ separates $gx$ and $gy$. Write $\mathcal{H}'$ as $\{H_1, \ldots, H_k\}$ such that $H_i$ separates $H_{i-1}$ and $H_{i+1}$ for every $2 \leq i \leq k-1$ and such that $H_1$ separates $x$ and $H_2, \ldots, H_k$. Because 
$$k= \# \mathcal{H}' = \# \mathcal{H}-2 \epsilon = \delta_L(x,y)-2 \epsilon \geq R_0 - 2 \epsilon = R+8(L+ \epsilon)+25,$$
there exist $r \leq p \leq q \leq s$ such that $|p-q| > R+2(L+1)$ and $|p-r|,|q-s| > 3(\epsilon+L+3)$ and $|r-1|,|k-s| > \epsilon$. 

\medskip \noindent
We claim that, for every hyperplane $J$ separating $H_p$ and $H_q$, the hyperplane $gJ$ intersects the subspace delimited by $H_r$ and $H_s$. Indeed, let $z$ be a vertex of $N(J) \cap I(x,y)$. By applying Lemma \ref{lem:isommid}, we know that $\delta_L(z,gz) \leq 3(\epsilon+L+3)$. Consequently, $gz \in N(gJ)$ cannot be outside the subspace delimited by $H_r$ and $H_s$ since $|p-r|$ and $|q-s|$ are greater than $3(\epsilon+L+3)$.

\medskip \noindent
Next, let $\mathcal{A}_g$ denote the set of all the hyperplanes $J$ separating $H_p$ and $H_q$ such that $gJ$ is transverse to $H_{r-1}$. By noticing, thanks to our previous claim, that $g \mathcal{A}_g$ is a collection of pairwise $L$-well-separated hyperplanes transverse to both $H_{r-1}$ and $H_r$ which does not contain any facing triple, we deduce that $\# \mathcal{A}_g \leq L$. Similarly, if $\mathcal{B}_g$ denotes the set of all the hyperplanes $J$ separating $H_p$ and $H_q$ such that $gJ$ is transverse to $H_{s+1}$, then $\# \mathcal{B}_g \leq L$. Set $\mathcal{H}_g'  = \mathcal{H}(H_p \mid H_q) \backslash \left( \mathcal{A}_g \cup \mathcal{B}_g \right)$, where $\mathcal{H}(H_p \mid H_q)$ denotes the set of all the hyperplanes separating $H_p$ and $H_q$. 

\medskip \noindent
So, if a hyperplane $J$ belongs to $\mathcal{H}_g'$, then $gJ$ is included into the subspace delimited by $H_{r-1}$ and $H_{s+1}$. If $gJ$, $H_{r-1}$ and $H_{s+1}$ define a facing triple, then the halfspace delimited by $gJ$ which is disjoint from $H_{r-1}$ and $H_{s+1}$ must contain either $gx$ and $gy$, which is impossible: in the former case, $x$ and $gx$ would be separated by $H_1, \ldots, H_{r-1}$, contradicting the inequality $\delta_L(x,gx) \leq \epsilon$ since $r> \epsilon+1$; and in the latter case, $y$ and $gy$ would be separated by $H_{s+1}, \ldots, H_k$, contradicting the inequality $\delta_L(y,gy) \leq \epsilon$ since $k-s> \epsilon$. Therefore, $gJ$ separates $H_{r-1}$ and $H_{s+1}$. The conclusion is that $g$ induces a map $\mathcal{H}_g \to \mathcal{H}(H_{r-1} \mid H_{s+1})$ where we set
$$\mathcal{H}_g = \mathcal{H}_g' \cap \{H_p, \ldots, H_q\}= \{H_p, \ldots, H_q\} \backslash \left( \mathcal{A}_g \cup \mathcal{B}_g \right).$$
Notice that
$$\# \mathcal{H}_g \geq |p-q| - \# \mathcal{A}_g - \# \mathcal{B}_g \geq |p-q|-2L> R+2.$$
Thus, we have proved that every $g \in F$ naturally induces a map $\mathcal{H}_g \to \mathcal{H}(H_{r-1},H_{s+1})$ for some $\mathcal{H}_g \subset \{H_p, \ldots, H_q\}$ of cardinality more than $R+2$. Because $F$ is infinite, there must exist infinitely many pairwise distinct elements $g_0,g_1, \ldots \in F$ inducing the same map. So there exists a subcollection $\mathcal{V} \subset \{ H_p , \ldots, H_q \}$ of cardinality more than $R+2$ such that $g_iJ=g_jJ$ for every $i,j \geq 0$ and every $J \in \mathcal{V}$. As a consequence, there exist two $L$-well-separated hyperplanes $V_1,V_2 \in \mathcal{V}$ separated by more than $R$ other hyperplanes such that the intersection $\mathrm{stab}(V_1) \cap \mathrm{stab}(V_2)$ is infinite, since it contains the elements $g_0^{-1}g_1, g_0^{-1}g_2, \ldots$ which are pairwise distinct by assumption. So far, we have proved the following statement:

\begin{fact}\label{fact:nonWPD}
Let $G$ be a group acting on a CAT(0) cube complex $X$ and $L, \epsilon\geq 0$ two constants. If $x,y \in X$ are two vertices satisfying $\delta_L(x,y) \geq R_0$ where $R_0 \geq 8L+10\epsilon+25$ and such that 
$$\{ g \in G \mid \delta_L(x,gx) \leq \epsilon, \ \delta_L(y,gy) \leq \epsilon \}$$ 
is infinite, then there exist two hyperplanes $V_1,V_2$ separating $x$ and $y$ such that $\mathrm{stab}(V_1) \cap \mathrm{stab}(V_2)$ is infinite and such that $V_1$ and $V_2$ are separated by at least $R_0-8L+10\epsilon+25$ pairwise $L$-well-separated hyperplanes.
\end{fact}

\noindent
Now, by noticing that the intersection $\mathrm{stab}(V_1) \cap \mathrm{stab}(V_2)$ acts on the convex subcomplexes $\mathrm{proj}_{N(V_1)}(N(V_2))$ and $\mathrm{proj}_{N(V_2)}(N(V_1))$, which have finite diameters since $V_1$ and $V_2$ are well-separated, we deduce that that our subgroup stabilises two cubes of $N(V_1)$ and $N(V_2)$. A fortiori, there exist two vertices $a \in N(V_1)$ and $b \in N(V_2)$ such that $\mathrm{stab}(a) \cap \mathrm{stab}(b)$ is infinite. 

\medskip \noindent
Thus, we have proved that, for every $R \geq 0$, there exist two vertices $a,b \in X$ at distance at least $R$ apart such that $\mathrm{stab}(a) \cap \mathrm{stab}(b)$ is infinite. This concludes the proof of our proposition.
\end{proof}

\noindent
Finally, our last preliminary result towards the proof of our main theorem determines which isometries of the cube complex induce loxodromic isometries with respect to the new metric.

\begin{lemma}\label{lem:HXwhenloxo}
Let $X$ be a CAT(0) cube complex, $L \geq 0$ an integer and $g \in \mathrm{Isom}(X)$ an isometry. Then $g$ induces a loxodromic isometry of $(X,\delta_L)$ if and only if $g$ skewers a pair of $L$-well-separated hyperplanes; otherwise, $g$ induces an elliptic isometry of $(X,\delta_L)$.
\end{lemma}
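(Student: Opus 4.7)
My proof will handle the two implications of the equivalence separately. For the forward direction I show that a skewered $L$-well-separated pair yields linear growth of $\delta_L(x,g^n x)$; for the backward direction I show that the absence of such a pair forces bounded $\delta_L$-orbits, which simultaneously rules out loxodromic and parabolic behaviour.

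\emph{Skewering $\Rightarrow$ loxodromic.} Suppose $g$ skewers an $L$-well-separated pair $(J_1,J_2)$, with halfspaces $g^n D_1 \subset D_2 \subset D_1$. Replacing $g$ by a power, assume $n=1$. The first step is to verify that the translates $\{g^k J_1 : k \in \mathbb{Z}\}$ are pairwise $L$-well-separated. The halfspaces $g^k D_1$ form a strictly nested bi-infinite sequence, and $g^k J_2$ strictly separates $g^k J_1$ from $g^{k+1} J_1$. By $g$-equivariance $(g^k J_1, g^k J_2)$ is $L$-well-separated, and iterating the monotonicity principle — if $K$ strictly separates $H_1$ from $H_2$ and $(H_1,K)$ is $L$-well-separated, then so is $(H_1,H_2)$, since any facing-triple-free family transverse to $H_1,H_2$ is automatically transverse to $K$ — promotes this to all pairs $(g^i J_1, g^j J_1)$. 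For any vertex $x$, take the largest $k_0$ with $x \in g^{k_0} D_1$; the hyperplanes $g^{k_0+1} J_1, \dots, g^{k_0+n} J_1$ are then $n$ pairwise $L$-well-separated hyperplanes separating $x$ from $g^n x$, giving $\delta_L(x,g^n x) \geq n$ and hence loxodromicity.

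\emph{No skewering $\Rightarrow$ elliptic.} If $g$ fixes a cube of $X$ its combinatorial orbit is bounded, and hence so is its $\delta_L$-orbit (since $\delta_L \leq d$), so $g$ is $\delta_L$-elliptic. Otherwise $g$ admits a combinatorial axis $\gamma$ of translation length $s$, and the hyperplanes crossing $\gamma$ decompose into exactly $s$ orbits under $\langle g\rangle$. Fix $x_0 \in \gamma$ and suppose for contradiction that $\delta_L(x_0,g^n x_0)$ is unbounded. For arbitrarily large $N$ there exist pairwise $L$-well-separated hyperplanes $V_1 < V_2 < \cdots < V_N$ ordered along $\gamma$ and separating $x_0$ from $g^n x_0$. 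By pigeonhole, once $N$ is large enough, two of them, $V_a$ and $V_b = g^m V_a$, lie in the same orbit. Provided $V_a$ and $V_b$ are disjoint, the halfspaces $D_a^+ \supsetneq D_b^+$ containing the positive end of $\gamma$ satisfy $g^m D_a^+ = D_b^+$, whence $g^{2m} D_a^+ \subsetneq D_b^+ \subsetneq D_a^+$, so $g$ skewers the $L$-well-separated pair $(V_a,V_b)$ — contradicting our hypothesis. Therefore $\delta_L(x_0,g^n x_0)$ is bounded in $n$, and the $g$-invariance of $\delta_L$ together with the triangle inequality extends boundedness to every vertex.

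\emph{Main obstacle.} The delicate step is guaranteeing that the pair $(V_a,V_b)$ produced by the orbit pigeonhole is actually disjoint: pairwise $L$-well-separated hyperplanes need not be pairwise disjoint. For instance, the isometry $(x,y)\mapsto (y+1,x)$ of $\mathbb{R}^2$ has combinatorial axis along the staircase through the origin, and the two hyperplanes $V = \{x=1/2\}$, $gV = \{y=1/2\}$ both cross it while being transverse to each other (and vacuously $L$-well-separated). I would handle this by first passing to a sufficiently large pairwise \emph{disjoint} subfamily of $V_1,\dots,V_N$ before applying the orbit pigeonhole, exploiting the fact that three pairwise transverse hyperplanes form a facing triple: hence pairwise transverse subfamilies of an $L$-well-separated collection are controlled by the facing-triple-free bound (and, in finite dimension, also by the dimension), which via a Ramsey-style selection yields the required disjoint subcollection on which the pigeonhole argument goes through cleanly.
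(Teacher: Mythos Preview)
Your forward direction matches the paper's: both build the family $\{g^{km}A\}$ of pairwise $L$-well-separated translates and read off linear growth of $\delta_L$. One quibble: ``for any vertex $x$'' should be ``for some vertex $x$'', since the maximal $k_0$ with $x\in g^{k_0}D_1$ need not exist in general; but a single vertex suffices for loxodromicity.

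For the backward direction the paper takes a somewhat different route. Rather than pigeonholing on the $s$ hyperplane-orbits along the axis, it picks three pairwise $L$-well-separated hyperplanes $A,B,C\in\mathcal H(\gamma)$ (crossing $\gamma$ in that order), translates $A$ forward past $C$, and uses the $L$-well-separation of $B$ and $C$ to argue that some translate $g^mA$ must become disjoint from $B$; this produces the skewered $L$-well-separated pair directly, without any Ramsey extraction.

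Your proposed fix for the disjointness obstacle contains a real error. You write that since three pairwise transverse hyperplanes form a facing triple, ``pairwise transverse subfamilies of an $L$-well-separated collection are controlled by the facing-triple-free bound.'' This runs the wrong way. The $L$-well-separation of $V_i,V_j$ bounds the size of any facing-triple-\emph{free} family transverse to both; but a pairwise-transverse subfamily $\{V_3,\dots,V_k\}$ is the opposite of facing-triple-free --- every triple in it \emph{is} a facing triple --- so the bound says nothing about $k$. Concretely, in an infinite-dimensional cube the coordinate hyperplanes are pairwise transverse and pairwise $2$-well-separated, with no bound on their number. Your fallback to the dimension bound is correct and, together with Ramsey, does give a large pairwise-disjoint subfamily on which your orbit pigeonhole goes through; but the lemma is stated for arbitrary $X$, so as written your argument is complete only in finite dimension.
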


\begin{proof}
Let $g \in \mathrm{Isom}(X)$ be an isometry. If $g$ is an elliptic isometry of $X$, then $g$ must induce an elliptic isometry of $(X,\delta_L)$. Suppose that $g$ is a loxodromic isometry of $X$. Up to subdividing $X$, we may suppose without loss of generality that $g$ acts by translation on a (combinatorial) geodesic line $\gamma$; fix a basepoint $x \in \gamma$. Suppose first that $\mathcal{H}(\gamma)$ contains at most three hyperplanes which are pairwise $L$-well-separated. A fortiori, $g$ does not skewer a pair of $L$-well-separated hyperplanes. Clearly, $\delta_L(x,g^nx) \leq 3$ for every $n \in \mathbb{Z}$, so that $g$ induces an elliptic isometry of $(X,\delta_L)$. Otherwise, suppose that $\mathcal{H}( \gamma)$ contains at least three pairwise $L$-well-separated hyperplanes, say $A,B,C$; by orienting $\gamma$ so that $g$ acts on it by positive translations, say that $A,B,C$ intersect $\gamma$ in that order. Let $n \geq 1$ be an integer so that $g^n \cdot A$ intersects $\gamma$ after $C$. Because $B$ and $C$ are well-separated, there must exists some $m \geq n$ such that $g^mA$ is disjoint from $B$. A fortiori, $g$ skewers the pair of $L$-well-separated hyperplanes $\{A,B\}$. Notice that, since any hyperplane transverse to both $A$ and $g^mA$ must be transverse to both $A$ and $B$, necessarily $A$ and $g^mA$ are $L$-well-separated. A fortiori, $\mathcal{A}= \{ g^{km}A \mid k \in \mathbb{Z} \}$ is an infinite collection of pairwise $L$-well-separated hyperplanes. Let $d$ denote the length of the subpath of $\gamma$ linking $N(A)$ and $N(g^mA)$. Because two vertices $a,b \in \gamma$ are separated by at least $\frac{1}{d} \cdot d(a,b)-1$ hyperplanes of $\mathcal{A}$, we deduce that
$$ \frac{1}{d} \cdot d(a,b)-1 \leq \delta_L(a,b) \leq d(a,b)$$
for every $a,b \in \gamma$. As a consequence, the axis $\gamma$ of $g$ is quasi-isometrically embedded into $(X,\delta_L)$, which implies that $g$ induces a loxodromic isometry of $(X,\delta_L)$. 
\end{proof}

\noindent
An immediate consequence of Lemma \ref{lem:HXwhenloxo} and Theorem \ref{thm:contractingisom} is:

\begin{cor}\label{cor:LoxoContracting}
Let $X$ be a CAT(0) cube complex and $g \in \mathrm{Isom}(X)$ an isometry. Then $g$ is a contracting isometry of $X$ if and only if it defines a loxodromic isometry of $(X,\delta_L)$ for $L$ sufficiently large.
\end{cor}

\begin{proof}[Proof of Theorem \ref{thm:mainHX}.]
The first two points of the theorem follows directly from Propositions \ref{prop:HXhyp} and \ref{prop:HXacyl}. The last point is a consequence of Corollary \ref{cor:LoxoContracting}. 
\end{proof}

\paragraph{Application 1: Acylindrical hyperbolicity.} Our hyperbolic models allow us to give an alternative and purely cubical proof of the fact a group acting on a CAT(0) cube complex with at least one WPD contracting isometry must be either virtually cyclic or acylindrically hyperbolic

\begin{prop}
Let $G$ be a group acting on a CAT(0) cube complex $X$ and $g \in G$ a WPD contracting isometry. There exists some $L_0 \geq 0$ such that, for every $L \geq L_0$, $g$ is a WPD loxodromic isometry of $(X,\delta_L)$.
\end{prop}

\begin{proof}
For convenience, we fix a combinatorial axis $\gamma$ of $g$ (which exists up to subdividing $X$). According to Theorem \ref{thm:WPDcontracting}, there exists some $L_0 \geq 0$ such that $g$ skewers a pair of $L_0$-well-separated hyperplanes $J_1$ and $J_2$ such that $\mathrm{stab}(J_1) \cap \mathrm{stab}(J_2)$ is finite. Notice that $J_1$ and $J_2$ necessarily intersect $\gamma$. Fix some $L \geq L_0$ and let $D$ denote the maximal number of pairwise $L$-well-separated hyperplanes separating $J_1$ and $J_2$. Notice that we already know from Lemma \ref{lem:HXwhenloxo} that $g$ defines a loxodromic isometry of $(X,\delta_L)$. If $g$ does not induce a WPD isometry of $(X,\delta_L)$, then we can find a vertex $x \in \gamma$, a constant $\epsilon \geq 0$ and a sufficiently large integer $m \geq 0$ such that $\delta_L(x,g^mx) \geq D+\|g\| +8L$ (where $\|g\|$ the translation length of $g$) and such that
$$\{h \in G \mid \delta_L(x,hx) \leq \epsilon, \ \delta_L(g^mx,hg^mx) \leq \epsilon \}$$ 
is infinite. It follows from Fact \ref{fact:nonWPD} that there exist two hyperplanes $H_1,H_2$ separating $x$ and $g^mx$ such that $\mathrm{stab}(H_1) \cap \mathrm{stab}(H_2)$ is infinite and such that $H_1$ and $H_2$ are separated by at least $D+ \|g\|+3$ pairwise $L$-well-separated hyperplanes. Up to translating $H_1$ and $H_2$ by a power of $g$, we may suppose without loss of generality that $J_1$ and $J_2$ both separate $H_1$ and $H_2$. Because there exist only finitely many hyperplanes separating $H_1$ and $H_2$, we know that $\mathrm{stab}(H_1) \cap \mathrm{stab}(H_2)$ contains a finite-index subgroup included into $\mathrm{stab}(J_1) \cap \mathrm{stab}(J_2)$, which is impossible since $\mathrm{stab}(J_1) \cap \mathrm{stab}(J_2)$ is finite and $\mathrm{stab}(H_1) \cap \mathrm{stab}(H_2)$ infinite. Consequently, $g$ must be a WPD isometry of $(X,\delta_L)$. 
\end{proof}

\begin{cor}\label{cor:altBBF}
If a group acts on a CAT(0) cube complex with at least one WPD contracting isometry, then it is either virtually cyclic or acylindrically hyperbolic.
\end{cor}

\paragraph{Application 2: Stable subgroups.} Notice that the third point of Theorem \ref{thm:mainHX} implies that, if $g \in G$ is a contracting isometry, or equivalently if $\langle g \rangle$ is a stable subgroup of $G$, then the orbits of $\langle g \rangle$ are quasi-isometrically embedded into $H(X)$. We generalise this observation to arbitrary stable subgroups. 

\begin{thm}\label{thm:HXstablesub}
Let $G$ be a group acting geometrically on a CAT(0) cube complex $X$ and $H \subset G$ a subgroup. If $H$ is a stable subgroup, then there exists some $L_0 \geq 0$ such that its orbits in $(X,\delta_L)$ are quasi-isometrically embedded for every $L \geq L_0$. 
\end{thm}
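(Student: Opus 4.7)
The plan is to reduce the statement to showing that a suitable contracting convex subcomplex carrying $H$ embeds quasi-isometrically into $H(X)$, and then to compare the $\delta_L$-metric restricted to this subcomplex with its intrinsic cube complex metric. Since $H$ is stable, it is in particular a Morse subgroup of $G$, so Corollary \ref{cor:MorseCriterion} supplies a contracting convex subcomplex $Y \subset X$ on which $H$ acts cocompactly, which I may take to be a neighborhood of an orbit $H \cdot x_0$. Because $\delta_L \leq d_X$ everywhere, the Hausdorff distance between $H \cdot x_0$ and $Y$ remains bounded in $H(X)$, so it suffices to prove that the inclusion $(Y, d_Y) \hookrightarrow H(X)$ is a quasi-isometric embedding (here $d_Y = d_X|_Y$, by convexity of $Y$). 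The upper bound $\delta_L(x,y) \leq d_Y(x,y)$ for $x,y \in Y$ is immediate.

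For the lower bound, I exploit two properties of $Y$: since $H$ is stable and hence hyperbolic, $Y$ is a hyperbolic CAT(0) cube complex by Milnor-\v Svarc, so by Theorem \ref{thm:hyperbolic1} its joins of hyperplanes are uniformly $L_Y$-thin for some $L_Y \geq 0$; and since $Y$ is contracting, Proposition \ref{prop:contracting} supplies a constant $C \geq 0$ such that every join $(\mathcal{H}, \mathcal{V})$ in $X$ with $\mathcal{V} \subset \mathcal{H}(Y)$ and $\mathcal{H} \cap \mathcal{H}(Y) = \emptyset$ is $C$-thin. The key combinatorial claim is the following: if $J_1, \ldots, J_n$ is the chain of hyperplanes of $Y$ separating two vertices $x, y \in Y$ (so $n = d_Y(x,y)$), then any pair of indices $a < b$ with $b - a > \max(L_Y, C)$ yields hyperplanes $J_a, J_b$ that are $(L_Y + C)$-well-separated as hyperplanes of $X$.

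To establish the claim, I take a set $\mathcal{K}$ of hyperplanes of $X$ transverse to both $J_a$ and $J_b$ and containing no facing triple, and split $\mathcal{K} = \mathcal{K}_0 \sqcup \mathcal{K}_1$ according to whether a hyperplane lies in $\mathcal{H}(Y)$ or not. A short argument based on the connectedness of halfspaces shows that every element of $\mathcal{K}$ must in fact be transverse to each $J_i$ with $a \leq i \leq b$. Combined with the fact that $\{J_a, \ldots, J_b\}$ is a chain and therefore contains no facing triple, this produces a join $(\mathcal{K}_0, \{J_a, \ldots, J_b\})$ of hyperplanes in $Y$ and a join $(\mathcal{K}_1, \{J_a, \ldots, J_b\})$ of hyperplanes in $X$ of the type handled by Proposition \ref{prop:contracting}(ii); the hypothesis $b - a > \max(L_Y, C)$ then forces $\#\mathcal{K}_0 \leq L_Y$ and $\#\mathcal{K}_1 \leq C$, so $\#\mathcal{K} \leq L_Y + C$.

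Finally, applying the claim to the subcollection $\{J_{1 + i(L_Y + C + 1)}\}$ produces roughly $n/(L_Y + C + 1)$ hyperplanes of $X$ which are pairwise $(L_Y + C)$-well-separated, giving $\delta_{L_Y + C}(x,y) \geq d_Y(x,y)/(L_Y + C + 1) - O(1)$; using the monotonicity of $L \mapsto \delta_L$ together with the stability statement of Theorem \ref{thm:mainHX}, this transfers to the constant $L$ defining $H(X)$. Composing the resulting quasi-isometric embedding $(Y, d_Y) \hookrightarrow H(X)$ with the Milnor-\v Svarc quasi-isometry between $H$ and any of its orbits then yields the theorem. The main obstacle is the combinatorial claim above: hyperplanes of $X$ lying outside $\mathcal{H}(Y)$ could a priori spoil well-separation in $X$ even for pairs that are well-separated inside $Y$, and controlling such hyperplanes is exactly what the contracting property of $Y$, via Proposition \ref{prop:contracting}, achieves.
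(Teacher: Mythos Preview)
Your approach is essentially the paper's: the paper factors the inclusion as $Y \to (Y,\delta_L) \to H(X)$ and handles the two maps separately via Lemma~\ref{lem:HXwhenXhyp} (hyperbolicity of $Y$) and Lemma~\ref{lem:HXcontracting} (contracting property of $Y$), while you merge these into a single combinatorial step. The key ingredients---bounding hyperplanes of $\mathcal{K}$ lying in $\mathcal{H}(Y)$ via thinness of joins in $Y$, and those outside $\mathcal{H}(Y)$ via Proposition~\ref{prop:contracting}(ii)---are identical.

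There is one slip to fix. You write ``$J_1,\ldots,J_n$ is the chain of hyperplanes of $Y$ separating $x,y$ (so $n=d_Y(x,y)$)''. The set of \emph{all} hyperplanes separating $x$ and $y$ has cardinality $d_Y(x,y)$ but is not a chain: distinct such hyperplanes may be transverse, so your claim that every $K\in\mathcal{K}$ transverse to $J_a$ and $J_b$ is transverse to each intermediate $J_i$ fails in that setting. What you need is a \emph{maximal collection of pairwise disjoint} hyperplanes separating $x$ and $y$; these are then nested, your transversality claim goes through, and the collection has cardinality $d_\infty(x,y)$ rather than $d_Y(x,y)$. Since $X$ (hence $Y$) is finite-dimensional under a geometric action, $d_\infty$ and $d_Y$ are bi-Lipschitz equivalent, so your final inequality survives with a harmless change of constants. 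With this correction the argument is complete.
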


\noindent
Our statement will be a straightforward consequence of the following proposition:

\begin{prop}\label{prop:HXstable}
Let $X$ be a cocompact CAT(0) cube complex and $Y \subset X$ a convex subcomplex. Then $Y$ is a stable subcomplex if and only if there exists some $L_0 \geq 0$ such that the inclusion $Y \subset X$ induces a quasi-isometric embedding $Y \to (X,\delta_L)$ for every~$L \geq L_0$. 
\end{prop}

\noindent
Given a metric space $M$, a subspace $N \subset M$ is \emph{stable} if, for every $A \geq 1$ and $B \geq 0$, there exists a constant $K \geq 0$ such that the Hausdorff distance between any two $(A,B)$-quasi-geodesics linking two points of $H$ is at most $K$. It is worth noticing that $N$ is stable if and only if it is a Morse subspace in which any two quasi-geodesics stays at finite Hausdorff distance (depending only on the parameters of the quasi-geodesics), or equivalently, if it is a hyperbolic Morse subspace.

\medskip \noindent
As a preliminary result, we prove the following statement, which we think to be of independent interest.

\begin{lemma}\label{lem:HXcontracting}
Let $X$ be a CAT(0) cube complex and $Y \subset X$ a convex subcomplex. If $Y$ is a contracting subcomplex, then there exists some $C \geq 0$ such that 
$$\frac{1}{C} \cdot \delta_{L-C}^Y(x,y)-C \leq \delta_L^X(x,y) \leq \delta_L^Y(x,y)$$
for every $L \geq 0$ and every $x,y \in Y$, where $\delta^X_L$ denotes the distance $\delta_L$ defined on $X$ and $\delta^Y_L$ its restriction to $Y$.
\end{lemma}

\begin{proof}
Fix some $L \geq 0$ and a constant $C \geq 0$ so that $Y$ is $C$-contracting, i.e., every join of hyperplanes $(\mathcal{H},\mathcal{V})$ satisfying $\mathcal{H} \subset \mathcal{H}(Y)$ and $\mathcal{V} \cap \mathcal{H}(Y) = \emptyset$ must be $C$-thin (see Proposition \ref{prop:contracting}). Let $x,y \in Y$ be two vertices. Because two hyperplanes of $X$ which are $L$-well-separated in $X$ are clearly $L$-well-separated in $Y$, necessarily
$$\delta_L^X(x,y) \leq \delta_L^Y(x,y).$$
Now, let $H_1, \ldots, H_r$ be a maximal collection of hyperplanes of $Y$ which are pairwise $L$-well-separated in $Y$. So $r=\delta^Y_L(x,y)$. Fix some $1 \leq i \leq r-C-1 $ and let $\mathcal{K}$ be a collection of hyperplanes of $X$ transverse to both $H_i$ and $H_{i+C+1}$ which does not contain any facing triple. Because $H_i$ and $H_{i+C+1}$ are $L$-well-separated in $Y$, necessarily $\# \left( \mathcal{K} \cap \mathcal{H}(Y) \right) \leq L$. And because $Y$ is $C$-contracting, $\# \left( \mathcal{K} \cap \mathcal{H}(Y)^c \right) \leq C$. Therefore, $\# \mathcal{K} \leq L+C$. This shows that $H_i$ and $H_{i+C+1}$ are $(L+C)$-well-separated in $X$. Consequently,
$$\delta_{L+C+1}^X(x,y) \geq \delta_{L+C}^X(x,y) \geq \frac{1}{C+1} \cdot \delta_L^Y(x,y)-C-1.$$
This concludes the proof of our lemma.
\end{proof}

\begin{proof}[Proof of Proposition \ref{prop:HXstable}.]
Suppose that $Y$ is a stable subcomplex, and let $C,L_0 \geq 0$ denote the constants respectively given by Lemmas \ref{lem:HXcontracting} and \ref{lem:HXwhenXhyp}. Fix some $L \geq L_0+C$. We know from Lemma \ref{lem:HXwhenXhyp} that the metrics $\delta_L^Y$ and $\delta_{L-C}^Y$ are quasi-isometric to the metric of $Y$. We conclude from Lemma \ref{lem:HXcontracting} that the restriction of $\delta_L^X$ to $Y$ has to be quasi-isometric to the metric of $Y$.

\medskip \noindent
Conversely, suppose that there exists some $L \geq 0$ such that the canonical map $Y \to (X,\delta_L)$ is a quasi-isometric embedding. As a consequence, $Y$ is hyperbolic and there exists a constant $A \geq 0$ such that, for every vertices $x,y \in X$, the inequality $d(x,y) \geq A$ implies $\delta_L(x,y) \geq 2$. Let $(\mathcal{H}, \mathcal{V})$ be a grid of hyperplanes satisfying $\# \mathcal{V} \geq A+2$, $\mathcal{V} \subset \mathcal{H}(Y)$ and $\mathcal{H} \cap \mathcal{H}(Y) = \emptyset$. Write $\mathcal{V}$ as $\{V_1, \ldots, V_r\}$ so that $V_i$ separates $V_{i-1}$ and $V_{i+1}$ for every $2 \leq i \leq r-1$. Fix two vertices $x \in Y \cap N(V_1)$ and $y \in Y \cap N(V_r)$ minimising the distance between $Y \cap N(V_1)$ and $Y \cap N(V_r)$. A fortiori, $x$ and $y$ are separated by $V_2, \ldots, V_{r-1}$, hence $d(x,y) \geq A$, and finally $\delta_L(x,y) \geq 2$. So there exist two $L$-well-separated hyperplanes $J_1$ and $J_2$ separating $x$ and $y$. According to Lemma \ref{lem:twomin}, $J_1$ and $J_2$ separates $Y \cap N(V_1)$ and $Y \cap N(V_r)$. Moreover, because the projection of $N(V_1)$ onto $Y$ turns out to be $Y \cap N(V_1)$ (as a consequence of Lemma \ref{lem:projinter}), we deduce from Lemma \ref{lem:vertextoproj} and Proposition \ref{prop:proj} that any hyperplane intersecting $N(V_1)$ outside $Y$ must be disjoint from $Y$. Therefore, $J_1$ and $J_2$ must separate $V_1$ and $V_r$, so that $V_1$ and $V_r$ have to be $L$-well-separated as well, hence $\# \mathcal{H} \leq L$. It follows from Proposition \ref{prop:contracting} that $Y$ is contracting. Consequently, $Y$ is a stable subcomplex. 
\end{proof}

\begin{proof}[Proof of Theorem \ref{thm:HXstablesub}.]
Fix a basepoint $x \in X$ and suppose that $H$ is a stable subgroup. As a consequence of Corollary \ref{cor:MorseCriterion}, the convex hull $Y$ of $H \cdot x$ in $X$ is a contained into a neighborhood of $Y$. A fortiori, $Y$ is a stable subcomplex. It follows from Proposition~\ref{prop:HXstable} that there exists some $L_0 \geq 0$ such that $Y$ quasi-isometrically embeds into $(X,\delta_L)$ for every $L \geq L_0$. A fortiori, $H \cdot x$ quasi-isometrically embeds into $(X,\delta_L)$ for every $L \geq L_0$. 
\end{proof}

\noindent
It may be expected that the converse of Theorem \ref{thm:HXstablesub} holds, i.e., if the orbits of the subgroup $H$ quasi-isometrically embed into $(X,\delta_L)$ for some $L \geq 0$, then $H$ turns out to be a stable subgroup. In view of Proposition \ref{prop:HXstable}, the only point to verify is that $H$ is a convex-cocompact group, or equivalently, that the convex hull of an $H$-orbit lies in a neighborhood of this orbit. However, the implication which interests us is really the one proved by Theorem~\ref{thm:HXstablesub} because it implies restrictions on the possible stable subgroups of a given group acting geometrically on some CAT(0) cube complex. For instance, we are able to reprove a result which follows from \cite{PurelyLoxoRAAG, KimKoberdaGeometryCurveGraph}. (An alternative argument can also be found at the end of the proof of Theorem \ref{thm:MorseinRAAG}.)

\begin{prop}\label{prop:stableinRAAG}
A stable subgroup in a right-angled Artin group is necessarily free.
\end{prop}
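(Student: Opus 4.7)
Plan: Let $H$ be a stable subgroup of $A(\Gamma)$, and let $X$ denote the universal cover of the Salvetti complex of $A(\Gamma)$, viewed as a CAT(0) cube complex on which $A(\Gamma)$ acts geometrically. The first step is to show that $H$ is itself word-hyperbolic. This follows directly from Theorem \ref{thm:HXstablesub}, which guarantees that the orbit map $H \to H(X)$ is a quasi-isometric embedding, combined with the Gromov hyperbolicity of $H(X)$ provided by Theorem \ref{thm:mainHX}. In particular, $H$ contains no subgroup isomorphic to $\mathbb{Z}^2$.

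The next step is to reduce to the case where $\Gamma$ is connected. If $\Gamma = \Gamma_1 \sqcup \cdots \sqcup \Gamma_k$ decomposes into its connected components, then $A(\Gamma) = A(\Gamma_1) \ast \cdots \ast A(\Gamma_k)$ is a free product decomposition with each factor freely irreducible. The subgroup $H$ acts on the associated Bass--Serre tree, and a Kurosh-type argument expresses $H$ as a free product of a free group together with finitely many conjugates of subgroups of the $A(\Gamma_i)$, each of which inherits stability from $H$. So it suffices to treat the case when $\Gamma$ is connected.

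Assuming $\Gamma$ is connected, since $H$ is stable it is in particular a Morse subgroup, and Theorem \ref{thm:MorseinRAAG} (from Appendix \ref{section:MorseInRAAG}) applies: $H$ is either a finite-index subgroup of $A(\Gamma)$ or is free. If $\Gamma$ consists of a single vertex, then $A(\Gamma) \cong \mathbb{Z}$ and there is nothing to prove. Otherwise $\Gamma$ contains at least one edge, so $A(\Gamma)$ contains a copy of $\mathbb{Z}^2$; any finite-index subgroup would inherit this $\mathbb{Z}^2$, contradicting the word-hyperbolicity of $H$ established in the first step. Hence $H$ must be free.

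The main obstacle is the Kurosh-type reduction in the second paragraph: while the decomposition of subgroups of free products is classical, one has to verify that stability descends to the pieces arising in the decomposition --- equivalently, that the intersection of $H$ with each conjugate of a free factor is itself stable in that factor. This amounts to showing that the vertex stabilisers of the $H$-action on the Bass--Serre tree inherit the Morse/contracting geometric behaviour encoded by stability, which one can establish by combining the QI-invariance of stability with Corollary \ref{cor:MorseCriterion}. Once this is in place, the rest of the argument follows directly from the tools already developed in the paper.
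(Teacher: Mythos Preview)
Your approach is correct in outline but takes a genuinely different route from the paper's. The paper's argument is that in the universal cover $X$ of the Salvetti complex of a RAAG, two hyperplanes are well-separated if and only if they are strongly separated; hence $H(X)=(X,\delta_0)$, which is quasi-isometric to the contact graph and therefore a quasi-tree. Theorem~\ref{thm:HXstablesub} then gives that $H$ quasi-isometrically embeds into a quasi-tree, so $H$ is virtually free, hence free by torsion-freeness. You instead use only the hyperbolicity of $H(X)$---not that it is a quasi-tree---to conclude that $H$ is hyperbolic, and then appeal to the much deeper Theorem~\ref{thm:MorseinRAAG} together with a Kurosh reduction. The paper's route is shorter, avoids Appendix~\ref{section:MorseInRAAG} entirely, and is the intended illustration of why $H(X)$ is a useful object; your route is legitimate (indeed the paper notes just before the proposition that an alternative argument can be extracted from the proof of Theorem~\ref{thm:MorseinRAAG}) but leans on heavier machinery.

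Two gaps to tighten. First, your Step~1 is superfluous: stable subgroups are \emph{by definition} hyperbolic Morse subgroups (see the sentence after Theorem~\ref{thm:MorseAcylHyp}), so there is no need to invoke Theorems~\ref{thm:HXstablesub} and~\ref{thm:mainHX} for this. Second, your justification of the Kurosh step (``QI-invariance of stability with Corollary~\ref{cor:MorseCriterion}'') is not the right tool. What you actually need is: each vertex group $H_j$ is a free factor of the hyperbolic group $H$, hence quasiconvex and thus Morse in $H$; Morseness is transitive, so $H_j$ is Morse in $A(\Gamma)$; since the free factor $A(\Gamma_i)$ is a retract and hence undistorted in $A(\Gamma)$, any quasi-geodesic in $A(\Gamma_i)$ between points of $H_j$ is a quasi-geodesic in $A(\Gamma)$ and therefore stays close to $H_j$; finally $H_j$ is hyperbolic as a subgroup of $H$. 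This yields stability of $H_j$ in $A(\Gamma_i)$ and completes the reduction.
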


\begin{proof}[Sketch of proof.]
Let $A$ be a free irreducible right-angled Artin group and $X$ the universal cover of the associated Salvetti complex. It is not difficult to show that two hyperplanes of $X$ are well-separated if and only if they are \emph{strongly separated}, i.e., no hyperplane of $X$ is transverse to both of them. Consequently, for every $L \geq 0$ the metric space $(X,\delta_L)$ is isometric to $(X,\delta_0)$, which turns out to be quasi-isometric to the contact graph of $X$. A fortiori, $(X,\delta_L)$ is a quasi-tree for every $L \geq 0$. It follows from Theorem \ref{thm:HXstablesub} that any stable subgroup of $A$ must be quasi-isometric to a tree, and so must be virtually free (see for instance \cite[Th\'eor\`eme 7.19]{GhysdelaHarpe}) and finally must be free since $A$ is torsion-free (see \cite{StallingsTorsionFree}). 
\end{proof}

\paragraph{Application 3: Regular elements.} Given a product $X=X_1 \times \cdots \times X_n$ of irreducible CAT(0) cube complexes, an isometry $g \in \mathrm{Isom}(X)$ is \emph{regular} if it induces a contracting isometry of $X_i$ for every $1 \leq i \leq n$. Regular isometries have been introduced in \cite{MR2827012} by analogy to regular semi-simple elements for symmetric spaces. Our goal is to give an alternative proof of \cite[Theorem 1.5]{RandomCC} (which is an improvement of \cite[Theorem~D]{MR2827012}), namely:

\begin{thm}\label{thm:Regular}
Let $G$ be a group acting essentially on a product $X=X_1 \times \cdots \times X_n$ of irreducible, unbounded and finite-dimensional CAT(0) cube complexes. Assume that $G$ does not a finite in $X$ nor in its visual boundary. Then $G$ contains a regular element.
\end{thm}

\noindent
The argument of \cite{RandomCC} is probabilistic. We propose here an argument based on cubical and hyperbolic geometries. In addition to the hyperbolic models we introduced, we need the following statement:

\begin{prop}\label{prop:simul}
Let $G$ be a group acting by isometries on quasi-geodesic hyperbolic spaces $X_1, \ldots, X_n$. Assume that:
\begin{itemize}
	\item for every $1 \leq i \leq n$, $G$ contains a loxodromic isometry of $X_i$;
	\item and for every $1 \leq i \leq n$, an element of $G$ either has bounded orbits in $X_i$ or is loxodromic.
\end{itemize}
Then there exists an element $g \in G$ which defines a loxodromic isometry of $X_i$ for every $1 \leq i \leq n$. 
\end{prop}

\noindent
An elementary proof of this result can be found in \cite{Simul} under two strengthened assumptions: hyperbolic spaces are supposed to be geodesic, and an element of $G$ which has a bounded orbit is supposed to fix a point. Proposition \ref{prop:simul} follows from \cite{Simul} as a consequence of the following two observations:
\begin{itemize}
	\item Let $X$ be a quasi-geodesic hyperbolic space. There exists a constant $C \geq 0$ such that, if $Y$ denotes the graph whose vertex-set is $X$ and whose edges link two points within distance $C$, then $Y$ is connected. Then $Y$ is geodesic hyperbolic space in which $X$ is quasi-dense and quasi-isometrically embedded. 
	\item Let $X$ be a geodesic $\delta$-hyperbolic space. Fix a bounded metric space $M$ and a basepoint $m \in M$. Let $Y$ denote the metric space obtained from $X$ by adding a copy $M_S$ of $M$ for every subset $S \subset X$ of diameter at most $5\delta$ and by linking the basepoint $m \in M_S$ to every point of $S$ by a segment $[0,1]$. Then $Y$ is geodesic hyperbolic space in which $X$ is quasi-dense and quasi-isometrically embedded. Moreover, $M$ can be chosen so that any isometry of $Y$ leaves $X$ invariant; for instance, take $M= [0,a] \times [0,a]$ with $a$ large compared to $\delta$.  If $g \in \mathrm{Isom}(Y)$ has a bounded orbit, then it has a bounded orbit in $X$. According to \cite[Lemma III.$\Gamma$.3.3]{}, $g$ must have an orbit $S$ of diameter at most $5\delta$. Therefore, $g$ fixes the basepoint of $M_S$. 
\end{itemize}
Theorem \ref{thm:Regular} is now an easy consequence of the combination of Proposition \ref{prop:simul} with our hyperbolic models of cube complexes.

\begin{proof}[Proof of Theorem \ref{thm:Regular}.]
Up to replacing $G$ with one of its finite-index subgroups, we suppose that $G$ preserves the product structure of $X$. For every $1 \leq i \leq n$, $G$ acts essentially on $X_i$ without fixing a point in the visual boundary. It follows from Theorem \ref{thm:CS} that $G$ contains a contracting isometry of $X_i$, so that, according to Corollary \ref{cor:LoxoContracting}, there exists $L(i) \geq 0$ such that $G$ contains a loxodromic isometry of $(X_i,\delta_{L(i)})$. Notice that, according to Lemma \ref{lem:HXwhenloxo}, we know that an element of $G$ either has bounded orbits in $(X_i,\delta_{L(i)})$ or is loxodromic. By applying Proposition \ref{prop:simul} to the actions of $G$ on $(X_1,\delta_{L(1)}), \ldots, (X_n, \delta_{L(n)})$, we deduce that $G$ contains an element $g$ defining a loxodromic isometry of $(X_i,\delta_{L(i)})$ for every $1 \leq i \leq n$. We conclude from Corollary \ref{cor:LoxoContracting} that $g$ defines a contracting isometry of $X_i$ for every $1 \leq i \leq n$, i.e., is a regular isometry. 
\end{proof}

\paragraph{Open questions.} We conclude this section by stating a few open questions about our hyperbolic models. First of all, is it really a model for universal acylindrical actions? Theorem \ref{thm:mainHX} does not completely prove this assertion, since our action is non-uniformly acylindrical. 

\begin{question}\label{q:Universal}
Let $G$ be a group acting geometrically on a CAT(0) cube complex $X$. Does there exist an $L\geq 0$ such that any two disjoint hyperplanes of $X$ are either $L$-well-separated or both transverse to infinitely many hyperplanes? If so, is the induced action $G \curvearrowright (X,\delta_L)$ acylindrical?
\end{question}

\noindent
Interestingly, the lack of acylindricity in the proof of Proposition \ref{prop:HXacyl} seems to have the same origin as the lack of acylindricity in the proofs of \cite[Theorem 7.1]{coningoff} and \cite[Theorem 22]{MoiAcylHyp}. Therefore, understanding this problem would be interesting. Another motivation would be to deduce from \cite{BowditchOneEndedSub} that a group acting geometrically on a CAT(0) cube complex contains only finitely many conjugacy classes of \emph{purely contracting subgroups} (i.e., subgroups containing only contracting isometries) isomorphic to a given finitely presented one-ended group. 

\medskip \noindent
From now on, given a CAT(0) cube complex $X$, we fix one of its hyperbolic models $H(X)$, hopefully the metric space $(X,\delta_L)$ where $L$ is the constant given by a positive answer to Question \ref{q:Universal}. 

\medskip \noindent
A natural question would be to study the behavior of $H(X)$ up to quasi-isometry. 

\begin{question}
Does a quasi-isometry $X \to Y$ between cocompact CAT(0) cube complexes induces a quasi-isometry $H(X) \to H(Y)$? a homeomorphism $\partial H(X) \to \partial H(Y)$?
\end{question}

\noindent
A positive answer to this question would allow us to define the \emph{hyperbolic boundary} $\partial_hG$ of a group $G$ acting geometrically on a CAT(0) cube complex $X$ as the Gromov boundary of the hyperbolic space $H(X)$. As a consequence, Lemma \ref{lem:HXcontracting} would imply that, for every group $G$ acting geometrically on some CAT(0) cube complex and for every Morse subgroup $H \subset G$, the hyperbolic boundary $\partial_hH$ of $H$ topologically embeds into the hyperbolic boundary $\partial_hG$ of $G$. As a particular case, if $H$ is a stable subgroup, then its Gromov boundary $\partial H$ topologically embeds into $\partial_hG$. With respect to this vocabulary, the proof of Proposition \ref{prop:stableinRAAG} amounts to saying that the hyperbolic boundary of a right-angled Artin group is a Cantor set, so that the Gromov boundary of any infinite stable subgroup must be a Cantor set as well, which implies that these groups must be free. 

\medskip \noindent
Basic (but non-trivial) results on Gromov boundaries of hyperbolic groups is that a multi-ended hyperbolic group splits over a finite subgroup and that a hyperbolic group with a Cantor set as its boundary must be virtually free. Are there similar statements with respect to our hyperbolic boundary?

\begin{question}
Let $X$ be a cocompact CAT(0) cube complex. When is $\partial H(X)$ a Cantor set? When is it connected? 
\end{question}

\appendix

\section{Crossing graphs as curve graphs}\label{section:crossing}

\noindent
Recall that the \emph{crossing graph} $\Delta X$ of a CAT(0) cube complex $X$ is the graph whose vertices are the hyperplanes of $X$ and whose edges link two transverse hyperplanes. This graph is a natural analogue of curve graphs of surfaces, but usually two objections are given against this analogy: first, the crossing graph may be disconnected; and next, every graph turns out to be the crossing graph of some CAT(0) cube complex, which prevents, in particular, the crossing graphs from being always hyperbolic. In this section, our goal is to show that these objections are not justified, and that crossing graphs are not so different from Hagen's contact graphs. 

\medskip \noindent
First, thanks to \cite[Lemma 2]{SplittingObstruction}, we understand precisely when the crossing graph is disconnected:

\begin{prop}
Let $X$ be a CAT(0) cube complex. The crossing graph $\Delta X$ is disconnected if and only if $X$ contains a cut vertex.
\end{prop}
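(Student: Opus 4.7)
The plan is to prove both directions of the equivalence via an intermediate characterisation: in a CAT(0) cube complex $X$, a vertex $v$ is a cut vertex if and only if its link $\mathrm{lk}(v)$ (viewed as a graph, where edges record squares of $X$ at $v$) is disconnected.

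First I would establish this intermediate equivalence. The easy direction goes as follows: if $v$ is a cut vertex and $X \setminus \{v\}$ has components $C_1, \ldots, C_k$, then two edges $e_1, e_2$ at $v$ whose far endpoints lie in different $C_i$'s cannot span a square at $v$---the opposite corner of such a square would provide an edge-path between these endpoints avoiding $v$---so $\mathrm{lk}(v)$ has at least two components. The converse uses disk diagrams: if edges $e_1, e_2$ at $v$ are in different components of $\mathrm{lk}(v)$ with far endpoints $w_1, w_2$, and $v$ is not a cut vertex, pick a path $\gamma$ from $w_1$ to $w_2$ in $X \setminus \{v\}$; the loop $\ell = e_1 \gamma \bar{e_2}$ is null-homotopic (since $X$ is CAT(0), hence contractible), so it bounds a minimal disk diagram $D \to X$, and the squares at $v \in \partial D$, lined up along the interior angle of $D$ at $v$, produce a path in $\mathrm{lk}(v)$ from $e_1$ to $e_2$, a contradiction.

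For the forward direction ($\Rightarrow$) of the proposition, I would assume $v$ is a cut vertex with $X \setminus \{v\}$ having components $C_1, \ldots, C_k$ ($k \geq 2$). Every hyperplane $J$ has carrier $N(J) \cong J \times [0,1]$; a direct check in the product shows that $N(J) \setminus \{v\}$ is connected, so it lies in a unique $C_{i(J)}$, and I set $C(J) := C_{i(J)}$. Each $C_i$ contains a neighbor of $v$, giving a hyperplane with $C(J) = C_i$, so every value is attained. If $J_1, J_2$ are transverse they share a square $S$; the four vertices of $S$, minus $v$ if it appears among them, form a connected subset of $X \setminus \{v\}$ contained in one $C_i$, so $C(J_1) = C(J_2)$. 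Hence $C$ is constant on $\Delta X$-components, forcing $\Delta X$ to have at least $k \geq 2$ components.

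For the backward direction ($\Leftarrow$), I argue the contrapositive. Assume $X$ has no cut vertex; by the intermediate equivalence every link is connected. The contact graph $\Gamma X$ is always connected (see Theorem \ref{thm:contact}), so any two hyperplanes $J, K$ are joined by a chain $J = J_0, J_1, \ldots, J_n = K$ with $N(J_i) \cap N(J_{i+1}) \neq \emptyset$. For each consecutive pair, I pick $v \in N(J_i) \cap N(J_{i+1})$ and dual edges $e_i, e_{i+1}$ at $v$; connectedness of $\mathrm{lk}(v)$ provides a sequence of edges at $v$ whose consecutive terms span squares, so the dual hyperplanes yield a path in $\Delta X$ from $J_i$ to $J_{i+1}$. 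Concatenating gives a path from $J$ to $K$ in $\Delta X$, whence $\Delta X$ is connected.

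The main obstacle will be the disk-diagram step in the intermediate equivalence: one needs that every null-homotopic combinatorial loop in $X$ bounds a minimal disk diagram, and that in such a minimal diagram each boundary vertex has interior angle genuinely filled by squares (no cusps), so that the squares at $v$ in $D$ really trace a combinatorial path in $\mathrm{lk}_X(v)$. The rest of the argument---partitioning hyperplanes by the components of $X \setminus \{v\}$ in the forward direction, and chaining contact-graph edges to transversality paths at each touching vertex in the backward direction---is essentially elementary bookkeeping.
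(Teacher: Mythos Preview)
Your proposal is correct. The paper does not actually prove this proposition in-house; it simply cites \cite[Lemma 2]{SplittingObstruction}, so there is no internal argument to compare against. Your route via the intermediate equivalence ``$v$ is a cut vertex $\iff$ $\mathrm{lk}(v)$ is disconnected'', followed by the component-labelling of hyperplanes for the forward direction and the contact-graph/link-path argument for the backward direction, is a clean and self-contained way to establish the result.

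Two small remarks. First, your citation of Theorem~\ref{thm:contact} for the connectedness of $\Gamma X$ is slightly off, since that theorem is stated under a geometric action; but the connectedness of $\Gamma X$ holds for arbitrary CAT(0) cube complexes and follows immediately from the fact that the hyperplanes dual to consecutive edges of any combinatorial geodesic have touching carriers. Second, the disk-diagram step you flag as the main obstacle is indeed the only delicate point, but it resolves cleanly: since $\gamma$ avoids $v$, the boundary cycle $e_1\gamma\bar{e_2}$ visits $v$ exactly once, so the diagram $D$ cannot pinch at the preimage $\bar{v}$ of $v$; hence the link of $\bar{v}$ in $D$ is a genuine arc from $\bar{e_1}$ to $\bar{e_2}$, and its image is the desired walk in $\mathrm{lk}_X(v)$. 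No minimality of $D$ is actually needed for this---any disk diagram will do.
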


\noindent
Therefore, when the crossing graph is disconnected, one can consider the graph $T$ whose vertices are the cut vertices of $X$ and the connected components of the complement, and whose edges link a cut vertex to all the components containing it. Because $X$ is simply connected, $T$ turns out to be a tree, so that Bass-Serre theory implies that any group acting on $X$ splits as a graph of groups such that vertex-groups are stabilisers of cut vertices or stabilisers of components. So, by the arboreal structure $T$ on $X$, we reduce the situation to actions on cube complexes whose crossing graphs are connected. As a particular case of the previous discussion, combined with Stallings' theorem, it follows that, if a one-ended group acts minimally and geometrically on some CAT(0) cube complex, then the crossing graph is necessarily connected.

\medskip \noindent
Next, if one considers only CAT(0) cube complexes which are uniformly locally finite, then crossing graphs turn out to be hyperbolic.

\begin{prop}\label{prop:crossinghyp}
Let $X$ be a uniformly locally finite CAT(0) cube complex without cut vertex. The crossing graph $\Delta X$ is a quasi-tree.
\end{prop}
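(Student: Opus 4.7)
The plan is to compare $\Delta X$ with Hagen's contact graph $\Gamma X$, which we already know to be a quasi-tree by Theorem \ref{thm:contact}. The two graphs share the same vertex set (the hyperplanes of $X$), and the edges of $\Delta X$ form a subset of the edges of $\Gamma X$, the missing ones being the \emph{osculation edges}, i.e.\ pairs of distinct hyperplanes whose carriers meet at some vertex without being transverse. Hence the inclusion $\Delta X \hookrightarrow \Gamma X$ is the identity on vertices and is $1$-Lipschitz. If I can show that every osculation edge can be replaced by a path of uniformly bounded length in $\Delta X$, then this inclusion becomes a quasi-isometry, and $\Delta X$ inherits the quasi-tree property from $\Gamma X$.

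So the whole problem reduces to proving a uniform bound of the following form: there exists a constant $C$, depending only on the uniform bound on vertex degrees, such that any two hyperplanes $H_1,H_2$ osculating at a vertex $v \in X$ satisfy $d_{\Delta X}(H_1,H_2) \leq C$. Since $H_1$ and $H_2$ correspond to vertices of the link $\mathrm{Lk}(v)$, and two hyperplanes through $v$ are transverse in $X$ if and only if they span a square at $v$ (equivalently, are joined by an edge in $\mathrm{Lk}(v)$), the induced subgraph of $\Delta X$ on hyperplanes through $v$ coincides with the $1$-skeleton of $\mathrm{Lk}(v)$. Uniform local finiteness bounds the number of vertices of $\mathrm{Lk}(v)$, hence the diameter of any of its connected components. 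Therefore the proposition boils down to the key claim: \emph{for every vertex $v \in X$, the link $\mathrm{Lk}(v)$ is connected.}

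The main obstacle is this key claim, and I would prove it by contrapositive via a disc diagram argument. Suppose $\mathrm{Lk}(v)$ decomposes into at least two components, and pick edges $e_1, e_2$ at $v$ dual to hyperplanes lying in different components, with other endpoints $w_1, w_2$. If $v$ is not a cut vertex there exists a path $\gamma$ from $w_1$ to $w_2$ in $X^{(1)} \setminus \{v\}$, and the loop $e_1 \cdot \gamma \cdot e_2^{-1}$ bounds a reduced disc diagram $D \to X$ by simple connectivity. Choosing $D$ minimal and applying the standard theory of disc diagrams in CAT(0) cube complexes, I would analyse the corner at the boundary vertex $v$: minimality, combined with the CAT(0) (flag) condition on links, should force the two boundary edges of $D$ at $v$ to be sides of a common square of $D$, hence to span a square in $X$, contradicting the choice of $e_1,e_2$ in different link-components. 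This yields that a disconnected link at $v$ forces $v$ to be a cut vertex, which is exactly what is needed.

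Putting everything together: by the key claim, $\mathrm{Lk}(v)$ is connected for every vertex $v$; by uniform local finiteness, its $1$-skeleton has diameter at most some constant $D$; so the two osculating hyperplanes $H_1,H_2$ are joined by a path of transverse hyperplanes in $\mathrm{Lk}(v)$ of length at most $D$, giving $d_{\Delta X}(H_1,H_2) \leq D$. This uniform replacement of osculation edges upgrades the inclusion $\Delta X \hookrightarrow \Gamma X$ to a quasi-isometry, and since $\Gamma X$ is a quasi-tree, so is $\Delta X$.
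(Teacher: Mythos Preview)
Your approach is essentially correct and genuinely different from the paper's. The paper argues directly via Manning's bottleneck criterion: it proves two lemmas (Lemma~\ref{lem:QT1} and Lemma~\ref{lem:QT2}) controlling how paths and geodesics in $\Delta X$ interact with hyperplanes of $X$ separating the endpoints, and then checks the bottleneck condition by hand. Your route instead bootstraps from the contact graph, showing that osculation edges can be replaced by uniformly bounded $\Delta X$-paths, so that the identity $\Delta X \hookrightarrow \Gamma X$ is a quasi-isometry. Both arguments ultimately rest on the same combinatorial fact---that in the absence of cut vertices the links are connected, hence (by uniform local finiteness) of uniformly bounded diameter---which the paper invokes in the first sentence of its proof. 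Your approach has the pleasant side effect of yielding Proposition~\ref{prop:QI} (the quasi-isometry $\Delta X \simeq \Gamma X$) immediately, whereas the paper proves that separately via strongly separated hyperplanes; conversely, the paper's argument is more self-contained in that it does not appeal to Hagen's theorem on $\Gamma X$.

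Two small points. First, your disc-diagram argument for the key claim overshoots: minimality does not force the two boundary edges at $\tilde v$ to lie in a \emph{single} square of $D$. What you actually get is that $\mathrm{Lk}_D(\tilde v)$ is an arc (since $\tilde v$ is a boundary vertex of a planar disc), and its image under the combinatorial map $D\to X$ is a connected subgraph of $\mathrm{Lk}_X(v)$ containing both $e_1$ and $e_2$, contradicting their lying in distinct components. Second, Theorem~\ref{thm:contact} as stated carries a cocompact group action hypothesis; you want the underlying fact (Hagen's theorem that $\Gamma X$ is a quasi-tree for any CAT(0) cube complex), which the paper cites in the proof of that theorem.
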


\noindent
Our proof follows essentially the arguments used \cite[Theorem 3.1.1]{Hagenthesis}. In particular, our goal is to apply the bottleneck criterion \cite{bottleneck}:

\begin{prop}
A geodesic metric space $Y$ is quasi-isometric to a tree if and only if there exists a constant $\delta>0$ such that, for every $x,y \in Y$, there is a \emph{midpoint} $m$ between $x$ and $y$, i.e.
$$d(m,x) = \frac{1}{2} d(x,y) = d(m,y),$$
with the property that any path $\gamma : [a,b] \to Y$ joining $x$ to $y$ satisfies $d(\gamma(t),m)<\delta$ for some $t \in [a,b]$. 
\end{prop}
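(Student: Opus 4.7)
The plan is to prove both directions of Manning's bottleneck criterion. The forward direction is the easier one and relies on transporting midpoints across a quasi-isometry and using the fact that, in a tree, every path between two points must contain the unique geodesic between them, in particular the midpoint. The reverse direction is the conceptual heart: the bottleneck property must be bootstrapped into a global tree-like approximation of $Y$.

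For the forward direction, assume there is a $(K,C)$-quasi-isometry $f : Y \to T$ to an $\mathbb{R}$-tree $T$, with coarse inverse $g : T \to Y$. Given $x,y \in Y$, let $m_T$ be the exact midpoint of the unique geodesic $[f(x),f(y)]$ in $T$. I will set $m := g(m_T)$, then adjust along a geodesic from $x$ to $y$ in $Y$ so that $d(x,m) = \tfrac{1}{2}d(x,y)$ exactly; the adjustment moves $m$ by a bounded amount, since $d(g(m_T),x)$ and $d(g(m_T),y)$ already each approximate $\tfrac{1}{2}d(x,y)$ up to a constant depending only on $K,C$. Now for any path $\gamma : [a,b] \to Y$ from $x$ to $y$, the composition $f \circ \gamma$ is a (discontinuous) "path" from $f(x)$ to $f(y)$ whose image, after connecting consecutive samples by $T$-geodesics, contains the midpoint $m_T$ (in a tree, every connected set containing two points contains the geodesic between them). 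Hence some $\gamma(t)$ lies within distance $K \cdot C' + C$ of $g(m_T)$, and therefore within a uniform constant $\delta$ of $m$.

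For the reverse direction, I would use the bottleneck property to show that $Y$ is $\delta'$-hyperbolic with geodesic triangles uniformly close to tripods, and then invoke the standard fact that a geodesic space all of whose asymptotic cones are $\mathbb{R}$-trees is quasi-isometric to an $\mathbb{R}$-tree (equivalently, one can build an explicit approximating tree by a greedy construction on an $R$-net). Concretely, given three points $x,y,z$, apply the bottleneck property to each pair to produce midpoints $m_{xy}, m_{yz}, m_{xz}$. Any geodesic from $x$ to $y$ passes $\delta$-close to $m_{xy}$; by concatenating with a geodesic from $y$ to $z$, the bottleneck property applied to the resulting path from $x$ to $z$ forces it to pass $\delta$-close to $m_{xz}$, and a short calculation (iterating the argument) shows that $m_{xy}, m_{yz}, m_{xz}$ lie within a uniform distance of a common point (the "center" of the triangle). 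This yields thin triangles with controlled insize, i.e.\ tree-approximation of triples with constant depending only on $\delta$.

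The main obstacle is this reverse direction: passing from a local statement about midpoints to a global quasi-isometry with a tree. I would handle it by an explicit net construction: choose a basepoint $p \in Y$, fix a $1$-net $N \subset Y$, and choose a geodesic $[p,x]$ for every $x \in N$. Define an equivalence on pairs $(x,t)$ with $x \in N$ and $t \in [0,d(p,x)]$ by declaring $(x,t) \sim (y,s)$ when the points at parameter $t$ on $[p,x]$ and parameter $s$ on $[p,y]$ satisfy $t=s$ and lie within distance $100\delta$; the bottleneck property guarantees that this coarsens to a well-defined quotient which is an $\mathbb{R}$-tree $T$, and that the natural map $Y \to T$ sending $x \in N$ to the class of $(x,d(p,x))$ is a quasi-isometry with constants depending only on $\delta$. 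Checking that the quotient really is a tree (no embedded circles) is where the bottleneck hypothesis does essential work: an embedded loop in $T$ would lift to a pair of geodesics in $Y$ whose midpoints are forced to coincide, contradicting the loop being non-degenerate.
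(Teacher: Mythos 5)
First, a remark on context: the paper does not prove this proposition at all --- it is quoted as Manning's bottleneck criterion and cited to \cite{bottleneck}, so there is no in-paper argument to compare yours against; your attempt has to stand on its own. In the forward direction your outline is essentially right, but one step is false as written: the claim that ``the adjustment moves $m$ by a bounded amount, since $d(g(m_T),x)$ and $d(g(m_T),y)$ already each approximate $\tfrac{1}{2}d(x,y)$ up to a constant''. A $(K,C)$-quasi-isometry only pins $d(g(m_T),x)$ down to within a multiplicative factor of roughly $K^2$ of $\tfrac{1}{2}d(x,y)$, so the pulled-back tree-midpoint can sit at parameter anywhere in an interval of length proportional to $d(x,y)$, and the adjustment to the exact metric midpoint is unbounded. (Concretely, take $Y=T=\mathbb{R}$ and $f(t)=2t$ for $t\geq 0$, $f(t)=t$ for $t<0$: for $x=-n$, $y=n$ the point $g(m_T)$ is at distance about $n/4$ from the metric midpoint $0$.) The standard fix reverses the direction of transport: let $m$ be the genuine midpoint of a geodesic $[x,y]$ in $Y$, use stability of quasi-geodesics in the tree to find $q\in[f(x),f(y)]$ uniformly close to $f(m)$, and run your separation argument at $q$.

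The reverse direction, which you correctly identify as the heart of the matter, does not go through. The ``standard fact'' you invoke --- that a geodesic space all of whose asymptotic cones are $\mathbb{R}$-trees is quasi-isometric to a tree --- is false: that condition characterises Gromov hyperbolicity, and $\mathbb{H}^2$ is hyperbolic without being a quasi-tree. Similarly, the approximating-tree construction for hyperbolic spaces only controls finite subsets with an error growing logarithmically in the number of points, so it cannot yield a global quasi-isometry. Your fallback net construction is the right sort of idea, but the relation ``$t=s$ and the corresponding points are $100\delta$-close'' is not transitive, and the entire content of the theorem is concentrated in the unverified assertions that its transitive closure does not collapse distances, that the quotient is an $\mathbb{R}$-tree, and that the quotient map is a quasi-isometry. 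What is missing is the key intermediate lemma that makes all of this work: a bisection argument upgrading the midpoint bottleneck to the statement that every path from $x$ to $y$ passes within $O(\delta)$ of \emph{every} point of a geodesic $[x,y]$ (apply the hypothesis to the two halves, then iterate over dyadic parameters). That is what makes geodesics coarsely unique, forces triangles to be uniformly close to tripods with a prescribed centre, and rules out embedded loops in the quotient. Without it, the final sentence of your proposal is an assertion rather than an argument; I would recommend following the proof in \cite{bottleneck} directly.
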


\noindent
We begin by stating and proving two preliminary lemmas about the metric in $\Delta X$. 

\begin{lemma}\label{lem:QT1}
Let $X$ be a CAT(0) cube complex. Let $J_1, \ldots, J_n$ be a path in $\Delta X$. For every hyperplane $H$ separating $J_1$ and $J_n$ in $X$, there exists some $1 \leq i \leq n$ such that $d_{\Delta X}(H,J_i) \leq 1$.
\end{lemma}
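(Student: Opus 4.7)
The plan is to reduce to a path-of-carriers argument and exploit the basic dichotomy that, for hyperplanes $H \neq J$, either $H$ is transverse to $J$ or $N(J)$ lies entirely in one halfspace of $H$. Concretely, I would argue by contradiction: assume that no $J_i$ satisfies $d_{\Delta X}(H, J_i) \leq 1$, i.e.\ for every $i$, $J_i \neq H$ and $J_i$ is not transverse to $H$.

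Under this assumption, each $J_i$ lies strictly in one of the two halfspaces $D^+$, $D^-$ delimited by $H$; call this halfspace $D_i \in \{D^+, D^-\}$. The hypothesis that $H$ separates $J_1$ from $J_n$ gives $D_1 \neq D_n$, so somewhere along the sequence there must be a consecutive pair with $D_i \neq D_{i+1}$. I would then derive a contradiction from the following claim: if $J_i$ and $J_{i+1}$ are transverse and both disjoint from $H$, then $D_i = D_{i+1}$.

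The key step, which is where the little bit of content lives, is this claim. Since $J_i$ is not equal to $H$ and not transverse to $H$, no cube of the carrier $N(J_i)$ can be crossed by $H$ (otherwise $H$ and $J_i$ would both appear as midcubes of a common cube, making them transverse); hence $N(J_i) \subset D_i$, and likewise $N(J_{i+1}) \subset D_{i+1}$. But $J_i$ and $J_{i+1}$ transverse means they share a common square, so $N(J_i) \cap N(J_{i+1})$ contains a vertex $v$. This $v$ lies in both $D_i$ and $D_{i+1}$, forcing $D_i = D_{i+1}$.

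I expect no real obstacle; the only subtlety is to be careful about the auxiliary fact that a hyperplane $H$ is transverse to $J$ iff $H$ meets the carrier $N(J)$ at a point off of $J$, so that the absence of transversality really does imply $N(J) \subset D^{\pm}$. Once that is noted, the proof is a clean induction: $D_1 = D_2 = \cdots = D_n$, contradicting $D_1 \neq D_n$, so some $J_i$ must be transverse to (or equal to) $H$.
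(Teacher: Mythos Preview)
Your argument is correct and is essentially the same as the paper's: both proceed by contradiction and show that if no $J_i$ equals or is transverse to $H$, then the $J_i$'s all lie on the same side of $H$, contradicting the separation of $J_1$ and $J_n$. The paper simply states this in one line, while you spell out the inductive step via carriers and the shared vertex of $N(J_i)\cap N(J_{i+1})$; there is no substantive difference.
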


\begin{proof}
There must exist some $J_i$ such that either $H=J_i$ or $H$ transverse to $J_i$, since otherwise $J_1, \ldots, J_n$ would be included into the halfspace delimited by $H$ which does not contain $J_n$, which is absurd. A fortiori, $d_{\Delta X}(H,J_i) \leq 1$.
\end{proof}

\begin{lemma}\label{lem:QT2}
Let $X$ be a CAT(0) cube complex. Suppose that the link of every vertex of $X$ has diameter at most $R$ for some uniform $R \geq 0$. Let $J_1, \ldots, J_n$ be a geodesic in $\Delta X$. For every $1 \leq i \leq n$, there exists a hyperplane $H$ separating $J_1$ and $J_n$ in $X$ such that $d_{\Delta X}(J_i,H) \leq 3+ R$.
\end{lemma}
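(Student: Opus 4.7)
The plan hinges on extracting one clean consequence of the link hypothesis: any two hyperplanes sharing a common vertex of $X$ lie at $\Delta X$-distance at most $R$. Indeed, in the link of a vertex $v$, the vertex set consists of the hyperplanes meeting $v$, and two such hyperplanes are adjacent in the link exactly when they span a square at $v$---which in particular forces them to be transverse in $X$. So a geodesic in the link projects to a sequence of hyperplanes through $v$ with consecutive members transverse, yielding the same-length path in $\Delta X$.

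With this in hand, first dispose of the trivial case where $J_i$ itself separates $J_1$ and $J_n$ by taking $H := J_i$. Otherwise, use that $J_{i-1}$ and $J_i$ are transverse (the endpoint cases $i=1,n$ are handled analogously, picking a vertex of $N(J_i) \cap N(J_1)$ or $N(J_i) \cap N(J_n)$) to pick a vertex $u$ belonging to $N(J_{i-1}) \cap N(J_i)$ lying in a common square. The goal is then to produce a separator $H \in S$ of $J_1, J_n$ that passes through a vertex at bounded combinatorial distance from $u$ in $X$.

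To locate such an $H$: choose $a \in N(J_1)$ and $b \in N(J_n)$ lying on the halfspaces of $J_1$ and $J_n$ opposite to one another, so that every hyperplane of $S$ separates $a$ from $b$. Run combinatorial geodesics from $u$ to $a$ and from $u$ to $b$; the separators of $J_1, J_n$ appear dually along the concatenation. Since, by assumption, $J_i$ does not separate $J_1, J_n$, a projection argument using Proposition \ref{prop:proj} and Lemma \ref{lem:sepproj} applied to the carriers $N(J_1), N(J_i), N(J_n)$ and Helly's property for convex subcomplexes forces the first separator $H$ encountered from $u$ to meet a vertex $v$ at combinatorial distance at most some small absolute constant from $u$ in $X$ (this is where the ``$+3$'' enters). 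The chain in $\Delta X$ going $J_i \leadsto H$ is then obtained by following the short combinatorial path from $u$ to $v$ edge by edge: each of these at most $3$ edges is dual to a hyperplane through its two endpoints, giving a walk of length at most $3$ through hyperplanes sharing successive vertices, followed by one link jump of length $\leq R$ at the vertex $v$ to pass from the last edge-dual hyperplane to $H$. This yields $d_{\Delta X}(J_i, H) \leq R + 3$.

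The main obstacle is the geometric core of Step 2: verifying that the first separator encountered along the geodesic from $u$ actually lies within a bounded combinatorial neighborhood of $u$. The assumption that $J_i$ does not separate $J_1$ from $J_n$ must be leveraged carefully---it places $J_1, J_n$ on the same side of $J_i$ (or forces $i$ to be near an endpoint, where a separate small argument applies), and this proximity is what pins the first separator down near $N(J_i)$. Getting the constant exactly $R+3$, rather than something like $2R$, requires that only one link jump is needed; arranging this, via the projection-based choice of $H$, is the delicate point of the argument.
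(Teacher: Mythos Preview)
Your proposal has a genuine gap at precisely the point you flag as ``the main obstacle'': the claim that some separator $H$ of $J_1$ and $J_n$ meets a vertex at bounded \emph{combinatorial} distance (in $X$) from your chosen vertex $u \in N(J_i)$. Nothing in the hypotheses controls the position of $J_i$ inside $X$ relative to the separators of $J_1$ and $J_n$; the geodesic condition is a condition in $\Delta X$, not in $X$, and the projection lemmas you invoke do not yield any uniform bound on $d_X(u, N(H))$. The hand-waved ``projection argument'' would need to show that the carrier of some separator comes within $3$ of $u$, and there is no reason for this to hold with a constant independent of the complex. Your chain $J_i \leadsto H$ through edge-dual hyperplanes along a short $X$-path therefore never gets off the ground, and the ``$+3$'' is not accounted for.

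The paper's argument avoids $X$-distances entirely and works purely in $\Delta X$. It fixes a maximal chain $H_1, \ldots, H_m$ of pairwise disjoint separators of $J_1$ and $J_n$; maximality forces consecutive $H_j, H_{j+1}$ to be tangent, whence $d_{\Delta X}(H_j, H_{j+1}) \leq R$ by the link hypothesis (this is where your first observation is used, and it is the only place). If $J_i$ is not transverse to any $H_j$, it lies between some $H_j$ and $H_{j+1}$. Lemma~\ref{lem:QT1} applied to the subpaths of the geodesic then produces indices $r < i < s$ with $d_{\Delta X}(J_r, H_j) \leq 1$ and $d_{\Delta X}(J_s, H_{j+1}) \leq 1$; the geodesic property gives $d_{\Delta X}(J_i, J_r) \leq d_{\Delta X}(J_r, J_s) \leq d_{\Delta X}(H_j, H_{j+1}) + 2 \leq R+2$, and one more step reaches $H_j$. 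The ``$+3$'' comes from the two applications of Lemma~\ref{lem:QT1} and the passage through $J_r$, not from any short walk in $X$.
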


\begin{proof}
Let $H_1, \ldots, H_m$ be a maximal collection of pairwise disjoint hyperplanes separating $J_1$ and $J_2$. Suppose that $H_j$ separates $H_{j-1}$ and $H_{j+1}$ for every $2 \leq j \leq m-1$. As a consequence of Lemma \ref{lem:twomin}, for every $1 \leq j \leq m-1$, the hyperplanes $H_j$ and $H_{j+1}$ must be tangent, so that $d_{\Delta X}(H_j,H_{j+1}) \leq R$. Fix some $1 \leq i \leq n$. If $J_i$ is transverse or equal to some $H_j$, then $d_{\Delta X}(J_i,H_j) \leq 1$ and we are done. Notice also that $J_i$ cannot be separated by $J_1$ from $J_n$, and similarly by $J_n$ from $J_1$, because otherwise it would be possible to shorten the path $J_1, \ldots, J_n$ in $\Delta X$. The last possible configuration is when $J_i$ lies in the subspace delimited by $H_j$ and $H_{j+1}$ for some $1 \leq j \leq m-1$. As a consequence of Lemma \ref{lem:QT1}, there exist $1 \leq r,s \leq n$ satisfying $r < i <s$ such that $d_{\Delta X}(J_r,H_j) \leq 1$ and $d_{\Delta X}(J_s,H_{j+1}) \leq 1$. So
$$\begin{array}{lcl} d_{\Delta X}(J_i,H_j) & \leq & d_{\Delta X}(J_i,J_r) + d_{\Delta X}(J_r,H_s) \leq d_{\Delta X}(J_s,J_r) +1 \\ \\ & \leq & d_{\Delta X}(H_j,H_{j+1}) +3 \leq R+3 \end{array}$$
concluding the proof.
\end{proof}

\begin{proof}[Proof of Proposition \ref{prop:crossinghyp}.]
Because $X$ does not contain any cut vertex, the link of every vertex of $X$ is finite; and because $X$ is uniformly locally finite, we deduce that the links of vertices of $X$ have diameters uniformly bounded, say by some constant $R \geq 0$. Let $J,H$ be two hyperplanes of $X$. Fix a geodesic between $J$ and $H$, and let $K$ be one of its vertices at distance at most $1/2$ from its midpoint $M$. Let $A_1, \ldots, A_n$ be any path between $J$ and $H$. According to Lemma \ref{lem:QT1}, there exists a hyperplane $S$ separating $J$ and $H$ such that $d_{\Delta X}(K,S) \leq 3+ R$; and according to Lemma \ref{lem:QT2} that there exists some $1 \leq i \leq n$ such that $d_{\Delta X}(A_i,S) \leq 1$. Therefore,
$$d_{\Delta X}(A_i,M) \leq d_{\Delta X}(A_i,S) + d_{\Delta X}(S,M) \leq R+9/2.$$
It follows from the bottleneck criterion that $\Delta X$ is a quasi-tree.
\end{proof}

\noindent
As a consequence, if a one-ended group acts minimally and geometrically on some CAT(0) cube complex, then the crossing graph is connected and quasi-isometric to a tree. This observation make crossing graphs good candidate for curve graphs of CAT(0) cube complexes. In fact, the next proposition implies that crossing graphs and contact graphs are essentially identical. 

\begin{prop}\label{prop:QI}
Let $X$ be a uniformly locally finite CAT(0) cube complex without cut vertex. The canonical map $\Delta X \to \Gamma X$ is a quasi-isometry. 
\end{prop}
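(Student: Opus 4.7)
The plan is to observe that $\Delta X$ and $\Gamma X$ share the same vertex set (the hyperplanes of $X$) and that the canonical map is the identity on vertices, so it suffices to show that it is a quasi-isometric embedding; surjectivity is automatic. The ``easy'' direction is that the map is $1$-Lipschitz: any edge of $\Delta X$ joins transverse hyperplanes, whose carriers certainly intersect, so it is also an edge of $\Gamma X$. Hence $d_{\Gamma X}(J,H) \leq d_{\Delta X}(J,H)$ for all hyperplanes $J,H$.

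For the reverse inequality I would use exactly the uniform link-diameter bound obtained in the proof of Proposition \ref{prop:crossinghyp}: since $X$ has no cut vertex, the $1$-skeleton of the link of every vertex is connected, and since $X$ is uniformly locally finite, the links are finite, so their diameters are uniformly bounded by some constant $R \geq 0$. The key step is then the following claim: if $J,H$ are two hyperplanes whose carriers meet, then $d_{\Delta X}(J,H) \leq R$. If $J$ and $H$ are transverse, there is nothing to prove. Otherwise, pick a vertex $v \in N(J) \cap N(H)$; then $J$ and $H$ both appear as vertices of $\mathrm{lk}(v)$, and two vertices of $\mathrm{lk}(v)$ are joined by an edge exactly when the corresponding hyperplanes span a square at $v$, i.e.\ are transverse. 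Consequently, a path of length at most $R$ in the $1$-skeleton of $\mathrm{lk}(v)$ joining $J$ to $H$ yields a path of the same length in $\Delta X$.

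Given any geodesic $J = K_0, K_1, \ldots, K_n = H$ in $\Gamma X$, each pair $(K_{i-1}, K_i)$ satisfies the hypothesis of the claim, so the triangle inequality gives
\[
d_{\Delta X}(J,H) \;\leq\; \sum_{i=1}^n d_{\Delta X}(K_{i-1},K_i) \;\leq\; R \cdot n \;=\; R \cdot d_{\Gamma X}(J,H).
\]
Combined with the Lipschitz bound in the other direction, this shows that the canonical map $\Delta X \to \Gamma X$ is an $(R,0)$-quasi-isometric embedding, and, being the identity on vertices, a quasi-isometry.

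The main obstacle is the claim about tangent hyperplanes, but this reduces to the link-diameter estimate that has already been isolated in the hypotheses and in the proof of Proposition \ref{prop:crossinghyp}; the assumption that $X$ has no cut vertex is exactly what guarantees that the $1$-skeleton of each $\mathrm{lk}(v)$ is connected (so that $R$ is actually finite on every link), and uniform local finiteness is what makes $R$ uniform across all vertices.
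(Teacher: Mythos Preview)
Your proof is correct and takes a genuinely different, more elementary route than the paper's.

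The paper proves the proposition indirectly: it shows, via Lemmas \ref{qi1} and \ref{qi2}, that $d_{\Delta X}$ is coarsely equivalent to the quantity ``maximal number of pairwise strongly separated hyperplanes separating two given hyperplanes'', and then invokes an external reference (\cite[Proposition 23]{MoiAcylHyp}) stating that $d_{\Gamma X}$ is coarsely equivalent to the same quantity. Your argument bypasses this third metric entirely: you observe that adjacency in $\Gamma X$ means the two carriers share a vertex $v$, and then use that the natural map $\mathrm{lk}(v)\to\Delta X$ (sending an edge at $v$ to its dual hyperplane) is a graph morphism, so a $\Gamma X$-edge contributes at most $\mathrm{diam}(\mathrm{lk}(v))\leq R$ to the $\Delta X$-distance. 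This is exactly the uniform link-diameter bound already isolated in the proof of Proposition \ref{prop:crossinghyp}, so no new ingredients are needed. The upshot is a sharper and self-contained bilipschitz estimate $d_{\Gamma X}\leq d_{\Delta X}\leq R\cdot d_{\Gamma X}$, whereas the paper's detour yields only a quasi-isometry with constants depending on those of two separate coarse equivalences and relies on an outside citation. The paper's approach does buy something extra, namely the identification of both distances with the strongly-separated-hyperplane count, which is of independent interest elsewhere in the article; but for the bare statement of Proposition \ref{prop:QI} your argument is cleaner.
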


\noindent
The key point to prove this proposition is that the distance in $\Delta X$ coincides coarsely with the maximal number of pairwise strongly separated hyperplanes separating two given hyperplanes. (Recall that two hyperplanes are \emph{strongly separated} if no other hyperplane is transverse to both of them.) This idea is made precise by the next two lemmas. 

\begin{lemma}\label{qi1}
Let $X$ be a CAT(0) cube complex. Suppose that there exists some $R \geq 1$ such that the link of every vertex of $X$ has diameter at most $R$. If $J,H$ are two hyperplanes satisfying $d_{\Delta X}(J,H) \geq 11Rn$, there exist at least $n$ pairwise strongly separated hyperplanes separating $J$ and $H$ in $X$. 
\end{lemma}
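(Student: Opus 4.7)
The plan is to exploit a $\Delta X$-geodesic from $J$ to $H$ and extract from it $n$ well-spaced hyperplanes that separate $J$ from $H$, using the following key observation: two hyperplanes $A$ and $B$ are strongly separated if and only if $d_{\Delta X}(A,B) \geq 3$. Indeed, $d_{\Delta X}(A,B) = 0$ means $A = B$; $d_{\Delta X}(A,B) = 1$ means $A$ and $B$ are transverse; $d_{\Delta X}(A,B) = 2$ means some hyperplane is adjacent to both in $\Delta X$, i.e.\ transverse to both $A$ and $B$; and $d_{\Delta X}(A,B) \geq 3$ excludes all three possibilities, so $A$ and $B$ are disjoint with no common transverse hyperplane.

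With this characterisation in hand, I would first fix a $\Delta X$-geodesic $J = J_0, J_1, \ldots, J_N = H$ of length $N = d_{\Delta X}(J,H) \geq 11Rn$. Applying Lemma \ref{lem:QT2} to this geodesic, for each index $0 \leq i \leq N$ I obtain a hyperplane $S_i$ of $X$ that separates $J$ from $H$ and satisfies $d_{\Delta X}(J_i, S_i) \leq R + 3$.

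Next, I would select the spacing. Setting $i_t = (t-1)(2R + 9)$ for $t = 1, \ldots, n$, the fact that $J_0, \ldots, J_N$ is a geodesic gives $d_{\Delta X}(J_{i_s}, J_{i_t}) = (t-s)(2R+9)$ for $s < t$, so the triangle inequality yields
$$d_{\Delta X}(S_{i_s}, S_{i_t}) \;\geq\; d_{\Delta X}(J_{i_s}, J_{i_t}) - d_{\Delta X}(J_{i_s}, S_{i_s}) - d_{\Delta X}(J_{i_t}, S_{i_t}) \;\geq\; (2R+9) - 2(R+3) \;=\; 3.$$
By the initial observation, the hyperplanes $S_{i_1}, \ldots, S_{i_n}$ are pairwise strongly separated, and each of them separates $J$ from $H$.

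The final bookkeeping is to confirm that $i_n = (n-1)(2R+9)$ does not exceed $N$. Since $N \geq 11Rn$, the condition $(n-1)(2R+9) \leq 11Rn$ simplifies to $9(n-1) \leq R(9n+2)$, which holds for every $R \geq 1$ and every $n \geq 1$. The only subtle point of the proof is really the calibration of the constant $11R$ in the hypothesis against the spacing $2R+9$ needed to survive the two $R+3$ losses coming from Lemma \ref{lem:QT2}; once this arithmetic is cleared, the argument is purely mechanical.
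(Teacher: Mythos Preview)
Your proof is correct and follows essentially the same route as the paper's: take a $\Delta X$-geodesic, approximate its vertices by separating hyperplanes via Lemma~\ref{lem:QT2}, and space the indices so that the triangle inequality forces the chosen $S_{i_t}$'s to be at mutual $\Delta X$-distance at least $3$, hence pairwise strongly separated. The only cosmetic difference is that the paper uses a uniform spacing of $11R$ between indices (so no final bookkeeping check is needed), while you use the tighter spacing $2R+9$ and verify separately that $i_n \leq N$.
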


\begin{proof}
Let $J=V_0,V_1, \ldots, V_{r-1},V_r=H$ be a geodesic in $\Delta X$ between $J$ and $H$. According to Lemma \ref{lem:QT2}, for every $1 \leq k \leq r-1$, there exists a hyperplane $S_k$ separating $J$ and $H$ such that $d_{\Gamma X}(V_k,S_k) \leq 3+R$. For every $1 \leq k \leq (r-1)/5$ and every $1 \leq j \leq (r-1)/5-k$, we have
\begin{center}
$\begin{array}{lcl} d_{\Delta X}(S_{11Rk},S_{11R(k+j)}) & \geq & d_{\Delta X}(V_{11Rk},V_{11R(k+j)})-d_{\Delta X}(V_{11Rk},S_{11Rk}) \\ \\ & & -d_{\Delta X}(V_{11R(k+j)},S_{11R(k+j)}) \\ \\ & \geq & 11Rj - 2(3+R) \geq 3; \end{array}$
\end{center}
A fortiori, $S_{11Rk}$ and $S_{11R(k+j)}$ are strongly separated. Therefore, $\{S_{11Rk} \mid 1 \leq k \leq n\}$ defines a collection $n$ pairwise strongly separated hyperplanes separating $J$ and $H$, concluding the proof.
\end{proof}

\begin{lemma}\label{qi2}
Let $X$ be a CAT(0) cube complex. Let $J$ and $H$ be two hyperplanes. If they are separated in $X$ by $n$ pairwise strongly separated hyperplanes $V_1, \ldots, V_n$, such that $V_i$ separates $V_{i-1}$ and $V_{i+1}$ for every $2 \leq i \leq n-1$, then $d_{\Delta X}(J,H) \geq n$. 
\end{lemma}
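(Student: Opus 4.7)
The plan is to fix an arbitrary path $W_0 = J, W_1, \ldots, W_k = H$ in the crossing graph $\Delta X$ joining $J$ to $H$ and to argue that $k \geq n$. For each $i \in \{1, \ldots, n\}$, the hyperplane $V_i$ separates the endpoints of the path, so Lemma \ref{lem:QT1} furnishes an index $j(i) \in \{0, 1, \ldots, k\}$ such that $d_{\Delta X}(V_i, W_{j(i)}) \leq 1$, i.e., $V_i$ is either equal or transverse to $W_{j(i)}$. The whole proof then reduces to showing that the map $i \mapsto j(i)$ is an injection into a set of cardinality $k$.

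The central step is verifying injectivity via the strong separation hypothesis. If $j(i) = j(i') = j$ for distinct indices $i \neq i'$, then $W_j$ would be equal or transverse to both $V_i$ and $V_{i'}$, and I would conclude by a short case analysis: $W_j$ cannot equal both since $V_i \neq V_{i'}$; it cannot equal one and be transverse to the other, as this would force $V_i$ and $V_{i'}$ to be transverse, contradicting that strongly separated hyperplanes are in particular disjoint; and it cannot be transverse to both $V_i$ and $V_{i'}$, as this would directly violate the definition of strong separation. Thus the map $i \mapsto j(i)$ is injective.

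To obtain the sharp bound $n \leq k$ (rather than the weaker $n \leq k+1$), I would further observe that $j(i) \neq 0$: since $V_i$ separates $W_0 = J$ from $W_k = H$, the hyperplane $J$ lies entirely in one halfspace of $V_i$ and is therefore disjoint from $V_i$, so that $V_i \neq W_0$ and $V_i$ is not transverse to $W_0$. (By the same reasoning one could also exclude $j(i) = k$, but this is not needed.) Hence the injection actually lands in $\{1, \ldots, k\}$, giving $n \leq k$, which is the desired inequality $d_{\Delta X}(J, H) \geq n$. The only mildly delicate point is this endpoint observation; overlooking it yields an off-by-one error. Beyond that, the argument is a direct application of the already-established Lemma \ref{lem:QT1} together with the definition of strongly separated hyperplanes, so no additional geometric input is required.
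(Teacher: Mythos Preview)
Your proof is correct and follows essentially the same strategy as the paper: apply Lemma~\ref{lem:QT1} to attach to each $V_i$ a vertex of the path, and use strong separation to show these attachments are to distinct vertices. The only difference is in the final step. You dispose of the off-by-one issue by observing directly that $j(i)\neq 0$ (since $V_i$ separates $J$ from $H$ and is therefore disjoint from $J$), so the injection lands in $\{1,\ldots,k\}$. The paper instead takes a geodesic, orders the indices $n_{\varphi(1)}<\cdots<n_{\varphi(n)}$, and bounds $d_{\Delta X}(J,H)$ from below by a telescoping sum, using that strongly separated hyperplanes satisfy $d_{\Delta X}(V_i,V_j)\geq 3$ to get each summand $\geq 1$. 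Your endpoint exclusion is the more direct route and avoids the bookkeeping of the sum; the paper's version extracts slightly more from the inequality $d_{\Delta X}\geq 3$ but is not needed for the stated bound.
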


\begin{proof}
Let $J=S_0,S_1, \ldots, S_{r-1},S_r=H$ be a geodesic in $\Gamma X$ between $J$ and $H$. According to Lemma \ref{lem:QT1}, for every $1 \leq k \leq n$, there exists some $1 \leq n_k \leq r-1$ such that $d_{\Gamma X}(V_k,S_{n_k}) \leq 1$. Notice that, for every $1 \leq i< j \leq n$, because $V_i$ and $V_j$ are strongly separated, necessarily $n_i \neq n_j$. Let $\varphi$ be a permutation so that the sequence $(n_{\varphi(k)})$ is increasing. We have
\begin{center}
$\begin{array}{lcl} d_{\Delta X}(J,H)& = & \displaystyle \sum\limits_{k=1}^n d_{\Delta X}(S_{n_{\varphi(k)}}, S_{n_{\varphi(k+1)}}) \\ \\  & \geq & \displaystyle \sum\limits_{k=1}^n \left( d_{\Delta X}(V_{\varphi(k)},V_{\varphi(k+1)}) - d_{\Delta X}(V_{\varphi(k)}, S_{n_{\varphi(k)}}) -d_{\Delta X}(V_{\varphi(k+1)}, S_{n_{\varphi(k+1)}}) \right) \\ \\ & \geq & \displaystyle \sum\limits_{k=1}^n (3-1-1) = n, \end{array}$
\end{center}
where we used the inequality $d_{\Gamma X}(V_{\varphi(k)},V_{\varphi(k+1)}) \geq 3$, which precisely means that $V_{\varphi(k)}$ and $V_{\varphi(k+1)}$ are strongly separated. This completes the proof. 
\end{proof}

\begin{proof}[Proof of Proposition \ref{prop:QI}.]
Lemmas \ref{qi1} and \ref{qi2} show that the metric in $\Delta X$ is coarsely equivalent to the maximal number of separating pairwise strongly separated hyperplanes. The same conclusion holds for the metric in $\Gamma X$ according to \cite[Proposition 23]{MoiAcylHyp}. The conclusion follows. 
\end{proof}

\noindent
It is worth noticing that, if a group acts on the CAT(0) cube complex we are considering, the quasi-isometry provided by the previous proposition is equivariant. As a consequence, the conclusion of Theorem \ref{thm:contact} also holds with respect to the contact graph.

\section{Morse subgroups of right-angled Artin groups}\label{section:MorseInRAAG}

\noindent
As promised in Application \ref{app:MorseRaag}, this appendix is dedicated to the proof of the following statement:

\begin{thm}\label{thm:MorseinRAAG}
A Morse subgroup in a freely irreducible right-angled Artin group is either a finite-index subgroup or a free subgroup containing only contracting isometries.
\end{thm}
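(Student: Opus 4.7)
The plan is to extract from the Morse hypothesis a contracting convex cocompact subcomplex via Corollary \ref{cor:MorseCriterion}, and then run a dichotomy based on its combinatorial boundary. Let $X$ denote the universal cover of the Salvetti complex of $A = A(\Gamma)$ with $\Gamma$ connected, and let $H \leq A$ be a Morse subgroup. Corollary \ref{cor:MorseCriterion} yields a contracting convex subcomplex $Y \subset X$ on which $H$ acts cocompactly. Following the same combinatorial boundary dichotomy as in the proof of Theorem \ref{thm:MorseAcylHyp}, either $\partial^c Y = \partial^c X$, in which case Lemma \ref{lem:wholeboundary} gives that $X$ lies in a bounded neighborhood of $Y$ and the cocompactness argument reproduced at the end of the proof of Theorem \ref{thm:MorseAcylHyp} shows $H$ has finite index in $A$; or $\partial^c Y \subsetneq \partial^c X$, and I must show $H$ is free with only contracting elements.

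Assume $\partial^c Y \subsetneq \partial^c X$. The main technical step is to upgrade ``Morse'' into ``stable'' by proving that $Y$ is hyperbolic. The crucial feature of freely irreducible RAAGs, already invoked in the sketch of Proposition \ref{prop:stableinRAAG}, is that two hyperplanes of $X$ are well-separated if and only if they are strongly separated. Assume for contradiction that $Y$ is not hyperbolic. Since $H$ acts geometrically on $Y$, the cocompact case of Theorem \ref{thm:hyperbolic1} produces a combinatorial isometric embedding $\mathbb{R}^2 \hookrightarrow Y$; in a RAAG this flat is spanned by $u$- and $v$-hyperplanes for some edge $uv$ of $\Gamma$, so infinitely many hyperplanes of each of these two types lie in $\mathcal{H}(Y)$. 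Pick any vertex $w \in V(\Gamma) \setminus \{u,v\}$ adjacent to $u$, which exists as soon as $\Gamma$ is not reduced to the single edge $uv$ (the case $\Gamma = uv$ giving $A \cong \mathbb{Z}^2$, where the theorem is immediate). The $u$-hyperplanes of $Y$ are each transverse to infinitely many $w$-hyperplanes of $X$, and the contracting condition via Proposition \ref{prop:contracting}, together with the equivalence well-separated $=$ strongly separated in a freely irreducible RAAG, forbids arbitrarily large joins $(\mathcal{H}, \mathcal{V})$ with $\mathcal{V} \subset \mathcal{H}(Y)$ and $\mathcal{H} \cap \mathcal{H}(Y) = \emptyset$; hence all but finitely many of these $w$-hyperplanes must lie in $\mathcal{H}(Y)$. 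Iterating the saturation argument along any path in $\Gamma$ starting at $u$ propagates the conclusion to every vertex type of $\Gamma$, forcing $\mathcal{H}(Y)$ to contain all hyperplanes of $X$ away from a bounded set; by cocompactness of the action of $H$ on $Y$ this forces $\partial^c Y = \partial^c X$, a contradiction. I expect this saturation/propagation step to be the main obstacle, as it requires controlling precisely how the contracting condition interacts with cocompactness along each adjacency path of $\Gamma$, and is where the ``freely irreducible'' assumption really bites.

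Once $Y$ is hyperbolic, $H$ is a stable subgroup of $A$, and Theorem \ref{thm:HXstablesub} provides a quasi-isometric embedding $H \hookrightarrow H(X)$. Because the RAAG is freely irreducible, the equality well-separated $=$ strongly separated identifies $H(X)$ with $(X,\delta_0)$, which is quasi-isometric to the contact graph of $X$, itself a quasi-tree by Theorem \ref{thm:contact}; consequently $H(X)$ is a quasi-tree. Thus the Gromov boundary of the hyperbolic group $H$ topologically embeds into the totally disconnected boundary of the quasi-tree $H(X)$, so $\partial H$ is totally disconnected and $H$ is virtually free; being torsion-free (as a subgroup of a RAAG, see \cite{StallingsTorsionFree}), $H$ is free. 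For the final clause, any non-trivial $h \in H$ has infinite order, and its cyclic subgroup $\langle h \rangle$ is quasi-isometrically embedded in $H(X)$; hence $h$ acts loxodromically on $H(X)$, and by the third item of Theorem \ref{thm:mainHX} it is a contracting isometry of $X$.
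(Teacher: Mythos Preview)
Your overall architecture matches the paper's: obtain a contracting convex $H$-cocompact $Y$ via Corollary \ref{cor:MorseCriterion}, run a dichotomy on $\partial^c Y$, and in the proper-boundary case show $H$ is free with only contracting elements. Where you diverge is precisely at the step you flag as the obstacle, and there the argument does not go through as written.

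First, a minor point: a combinatorial flat $\mathbb{R}^2\hookrightarrow X(\Gamma)$ is \emph{not} in general spanned by hyperplanes of only two labels $u,v$; the horizontal and vertical hyperplanes may carry any labels from two mutually adjacent subsets of $V(\Gamma)$. Pigeonhole salvages the existence of infinitely many $u$-type horizontals and $v$-type verticals for some edge $uv$, but nothing more.

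The serious gap is the propagation. Suppose you know that the $v$-hyperplanes $\{g v^k J_v\}_k$ coming from a specific flat lie in $\mathcal{H}(Y)$, and you pick $w$ adjacent to $v$. The contracting condition, applied to the join of these $v$-hyperplanes against $w$-hyperplanes transverse to all of them, only forces the \emph{particular} family $\{g w^m J_w\}_m$ into $\mathcal{H}(Y)$; it says nothing about an arbitrary $w$-hyperplane $h J_w$ with $h$ unrelated to $g$. Iterating along paths in $\Gamma$ therefore produces, at each stage, hyperplanes living in one specific translate of a standard flat, not ``all hyperplanes of type $w$ away from a bounded set''. So the conclusion ``$\mathcal{H}(Y)$ is cofinite in $\mathcal{H}(X)$'' does not follow, and hence neither does $\partial^c Y=\partial^c X$. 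What you would actually need is a mechanism that turns ``$\partial^c Y$ contains a non-isolated point'' into ``$\partial^c Y=\partial^c X$'', and that is exactly the content of the paper's Proposition \ref{prop:boundaryRAAG} together with Lemma \ref{lem:sequentiallyclosed}.

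The paper proceeds as follows. Lemma \ref{lem:sequentiallyclosed} shows that for contracting convex $Y$ the set $\partial^c Y$ is both full and sequentially closed in $\partial^c X(\Gamma)$. Proposition \ref{prop:boundaryRAAG} establishes that $\partial^c X(\Gamma)$ has a unique $\prec$-component $\partial$ not reduced to a point, and that the sequential closure of $\partial$ is all of $\partial^c X(\Gamma)$. Hence either $\partial^c Y$ contains a non-isolated point, in which case fullness gives $\partial\subset\partial^c Y$ and sequential closedness gives $\partial^c Y=\partial^c X(\Gamma)$ (so $H$ has finite index), or every point of $\partial^c Y$ is isolated in $\partial^c X(\Gamma)$. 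In the second case, since $\partial^c N(J)$ never contains isolated points (Lemma \ref{lem:diamhyp}), one gets $\partial^c N(J)\cap\partial^c Y=\emptyset$ for every hyperplane $J$, hence each $N(J)\cap Y$ is bounded, hence the hyperplanes of $Y$ are uniformly finite, hence $Y$ is a quasi-tree. This is stronger than the hyperbolicity you aim for and bypasses any saturation. Your endgame (freeness via a quasi-tree, contracting elements via Theorem \ref{thm:mainHX} or equivalently Theorem \ref{thm:contractingboundary}) is fine once this is in place.
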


\noindent
Our proof to this theorem is based on the combinatorial boundary as introduced in \cite{article3}. (An alternative argument can be found in \cite{StronglyQC}.) We begin by defining the vocabulary which we will use below.

\medskip \noindent
Fix a CAT(0) cube complex $X$. For any subcomplex $Y \subset X$ we denote by $\mathcal{H}(Y)$ the set of hyperplanes of $X$ dual to some edge of $Y$. We define a partial order $\prec$ on the set of the combinatorial rays of $X$ by: $r_1 \prec r_2$ if all but finitely many hyperplanes of $\mathcal{H}(r_1)$ belong to $\mathcal{H}(r_2)$, denoted by $\mathcal{H}(r_1) \underset{a}{\subset} \mathcal{H}(r_2)$. Notice that, if $\partial^cX$ denotes the quotient of the set of combinatorial rays by the relation $\sim$ defined by: $r_1 \sim r_2$ if and only if $r_1 \prec r_2$ and $r_2 \prec r_1$; then $\prec$ induces naturally a partial order on $\partial^cX$, also denoted by $\prec$ for convenience. The poset $(\partial^cX, \prec)$ is the \emph{combinatorial boundary} of $X$. If $Y \subset X$ is a subcomplex, the \emph{relative combinatorial boundary $\partial^cY$ of $Y$ in $X$} is the subset of $\partial^cX$ corresponding to the set of the combinatorial rays included into $Y$.

\medskip \noindent
The boundary $\partial^cX$ can be endowed with a graph structure by adding an edge between two $\prec$-comparable rays. In this context, the \emph{$\prec$-components} of $\partial^cX$ correspond to the connected components of this graph. In particular, a point of $\partial^cX$ is \emph{isolated} if the $\prec$-component containing it is a single point. Finally, we denote by $d_{\prec}$ the graph metric on $\partial^cX$. 

\medskip \noindent
The following observation will be useful later:

\begin{lemma}\label{lem:increasingsequence}
Let $X$ be a finite-dimensional CAT(0) cube complex. Then any increasing chain in $(\partial^cX,\prec)$ has length at most $\dim(X)$.
\end{lemma}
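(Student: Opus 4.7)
The plan is to show that any strict chain $r_1 \prec r_2 \prec \cdots \prec r_n$ in $(\partial^cX, \prec)$ gives rise to $n-1$ pairwise transverse hyperplanes of $X$, which then forces $n-1 \leq \dim(X)$. After replacing each $r_i$ by an equivalent combinatorial ray starting at a common basepoint $x_0$ (possible because changing the basepoint alters $\mathcal{H}(r_i)$ by only finitely many hyperplanes, hence preserves the $\sim$-class), the strict chain condition is retained: $\mathcal{H}(r_{i+1}) \setminus \mathcal{H}(r_i)$ is infinite for every $i$.

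The key geometric observation I will establish is: if $r \prec r'$ are rays from $x_0$ and $J \in \mathcal{H}(r') \setminus \mathcal{H}(r)$, then all but finitely many $H \in \mathcal{H}(r)$ are transverse to $J$. Indeed, since $r$ does not cross $J$, the ray $r$ (and in particular $x_0$) lies in a single halfspace $D$ of $J$. If $H \in \mathcal{H}(r)$ is not transverse to $J$, then $H$ is contained in one halfspace of $J$, and since $H$ meets $r \subset D$, it is entirely contained in $D$. For all but finitely many such $H$ we have $H \in \mathcal{H}(r')$ (because $\mathcal{H}(r) \setminus \mathcal{H}(r')$ is finite); but then $r'$, which starts at $x_0 \in D$ and leaves $D$ precisely by crossing $J$, must cross $H$ before $J$. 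Only finitely many hyperplanes are crossed by $r'$ before $J$, so the count is finite.

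I then build the desired family $J_{n-1}, J_{n-2}, \ldots, J_1$ by descending induction, requiring at step $i$ that $J_i \in \mathcal{H}(r_{i+1}) \setminus \mathcal{H}(r_i)$, that $J_i \notin \mathcal{H}(r_j)$ for every $j \leq i$, and that $J_i$ is transverse to each previously chosen $J_k$ with $k > i$. The set $\mathcal{H}(r_{i+1}) \setminus \mathcal{H}(r_i)$ is infinite by strictness; each exclusion $J_i \notin \mathcal{H}(r_j)$ for $j < i$ rules out only the finite set $\mathcal{H}(r_j) \setminus \mathcal{H}(r_i)$. For the transversality constraint with $J_k$ ($k > i$): the inductive hypothesis on $J_k$ ensures $J_k \notin \mathcal{H}(r_{i+1})$ (since $i+1 \leq k$), so $J_k \in \mathcal{H}(r_{k+1}) \setminus \mathcal{H}(r_{i+1})$, and applying the key observation to $r_{i+1} \prec r_{k+1}$ with hyperplane $J_k$ shows that $J_k$ is transverse to all but finitely many elements of $\mathcal{H}(r_{i+1})$. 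Thus only finitely many candidates are excluded at each step, and infinitely many valid choices remain.

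The resulting $J_1, \ldots, J_{n-1}$ are pairwise transverse, hence bound a common cube, forcing $n-1 \leq \dim(X)$. The main obstacle is establishing the key observation in paragraph two: it hinges crucially on the common basepoint to make ``before crossing $J$'' well-defined and finite, and on the finiteness of $\mathcal{H}(r) \setminus \mathcal{H}(r')$. Everything else is bookkeeping on top of this observation and the standard fact that a collection of pairwise transverse hyperplanes has cardinality at most $\dim(X)$.
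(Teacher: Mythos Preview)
Your proof is correct and follows essentially the same approach as the paper: both establish that for $r \prec r'$ and $J \in \mathcal{H}(r') \setminus \mathcal{H}(r)$, cofinitely many hyperplanes of $\mathcal{H}(r)$ are transverse to $J$, and then inductively extract $n-1$ pairwise transverse hyperplanes. Your bookkeeping via the explicit condition $J_i \notin \mathcal{H}(r_j)$ for $j \leq i$ replaces the paper's device of passing to subrays, but the two are equivalent.
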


\begin{proof}
Let $r_1 \prec \cdots \prec r_n$ be an increasing chain in $(\partial^cX,\prec)$. Our goal is to prove that $n \leq \dim(X)+1$. We begin by proving the following claim:

\begin{claim}\label{claim:raytransverse}
Let $\rho_1,\rho_2$ be two rays satisfying $\rho_1 \prec \rho_2$. All but finitely many hyperplanes of $\mathcal{H}(\rho_2) \backslash \mathcal{H}(\rho_1)$ are transverse to all but finitely many hyperplanes of $\mathcal{H}(\rho_1)$. 
\end{claim}

\noindent
Suppose that $J \in \mathcal{H}(\rho_2) \backslash \mathcal{H}(\rho_1)$ is a hyperplane which does not separate $\rho_1(0)$ and $\rho_2(0)$. Let $e$ denote the edge of $\rho_2$ which is dual to $J$ and let let $H \in \mathcal{H}(\rho_1)$ be a hyperplane which does not separate $\rho_1(0)$ and $\rho_2(0)$ nor $\rho_2(0)$ and $e$. Let $e_1,e_2$ denote the edges of $\rho_1,\rho_2$ respectively which are dual to $H$. By noticing that $J$ separates $\rho_2(0)$ and $e$, but does not separate $\rho_1(0)$ and $\rho_2(0)$ nor $\rho_1(0)$ and $e_1$, it follows that $J$ separates $e_1$ and $e_2$. A fortiori, $J$ and $H$ must be transverse. This proves our claim.

\medskip \noindent
Now, let us construct a sequence of hyperplanes $J_1, \ldots, J_{n-1}$ by applying iteratively Claim \ref{claim:raytransverse}. Let $J_1 \in \mathcal{H}(r_n) \backslash \mathcal{H}(r_{n-1})$ be a hyperplane which is transverse to all but finitely many hyperplanes of $\mathcal{H}(r_{n-1})$; up to replacing $r_{n-1}$ with a subray starting from $r_{n-1}(k)$ for some sufficiently large $k$, we may suppose without loss of generality that $J_1$ is transverse to all the hyperplanes of $\mathcal{H}(r_{n-1})$. Similarly, fix a hyperplane $J_2 \in \mathcal{H}(r_{n-1}) \backslash \mathcal{H}(r_{n-2})$ which is transverse to all the hyperplanes of $\mathcal{H}(r_{n-2})$ (up to replacing $r_{n-2}$ with a subray); and so on. Thus, we get a sequence of pairwise transverse hyperplanes $J_1, \ldots, J_{n-1}$. A fortiori, $n-1 \leq \dim(X)$, which proves our lemma. 
\end{proof}

\noindent
Now, let us show that the relative combinatorial boundary of a contracting subcomplex in the whole combinatorial boundary satisfies some specific properties.

\begin{definition}
Let $X$ be a CAT(0) cube complex. A subset $S \subset \partial^cX$ is \emph{full} if every point of $\partial^c X$ which is $\prec$-comparable to some point of $S$ must belong to $S$. 
\end{definition}

\begin{definition}
Let $X$ be a CAT(0) cube complex. A sequence of combinatorial rays $(r_n)$ satisfying $r_n(0)=r_m(0)$ for every $n,m \geq 0$ \emph{converges} to a combinatorial ray $r$ if, for every ball $B$ centered at $r_0(0)$, the sequence $(B \cap r_n)$ is eventually constant to $B \cap r$. A subset $\partial \subset \partial^c X$ is \emph{sequentially closed} if, for every sequence of combinatorial rays $(r_n)$ converging to some combinatorial ray $r$ and satisfying $r_n(+ \infty) \in \partial$ for every $n \geq 0$, $r(+ \infty) \in \partial$ holds.
\end{definition}

\begin{lemma}\label{lem:sequentiallyclosed}
Let $X$ be a CAT(0) cube complex and $Y \subset X$ a combinatorially convex subcomplex. If $Y$ is contracting then $\partial^c Y$ is a full and sequentially closed subset of $\partial^c X$.
\end{lemma}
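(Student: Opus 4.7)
My plan is to first establish the reformulation that $r(+\infty) \in \partial^cY$ if and only if $|\mathcal{H}(r) \setminus \mathcal{H}(Y)|<\infty$. The forward direction is immediate from the definition of $\sim$, and for the converse, once only finitely many hyperplanes of $r$ lie outside $\mathcal{H}(Y)$, the sequence $(\mathrm{proj}_Y(r(n)))_n$ (with duplicates deleted) defines, via Lemma \ref{lem:sepproj}, a combinatorial geodesic ray in $Y$ whose hyperplane set differs from $\mathcal{H}(r)$ only in a finite symmetric difference. With this equivalent characterisation, fullness in the case $s \prec r$ becomes immediate from the containment $\mathcal{H}(s) \setminus \mathcal{H}(Y) \subseteq (\mathcal{H}(s) \setminus \mathcal{H}(r)) \cup (\mathcal{H}(r) \setminus \mathcal{H}(Y))$, so the only genuine work lies in the case $r \prec s$.

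For $r \prec s$, I will argue by contradiction, assuming $\mathcal{H}(s) \setminus \mathcal{H}(Y)$ is infinite. After replacing $r$ by an equivalent ray in $Y$ (so that $\mathcal{H}(r) \subseteq \mathcal{H}(Y)$), Ramsey in finite dimension extracts pairwise disjoint hyperplanes $V_1, \ldots, V_K \in \mathcal{H}(s) \setminus \mathcal{H}(r)$ appearing in order at positions $n_1 < \ldots < n_K$ on $s$. Since $r \prec s$ makes $\mathcal{H}(r) \cap \mathcal{H}(s)$ infinite with unbounded positions on $s$, I can also select pairwise disjoint $H_1, \ldots, H_L \in \mathcal{H}(r) \cap \mathcal{H}(s) \subseteq \mathcal{H}(Y)$ at positions strictly greater than $n_K$. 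Claim \ref{claim:raytransverse} then forces every $V_i$ to be transverse to every $H_j$; and because pairwise disjoint hyperplanes along a common geodesic ray are totally ordered by position, neither family can contain a facing triple. Thus $(\{V_i\}, \{H_j\})$ is a join of hyperplanes of the type forbidden by Proposition \ref{prop:contracting}(ii) as soon as $K$ and $L$ exceed the contracting constant, contradicting the contracting property of $Y$.

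For sequential closure, I will promote the previous argument to a bound on $|\mathcal{H}(\rho) \setminus \mathcal{H}(Y)|$ that is uniform over $\rho(+\infty) \in \partial^cY$. For any such $\rho$, the projection ray $q_\rho \subseteq Y$ exists because $|\mathcal{H}(\rho) \cap \mathcal{H}(Y)| = \infty$, and satisfies $q_\rho \prec \rho$. Running the fullness-argument scheme with $(q_\rho, \rho)$ in place of $(r,s)$ shows that any pairwise disjoint subfamily of $\mathcal{H}(\rho) \setminus \mathcal{H}(Y)$ avoiding the hyperplanes that separate $\rho(0)$ from $Y$ has size bounded by the contracting constant. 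Since that avoided set has size at most $d(\rho(0), Y)$, Ramsey in finite dimension then bounds $|\mathcal{H}(\rho) \setminus \mathcal{H}(Y)|$ by some $R'$ depending only on $Y$, $\dim(X)$, and $d(\rho(0), Y)$. The convergence $r_n \to r$ forces $r_n(0) = r(0)$ for every $n$, so this $R'$ is uniform across the sequence, yielding $|\mathcal{H}(r_n) \setminus \mathcal{H}(Y)| \leq R'$ for all $n$.

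The conclusion is a direct passage to the limit: $r_n \to r$ means $r_n$ and $r$ agree on $[0, M_n]$ for some $M_n \to \infty$, so $\mathcal{H}(r[0, M_n]) \setminus \mathcal{H}(Y) \subseteq \mathcal{H}(r_n) \setminus \mathcal{H}(Y)$ has size at most $R'$; sending $n \to \infty$ gives $|\mathcal{H}(r) \setminus \mathcal{H}(Y)| \leq R'$, and the preliminary reformulation delivers $r(+\infty) \in \partial^cY$. I expect the main obstacle to be precisely the uniformity of $R'$ in the third step: the convergence $r_n \to r$ only controls prefixes, so without an $n$-independent bound the "bad" hyperplanes of $r$ beyond each prefix could in principle accumulate, and the Case-$r\prec s$ argument would not transfer. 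Producing the uniform bound forces the contracting property of $Y$ to rigidify every $r_n$ simultaneously, which is exactly what the pairing of Claim \ref{claim:raytransverse} with Proposition \ref{prop:contracting} achieves.
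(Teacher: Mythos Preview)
Your proof is correct and uses essentially the same mechanism as the paper: the characterisation of $\partial^c Y$ via $\mathcal{H}(r) \underset{a}{\subset} \mathcal{H}(Y)$, Ramsey extraction of pairwise disjoint hyperplanes outside $\mathcal{H}(Y)$, and the contracting join bound of Proposition~\ref{prop:contracting}(ii). The only differences are cosmetic: you supply a self-contained proof of fullness where the paper simply cites \cite[Remark 4.15]{article3}, and for sequential closure you first bound each $|\mathcal{H}(r_n)\setminus\mathcal{H}(Y)|$ uniformly and then pass to the limit, whereas the paper argues directly on the limit ray $r$ and invokes a single $r_n$ (for large $n$) to manufacture the $\mathcal{V}$-side of the join.
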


\begin{proof}
The fact that $\partial^cY$ is full in $\partial^c X$ was noticed in \cite[Remark 4.15]{article3}. Let $(r_n)$ be a sequence of combinatorial rays such that: 
\begin{itemize}
	\item there exists some $x_0 \in X$ such that $r_n(0)=x_0$ for every $n \geq 0$; 
	\item $r_n(+ \infty) \in \partial^c Y$ for every $n \geq 0$; 
	\item and $(r_n)$ converges to some other combinatorial ray $r$. 
\end{itemize}
We want to prove that $r(+ \infty) \in \partial^c Y$. According to \cite[Lemma 4.5]{article3}, it is equivalent to show that $\mathcal{H}(r) \underset{a}{\subset} \mathcal{H}(Y)$. For convenience, set $D=d(r(0),Y)$. Suppose that there exists a finite subcollection $\mathcal{H} \subset \mathcal{H}(r) \backslash \mathcal{H}(Y)$ such that there exists some $k$ greater than $\max(D,\dim(X))$ so that $\# \mathcal{H} \geq \mathrm{Ram}(k)$; if such a $\mathcal{H}$ does not exist, then $| \mathcal{H}(r) \backslash \mathcal{H}(Y)| \leq \mathrm{Ram}( \max(D,\dim(X)))$ and there is nothing to prove. Notice that $\mathcal{H}$ contains a subcollection $\mathcal{H}_0$ with at least $k$ pairwise disjoint hyperplanes. Because there exist at most $D$ hyperplanes separating $r(0)$ from $Y$, $\mathcal{H}_0$ contains a subcollection $\mathcal{H}_1$ such that $\# \mathcal{H}_1 \geq \# \mathcal{H}_0 - D$ and such that no hyperplane of $\mathcal{H}_1$ separates $r(0)$ from $Y$. A fortiori, the hyperplanes of $\mathcal{H}_1$ separate some subray of $r$ from $Y$. 

\medskip \noindent
Now, choose some $n \geq 0$ sufficiently large so that the hyperplanes of $\mathcal{H}_1$ separate $Y$ and some subray $\rho_n \subset r_n$. Because $r_n(+ \infty) \in \partial^c Y$, we know that $\mathcal{H}(r) \underset{a}{\subset} \mathcal{H}(Y)$. As a consequence, we can choose some vertex $z \in r_n$ sufficiently far away from $r_n(0)$ so that there exists a collection $\mathcal{V}$ of at least $B+1$ hyperplanes intersecting both $\rho_n$ and $Y$, where $B$ is the constant given by Point $(ii)$ in Proposition \ref{prop:contracting} applied to $Y$. Since the hyperplanes of $\mathcal{H}_1$ separate $\rho_n$ and $Y$, and that the hyperplanes of $\mathcal{V}$ intersect both $\rho_n$ and $Y$, we deduce that any hyperplane of $\mathcal{H}_1$ is transverse to any hyperplane of $\mathcal{V}$. Moreover, $\mathcal{H}_1$ and $\mathcal{V}$ do not contain any facing triple, so $(\mathcal{H}_1, \mathcal{V})$ define a join of hyperplanes satisfying $\mathcal{H}_1 \cap \mathcal{H}(Y) = \emptyset$, $\mathcal{V} \subset \mathcal{H}(Y)$ and $\# \mathcal{V} \geq B+1$. From the definition of the constant $B$, it follows that $\# \mathcal{H}_1 \leq B$. Therefore,
$$k = \# \mathcal{H}_0 \leq \# \mathcal{H}_1+D \leq B+D,$$
hence $\# \mathcal{H} \leq \mathrm{Ram}(B+D)$. Consequently, $\mathcal{H}(r) \backslash \mathcal{H}(Y)$ is finite, which concludes the proof.
\end{proof}

\noindent
Now we are ready to turn to right-angled Artin groups. First of all, we recall some classical facts on their cubical geometry. So let $\Gamma$ be a simplicial graph. The Cayley graph $X(\Gamma)$ of the right-angled Artin group $A(\Gamma)$, constructed from its canonical generating set, is naturally a CAT(0) cube complex. (More precisely, the Cayley graph is a median graph, and the cube complex $X(\Gamma)$ obtained from it by \emph{filling in the cubes}, i.e., adding an $n$-cube along every induced subgraph isomorphic to the one-skeleton of an $n$-cube, turns out to be a CAT(0) cube complex.) For every vertex $u \in V(\Gamma)$, we denote by $J_u$ the hyperplane dual to the edge joining $1$ and $u$; every hyperplane of $X(\Gamma)$ is a translate of some $J_v$. It is worth noticing that, for every vertices $u,v \in V(\Gamma)$, the hyperplanes $J_u$ and $J_v$ are transverse if and only if $u$ and $v$ are adjacent vertices of $\Gamma$. Moreover, the carrier $N(J_u)$ of the hyperplane $J_u$ coincides with the subgraph generated by $\langle \mathrm{link}(u) \rangle \sqcup u \langle \mathrm{link}(u) \rangle$, where $\mathrm{link}(u)$ denotes the collection of the vertices of $\Gamma$ adjacent to $u$. As a consequence, the stabiliser of the hyperplane $J_u$ is the subgroup $\langle \mathrm{link}(u) \rangle$. 

\medskip \noindent
A key point in the proof of Theorem \ref{thm:MorseinRAAG} will be to understand the structure of the combinatorial boundary of $X(\Gamma)$. This is the purpose of our next statement. 

\begin{prop}\label{prop:boundaryRAAG}
Let $\Gamma$ be a connected simplicial graph not reduced to a single vertex. There exists a unique $\prec$-component of $\partial^c X(\Gamma)$ which is not reduced to a single point. Moreover, its sequential closure is the whole boundary $\partial^c X(\Gamma)$. 
\end{prop}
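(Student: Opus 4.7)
The plan proceeds in three steps.

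\textbf{Step 1: Constructing the big component.} For each $v \in V(\Gamma)$ let $r_v = (1, v, v^2, \ldots)$ denote the basic $v$-ray. For every edge $\{u, v\}$ of $\Gamma$, the generators $u, v$ commute and span a convex $\mathbb{Z}^2$-flat $F_{uv} \subseteq X(\Gamma)$; the diagonal ray $\rho_{uv} = (1, u, uv, u^2 v, u^2 v^2, \ldots)$ inside $F_{uv}$ crosses exactly the positive type-$u$ and positive type-$v$ hyperplanes of the flat, so $\mathcal{H}(r_u), \mathcal{H}(r_v) \subseteq \mathcal{H}(\rho_{uv})$ and thus $r_u \prec \rho_{uv} \succ r_v$. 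Because $\Gamma$ is connected with $|V(\Gamma)| \geq 2$, propagating this identification along any path in $\Gamma$ shows that all basic rays $\{r_v : v \in V(\Gamma)\}$ lie in a single $\prec$-component, which I call $\mathcal{C}$.

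\textbf{Step 2: Uniqueness.} I must show that whenever $r \prec r'$ with $r \not\sim r'$, both rays lie in $\mathcal{C}$. By Claim \ref{claim:raytransverse} from the proof of Lemma \ref{lem:increasingsequence}, all but finitely many hyperplanes of $\mathcal{H}(r') \setminus \mathcal{H}(r)$ are transverse to all but finitely many of $\mathcal{H}(r)$. Letting $S, T \subseteq V(\Gamma)$ be the sets of types appearing infinitely often in $\mathcal{H}(r)$ and in $\mathcal{H}(r') \setminus \mathcal{H}(r)$ respectively, the transversality forces every $v \in T$ to be adjacent to every $u \in S$ in $\Gamma$. Fixing $u \in S$ and $v \in T$, the key geometric claim is that $r'$ admits as a $\prec$-subordinate a translate of the diagonal ray $\rho_{uv}$ of some $\{u,v\}$-flat. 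Since $\rho_{uv} \in \mathcal{C}$ by Step 1, this forces $r' \in \mathcal{C}$, and then $r \prec r'$ gives $r \in \mathcal{C}$.

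\textbf{Step 3: Sequential density.} Let $r$ be an arbitrary combinatorial ray, based at $1$ without loss of generality, with word $w_1 w_2 \cdots$. For each $n \geq 1$ I construct $r_n$ as the concatenation of $r|_{[0,n]}$ with a basic tail $r(n) \cdot r_{v_n}^{\pm}$, where $v_n \in V(\Gamma)$ and a sign are chosen so that none of the tail's hyperplanes $\{r(n) v_n^{\pm k} J_{v_n} : k \geq 0\}$ coincides with one of the finitely many hyperplanes in $\mathcal{H}(r|_{[0,n]})$; this is possible because $|V(\Gamma)| \geq 2$ and only finitely many obstructions must be avoided. The concatenation is then a combinatorial geodesic ray whose tail is a basic ray, so $r_n$ is $\sim$-equivalent to a translate of $r_{v_n}$ and therefore $r_n \in \mathcal{C}$. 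Since $r_n$ agrees with $r$ on the ball of radius $n$ around $1$, and since combinatorial geodesic rays have strictly increasing distance from their starting vertex (so they never re-enter a ball once they have left it), $r_n \to r$ combinatorially. This shows that $\partial^c X(\Gamma)$ is contained in the sequential closure of $\mathcal{C}$.

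\textbf{Main obstacle.} The hard part is the extraction in Step 2 of a diagonal flat ray as a $\prec$-subordinate of $r'$. The transversality conclusion supplies two infinite families of pairwise transverse hyperplanes of types $u, v$ inside $\mathcal{H}(r')$, but these families need not a priori concentrate in a single $\{u, v\}$-flat of $X(\Gamma)$: when $\mathrm{link}(u)$ strictly contains $\{v\}$, a given type-$u$ hyperplane sits inside infinitely many $\{u, v\}$-flats. The heart of the argument is to show that after passing to cofinite subfamilies, the hyperplanes concentrate in a single $\{u, v\}$-flat $F$, and then to invoke the \emph{in-between convexity} within $F$ (any type-$u$ hyperplane of $F$ lying between two type-$u$ hyperplanes crossed by $r'$ separates them in $X(\Gamma)$, since $F$ is convex, and hence must be crossed by $r'$ as well). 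The extracted hyperplanes then form a contiguous quadrant of $F$, whose diagonal ray is a translate of $\rho_{uv}$ and satisfies $\mathcal{H}(\rho) \subseteq \mathcal{H}(r')$ up to finite difference. This places $r'$ in $\mathcal{C}$ and concludes the uniqueness argument.
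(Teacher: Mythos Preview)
Your Step~2 contains a genuine error: the ``key geometric claim'' that $r'$ admits a translate of the diagonal ray $\rho_{uv}$ as a $\prec$-subordinate is false in general, and so is the supporting concentration claim. Take $\Gamma$ to be the path $a\text{--}b\text{--}c$, so that $A(\Gamma)\cong\langle a,c\rangle\times\langle b\rangle\cong F_2\times\mathbb{Z}$. Let $r$ be the ray labelled $acacac\cdots$ and $r'$ the ray labelled $abcabcabc\cdots$; one checks $r\prec r'$ strictly, with $S=\{a,c\}$ and $T=\{b\}$. The type-$a$ hyperplanes crossed by $r'$ are $(ac)^mJ_a$ for $m\geq 0$, and any two of these lie in \emph{distinct} $\{a,b\}$-flats: indeed $(ac)^{m_1}J_a$ and $(ac)^{m_2}J_a$ lie in a common flat $g\langle a,b\rangle$ only if $a^{k_2}(ac)^{m_1-m_2}a^{-k_1}\in\langle b\rangle$ for some $k_1,k_2$, which forces $m_1=m_2$ in the free group $\langle a,c\rangle$. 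So no infinite subfamily concentrates in a single flat, and one verifies directly that no translate of $\rho_{ab}$ (whose type-$a$ hyperplanes are $ga^kJ_a$) has cofinitely many of its hyperplanes inside $\mathcal{H}(r')$. The same holds for $\rho_{cb}$ by symmetry. (It happens that $r_b\prec r'$ here, but only because $b$ is central; this does not suggest a general fix.)

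The paper avoids this obstruction entirely. Instead of searching for a specific flat subordinate of $r'$, it passes to a $\prec$-\emph{minimal} ray below each non-isolated point and invokes a general dichotomy (Lemma~\ref{lem:fellowtravelhyp}): a $\prec$-minimal ray either lies in the boundary of some hyperplane carrier $N(J)$, or crosses infinitely many pairwise strongly separated hyperplanes and is therefore isolated. Since a non-isolated point has a non-isolated minimal subordinate, one lands in some $\partial^c N(J)$. The uniqueness then reduces to connecting $\partial^c N(H_1)$ and $\partial^c N(H_2)$ for arbitrary hyperplanes $H_1,H_2$, which is done by a chain of hyperplanes whose consecutive carriers share a common $\{u,v\}$-flat (Lemmas~\ref{lem:sequencehyp} and~\ref{lem:diamtransversehyp}). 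This replaces your attempted extraction of a single flat inside $\mathcal{H}(r')$ by a chain of flats linking hyperplane carriers, which is what actually works.

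A smaller point: in Step~3 you assert $r_n\in\mathcal{C}$ because its tail is a translate $r(n)\cdot r_{v_n}^{\pm}$ of a basic ray, but Step~1 only placed the \emph{untranslated} rays $r_v$ in $\mathcal{C}$. You would still need the analogue of Lemma~\ref{lem:sequencehyp} to show $\mathcal{C}$ is $A(\Gamma)$-invariant.
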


\noindent
Before proving this proposition, we will need several preliminary lemmas. 

\begin{lemma}\label{lem:fellowtravelhyp}
Let $X$ be a complete locally finite CAT(0) cube complex and $r \in \partial^cX$ a $\prec$-minimal combinatorial ray. Either there exists a hyperplane $J$ such that $r(+ \infty) \in \partial^c N(J) \subset \partial^c X$, or $\mathcal{H}(r)$ contains an infinite collection of pairwise strongly separated hyperplanes. In the latter case, $r(+ \infty)$ is an isolated point of $\partial^c X$.
\end{lemma}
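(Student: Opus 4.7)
The plan is a dichotomy: either some hyperplane $J$ is transverse to infinitely many hyperplanes of $\mathcal{H}(r)$, or not. In the former case I will find $J$ with $r(+\infty)\in\partial^c N(J)$; in the latter I will build an infinite pairwise strongly separated subcollection of $\mathcal{H}(r)$ and then derive the isolation of $r(+\infty)$.

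Suppose first that some hyperplane $J$ is transverse to infinitely many hyperplanes of $\mathcal{H}(r)$. I project the vertices $x_0,x_1,\ldots$ of $r$ onto $N(J)$: by Lemma \ref{lem:sepproj}, two consecutive projections $p_i,p_{i+1}$ coincide, unless the unique hyperplane $H_{i+1}\in\mathcal{H}(r)$ crossed by $r$ between $x_i$ and $x_{i+1}$ is transverse to $J$ (or equals $J$), in which case $p_i$ and $p_{i+1}$ differ by a single edge dual to $H_{i+1}$. Removing repetitions from $(p_i)$ yields a combinatorial ray $r'\subset N(J)$ with $\mathcal{H}(r')\subset\mathcal{H}(r)\cup\{J\}$, so $r'\prec r$; the $\prec$-minimality of $r$ then forces $r\sim r'$, whence $r(+\infty)\in\partial^c N(J)$.

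Now assume no such $J$ exists. Writing $\mathcal{H}(r)=\{H_1,H_2,\ldots\}$ in the order crossed by $r$, I construct inductively a pairwise strongly separated subsequence $(H_{i_n})$. Given $H_{i_1},\ldots,H_{i_n}$ already chosen, I need to see that only finitely many indices $k$ are \emph{bad}, namely such that $H_k$ fails to be strongly separated from some $H_{i_s}$ with $s\leq n$. Fix such an $s$: by the first case applied with $J=H_{i_s}$, the projection of $r$ onto $N(H_{i_s})$ stabilises eventually at some vertex $p_{i_s}$. Any hyperplane $J$ transverse to $H_{i_s}$ and to some $H_k$ with $x_k$ already projecting to $p_{i_s}$ cannot separate $x_k$ from $p_{i_s}$---by Lemma \ref{lem:vertextoproj} and the fact that $J$ already meets $N(H_{i_s})$---so the trace $J\cap N(H_{i_s})$ is constrained to lie among the finitely many hyperplanes of $N(H_{i_s})$ on the geodesic from $x_{i_s}$ to $p_{i_s}$; local finiteness of $N(H_{i_s})$ then bounds the number of such witnesses $J$. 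Each witness is transverse to only finitely many hyperplanes of $\mathcal{H}(r)$ (our standing assumption), so only finitely many bad $k$ arise from $H_{i_s}$; summing over $s\leq n$, I select $i_{n+1}$ beyond them all.

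Finally, let $V_1,V_2,\ldots$ be the pairwise strongly separated subcollection just built. Minimality already precludes strict $r'\prec r$, so I only need to rule out strict $r\prec r'$. Let $r'$ be such a ray and let $K\in\mathcal{H}(r')\setminus\mathcal{H}(r)$: strong separation forbids $K$ from being transverse to two $V_k$'s, while $K\notin\mathcal{H}(r)$ forbids $K$ from separating two vertices of $r$, and since $r$ does not cross $K$, these constraints together force $r$ and all but at most one $V_k$ to lie in the same halfspace $K^+$. Since $r'$ crosses $K$ at a finite position, either $r'(0)\in K^-$---in which case $K$ separates $r'(0)$ from every vertex of $r$, so only finitely many such $K$ exist by local finiteness---or $r'(0)\in K^+$, in which case $r'$ would have to cross the infinitely many $V_k\subset K^+$ before reaching $K$, which is impossible at a finite position. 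Hence $\mathcal{H}(r')\setminus\mathcal{H}(r)$ is finite, $r\sim r'$, and $r(+\infty)$ is isolated. The main obstacle is the geometric claim in the inductive step, bounding the witness hyperplanes via projection stabilisation; this requires a delicate combined use of Lemma \ref{lem:vertextoproj} and local finiteness of carriers.
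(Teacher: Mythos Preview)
Your first case (projecting $r$ onto $N(J)$ when $J$ is transverse to infinitely many hyperplanes of $\mathcal{H}(r)$) is correct and in fact cleaner than the paper's corresponding argument. Your isolation argument for $r(+\infty)$ is also essentially correct.

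The genuine gap is in the inductive step of the second case. You claim that a witness $J$ transverse to both $H_{i_s}$ and some late $H_k$ must have its trace on $N(H_{i_s})$ among the hyperplanes of the geodesic from $x_{i_s}$ to $p_{i_s}$; equivalently, that $J$ separates $x_{i_s}$ from $p_{i_s}$. But all you have established (via Lemma~\ref{lem:vertextoproj}) is that $J$ does not separate $x_k$ from $p_{i_s}$. If $J\notin\mathcal{H}(r)$, then $J$ separates no two vertices of $r$, so $x_{i_s}$ and $x_k$ lie on the same side of $J$, and hence $x_{i_s}$ and $p_{i_s}$ also lie on the same side: the trace of $J$ does \emph{not} meet your geodesic. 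Nothing you have written bounds the location of $J\cap N(H_{i_s})$ in this situation, and local finiteness alone does not suffice, because $\mathrm{proj}_{N(H_{i_s})}(N(H_k))$ (which is exactly the region of $N(H_{i_s})$ that such a $J$ must cross, by Proposition~\ref{prop:proj}) can be arbitrarily large even when it contains the stable point $p_{i_s}$.

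What actually closes this gap is a limiting argument of the kind the paper uses: if $H_{i_s}$ fails to be strongly separated from infinitely many $H_k$, the witnesses $J_k$ give a sequence of flat rectangles (via Proposition~\ref{prop:cycle} applied to $(N(H_{i_s}),N(r),N(H_k),N(J_k))$) whose lengths go to infinity; a subsequential limit in the locally finite complex produces a strip or half-plane and hence a single hyperplane $J$ whose carrier contains a ray $\rho$ with $\rho\prec r$, forcing $J$ to be transverse to infinitely many hyperplanes of $\mathcal{H}(r)$ and contradicting your standing hypothesis. So your dichotomy is the right one, but the inductive step cannot be carried through by projection stabilisation alone; it needs precisely the compactness input that the paper's proof provides.
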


\begin{proof}
According to \cite[Lemme 4.8]{article3}, there exists an infinite collection $\{ V_1, V_2, \ldots \} \subset \mathcal{H}(r)$ of pairwise disjoint hyperplanes. For convenience, suppose that $V_j$ separates $V_i$ and $V_k$ for every $1 \leq i<j<k$. 

\medskip \noindent
First, suppose that, for every $i \geq 1$, there exists some $j \geq i$ such that $V_i$ and $V_j$ are strongly separated. Notice that, for every $j_1 >j_2>j_3 \geq 1$, if $V_{j_1}$ and $V_{j_2}$ are strongly separated, as well as $V_{j_2}$ and $V_{j_3}$, then $V_{j_1}$ and $V_{j_3}$ are necessarily strongly separated. Consequently, $\{ V_1, V_2, \ldots \}$ (and a fortiori $\mathcal{H}(r)$) must contain an infinite subcollection of pairwise strongly separated hyperplanes. Up to taking a subcollection of $\{V_1, V_2, \ldots \}$, let us suppose that $V_i$ and $V_j$ are strongly separated for every $1 \leq i < j$. We want to prove that $r(+ \infty)$ is an isolated point of $\partial^cX$. 

\medskip \noindent
Let $\rho$ be a combinatorial ray. Up to taking a ray equivalent to $\rho$, we may suppose without loss of generality that $\rho(0)=r(0)$. If there exists some $i \geq 1$ such that $J_i \notin \mathcal{H}(\rho)$ then $V_i, V_{i+1}, \ldots \in \mathcal{H}(r) \backslash \mathcal{H}(\rho)$, and because no hyperplane intersects both $V_{i}$ and $V_{i+1}$, $\mathcal{H}(\rho) \cap \mathcal{H}(r)$ must be included into the set of the hyperplanes separating $r(0)$ from the edge $r \cap N(V_{i+1})$, so that it has to be finite. Thus, neither $r \prec \rho$ nor $\rho \prec r$ holds. From now on, up to extracting a subcollection of $\{ V_1,V_2, \ldots\}$, suppose that $V_1,V_2, \ldots \in \mathcal{H}(\rho)$. Let $J \in \mathcal{H}(r)$ be a hyperplane such that the edge $N(J) \cap r$ is between $V_j$ and $V_{j+1}$ for some $j \geq 2$. Because no hyperplane intersects both $V_{j-1}$ and $V_j$, nor both $V_{j+1}$ and $V_{j+2}$, we deduce that $J$ separates $V_{j-1}$ and $V_{j+2}$. On the other hand, we know that $V_{j-1}$ and $V_{j+2}$ intersect $\rho$, hence $J \in \mathcal{H}(\rho)$. Thus, we have proved that $r \prec \rho$. By symmetry, the same argument shows that $\rho \prec r$, hence $r \sim \rho$. As a consequence, we deduce that $r(+ \infty)$ is an isolated point of $\partial^c X$.

\medskip \noindent
Next, suppose that there exists some $i \geq 1$ such that $V_i$ and $V_j$ are not strongly separated for every $j \geq i$. Up to taking a subcollection of $\{ V_1, V_2, \ldots \}$, we may suppose without loss of generality that $i=1$. So we know that, for every $i \geq 1$, there exists a hyperplane $H_i$ intersecting both $V_1$ and $V_i$. Consequently, $(N(V_1), N(r), N(V_i),N(H_i))$ is a cycle of four convex subcomplexes. Let $D_i \hookrightarrow X$ be the flat rectangle given by Proposition \ref{prop:cycle}; for convenience, we identify $D_i$ with its image in $X$. Write $\partial D_i = u_i \cup \rho_i \cup v_i \cup h_i$ where $u_i \subset N(V_1)$, $\rho_i \subset N(r)$, $v_i \subset N(V_i)$ and $h_i \subset N(H_i)$ are combinatorial geodesics. Because $X$ is locally finite, up to taking a subsequence we may suppose without loss of generality that $(D_i)$ converges to a subcomplex $D_{\infty}$ in the sense that, for every ball $B$ centered at $r(0)$, the sequence $(B \cap D_i)$ is eventually constant to $B \cap D_{\infty}$. Noticing that each $D_i$ is a flat rectangle and that $\rho_i \underset{i \to + \infty}{\longrightarrow} + \infty$, we deduce that $D_{\infty}$ is isometric to either $[0,+ \infty) \times [0,+ \infty)$ or $[0,+ \infty) \times [0,L]$ for some $L \geq 1$ (depending on whether $(\mathrm{length}(u_i))$ is bounded or not). If $J$ denotes the hyperplane dual to the edge $\{ 0 \} \times [0,1]$ of $D_{\infty}$ and $\rho$ the combinatorial ray $[0,+ \infty) \times \{ 0 \} \subset D_{\infty}$ (which is also the limit of $(\rho_i)$), then $\rho(+ \infty) \in \partial^c N(J)$ since $\rho \subset N(J)$ by construction. On the other hand, we know that $\rho_i \subset N(r)$ for every $i \geq 1$, so $\rho \subset N(r)$. Because the hyperplanes of the subcomplex $N(r)$ are precisely the hyperplanes intersecting $r$, it follows that $\rho \prec r$. Finally, since $r$ is $\prec$-minimal by assumption, necessarily
$$r(+ \infty)= \rho(+ \infty) \in \partial^c N(J),$$
which concludes the proof.
\end{proof}

\begin{lemma}\label{lem:sequencehyp}
Let $\Gamma$ be a connected simplicial graph which is not reduced to a single vertex and $H,H'$ two hyperplanes of $X(\Gamma)$. There exist a sequence of hyperplanes
$$H_0=H, \ H_1, \ldots, H_{n-1}, \ H_n=J'$$
of $X(\Gamma)$ such that, for every $0 \leq i \leq n-1$, there exists two adjacent vertices $u,v \in V(\Gamma)$ and some $g \in A(\Gamma)$ such that $H_i = gJ_u$ and $H_{i+1}= gJ_v)$.
\end{lemma}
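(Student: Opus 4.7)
The plan is to recognise the statement as the assertion that the crossing graph $\Delta X(\Gamma)$ is connected. Indeed, two hyperplanes of the form $H_i=gJ_u$, $H_{i+1}=gJ_v$ with $(u,v)\in E(\Gamma)$ bound the common square with corners $\{g,gu,gv,guv\}$, hence are transverse; conversely, any square of $X(\Gamma)$ has corners of the shape $\{g,gu,gv,guv\}$ for some $g\in A(\Gamma)$ and some edge $(u,v)$ of $\Gamma$ (squares correspond exactly to pairs of commuting generators), so any pair of transverse hyperplanes can be written as $(gJ_u,gJ_v)$. Thus the lemma is equivalent to saying that any two hyperplanes of $X(\Gamma)$ are joined by a path in $\Delta X(\Gamma)$.

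The argument proceeds in two steps. For each vertex $x\in X(\Gamma)$, let $\mathcal{E}(x)$ denote the set of hyperplanes dual to an edge incident to $x$; I would first prove that $\mathcal{E}(x)$ is connected in $\Delta X(\Gamma)$. By $A(\Gamma)$-equivariance it suffices to treat $x=1$, in which case
$$\mathcal{E}(1)=\{J_v\mid v\in V(\Gamma)\}\cup\{v^{-1}J_v\mid v\in V(\Gamma)\},$$
since the edges incident to $1$ in the Cayley graph are the edges $1\to v^{\pm 1}$ for $v\in V(\Gamma)$. For any edge $(u,v)$ of $\Gamma$, the relation $uv=vu$ produces the four squares with corners $\{\epsilon,\epsilon u^{\alpha},\epsilon v^{\beta},\epsilon u^{\alpha}v^{\beta}\}$ with $\alpha,\beta\in\{\pm 1\}$ (based at the appropriate $\epsilon\in\{1,u^{-1},v^{-1},u^{-1}v^{-1}\}$), which show that every pair in $\{J_u,u^{-1}J_u\}\times\{J_v,v^{-1}J_v\}$ is transverse. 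Since $\Gamma$ is connected and contains at least two vertices (so every vertex has a neighbour), this immediately yields the connectedness of $\mathcal{E}(1)$ in $\Delta X(\Gamma)$.

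For the second step, given two hyperplanes $H,H'$, I would pick vertices $x\in N(H)$ and $x'\in N(H')$. Because the $1$-skeleton of $X(\Gamma)$, namely the Cayley graph of $A(\Gamma)$, is connected, we may fix a combinatorial path $x=x_0,x_1,\ldots,x_n=x'$. For each $0\leq i\leq n-1$, the hyperplane $K_i$ dual to the edge between $x_i$ and $x_{i+1}$ lies in $\mathcal{E}(x_i)\cap\mathcal{E}(x_{i+1})$. Applying the first step to each $x_i$, the union $\bigcup_{i=0}^{n}\mathcal{E}(x_i)$ is connected in $\Delta X(\Gamma)$, and contains both $H\in\mathcal{E}(x_0)$ and $H'\in\mathcal{E}(x_n)$. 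Translating the resulting path in $\Delta X(\Gamma)$ back into the desired description through squares gives the sequence of hyperplanes claimed by the lemma.

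The main conceptual point is the translation from the description in the statement to connectedness in the crossing graph; after that, the proof reduces to combining a local observation (the star at the identity is connected in $\Delta X(\Gamma)$, which follows from the connectedness of $\Gamma$) with the trivial fact that the Cayley graph of $A(\Gamma)$ is connected. I do not anticipate any genuine obstacle.
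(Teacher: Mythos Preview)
Your argument is correct. The key reformulation---that the condition ``$H_i=gJ_u$, $H_{i+1}=gJ_v$ with $(u,v)\in E(\Gamma)$'' is exactly the condition that $H_i$ and $H_{i+1}$ are transverse---is valid, because every square of $X(\Gamma)$ has corners $\{g,gu,gv,guv\}$ for some edge $(u,v)$ of $\Gamma$. After that, your two-step proof (connectedness of the local star $\mathcal{E}(x)$ via connectedness of $\Gamma$, then propagation along a Cayley path) goes through without difficulty.

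The paper's proof is organised differently: it normalises $H=J_u$, $H'=gJ_v$ and inducts on the word length $|g|$. The base case $g=1$ is exactly your local step at the identity (a path in $\Gamma$ gives a sequence $J_{z_0},\dots,J_{z_r}$), and the inductive step peels off a terminal letter $k\in\langle w\rangle$ using a neighbour $x$ of $w$, exploiting $kJ_x=J_x$. The two arguments are close in spirit---both combine ``paths in $\Gamma$'' with ``paths in $A(\Gamma)$''---but yours packages the global step as Cayley-graph connectedness rather than as an induction, and makes the equivalence with connectedness of $\Delta X(\Gamma)$ explicit. Your version is arguably more conceptual and avoids the small bookkeeping with the reduced decomposition $g=hk$; the paper's version is slightly more elementary in that it never names the crossing graph and produces the sequence constructively.
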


\begin{proof}
Up to translating by an element of $A(\Gamma)$, we suppose without loss of generality that $H=J_u$ and $H'=gJ_v$ for some $u,v \in V(\Gamma)$ and $g \in A(\Gamma)$. We argue by induction on the length of $g$. If $|g|=0$ then $H'=J_v$. Let
$$z_0 = u, \ z_1, \ldots, z_{r-1}, \ z_r=v$$
be a path in $\Gamma$ from $u$ to $v$. Then the sequence of hyperplanes
$$J_{z_0}=H, \ J_{z_1}, \ldots, \ J_{z_{r-1}}, \ J_{z_r}=H'$$
allows us to conclude. Next, suppose that $|g| \geq 1$. Write $g$ as a reduced word $hk$ where $h \in A(\Gamma)$ and $k \in \langle w \rangle \backslash \{ 1 \}$ for some $w \in V(\Gamma)$. Fix a vertex $x \in V(\Gamma)$ adjacent to $w$ (such a vertex exists since $\Gamma$ is a connected graph which is not reduced to a single vertex). Let
$$z_0=x , \ z_1, \ldots, z_{r-1}, \ z_r=v$$
be a path in $\Gamma$ from $x$ to $v$. Then
$$gJ_{z_0}=gJ_x=hJ_x, \ gJ_{z_1}, \ldots, \ gJ_{z_{r-1}}, \ gJ_{z_r}=gJ_v$$
defines a suitable sequence of hyperplanes from $hJ_x$ to $gJ_v$. Noticing that $|h|<|g|$, we deduce from our induction hypothesis that there exists a suitable sequence of hyperplanes from $J_u=H$ to $hJ_x$. By concatenating our two sequence of hyperplanes, we get a suitable sequence of hyperplanes from $H$ to $gJ_v=H'$, which concludes the proof.
\end{proof}

\begin{lemma}\label{lem:diamhyp}
Let $\Gamma$ be a simplicial graph and $u \in V(\Gamma)$ a vertex which is not isolated. Then $1 \leq \mathrm{diam}_{\prec} \partial^c N(J_u) \leq 4$.
\end{lemma}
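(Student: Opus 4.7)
The plan is to fix a convenient ``pivot'' ray $\xi=(u^n)_{n\ge 0}$ along the $u$-axis and to prove that every point of $\partial^c N(J_u)$ lies at $\prec$-distance at most $2$ from $[\xi]$; the upper bound of $4$ then follows from the triangle inequality, and the lower bound $\ge 1$ will just be an exhibition of two distinct points.

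First I would perform a mild reduction. Any combinatorial ray $\sigma\subset N(J_u)$ crosses the hyperplane $J_u$ at most once, so up to passing to an equivalent subray it is entirely contained in one of the two sides $\langle \mathrm{link}(u)\rangle$ or $u\langle \mathrm{link}(u)\rangle$. Since $u$ commutes with every element of $\langle \mathrm{link}(u)\rangle$ and stabilises every hyperplane $J_s$ with $s\in \mathrm{link}(u)$, a ray on the $u$-side starting at $ug$ has exactly the same hyperplane set as the corresponding ray on the $1$-side starting at $g$; so I may assume $\sigma$ lies in $\langle \mathrm{link}(u)\rangle$, starting at some vertex $p$ and spelled by a reduced word $s_1 s_2 s_3\cdots$ in $V(\mathrm{link}(u))^{\pm 1}$. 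In particular every hyperplane of $\sigma$ carries a label in $\mathrm{link}(u)$.

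The central step is to consider the ``interleaved'' combinatorial path $\eta$ starting at $p$ and spelled by $u s_1 u s_2 u s_3\cdots$, which thus visits $p,\ pu,\ p s_1 u,\ p s_1 u^2,\ p s_1 s_2 u^2,\ \ldots$. Using that $u$ commutes with each $s_i$, that $\langle \mathrm{link}(u)\rangle\subset \mathrm{stab}(J_u)$, and that $u\in \mathrm{stab}(J_{v})$ for every $v\in \mathrm{link}(u)$, a direct identification of the hyperplane dual to each edge of $\eta$ yields
$$\mathcal{H}(\eta)\ =\ \{u^k J_u : k\ge 0\}\ \cup\ \mathcal{H}(\sigma)\ =\ \mathcal{H}(\xi)\cup\mathcal{H}(\sigma).$$
Because every hyperplane of $X(\Gamma)$ carries a well-defined $V(\Gamma)$-label preserved by the $A(\Gamma)$-action, the two families in the union have disjoint label sets (``$u$'' versus vertices of $\mathrm{link}(u)$), and each family is internally injective since $\xi$ and $\sigma$ are already geodesic rays; hence $\eta$ crosses no hyperplane twice and is therefore a combinatorial geodesic ray. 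The containments $\mathcal{H}(\xi),\mathcal{H}(\sigma)\subset \mathcal{H}(\eta)$ then give $\sigma\prec\eta$ and $\xi\prec\eta$, i.e. a path of length $2$ between $[\sigma]$ and $[\xi]$ in the $\prec$-graph. Applying this to any two rays $\sigma,\tau\subset N(J_u)$ gives $d_\prec([\sigma],[\tau])\le 4$.

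For the lower bound, it is enough to exhibit two distinct boundary points in $\partial^c N(J_u)$: since $u$ is non-isolated, pick $v\in \mathrm{link}(u)$, and observe that $(v^n)_{n\ge 0}$ and $(v^{-n})_{n\ge 0}$ both lie in $\langle \mathrm{link}(u)\rangle\subset N(J_u)$ but have disjoint infinite hyperplane sets, hence are inequivalent. The main obstacle I anticipate is verifying that the interleaved word $u s_1 u s_2\cdots$ is genuinely reduced (i.e.\ that $\eta$ is a geodesic), but the label argument disposes of this cleanly; conceptually the key point is that $\eta$ is allowed to leave $N(J_u)$ — its $u$-power prefixes do precisely that — which is why $\partial^c N(J_u)$ has bounded $\prec$-diameter when measured inside $\partial^c X(\Gamma)$, even though it could well be disconnected in its own induced graph structure.
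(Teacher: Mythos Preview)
Your proof is correct and follows essentially the same idea as the paper's: both exploit the product structure $N(J_u)\subset\langle u\rangle\times\langle\mathrm{link}(u)\rangle$ and the fact that any boundary point of such a product is at $\prec$-distance $\le 2$ from a fixed axis via an interleaved ``diagonal'' ray. The paper simply quotes this as the general fact $\mathrm{diam}_\prec\partial^c(A\times B)=4$ for two unbounded factors, whereas you unfold the interleaving argument explicitly; your version is longer but self-contained, and the label argument you use to check that $\eta$ is geodesic is exactly the right way to handle the only delicate point.
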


\begin{proof}
We know that $N(J_u) \subset \langle \mathrm{star}(u) \rangle = \langle u \rangle \times \langle \mathrm{link}(u) \rangle$. Because $u$ is not an isolated vertex of $\Gamma$, $\mathrm{link}(u)$ is non-empty, so that $N(J_u)$ is included into the convex subcomplex $\langle \mathrm{star}(u) \rangle$ which decomposes as a Cartesian product of two unbounded subcomplexes, hence
$$\mathrm{diam}_{\prec} \partial^cN(J_u) \leq \mathrm{diam}_{\prec} \partial^c \langle \mathrm{star}(u) \rangle =4.$$
Moreover, since $\langle \mathrm{star}(u) \rangle$ contains a combinatorial copy of $\mathbb{R}^2$, it is clear that the $ \partial^cN(J_u)$ contains at least two two points, hence $\mathrm{diam}_{\prec} \partial^cN(J_u) \geq 1$.
\end{proof}

\begin{lemma}\label{lem:diamtransversehyp}
Let $\Gamma$ be a simplicial graph and $u,v \in V(\Gamma)$ two adjacent vertices. Then $\mathrm{diam}_{\prec} \left( \partial^c N(J_u) \cup \partial^c N(J_v) \right) \leq 10$. 
\end{lemma}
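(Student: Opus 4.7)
The plan is to build a short $\prec$-bridge between $\partial^c N(J_u)$ and $\partial^c N(J_v)$ and then invoke Lemma~\ref{lem:diamhyp} on each side together with the triangle inequality. The crucial algebraic input will be that, since $u$ and $v$ are adjacent in $\Gamma$, they commute in $A(\Gamma)$, $v$ lies in $\langle \mathrm{link}(u) \rangle = \mathrm{stab}(J_u)$, and $u$ lies in $\langle \mathrm{link}(v) \rangle = \mathrm{stab}(J_v)$.

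Concretely, I would consider the three combinatorial geodesic rays based at $1$: the horizontal ray $\gamma_u = (1, u, u^2, u^3, \ldots)$, the vertical ray $\gamma_v = (1, v, v^2, v^3, \ldots)$, and the zig-zag ray $\delta = (1, u, uv, u^2 v, u^2 v^2, u^3 v^2, u^3 v^3, \ldots)$ alternating multiplications by $u$ and $v$. Because $u$ and $v$ commute, each element $u^a v^b$ has length $a+b$ in $A(\Gamma)$, so $\delta$ is genuinely a geodesic ray. Since $v \in \langle \mathrm{link}(u) \rangle \subset N(J_u)$, the whole ray $\gamma_v$ lies in $N(J_u)$, so $\gamma_v(+\infty) \in \partial^c N(J_u)$; symmetrically, $\gamma_u(+\infty) \in \partial^c N(J_v)$.

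The main step to carry out is the hyperplane equality
$$\mathcal{H}(\delta) = \{ u^k J_u \mid k \geq 0 \} \cup \{ v^k J_v \mid k \geq 0 \} = \mathcal{H}(\gamma_u) \cup \mathcal{H}(\gamma_v).$$
Each edge of $\delta$ is a translate of either $\{1,u\}$ or $\{1,v\}$, with translating element of the form $u^a v^b$; simplifying the dual hyperplane via the identities $v \cdot J_u = J_u$ and $u \cdot J_v = J_v$ collapses every edge label to the stated form. This is essentially bookkeeping, but it is where the hypothesis that $u$ and $v$ are adjacent (hence commute and stabilise each other's hyperplanes) is actually used, and it is the step I expect to be the only real obstacle. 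Once the equality is in place, the inclusions $\mathcal{H}(\gamma_u), \mathcal{H}(\gamma_v) \subset \mathcal{H}(\delta)$ give $\gamma_u \prec \delta$ and $\gamma_v \prec \delta$, so $\gamma_u(+\infty)$ and $\gamma_v(+\infty)$ are each $\prec$-adjacent to $\delta(+\infty)$, producing the bridge $d_\prec(\gamma_u(+\infty), \gamma_v(+\infty)) \leq 2$.

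For the conclusion, Lemma~\ref{lem:diamhyp} applies to both $u$ and $v$ (neither is isolated since they are adjacent), giving $\mathrm{diam}_\prec \partial^c N(J_u), \mathrm{diam}_\prec \partial^c N(J_v) \leq 4$. For arbitrary $\alpha \in \partial^c N(J_u)$ and $\beta \in \partial^c N(J_v)$, the triangle inequality through $\gamma_v(+\infty) \in \partial^c N(J_u)$ and $\gamma_u(+\infty) \in \partial^c N(J_v)$ then yields
$$d_\prec(\alpha, \beta) \leq 4 + 2 + 4 = 10,$$
as claimed.
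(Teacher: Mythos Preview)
Your proof is correct and is essentially the same argument as the paper's: the paper also uses the rays labelled $u^\infty$, $v^\infty$, and $(uv)^\infty$, observes that the first two lie in $N(J_v)$ and $N(J_u)$ respectively, shows that both are $\prec$-below the diagonal ray (the paper phrases this via the $\mathbb{R}^2$ picture of $\langle u,v\rangle$ rather than your explicit hyperplane bookkeeping), and then concludes by the triangle inequality and Lemma~\ref{lem:diamhyp}.
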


\begin{proof}
Let $r_u$ (resp. $r_v$) denote the combinatorial ray starting from $1$ and labelled by $u \cdot u \cdot \cdots$ (resp. labelled by $v \cdot v \cdots$). Because $u \in \mathrm{link}(v)$, we know that $\xi_u := r_u(+ \infty)$ belongs to $\partial^c N(J_v)$; similarly, $\xi_v:= r_v(+ \infty) \in \partial^c N(J_u)$. Now, let $\rho$ denote the combinatorial ray starting from $1$ and labelled by $u \cdot v \cdot u \cdot v \cdots$. The situation is the following: $\langle u,v \rangle$ defines a convex subcomplex isomorphic to $\mathbb{R}^2$, and $r_u$ corresponds to the horizontal ray $[0,+ \infty) \times \{0 \}$, $r_v$ to the vertical ray $\{0 \} \times [0,+ \infty)$ and $\rho$ to the ``diagonal'' ray starting from the origin included into the upper-right quadrant. In particular, $r_u \prec \rho$ and $r_v \prec \rho$. For convenience, set $\xi := \rho(+ \infty)$. Thanks to Lemma \ref{lem:diamhyp}, we deduce that
$$\mathrm{diam}_{\prec} \left( \partial^c N(J_u) \cup \partial^c N(J_v) \right) \leq \mathrm{diam}_{\prec} \partial^c N(J_u) + d_{\prec}(\xi_v, \xi_u) + \mathrm{diam}_{\prec} \partial^c N(J_v) \leq 10,$$
since $d_{\prec}(\xi_v, \xi_u) \leq d_{\prec}(\xi_v, \xi) + d_{\prec}(\xi,\xi_u) = 2$. This concludes the proof.
\end{proof}

\begin{proof}[Proof of Proposition \ref{prop:boundaryRAAG}.]
Let $r_1',r_2'$ be two combinatorial rays such that $r_1'(+ \infty)$ and $r_2'(+ \infty)$ are not isolated points of $\partial^cX(\Gamma)$. We want to prove that $r_1'(+ \infty)$ and $r_2'(+ \infty)$ belong to the same $\prec$-component of $\partial^c X(\Gamma)$. 

\medskip \noindent
First, as a consequence of Lemma \ref{lem:increasingsequence}, there exist two $\prec$-minimal combinatorial rays $r_1,r_2$ such that $r_1 \prec r_1'$ and $r_2 \prec r_2'$. So it is sufficient to prove that $r_1(+ \infty)$ and $r_2(+ \infty)$ belong to the same $\prec$-component of $\partial^c X(\Gamma)$. We deduce from Lemma \ref{lem:fellowtravelhyp} that there exist two hyperplanes $H_1,H_2$ of $X(\Gamma)$ such that $r_1(+ \infty) \in \partial^c N(H_1)$ and $r_2(+ \infty) \in \partial^c N(H_2)$. Let
$$J_1=H_1, \ J_2, \ldots, J_{n-1}, \ J_n=H_2$$
be the sequence of hyperplanes provided by Lemma \ref{lem:sequencehyp}. We deduce from Lemma \ref{lem:diamtransversehyp} that
$$d_{\prec}(r_1(+ \infty),r_2(+ \infty)) \leq \sum\limits_{k=1}^{n-1} \mathrm{diam} \left( \partial^c N(J_k) \cup \partial^c N(J_{k+1}) \right) \leq 10(n-1) < + \infty.$$
A fortiori, $r_1(+ \infty)$ and $r_2(+ \infty)$ belong to the same $\prec$-component of $\partial^c X(\Gamma)$. 

\medskip \noindent
Thus, we have prove that $\partial^c X(\Gamma)$ contains at most one $\prec$-component which is not reduced to a single point. On the other hand, we assumed that $\Gamma$ is not reduced to a single vertex, so $X(\Gamma)$ contains a combinatorial copy of $\mathbb{R}^2$, which implies that $\partial^c X(\Gamma)$ contains at least one $\prec$-component which is not reduced to a single point. Consequently, we have proved the first assertion of our proposition. Let us denote by $\partial$ the unique connected component of $\partial^c X(\Gamma)$.

\medskip \noindent
Let $r$ be a combinatorial ray such that $r(0)=1$ and such that $r(+ \infty)$ is an isolated point of $\partial^c X(\Gamma)$, and let
$$w = \ell_1 \cdot \ell_2 \cdot \ell_3 \cdots$$
denote the infinite reduced word labelling $r$ (where $\ell_1, \ell_2 \in V(\Gamma) \cup V(\Gamma)^{-1}$). Fix some $n \geq 1$. Say that $\ell_n \in \langle u \rangle$ for some $u \in V(\Gamma)$ and let $v \in V(\Gamma)$ be a vertex adjacent to $u$ (such a vertex exists since $\Gamma$ is a connected graph which we supposed not reduced to a single vertex). Set
$$w_n^{\pm} = \ell_1 \cdots \ell_{n-1} \cdot \ell_n \cdot v^{\pm 1} \cdot v^{\pm 1} \cdot v^{\pm 1} \cdots,$$
and $w_n=w_n^+$ if $w_n^+$ is a reduced word and $w_n=w_n^-$ otherwise. Notice that at least one of $w_n^+$ and $w_n^-$ must be reduced, so that $w_n$ has to be reduced. In particular, if we denote by $r_n$ the path in $X(\Gamma)$ starting from $1$ and labelled by $w_n$, then $r_n$ is a combinatorial ray. Moreover, $r_n$ eventually lies in $\ell_1 \cdots \ell_n \cdot N(J_u)$, so that $r_n(+ \infty) \in \ell_1 \cdots \ell_n \cdot \partial^c N(J_u)$. As a consequence of Lemma \ref{lem:diamhyp}, $\partial^c N(J_u)$ is not reduced to a point and its $\prec$-diameter is finite, so that $\partial^c N(J_u)$, and a fortiori $\ell_1 \cdots \ell_n \cdot \partial^c N(J_u)$, cannot contain isolated points of $\partial^c X(\Gamma)$. We conclude that $r_n(+ \infty) \in \partial$. 

\medskip \noindent
By construction, our sequence $(r_n)$ is eventually constant to $r$ on each ball, so that $(r_n)$ converges to $r$. Since we know that $r_n(+ \infty) \in \partial$ for every $n \geq 1$, we deduce that $r(+ \infty)$ belongs to the sequential closure of $\partial$, which concludes the proof of our proposition. 
\end{proof}

\noindent
We are finally ready to prove Theorem \ref{thm:MorseinRAAG}. 

\begin{proof}[Proof of Theorem \ref{thm:MorseinRAAG}.]
Let $\Gamma$ be a connected simplicial graph which is not reduced to a single vertex, and let $H$ be a Morse subgroup of $A(\Gamma)$. According to Corollary \ref{cor:MorseCriterion}, there exists a contracting convex subcomplex $Y \subset X(\Gamma)$ on which $H$ acts cocompactly. Therefore, it follows from \cite[Remark 4.15]{article3} and Lemma \ref{lem:sequentiallyclosed} that $\partial^c Y$ is a full and sequentially closed subset of $\partial^c X(\Gamma)$. We deduce from Proposition \ref{prop:boundaryRAAG} that, if $\partial^c Y$ contains an isolated point of $\partial^c X(\Gamma)$, then $\partial^cY= \partial^c X(\Gamma)$, so that $X(\Gamma)$ is a neighborhood of $Y$ according to Lemma \ref{lem:wholeboundary}. It follows that $H$ acts cocompactly on $X(\Gamma)$, so that $H$ must be a finite-index subgroup of $A(\Gamma)$. 

\medskip \noindent
From now on, suppose that $\partial^c Y$ contains only isolated points of $\partial^c X(\Gamma)$. As a consequence, the endpoints at infinity of an axis of any non-trivial element of $H$ must be isolated in $\partial^c X(\Gamma)$, since they necessarily belong to $\partial^c Y$, so we deduce from Theorem \ref{thm:contractingboundary} that any non-trivial isometry of $H$ is contracting. Now, we want to prove that $H$ is free. Let $J$ be a hyperplane. As a consequence of Lemma \ref{lem:diamhyp}, $\partial^c N(J)$ does not contain any isolated point of $\partial^cX(\Gamma)$, so that $\partial^c N(J) \cap \partial^cY = \emptyset$. Since a locally finite CAT(0) cube complex of infinite diameter must contain a combinatorial ray, we deduce that the intersection $N(J) \cap Y$ is necessarily finite. Therefore, because the hyperplanes of $Y$ are precisely the intersections of the hyperplanes of $X(\Gamma)$ with $Y$, it follows that the hyperplanes of $Y$ are finite. In fact, since $H$ acts cocompactly on $Y$, we know that the hyperplanes of $Y$ are uniformly finite, so that $Y$ must be quasi-isometric to a tree according to \cite[Proposition 3.8]{coningoff}. A fortiori, $H$ must be quasi-isometric to a tree, which implies that $H$ is virtually free (see for instance \cite[Th\'eor\`eme 7.19]{GhysdelaHarpe}), and in fact free since $H$ is also torsion-free (see \cite{StallingsTorsionFree}).
\end{proof}

\addcontentsline{toc}{section}{References}

\bibliographystyle{alpha}
{\footnotesize\bibliography{HCCC}}

\end{document}